\documentclass[11pt]{amsart}

\usepackage[a4paper]{geometry}

\usepackage{amsmath}
\usepackage{amssymb}
\usepackage{enumerate}
\usepackage{tikz}
\usepackage{float}
\usepackage{verbatim}
\usepackage{youngtab}
\usepackage{blkarray}

\allowdisplaybreaks

\title{Representations of the rational Cherednik algebra $H_{t,c}(S_3,\h)$ in positive characteristic}
\author{Martina Balagovi\'c and Jordan Barnes}

\usepackage{amsthm}
\newtheorem{theorem}{Theorem}[section]
\newtheorem{prop}[theorem]{Proposition}
\newtheorem{lemma}[theorem]{Lemma}
\newtheorem{cor}[theorem]{Corollary}
\newtheorem{conjecture}[theorem]{Conjecture}
\theoremstyle{definition}

\newtheorem{example}[theorem]{Example}
\newtheorem{remark}[theorem]{Remark}
\newtheorem{assumption}[theorem]{Assumption}

\newcommand{\x}[1]{\overline{x_{#1}}}
\newcommand{\s}[1]{\overline{\sigma_{#1}}}
\newcommand{\Triv}{\mathrm{Triv}}
\newcommand{\Sign}{\mathrm{Sign}}
\newcommand{\Stand}{\mathrm{Stand}}
\newcommand{\h}{\mathfrak{h}}
\newcommand{\id}{\mathrm{id}}

\def\cato/{Category $\mathcal{O}_{t,c}$}

\begin{document}
	\maketitle

\begin{abstract}
We study the rational Cherednik algebra $H_{t,c}(S_3,\h)$ of type $A_2$ in positive characteristic $p$, and its irreducible category $\mathcal{O}$ representations $L_{t,c}(\tau)$. 
For every possible value of $p,t,c$, and $\tau$ we calculate the Hilbert polynomial and the character of $L_{t,c}(\tau)$, and give explicit generators of the maximal proper graded submodule of the Verma module.  
\end{abstract}

\section{Introduction}	

\subsection{The algebra}
For a general overview of rational Cherednik algebras and their representation see \cite{EtMa11}; for specifics in positive characteristic see \cite{BaCh13a}. 

Let $\Bbbk$ be an algebraically closed field of positive characteristic $p$, and $\mathbb{F}_p=\left\{0,\ldots, p-1\right\}$ its finite subfield with $p$ elements. Consider the symmetric group $S_3$, let $V$ be its permutation representation with a basis $y_1,y_2,y_3$ and the action $g.y_i=y_{g(i)}$, $\h=\left\{\sum_{i=1}^3a_iy_i\mid \sum_{i=1}^3a_i=0 \right\}$ its two dimensional subrepresentation (the standard representation), $V^*$ the dual of $V$ with the dual basis $x_1,x_2,x_3$, and $\h^*$ the dual of $\h$, realised as a quotient of $V^*$ and spanned by $\overline{x_1}$, $\overline{x_2}$, $\overline{x_3}$ satisfying $\overline{x_1}+\overline{x_2}+\overline{x_3}=0$. Let $\left<\cdot,\cdot \right>$ denote the evaluation pairings $V^*\otimes V\to \Bbbk$ (satisfying $\left<x_i,y_j \right>=\delta_{ij}$) and $\h^*\otimes \h\to \Bbbk$. The group $S_3$ is a reflection group with $\h$ its reflection representation, so we can associate a rational Cherednik algebra to it. 

Let $t,c\in \Bbbk$ be parameters. Let $\Bbbk[S_3]$ be the group algebra of $S_3$ over $\Bbbk$ and $T(\h\oplus \h^*)$ the tensor algebra of $\h\oplus \h^*$. The rational Cherednik algebra $H_{t,c}(S_3,\h)$ is the quotient of $\Bbbk[S_3]\ltimes T(\h\oplus \h^*)$ by the relations (for all $y,y'\in \h$, $x,x'\in \h^*$)
\begin{equation}
[x,x']=0, \quad  [y,y']=0, \quad [y,x]=t\left<x,y \right>-\sum_{1\le i<j\le 3} c\left<x-(ij).x,y \right>(ij).
\end{equation}

The algebra $H_{t,c}(S_3,\h)$ is graded by $\deg x=1$, $\deg y=-1$ and $\deg g=0$. It has a triangular decomposition in the sense that the group algebra $\Bbbk[S_3]$ and the symmetric algebras $S\h$ and $S\h^*$ are subalgebras, and the multiplication map 
$$S\h \otimes \Bbbk[S_3] \otimes S\h^*\to H_{t,c}(S_3,\h)$$
is a vector space isomorphism. Let $S=\left\{(12),(23),(13)\right\}$
be the set of reflections in $S_3$. 

\subsection{The modules}

Let $\tau$ be an irreducible representation of $S_3$, and augment it to a $\Bbbk[S_3]\ltimes S\h$ module by setting the action of $\h$ on $\tau$ to be $0$. The Verma module $M_{t,c}(\tau)$ is the induced module
$$M_{t,c}(\tau)=H_{t,c}(S_3,\h)\otimes_{\Bbbk[S_3]\ltimes S\h}\tau.$$
As a vector space $M_{t,c}(\tau)\cong S\h^* \otimes \tau$, with the action of $g\in S_3$ on $S\h^* \otimes \tau$ being diagonal, the action of $x\in \h^*$ being by multiplication on the first tensor factor, and the action of $y\in \h$ given by the Dunkl operators
$$D_{y}=t\partial_y\otimes \mathrm{id} - \sum_{1\le i<j\le 3}c\left<x_i-x_j,y\right>\frac{1-(ij)}{x_i-x_j}\otimes (ij).$$ Setting $\deg (\tau)=0$ makes $M_{t,c}(\tau)$ into a graded representation of $H_{t,c}(S_3,\h)$, and this grading coincides with the natural grading on $S\h^*$.

In positive characteristic $M_{t,c}(\tau)$ has a maximal proper graded submodule $J_{t,c}(\tau)$, and the quotient $L_{t,c}(\tau)=M_{t,c}(\tau)/J_{t,c}(\tau)$ is irreducible and finite dimensional. Category $\mathcal{O}$ (denoted $\mathcal{O}_{t,c}$ or $\mathcal{O}_{t,c}(S_3,\h)$ when necessary) is the category of all finite dimensional graded $H_{t,c}(S_3,\h)$ modules. The morphisms are all $H_{t,c}(S_3,\h)$-maps which preserve the grading. For a graded module $M=\oplus_k M^k$ and an integer $l$, the shifted module $M[l]$ has the same underlying vector space and the algebra action and the grading $M[l]^k=M^{l+k}$. Irreducible modules in category $\mathcal{O}_{t,c}$ up to isomorphism are $L_{t,c}(\tau)[k]$ for all irreducible representations $\tau$ of $S_3$ and $k\in \mathbb{Z}$. 

\subsection{The main questions}

We wish to describe all $L_{t,c}(\tau)$ in three ways: 
\begin{enumerate}
\item By calculating their Hilbert polynomials. For a graded vector space $M=\oplus_{k}M^k$ with finite dimensional graded pieces $M^k$, its Hilbert series is $$h_M(z)=\sum_{k} \dim M^k z^k\in \mathbb{Z}[z,z^{-1}].$$
\item By calculating their characters. Let $K_{0}(S_3)$ be the Grothendieck group of $S_3$ representations over $\Bbbk$, and for an $S_3$ representation $M$ let $[M]$ be its class in $K_{0}(S_3)$. For a graded $S_3$ representation $M=\oplus_{k}M^k$ with finite dimensional graded pieces $M^k$, its character is $$\chi_M(z)=\sum_{k} [M^k] z^k \in K_{0}(S_3)[z,z^{-1}].$$ 
\item By specifying a finite set of generators of the maximal proper graded submodule $J_{t,c}(\tau)$. One can always find such a set by considering singular vectors. A vector $v$ in a graded $H_{t,c}(S_3,\h)$ module is called \emph{singular} if it is homogeneous, not of lowest degree, and satisfies $y.v=0$ for all $y\in \h$. Every such vector generates a proper graded subrepresentation. To find a set of generators of $J_{t,c}(\tau)$, we first look for singular vectors in $M_{t,c}(\tau)$, take a quotient of $M_{t,c}(\tau)$ by the submodule they generate, then look for singular vectors in this quotient, and iterate this process until we get an irreducible module. (It always terminates.)
\end{enumerate}

Notice that (1), (2) and (3) above are not independent, as each item carries strictly more information than the previous one. Knowing the generators of $J_{t,c}(\tau)$ usually makes it possible to find the characters $\chi_{L_{t,c}(\tau)}(z)$, and from there it is straightforward to compute the Hilbert polynomials $h_{L_{t,c}(\tau)}(z)$. 
 
Once the above are known for $L_{t,c}(\tau)$, they are easily computed for $L_{t,c}(\tau)[k]$ by shifting degrees; for example, the Hilbert series satisfy
$h_{M[l]}(z)=z^{-l}h_M(z).$

\subsection{The parameters}
In this paper we aim to answer (1), (2) and (3) for $L_{t,c}(\tau)$ for all $t,c,\tau$ and $p$ for which they are previously unknown. As the group $S_3$ has order $3!=6$, the representation theory of $S_3$ is semisimple with three irreducible representations $\tau=\Triv,\Sign,\Stand$ when $p>3$, not semisimple with two irreducible representations $\tau=\Triv,\Sign$ when $p=3$, and not semisimple with two irreducible representations $\tau=\Triv,\Stand$ when $p=2$ (see Section \ref{sect-IrrepsofS3}). The parameter $t\in \Bbbk$ can be assumed to be $0$ or $1$, leading to different behaviours (see Lemma \ref{lemma-rescaleparameter}). The parameter $c$ takes all values in $\Bbbk$. When $t=0$, the irreducible modules $L_{0,c}(\tau)$ behave in one way when $c\ne 0$ (generic values) and in a different way when $c=0$ (special value). When $t=1$, the irreducible modules $L_{1,c}(\tau)$ behave in one way when $c\notin \mathbb{F}_p$ (generic values) and in a different way when $c\in \mathbb{F}_p$ (special values) - see Lemma \ref{WhatIsGeneric} and the paragraph above it.

\subsection{History and related work}

Rational Cherednik algebras were introduced in 2002 by Pavel Etingof and Victor Ginzburg \cite{EtGi02}, building on earlier work of Ivan Cherednik \cite{Ch92,Ch95}. Since then, their representation theory has been studied extensively. A lot of work takes place over fields of characteristic 0, see \cite{DJO94, BEG03a, BEG03b, ChEt03, GGOR03, Go03, Du04, Gr08}. Less is known in positive characteristic, see \cite{La05, BFG06, BeMa13, BaCh13a, BaCh13b, DeSa14, DeSu16, CaKa21}.

Rational Cherednik algebras associated to the reflection group $S_n$ in characteristic $0$ have been studied in \cite{BFG06} and \cite{Go03}. The setup for the representation theory of rational Cherednik algebra in positive characteristics has been described in \cite{BaCh13a}. 

The preprint \cite{Li14} proves several results about rational Cherednik algebras associated to the symmetric and dihedral groups. Most importantly, it clarifies that for $S_n$, the parameter $c$ being generic means $c\notin \mathbb{F}_p$. The setup uses $V$ instead of $\h$ for the reflection representation, but the Hilbert polynomials, characters, and singular vectors can be translated, see Proposition \ref{prop-dictionaryVvsh}. This paper calculates the Hilbert polynomial of $L_{1,0}(\Triv)$, $L_{1,3^{-1}}(\Triv)$ when $p\ne 2$, describes the singular vectors in the case $p>3$, $t=1$, $\tau=\Triv$, $c\in \left\{0,1,2,\ldots, p-1\right\}$ with $0<c<p/2$ or $2p/3<c<p$, and conjectures the degrees of the singular vectors when $p/2<c<2p/3$. We prove their conjecture. The previously known cases are marked \cite{Li14} in the tables below. 

The paper \cite{DeSa14} considers rational Cherednik algebras associated to complex reflection groups $G(m,r,n)$ in positive characteristic and their representation theory for generic values of the parameter $c$. Its Section 4 provides characters for $G=G(m,1,n)$ in the non-modular case, which overlaps with our work when $m=1,n=3$, $p>3$. The characters calculated are for all $L_{t,c}(\tau)$ with $p>n$, $t=0,1$, $c\notin \mathbb{F}_p$, and all $\tau$. Those cases are marked \cite{DeSa14} in the tables below, and our contribution in those cases is the calculation of singular vectors. (These singular vectors are later used in the description of $L_{t,c}(\tau)$ with $p>3$, $t=0,1$, $c\in \mathbb{F}_p$, and all $\tau$, which is new). 

The paper \cite{DeSu16} examines the characteristic $p$ representation theory of rational Cherednik algebras associated to $S_n$ in the case that $p\mid n$, $\tau=\Triv$, $t=1$, providing singular vectors and the Hilbert polynomials. This is relevant for us in the case $p=3$, $\tau=\Triv$, $t=1$, marked \cite{DeSu16} in the tables below.

The paper \cite{CaKa21} studies the rational Cherednik algebra $H_{t,c}(S_n,\h)$ for generic $c$ and its representation theory in characteristic $p\mid {n-1}$. This only coincides with our work for $n=3$, $p=2$, $c\notin \mathbb{F}_2$, where they calculate the Hilbert polynomials of $L_{t,c}(\Triv)$. We mark this case by \cite{CaKa21} in the table below.

\subsection{Methods} We extensively use the standard computational methods: the grading Casimir element $\Omega$ (Lemma \ref{OmegaAction}), equivalences of categories for algebras obtained by rescaling the parameters (Lemmas \ref{lemma-rescaleparameter} and \ref{Sign}), the fact that for generic $c$ interesting things only happen in degrees divisible by $p$ (Proposition 3.4. in \cite{BaCh13a}), information about which $c$ are generic (Proposition \ref{WhatIsGeneric}),  and general information about some special values of $c$ (Lemmas \ref{t=c=0}, \ref{t=1,c=0}, \ref{Lian3.2.}). In early stages of the project we used the computational software Magma \cite{Magma} to generate conjectures, guess singular vectors, and check the Hilbert polynomials of $L_{t,c}(\tau)$ for specific $\tau,t,c,p$.

A new computational method we use is to write $\h^*$ in the Young basis (see Section \ref{sec-basisofh}), and use this and our understanding of symmetric polynomials to choose a good basis for $S\h^*$ and $M_{t,c}(\tau)\cong S\h^*\otimes \tau$ (see Theorems \ref{decmposeShp>3}, \ref{decmposeVermap>3}, \ref{decmposeShp=2},  and \ref{decmposeVermaStandp=2}). Such a basis is useful when looking for singular vectors in $M_{t,c}(\tau)$ and its quotients because: 
\begin{itemize}
\item[i)] It is compatible with the decomposition into $S_3$ subrepresentations. When looking for singular vectors, the Casimir element $\Omega$ (Lemma \ref{OmegaAction}) can tell us which isotypic components we need to examine in which degrees, and this basis makes it easy to restrict to a particular isotypic component in some degree. This reduces the size of the space we are examining for singular vectors. 
\item[ii)] It is compatible with the restriction to $S_2$, in the sense that every vector is either $S_2$-symmetric or skew symmetric. A consequence is that we can reduce the search for singular vectors (which are in the kernel of all Dunkl operators $D_y$) to the search for vectors in the kernel of $D_{y_1}$, see Lemma \ref{KerD1}. 
\item[iii)] Every vector in it is a product of an invariant from $(S\h^*)^{S_3}$ and one of the small number of fixed vectors of small degree. The difference parts of Dunkl operators are $(S\h^*)^{S_3}$-linear and the differential parts of Dunkl operators are compatible with products (Leibniz rule). This makes the Dunkl operator calculations manageable. 
\end{itemize}

Using these properties to reduce the number of computations we need to do, we calculate the values of Dunkl operators explicitly and look for singular vectors. This lets us describe the generators of $J_{t,c}(\tau)$, from where we compute the characters and the Hilbert polynomials of $L_{t,c}(\tau)$.

\subsection{Main results}	

For characteristic $p=2$, Theorem \ref{mainthmp=2} describes $L_{t,c}(\tau)$ for all $t,c,\tau$ by giving the generators of $J_{t,c}(\tau)$, characters, and Hilbert polynomials. We now list the Hilbert polynomials  along with the references in cases where these polynomials have been previously found:

\begin{center}
{\renewcommand*{\arraystretch}{1.5}\begin{tabular}{ c |c |c }
$p=2$ & $\Triv$ & $\Stand$ \\ \hline \hline 
 $t=0$, $c\ne 0$ & $1+2z+2z^2+z^3$  & $2+2z+2z^2$ \\
 & \scriptsize{\cite[Thm 2.11]{CaKa21}} & \\  \hline 
 $t=0$, $c=0$ & 1 & 2 \\ \hline 
 $t=1$, $c\notin \mathbb{F}_2$  
  & $(1+z)^2 (1+2z^2+2z^4+z^6)$ & $2(1+z)^2(1+z^2+z^4)$ \\ 
  & \scriptsize{\cite[Thm 3.17]{CaKa21}} & \\  \hline  
 $t=1$, $c=0$  & $\left( 1+z \right)^2$  & $2\left( 1+z \right)^2$   \\ 
 $t=1$, $c=1$  & $1$ & $\frac{2-z-z^3-z^5-z^7+2z^8}{(1-z)^2}$ \\
 & \scriptsize{\cite[Thm 3.2]{Li14}}
\end{tabular}
}
\end{center}

For characteristic $p=3$, Theorem \ref{mainthmp=3} describes $L_{t,c}(\tau)$ for all $t,c,\tau$ by giving the generators of $J_{t,c}(\tau)$, characters, and Hilbert polynomials. We now list the Hilbert polynomials, along with the references to previously known polynomials:

\begin{center}
{\renewcommand*{\arraystretch}{1.5}\begin{tabular}{ c |c |c }
$p=3$ & $\Triv$ & $\Sign$ \\ \hline \hline 
 $t=0$, all $c$ & $1$ & $1$ \\ \hline 
 $t=1$, all $c$  & $(1+z+z^2)^2$  & $(1+z+z^2)^2$   \\ 
  & \scriptsize{\cite{DeSu16}}   &  \\ 
\end{tabular}}
\end{center}

For characteristic $p>3$ and generic $c$, \cite{DeSa14} calculate the characters and the Hilbert polynomials of all $L_{t,c}(\tau)$. We describe the generators of $J_{t,c}(\tau)$ (for general interest, and to use in describing $L_{t,c}(\tau)$ for special $c$), and along the way we reprove their results in this case. This can be found in Theorem \ref{thm-p>3generic}. 

For characteristic $p>3$ and special $c$, see Theorem \ref{thm-p>3special}. The case $c=0$ is standard and well known. For $c \in \mathbb{F}_p\setminus \left\{0\right\}$, \cite{Li14} describes $L_{1,c}(\Triv)$ except when $p/2<c<2p/3$, where they give conjectured degrees of the generators. We deal with this case, finding generators of $J_{1,c}(\Triv)$, characters, and Hilbert polynomials. This finishes the description of $L_{1,c}(\Triv)$. The description of $L_{1,c}(\Sign)$ follows by Lemma \ref{Sign}. 

The description of $L_{1,c}(\Stand)$ for $c\in \mathbb{F}_p$, $0<c<p/3$ is done by finding the generators of $J_{1,c}(\Stand)$, characters, and Hilbert polynomials. We are only able to prove this using a technical Assumption \ref{assum} on the field $\mathbb{F}_p$. This assumption holds for all primes $p < 2023$, and we conjecture it holds for all primes. The description of $L_{1,c}(\Stand)$ for $c\in \mathbb{F}_p$, $2p/3<c<p$ follows by Lemma \ref{Sign}.

The description of $L_{1,c}(\Stand)$ for $c\in \mathbb{F}_p$, $p/3<c<p/2$, is conjectural, see Conjecture \ref{finalconj}, and the description of $L_{1,c}(\Stand)$ for $c\in \mathbb{F}_p$, $p/2<c<2p/3$ would follow from there. We checked these conjectures in Magma \cite{Magma} for primes up to $20$. 

We now list the Hilbert polynomials.
\begin{center}
{\renewcommand*{\arraystretch}{1.5}\begin{tabular}{ c |c |c |c }
$p>3$ & Triv & Sign & Stand \\ \hline \hline 
 $t=0$, $c$ generic & $1+2z+2z^2+z^3$ & $1+2z+2z^2+z^3$ & $2+2z+2z^2$ \\ 
   & \scriptsize{\cite[Prop 4.1]{DeSa14}} &  \scriptsize{\cite[Prop 4.1]{DeSa14}}  & \scriptsize{\cite[Prop 4.1]{DeSa14}} \\ \hline 
 $t=0$, $c=0$ & 1 & 1 & 2 \\ \hline 
 $t=1$, & $\frac{(1-z^{2p})(1-z^{3p})}{(1-z)^2}$ & $\frac{(1-z^{2p})(1-z^{3p})}{(1-z)^2}$ & $\frac{2(1-z^{p})(1-z^{3p})}{(1-z)^2}$ \\ 
 $c\notin \mathbb{F}_p$ &    \scriptsize{\cite[Prop 4.2]{DeSa14}}  & \scriptsize{\cite[Prop 4.2]{DeSa14}} & \scriptsize{\cite[Prop 4.2]{DeSa14}}  \\ \hline 
 $t=1$, $c\in \mathbb{F}_p$  & &  &   \\
 $c=0$ & $\left(\frac{1-z^{p}}{1-z}\right)^2$ & $\left(\frac{1-z^{p}}{1-z}\right)^2$ & $2\left(\frac{1-z^{p}}{1-z}\right)^2$ \\
 $0<c<p/3$ & $\left(\frac{1-z^{3c+p}}{1-z}\right)^2$ & $\left(\frac{1-z^{p-3c}}{1-z}\right)^2$  & $\frac{1-z^{p-3c}-2z^p -z^{p+3c}+2z^{2p}}{(1-z)^2}$ \\
& \scriptsize{\cite[Thm 3.3]{Li14}} & & \\ 
 $p/3<c<p/2$ & $\left(\frac{1-z^{3c-p}}{1-z}\right)^2$  & $\frac{(1-z^{3p-6c})(1-z^p)}{(1-z)^2}$  & $\frac{1-z^{-p+3c}-z^{3p-3c}-z^{p+3c}-z^{5p-3c}+2z^{4p}} 
 {(1-z)^2}$\\
 & \scriptsize{\cite[Thm 3.3]{Li14}} & & \scriptsize{(Conjecture)} \\  
 $p/2<c<2p/3$ & $\frac{(1-z^{6c-3p})(1-z^p)}{(1-z)^2}$  & $\left(\frac{1-z^{2p-3c}}{1-z}\right)^2$ & $\frac{1-z^{2p-3c}-z^{3c}-z^{4p-3c}-z^{2p+3c}+2z^{4p}}{(1-z)^2}$\\
&  & & \scriptsize{(Conjecture)} \\
 $2p/3<c<p$ & $\left(\frac{1-z^{3c-2p}}{1-z}\right)^2$  & $\left(\frac{1-z^{4p-3c}}{1-z}\right)^2$  & $\frac{1-z^{3c-2p}-2z^p -z^{4p-3c}+2z^{2p}}{(1-z)^2}$ \\
& \scriptsize{\cite[Thm 3.3]{Li14}} & & \\ 
 \end{tabular}}
\end{center}

\subsection{Further questions} Obvious follow up questions are to prove Conjecture \ref{finalconj}, and show that Assumption \ref{assum} holds for all primes $p$. It would be interesting to see if similar methods can be used beyond $S_3$, most interestingly for special $c$ and representations other than the trivial. This could help gather evidence for a conjecture by Etingof and Reins that the representation theory of $H_{t,c}(S_n)$ in characteristic $p$ in a certain sense only depends on $n \bmod p$.

\subsection{Roadmap of the paper}	
In Section \ref{sect-prelim}	we list all the preliminaries, including both the standard tools in Category $\mathcal{O}$ and the new results about the bases of $\h^*$, $S\h^*$ and 	$M_{t,c}(\tau)$ which we will use. In Section \ref{sectionp=2} we completely describe all $L_{t,c}(\tau)$ when $p=2$, and in Section \ref{sectionp=3} all $L_{t,c}(\tau)$ when $p=3$. Section \ref{sect-auxiliary} contains many explicit auxiliary computations we will need to fully use the basis of $M_{t,c}(\tau)$ in case $p>3$. We tackle the cases $p>3$ in Section \ref{sect-p>3-generic} (generic $c$) and Section \ref{sect-p>3-special} (special $c$).

\subsection*{Acknowledgements}	
We thank Pavel Etingof, Gwyn Ballamy and Stefan Kolb for useful comments, and Alina Vdovina for access to MAGMA. This work was done as a part of the PhD thesis of J.B. We thank Alina Vdovina, Peter Jorgensen, and David Seifert for support during this time. The thesis was funded by EPSRC DTP EP/N509528/1.

\section{Preliminaries}	\label{sect-prelim}

\subsection{Irreducible representations of $S_3$ in positive characteristic}\label{sect-IrrepsofS3}

We will use $s_1=(12)$ and $s_2=(23)$ to denote the generators of the symmetric group $S_3$. Over algebraically closed fields of characteristic $0$ the group $S_3$ has three finite dimensional irreducible representations up to isomorphism: the trivial, the sign and the standard representation. The notation we use for them and their character table is given below. 
\begin{center}
{\renewcommand*{\arraystretch}{1.5}\begin{tabular}{c|ccc}
$S_3$ & e & (12) & (123) \\
& 1 & 3 & 2 \\ \hline
$\Triv$ & 1 & 1 & 1 \\
$\Sign$ & 1 & -1 & 1 \\
$\Stand$ & 2 & 0 & -1
\end{tabular}}
\end{center}

Considering its Brauer characters over an algebraically closed field $\Bbbk$ of characteristic $p>0$ we immediately see that:
\begin{itemize}
\item if $p=2$, $\Triv=\Sign$, and irreducible representations are $\Triv$ and $\Stand$;
\item if $p=3$, $\Stand$ is not irreducible, and irreducible representations are $\Triv$ and $\Sign$;
\item if $p>3$, irreducible representations are $\Triv$, $\Sign$ and $\Stand$, and the category of representations of $S_3$ over $\Bbbk$ is semisimple. 
\end{itemize}

The following calculation works for arbitrary characteristic, though in some characteristics (namely, $2$ and $3$) the resulting scalars coincide. 

\begin{lemma}\label{OmegaAction}
Consider the Casimir element $\Omega$ in $H_{1,c}(S_3,\h)$, given by 
$$\Omega=\sum_{i=1}^2\x{i}(y_i-y_3)+c\sum_{1\le i<j\le 3}(1-(ij)).$$
Assume $M$ is a graded $H_{1,c}(S_3,\h)$-module and $\tau\subseteq M$ an irreducible $S_3$-subrepresentation contained in the kernel of all Dunkl operators. Then $\Omega$  acts on $\tau$ by a constant as follows:
$$\Omega|_\Triv=0\cdot \mathrm{id}, \quad 
\Omega|_\Sign=6c \cdot \mathrm{id}, \quad 
\Omega|_\Stand=3c \cdot \mathrm{id}.$$
\end{lemma}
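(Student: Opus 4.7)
The plan is to split $\Omega$ into two pieces and handle each separately. Let $A = \sum_{i=1}^2 \overline{x_i}(y_i - y_3)$ be the Cherednik-algebra part of $\Omega$ and $B = \sum_{1 \le i < j \le 3}(1 - (ij))$ the group algebra part, so that $\Omega = A + cB$.

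First I would show that $A$ annihilates every $v \in \tau$. The key observation is that, although $y_1, y_2, y_3$ individually lie in $V$ rather than in $\h$, each difference $y_i - y_3$ for $i = 1, 2$ does lie in $\h$. Since $\tau$ is contained in the kernel of every Dunkl operator $D_y$ with $y \in \h$, we get $(y_i - y_3)v = 0$ for $i = 1, 2$. Acting with the rightmost factor first then gives $A v = \sum_{i=1}^2 \overline{x_i}(y_i - y_3) v = 0$ immediately, with no need to commute $\overline{x_i}$ past $y_i - y_3$.

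Next I would compute the action of $B$ on each of the three irreducibles. The element $\sum_{1 \le i < j \le 3}(ij) \in \Bbbk[S_3]$ is a class sum, hence central, so by Schur's lemma $B$ acts as a scalar on any irreducible representation. For $\Triv$ and $\Sign$ the scalar is immediate: each transposition acts as $1$ or $-1$, so $B$ acts as $3 - 3 = 0$ or $3 - (-3) = 6$ respectively. For $\Stand$ I would run a short matrix calculation in an explicit model, for instance in $\h$ with basis $y_1 - y_3, y_2 - y_3$, and check that the three transposition matrices sum to the zero matrix, giving $B = 3 - 0 = 3$. Combined with $A v = 0$, this yields the claimed scalars $0$, $6c$, $3c$ on $\Triv$, $\Sign$, $\Stand$ respectively.

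There is no real obstacle here. The only small subtlety worth flagging is that on $\Stand$ the scalar should be obtained by an explicit matrix computation rather than by trace divided by dimension, since $\dim \Stand = 2$ is not invertible when $p = 2$. The argument then runs uniformly in all characteristics, with the scalars collapsing consistently when irreducibles merge (for example, $6c$ vanishes in characteristic $2$, matching the identification $\Triv = \Sign$).
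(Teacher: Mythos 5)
Your proof is correct and follows essentially the same route as the paper: the Dunkl-operator term kills $\tau$ because $y_i-y_3\in\h$, and the class sum $\sum_{i<j}(1-(ij))$ is central so acts by a scalar computed representation by representation. Your extra care in obtaining the $\Stand$ scalar from an explicit matrix sum (rather than trace divided by dimension) is a sensible refinement of the paper's "calculated from the characters", since $\dim\Stand=2$ is not invertible in characteristic $2$, but it does not change the argument.
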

\begin{proof}
Standard direct computation; the term $\sum_{i=1}^2\x{i}(y_i-y_3)$ acts by $0$ on any vector in the kernel of Dunkl operators, and the term $\sum_{1\le i<j\le 3}(1-(ij))$ is central in $\Bbbk[S_3]$ so acts on any irreducible $\tau$ by a constant which can be calculated from the characters. 
\end{proof}

\subsection{Reflection representations of $S_n$ in positive characteristic}\label{sect-reflectionrepn}
The group $S_n$ has a natural permutation representation $V$ with a basis $y_1,\ldots ,y_n$ and the action given on this basis as $g.y_i=y_{g(i)}$. It is self dual, with $x_1,\ldots ,x_n$ the dual basis of $V^*$ and the isomorphism $V\to V^*$ given by $y_i\mapsto x_i$. 

Let $Y=y_1+\ldots+y_n$ and $X=x_1+\ldots+x_n$. They span $\Bbbk Y\subseteq V$ and $\Bbbk X \subseteq V^*$, which are one dimensional subrepresentations isomorphic to the trivial representation $\Triv$. Another subrepresentation of $V$ is $$\h=\left\{\sum_i a_iy_i \mid \sum a_i =0\right\}.$$ It is called the standard representation and we denote it $\Stand$. 

When the characteristic $p$ of the field $\Bbbk$ does not divide $n$, $\Bbbk Y$ and $\h$ are in direct sum, $V=\Bbbk Y\oplus \h$, and $\h$ is irreducible. The dual is then also a direct sum, $V^*=\Bbbk X\oplus \h^*$. Here $$\Bbbk X\cong (\Bbbk Y)^*=\left\{x\in V^*\mid \left<x,y\right>=0 \quad \forall y\in \h \right\}$$ with $\left<X,Y\right>=n\ne 0$, and $\h^*$ can be realised as 
$$V^*/\Bbbk X= \mathrm{span}\left\{ \x{i}=x_i+\Bbbk X \right\}$$
or as 
$$\left\{x \in V^* \mid \left<x,Y \right>=0 \right\}=\mathrm{span}\left\{x_i-x_j\right\}\subseteq V^*.$$
The isomorphism between these two realisations is given by the splitting
$$\pi: V^*/\Bbbk X \hookrightarrow V^* \qquad \pi(\x{i})=x_i-\frac{1}{n}X.$$

When the characteristic $p$ of the field $\Bbbk$ divides $n$, $Y$ lies in $\h$ so $\Bbbk$ and $\h$ are not in direct sum and $\h$ is not irreducible. In that case there is a non-split exact sequence
$$0\to \h\to V \to V/\h \to 0,$$
with $V/\h\cong \Triv$. Dualising we get another non-split exact sequence
$$0\to (V/\h)^* \to  V \to \h^* \to 0,$$
with $$(V/\h)^*=\left\{x \in V^* \mid \left<x,\h \right>=0 \right\}=\Bbbk X,$$
so in this case $$\h^*=V^*/\Bbbk X=\mathrm{span}\left\{ \x{i}=x_i+\Bbbk X \right\}.$$

In any characteristic both the representation $V$ and its subrepresentation $\h$ are reflection representations, so one can consider the rational Cherednik algebras $H_{t,c}(S_n,V)$ and $H_{t,c}(S_n,\h)$. The two choices lead to slightly different conventions and computations in the literature, though the two algebras and their representations are very closely related. For example, we have the following lemma. 

\begin{prop}\label{prop-dictionaryVvsh}
Let $\Bbbk$ be an algebraically closed field whose finite characteristic $p$ does not divide $n$, $\tau$ an irreducible representation of $S_n$,  
$L_{t,c}(S_n,V, \tau)$ the irreducible representation with lowest weight $\tau$ for the rational Cherednik algebra $H_{t,c}(S_n,V)$ over $\Bbbk$ generated by $V,V^*,S_n$, and $L_{t,c}(S_n,\h, \tau)$ the irreducible representation with lowest weight $\tau$ for the rational Cherednik algebra $H_{t,c}(S_n,\h)$ over $\Bbbk$ generated by $\h,\h^*,S_n$. Their characters are related as follows: 
\begin{itemize}
\item if $t=0$, 
$$\chi_{L_{0,c}(S_n,V, \tau)}(z)=\chi_{L_{0,c}(S_n,\h, \tau)}(z)$$
\item if $t= 1$, 
$$\chi_{L_{1,c}(S_n,V, \tau)}(z)=\chi_{L_{1,c}(S_n,\h, \tau)}(z)\cdot \left(\frac{1-z^p}{1-z}\right).$$
\end{itemize}
\end{prop}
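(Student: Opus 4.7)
The strategy is to decompose $H_{t,c}(S_n,V)$ as a tensor product involving $H_{t,c}(S_n,\h)$, transfer this to Verma modules, and identify their irreducible quotients. Since $p\nmid n$, we have $V=\h\oplus\Bbbk Y$ and $V^*=\h^*\oplus\Bbbk X$ with $X:=x_1+\cdots+x_n$ and $Y:=y_1+\cdots+y_n$, both $S_n$-invariant. Using $\left<X,y\right>=0=\left<x,Y\right>$ for $x\in\h^*, y\in\h$, and $sX=X$, $sY=Y$ for every reflection $s$, I would check from the defining Cherednik relations that $X$ and $Y$ each commute with $\h$, $\h^*$ and $\Bbbk[S_n]$, and that $[Y,X]=tn$. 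This gives an algebra isomorphism
$$H_{t,c}(S_n,V)\;\cong\;H_{t,c}(S_n,\h)\otimes B,\qquad B:=\Bbbk\left<X,Y\right>/([Y,X]-tn).$$

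The same computations show, via the Dunkl operator formulas, that under $SV^*\cong S\h^*\otimes\Bbbk[X]$ the Verma module splits as $M_{t,c}(S_n,V,\tau)\cong M_{t,c}(S_n,\h,\tau)\otimes P$, where $P=\Bbbk[X]$ is the natural $B$-module (with $X$ acting by multiplication and $Y$ by $tn\partial_X$). For $y\in\h$, the operator $D_y$ acts trivially on the $\Bbbk[X]$-factor because $\partial_y X=0$ and $sX=X$, while $D_Y=t\partial_Y$ acts on the $\Bbbk[X]$-factor as $tn\partial_X$ and trivially on $S\h^*$ because $\partial_Y$ annihilates $\h^*\subset V^*$. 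Hence $L_{t,c}(S_n,V,\tau)$ is a graded quotient of $L_{t,c}(S_n,\h,\tau)\otimes L_B$, where $L_B$ is the unique graded simple quotient of $P$ over $B$. Since both $L_{t,c}(\h,\tau)$ and $L_B$ (computed below) are finite-dimensional simple modules over their respective algebras, their outer tensor product is simple over $H_{t,c}(\h)\otimes B = H_{t,c}(V)$ over the algebraically closed field $\Bbbk$, so it must in fact equal $L_{t,c}(V,\tau)$. Characters then multiply:
$$\chi_{L_{t,c}(V,\tau)}(z)\;=\;\chi_{L_{t,c}(\h,\tau)}(z)\cdot\chi_{L_B}(z).$$

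It remains to compute $\chi_{L_B}$. For $t=0$, $Y$ acts as $0$ on $P$, so $XP$ is the maximal proper submodule and $L_B\cong\Bbbk$ has character $1$, giving the first formula. For $t=1$, $n$ is invertible in $\Bbbk$ and $n\partial_X$ annihilates exactly $\Bbbk[X^p]\subset\Bbbk[X]$; since $(n\partial_X)^j X^j = n^j j!\neq 0$ for $0\le j<p$, the quotient $\Bbbk[X]/(X^p)$ is simple over $B$, so $L_B\cong\Bbbk[X]/(X^p)$ has character $(1-z^p)/(1-z)$, yielding the second formula. The main obstacle is the simplicity of $L_{t,c}(\h,\tau)\otimes L_B$ as an $H_{t,c}(V)$-module; once one establishes the simplicity and finite-dimensionality of each factor separately, this follows from the standard fact that over an algebraically closed field the outer tensor product of finite-dimensional simple modules over a tensor product algebra is again simple.
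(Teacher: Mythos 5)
Your proposal is correct in outline, but it proves the proposition by a genuinely different route than the paper. The paper never factors the algebra: it takes the generators $v_1,\dots,v_k$ of $J_{t,c}(S_n,\h,\tau)$, adjoins $X\otimes\tau$ (for $t=0$) or $X^p\otimes\tau$ (for $t=1$), checks these generate a proper graded submodule of $M_{t,c}(S_n,V,\tau)$, and then proves irreducibility of the quotient by a hands-on Dunkl-operator computation, expanding a putative singular vector as $v=\sum_i X^{m-i}f_i$ with $f_i\in S^i\h^*\otimes\tau$ and extracting $D_y(f_i)\in\langle v_1,\dots,v_k\rangle$ for $y\in\h$. You instead use the ``centre of mass'' decomposition $H_{t,c}(S_n,V)\cong H_{t,c}(S_n,\h)\otimes B$ with $B=\Bbbk\langle X,Y\rangle/([Y,X]-tn)$ (your commutator checks are right: $\langle X,y\rangle=0$ for $y\in\h$, $\langle x,Y\rangle=0$ for $x\in\pi(\h^*)$, and the reflection terms vanish by $S_n$-invariance of $X,Y$; PBW gives the isomorphism), factor the Verma module as $M_{t,c}(S_n,\h,\tau)\otimes\Bbbk[X]$, and identify $L_{t,c}(S_n,V,\tau)\cong L_{t,c}(S_n,\h,\tau)\otimes L_B$ with $\chi_{L_B}=1$ or $(1-z^p)/(1-z)$. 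Your approach is more structural, gives an actual module isomorphism rather than just equal characters, and makes the character factorization immediate; the paper's approach, while more computational, produces explicit generators of $J_{t,c}(S_n,V,\tau)$ in terms of those for $\h$, which is what the authors need elsewhere to translate singular vectors between the $V$- and $\h$-conventions of the cited literature. One small point deserves care in your last step: $L_{t,c}$ is defined as the quotient by the maximal proper \emph{graded} submodule, so strictly you need $L_{t,c}(S_n,\h,\tau)\otimes L_B$ to have no proper nonzero graded submodule. This follows either from the ungraded simplicity you invoke (irreducibility of $L_{t,c}(\tau)$ is asserted in the paper, and a simple module has no proper graded submodules a fortiori), or by a two-line direct argument: given a nonzero homogeneous element $\sum_j u_j\otimes X^j$ of a graded submodule, apply the largest relevant power of $Y$ (which acts as $tn\partial_X$, with $n$ invertible and exponents $<p$) to land in $L_{t,c}(S_n,\h,\tau)\otimes 1$, then use graded simplicity of the first factor and multiply back by powers of $X$. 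With that remark your argument is complete.
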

\begin{proof}
The representation $L_{t,c}(S_n,\h, \tau)$ can be written as a quotient of the Verma module $M_{t,c}(S_n,\h, \tau)$ by a submodule $\left<v_1,v_2,\ldots, v_k\right>$ generated by some homogeneous vectors $v_i$ of degrees $d_i\in \mathbb{N}$. We can assume without loss of generality that $0<d_1\le d_2\le \ldots \le d_k$, and for every $i$ and all $y\in \h$ we have $D_{y}(v_i)\in \left<v_1,\ldots, v_{i-1} \right>$. 

As $p$ does not divide $n$, we have $V^*\cong \h^*\oplus \Bbbk X$. Using the inclusion $S\h^*\subseteq SV^*$, we can consider $v_i \in M_{t,c}(S_n,\h, \tau)\cong S\h^* \otimes \tau$ as elements of $M_{t,c}(S_n,V,\tau)\subseteq S V^*\otimes \tau$. We claim that
\begin{align*}
L_{0,c}(S_n,V, \tau)&= M_{0,c}(S_n,V, \tau)/\left<X\otimes \tau,v_1,v_2,\ldots, v_k\right>,\\
L_{1,c}(S_n,V, \tau)&= M_{1,c}(S_n,V, \tau)/\left<X^p \otimes \tau,v_1,v_2,\ldots, v_k\right>.
\end{align*}

First, let us show that for every $i$ and all $y\in V$ we have $D_{y}(v_i)\in \left<v_1,\ldots, v_{i-1} \right>$. If $y\in \h\subseteq V$ this is true by assumption, so it remains to check it for $y=Y$. This follows from the fact that $[Y,x]=0$ for all $x\in \h^*$ and that $v_i\in S\h^*\otimes \tau$. 

Next, note that when $t=0$ we have $[y,X]=0$ for all $y\in V$, and when $t=1$ we have $[y,X^p]=0$ for all $y\in V$. As a consequence, for $t=0$ the set $X\otimes \tau$ consists of singular vectors in degree $1$, and when $t=1$ the set $X^p\otimes \tau$ consists of singular vectors in degree $p$. 

Let $J_0=\left<X\otimes \tau,v_1,v_2,\ldots, v_k\right>$ for $t=0$ and $J_1=\left<X^p \otimes \tau,v_1,v_2,\ldots, v_k\right>$ for $t=1$. The above shows that $J_t$ is a proper graded submodule of $M_{t,c}(S_n,V,\tau)$, and so $L_{t,c}(S_n,V, \tau)$ is a quotient of $M_{t,c}(S_n,V, \tau)/J_t$. 

Finally, let us show that the module $M_{t,c}(S_n,V, \tau)/J_t$ is irreducible. If it is not, then there is a homogeneous vector $v\in M^m_{t,c}(S_n,V, \tau)$, which is not in $J_t$, and is such that for all $y\in V$ we have $D_y(v)\in J_t$. Write $v\in SV^*\otimes \tau\cong \Bbbk[X]\otimes S\h^* \otimes \tau$ as 
$v=\sum_{i=0}^m X^{m-i} f_{i}$
for $f_i \in S^i\h^*\otimes \tau$. The condition $D_{y_j}(v)\in J_t$ for all $j=1,\ldots, n$ then becomes
\begin{align*}
D_{y_j}(v)
&=\sum_{i=0}^m [y_j,X^{m-i}] f_{i}+ X^{m-i}D_{y_j}(f_{i})\\
&=\sum_{i=0}^m (m-i) X^{m-i-1} f_{i}+ X^{m-i}D_{y_j}(f_{i})\\
&=\sum_{i=1}^{m}\left(  (m-i+1) X^{m-i} f_{i-1}+ X^{m-i}D_{y_j}(f_{i})\right) \in J_t.
\end{align*}
This can be rewritten as 
$$X^{m-i} D_{y_j}(f_{i})=-(m-i+1) X^{m-i} f_{i-1} \pmod {J_t} \quad \textrm{ for all } j=1,\ldots, n,$$
so for any $j,k\in \left\{1,\ldots n\right\}$ we have 
$$X^{m-i} D_{y_j-y_k}(f_{i})=X^{m-i} D_{y_j}(f_{i})-X^{m-i} D_{y_k}(f_{i}) \in J_t$$
and consequently
$X^{m-i} D_{y}(f_{i})\in J_t$ for all $y \in \h.$

We now consider two cases: 
\begin{itemize}
\item $t=0$. As $D_y(f_m)=0$ for all $y \in \h$
and $M_{t,c}(S_n,\h, \tau) / \left<v_1,v_2,\ldots, v_k\right> = L_{0,c}(S_n,\h,\tau)$ is irreducible, we conclude that 
$$f_m\in \left< v_1,\ldots, v_n\right>.$$
But then 
$$v=f_m+X\sum_{i=0}^{m-1}X^{m-i-1}f_i\in \left< X, v_1,\ldots, v_n\right>=J_0,$$
which contradicts the assumption that $v\notin J_0.$

\item $t=1$. 
As for all $ y \in \h$, $i=0, \ldots, m$ we have 
$$X^{m-i} D_{y}(f_{i})\in \left< X^p\otimes \tau, v_1,\ldots, v_n\right>$$
we can in particular conclude that for all $i$ such that $m-i<p$ and all $ y \in \h$ we have 
$$D_{y}(f_{i})\in \left< v_1,\ldots, v_n\right>.$$
The module $M_{1,c}(S_n,\h, \tau) / \left<v_1,v_2,\ldots, v_k\right> = L_{1,c}(S_n,\h,\tau)$ is irreducible, so 
$$f_i\in \left< v_1,\ldots, v_n\right>, \quad  \textrm{for all } m-p<i\le m.$$
But then 
$$v=X^p\sum_{i=0}^{m-p}X^{m-p-i}f_i+\sum_{i=m-p+1}^{m}X^{m-i}f_i\in \left< X^p\otimes \tau, v_1,\ldots, v_n\right>=J_1,$$
which contradicts the assumption that $v\notin J_1.$

\end{itemize}

\end{proof}

We will be working with $H_{t,c}(S_n,\h)$ but will need to cite several papers which work with $H_{t,c}(S_n,V)$ and we will use Proposition \ref{prop-dictionaryVvsh} to translate between the two.

\subsection{Choices of bases in $\h^*$}\label{sec-basisofh}
In order to do explicit computations with Verma modules, we will need to fix bases for $S\h^*$, all irreducible representations $\tau$ of $S_3$, and the Verma modules $M_{t,c}(\tau)$. Let $n=3$ for this section. 

By Section \ref{sect-reflectionrepn}, in any characteristic $\h^*$ has a basis $\left\{\x{1},\x{2}\right\}$, and this is the basis we will use for $p=2,3$. 

In characteristic $p>3$, it is convenient to use a basis which is well behaved when restricting from $S_3$ to $S_2=\left\{e,s_1\right\}\subseteq S_3$, in order to use the additional symmetries to reduce the number of computations (see Lemma \ref{KerD1}). We start by identifying 
$$\h^*\cong \left\{\sum_i a_ix_i\mid \sum_i a_i=0\right\}\subseteq V^*$$
via $\pi$, and then define the \emph{rescaled Young basis} of $\h^*$ as
\begin{align*}
b_+&=x_1+x_2-2x_3\\
b_-&=x_1-x_2.
\end{align*}
This basis satisfies: 
\begin{align*}
s_1.b_+=b_+ \quad  
s_1.b_-=-b_- \quad  
b_-=\frac{2}{3}\left(s_2+\frac{1}{2}\right)b_+.
\end{align*}

Conversely, expressing $\pi(\x{i})$ in this basis we get
\begin{align*}
\pi(\x{1})&=x_1-\frac{X}{3}=\frac{b_++3b_-}{6}\\
\pi(\x{2})&=x_2-\frac{X}{3}=\frac{b_+-3b_-}{6}\\
\pi(\x{3})&=x_3-\frac{X}{3}=\frac{-b_+}{3}.
\end{align*}

\subsection{Symmetric polynomials}\label{sect-sympol}

Singular vectors are in general difficult to describe, but some can always be constructed from symmetric polynomials in $\h^*$. Specifically, let $(S\h^*)^{S_3}_+$ be the set of symmetric polynomials of strictly positive degree. For all $f\in (S\h^*)^{S_3}_+$ and $v\in \tau$, the vector $f\otimes v$ is singular in $M_{0,t}(\tau)$ and $f^p\otimes v$ is singular in $M_{1,t}(\tau)$. Consequently they generate proper submodules, and we can define quotient modules
\begin{align*}
N_{0,c}(\tau)&=M_{0,c}(\tau)/S\h^*(S\h^*)^{S_3}_+\otimes \tau,\\
N_{1,c}(\tau)&=M_{1,c}(\tau)/S\h^*((S\h^*)^{S_3}_+)^p\otimes \tau.
\end{align*} 
These are called \emph{baby Verma modules}, are finite dimensional, and $L_{t,c}(\tau)$ are quotients of $N_{t,c}(\tau)$. We specify their characters for the cases we will need in Lemmas \ref{characterShp>3} and \ref{characterShp=2}. 

In order to describe baby Verma modules, but also in order to decompose $M^k_{t,c}(\tau)\cong S^k\h^*\otimes \tau$ as an $S_3$ representation (which can help us limit the space in which we are looking for singular vectors), we will need to understand the $S_3$ invariants in $SV^*$ and both incarnations of $S\h^*$. We do this for general $n$ because it is just as easy as for $n=3$. 

First, by the fundamental theorem of symmetric polynomials, for a field $\Bbbk$ of any characteristic, $(SV^*)^{S_n}$ is a polynomial algebra in $n$ variables of degrees $1,2, \ldots n$, and those variables can be chosen to be the elementary symmetric polynomials
$$\sigma^V_k=\sum_{1\le i_1<i_2<\ldots <i_k\le n}x_{i_1}x_{i_2}\ldots x_{i_k}.$$
For example, for $n=3$: 
\begin{align*}
\sigma^V_1&=X=x_1+x_2+x_3\\
\sigma^V_2&=x_1x_2+x_1x_3+x_2x_3\\
\sigma^V_3&=x_1x_2x_3.
\end{align*}
Over some fields there are other good choices of symmetric polynomials as generators of the invariant ring, but this choice works over any field.

The representations $V^*$ and $\h^*$ are faithful representations of $S_n$ over a field $\Bbbk$ of any characteristic $p$ unless $(n,p)=(2,2)$, and there is always a surjection $V^*\to \h^*$. So, by \cite{Na79} Prop 4.1, for all $(n,p)\ne (2,2)$, $(S\h^*)^{S_n}$ is also a polynomial algebra. (Note this is not true for $(S\h)^{S_n}$ in general.) Let $\s{i}$ be the image of $\sigma^V_i$ under the quotient map $SV^*\to S\h^*$. For $n=3$ we have:
\begin{align*}
\s{1}&=0\\
\s{2}&=\x{1}\x{2}+\x{1}\x{3}+\x{2}\x{3}=-(\x{1}^2+\x{1}\x{2}+\x{2}^2)\\
\s{3}&=\x{1}\x{2}\x{3}=-(\x{1}^2\x{2}+\x{1}\x{2}^2).
\end{align*}
The polynomials $\s{2},\s{3}, \ldots, \s{n}$ are nonzero, algebraically independent, there are $n-1=\dim \h^*$ of them, and the product of their degrees is $\prod_{i=2}^ni=n!=|S_n|$, so by \cite{Ke96} Prop 16 we can conclude that for all $(n,p)\ne (2,2)$
$$(S\h^*)^{S_n}=\Bbbk[\s{2},\s{3}, \ldots , \s{n}].$$

As explained in section \ref{sect-reflectionrepn}, in good characteristic (when $p$ does not divide $n$), $\h^*$ can be alternatively realised as a subrepresentation rather than a quotient of $V^*$ via the map $\pi$. Call the images of $\s{i}$ in this realisation 
$\sigma_i=\pi(\s{i})\in (S\h^*)^{S_n}.$

Finally, for $n=3$ and $p>3$, let us calculate $\sigma_i$ explicitly in terms of the basis $b_-,b_+$ from Section \ref{sec-basisofh}: 
\begin{align}
\sigma_2&=\pi(\s{2})=-\pi(\x{1}^2+\x{1}\x{2}+\x{2}^2)=\frac{-1}{12}(b_+^2+3b_-^2) \notag\\
\sigma_3&=\pi(\s{3})=-\pi(\x{1}^2\x{2}+\x{1}\x{2}^2)=\frac{-1}{2^23^3}(b_+^3-9b_+b_-^2). \label{sigmaexplicit}
\end{align}

Closely related to the symmetric polynomial are the antisymmetric polynomials, $f\in S\h^*$ which transform as the $\Sign$ representation. By considering each transposition in turn and writing out $(ij).f=-f$, one sees that $f$ is divisible by $x_i-x_j$ for all $i\ne j$, and consequently that $f$ is divisible by the Vandermonde polynomial $q=\prod_{i<j}(x_i-x_j)$. It then quickly follows that any antisymmetric polynomial $f$ is of the form $f=q\cdot f'$, where $f'$ is a symmetric polynomial. 

For $n=3$ we can calculate the Vandermonde polynomial $q$ in terms of the basis $b_-,b_+$ from Section \ref{sec-basisofh} as 
$$q=(x_1-x_2)(x_2-x_3)(x_1-x_3)=b_+^2b_--b_-^3.$$

\subsection{Bases of Verma modules}

We will need the following combinatorial lemma. 

\begin{lemma}\label{numberinbasis}
For any $k\in \mathbb{N}_0$ the number of non-negative integral solutions $(a,b)\in \mathbb{N}_0^2$ of the equation $2a+3b=k$
equals
$$\begin{cases}\lfloor \frac{k}{6} \rfloor + 1, & k \textrm{ even}\\
\lfloor \frac{k-3}{6} \rfloor + 1, & k \textrm{ odd.} \end{cases}$$
\end{lemma}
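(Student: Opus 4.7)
The plan is to parameterize solutions by the value of $b$ and then count admissible $b$ in each parity class of $k$.

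First, I would note that solutions $(a,b)\in \mathbb{N}_0^2$ to $2a+3b=k$ are in bijection with non-negative integers $b$ satisfying the two conditions $3b\le k$ and $k-3b \in 2\mathbb{Z}$, via the map $b \mapsto \bigl((k-3b)/2,\, b\bigr)$. The divisibility condition $k-3b \equiv 0 \pmod 2$ simplifies to $b\equiv k \pmod 2$, so the count reduces to the number of $b\in \mathbb{N}_0$ with $b\equiv k\pmod 2$ and $b\le k/3$.

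Next I would split into cases. If $k$ is even, write $b=2m$ with $m\in \mathbb{N}_0$; the inequality $b\le k/3$ becomes $m\le k/6$, and the number of such $m$ is $\lfloor k/6\rfloor + 1$. If $k$ is odd, write $b=2m+1$ with $m\in \mathbb{N}_0$; the inequality $b\le k/3$ becomes $m\le (k-3)/6$, and the number of such $m$ is $\lfloor (k-3)/6\rfloor + 1$ (which correctly yields $0$ when $k=1$, since $\lfloor -1/3\rfloor = -1$).

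There is no real obstacle here; the argument is purely elementary counting, and a one-line sanity check at small values of $k$ (e.g.\ $k=0,1,2,3,4,5,6$) confirms the formulas. If a cleaner unified presentation were desired, one could alternatively extract the coefficient of $z^k$ in the generating function $\tfrac{1}{(1-z^2)(1-z^3)}$ via partial fractions, but the direct case split above is shorter and matches the piecewise form in which the lemma is stated.
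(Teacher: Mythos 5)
Your proof is correct and follows essentially the same route as the paper: in both cases one observes that $b$ must have the same parity as $k$, writes $b=2j$ or $b=2j+1$ accordingly, and counts the admissible values of the new parameter, arriving at $\lfloor k/6\rfloor+1$ resp.\ $\lfloor (k-3)/6\rfloor+1$. The only difference is cosmetic: the paper records the explicit parametrisation of the solution set for later use, while you phrase it as a bijection onto admissible $b$.
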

\begin{proof}
We will parametrise the solutions for later use. 

First assume $k=2k'$ is even. The equation $2a+3b=k$ then becomes 
$$2a+3b=2k'$$
so we see $b$ is even. Write $b=2j$ for some $j\ge 0$. The equation now becomes
$$a=k'-3j$$
which gives another condition $j\le \frac{k'}{3}$. 
So, the set of solutions can be parametrised as
$$\left\{\left(\frac{k}{2}-3j,2j\right)\mid 0\le j\le \frac{k}{6}\right\}$$ and their total number is $\lfloor \frac{k}{6} \rfloor + 1$.

Now assume $k=2k'+1$ is odd. The equation $2a+3b=k$ becomes 
$$2a+3b=2k'+1$$
so $b$ is odd. Writing $b=2j+1$ for some $j\ge 0$ we get
$$a=k'-1-3j$$
which gives another condition $j\le \frac{k'-1}{3}$. 
So, the set of solutions can be parametrised as
$$\left\{\left(\frac{k}{2}-1-3j,2j+1\right)\mid 0\le j\le \frac{k-3}{6}\right\}$$ and their total number is $\lfloor \frac{k-3}{6} \rfloor + 1$.

\end{proof}

\begin{theorem}\label{decmposeShp>3}
In characteristic  $p>3$, $S^k\h^*$ is a direct sum of the following irreducible $S_3$ representations: 
\begin{itemize}
\item for every $a,b\in \mathbb{N}_0$ satisfying $2a+3b=k$, a subrepresentation isomorphic to $\Triv$ with a basis $$\left\{ \sigma_2^a\sigma_3^b \right\};$$
\item for every $a,b \in \mathbb{N}_0$ satisfying $2a+3b+3=k$,  a subrepresentation isomorphic to $\Sign$ with a basis
$$\left\{\sigma_2^a\sigma_3^b q \right\};$$
\item for every $a,b \in \mathbb{N}_0$ satisfying $2a+3b=k-1$, 
a subrepresentation isomorphic to $\Stand$ with a basis $$\left\{ \sigma_2^a\sigma_3^b b_+, \sigma_2^a\sigma_3^b b_- \right\};$$
\item for every $a,b \in \mathbb{N}_0$ satisfying $2a+3b=k-2$, 
a subrepresentation isomorphic to $\Stand$ with a basis $$\left\{ \sigma_2^a\sigma_3^b (-b_+^2+3b_-^2), \sigma_2^a\sigma_3^b 2b_+b_- \right\}.$$
\end{itemize}
\end{theorem}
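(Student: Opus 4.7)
The plan is to identify the four families listed in the statement, verify each transforms as the claimed irreducible $S_3$-representation, and close the argument by a dimension count matching $\dim S^k\h^* = k+1$. The $\Triv$ family is immediate from Section \ref{sect-sympol}: since $(S\h^*)^{S_3} = \Bbbk[\sigma_2,\sigma_3]$ is a polynomial algebra in algebraically independent generators, the monomials $\sigma_2^a\sigma_3^b$ with $2a+3b=k$ form a basis of the degree-$k$ invariants. The $\Sign$ family follows similarly from the description in Section \ref{sect-sympol} of the antisymmetric polynomials as $q \cdot (S\h^*)^{S_3}$.

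For the two $\Stand$ families I would first verify directly, using the Young-basis relations of Section \ref{sec-basisofh}, that the seed pairs $\{b_+, b_-\}$ (in degree $1$) and $\{-b_+^2+3b_-^2,\, 2b_+b_-\}$ (in degree $2$) each satisfy $s_1 v_+ = v_+$, $s_1 v_- = -v_-$, and the relation $v_- = \tfrac{2}{3}(s_2 + \tfrac{1}{2})v_+$ characterising $\Stand$. A brief computation of $s_2 b_\pm$ in the Young basis suffices for the degree-$2$ pair. Because multiplication by any element of $(S\h^*)^{S_3}$ is $S_3$-equivariant, the listed $\sigma_2^a\sigma_3^b$-translates of these seed pairs are again copies of $\Stand$.

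The main obstacle is to establish linear independence across the two $\Stand$ families (the $\Triv$, $\Sign$ and $\Stand$ isotypic components are automatically disjoint, and independence within the $\Triv$ and $\Sign$ families is obvious from the polynomial-ring structure). Here I would invoke Chevalley--Shephard--Todd: since $p>3$ forces $p\nmid |S_3|=6$, the algebra $S\h^*$ is a free $(S\h^*)^{S_3}$-module of rank $6$ whose coinvariant quotient carries the regular representation $\Triv \oplus \Sign \oplus 2\cdot\Stand$. Hence the $\Stand$-isotypic component is a free $(S\h^*)^{S_3}$-module on two generators, and matching with the degrees $1$ and $2$ of the seed pairs shows those two pairs are free generators; no nontrivial relation between the two families is possible.

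Finally, a dimension count: by Lemma \ref{numberinbasis}, the four families contribute $T_k + T_{k-3} + 2T_{k-1} + 2T_{k-2}$ basis vectors in total, where $T_k$ denotes the number of non-negative integer solutions of $2a+3b=k$. In Hilbert-series form this equals
\[
\frac{1 + 2z + 2z^2 + z^3}{(1-z^2)(1-z^3)} = \frac{(1+z)(1+z+z^2)}{(1-z)^2(1+z)(1+z+z^2)} = \frac{1}{(1-z)^2},
\]
which matches $\sum_k (\dim S^k\h^*)\, z^k$. The listed vectors therefore form a $\Bbbk$-basis of $S^k\h^*$ as a graded $S_3$-module, completing the decomposition.
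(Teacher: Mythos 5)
Your proof is correct, but it settles the one genuinely delicate point — linear independence between the two $\Stand$ families — by a different route than the paper. The paper argues this by hand: assuming a vanishing combination $f_1 b_+ + f_2 b_- + f_3(-b_+^2+3b_-^2) + f_4\cdot 2b_+b_- = 0$ with symmetric coefficients, it applies the projection $\tfrac12(\mathrm{id}-s_1)$ (or the operator $\tfrac23(s_2+\tfrac12)$) to reduce to $f_2 b_- + f_4\cdot 2b_+b_- = 0$, divides by $b_-$, and gets a contradiction because a symmetric polynomial cannot equal $-2f_4 b_+$; it then finishes with an explicit floor-function count against $\dim S^k\h^* = k+1$. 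You instead invoke the non-modular Chevalley--Shephard--Todd package: $p\nmid 6$ makes $S\h^*$ a free $(S\h^*)^{S_3}$-module whose coinvariant algebra is the regular representation with $\Stand$ in degrees $1$ and $2$, so once your seed pairs survive (and hence generate) in the coinvariant quotient — trivial in degree $1$, and in degree $2$ because the degree-$2$ part of the ideal of positive-degree invariants is just $\Bbbk\sigma_2$, which lies in the $\Triv$-component — freeness rules out any relation across the two families. That step deserves the one extra sentence just given, since "matching the degrees" alone does not by itself show the seeds generate, but it is an immediate verification. Your Hilbert-series check $\bigl(1+2z+2z^2+z^3\bigr)/\bigl((1-z^2)(1-z^3)\bigr) = 1/(1-z)^2$ is also a tidier packaging of the paper's parity-split floor computation in Lemma~\ref{numberinbasis}. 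The trade-off: the paper's argument is self-contained and elementary, using only facts it has already established about $(S\h^*)^{S_3}$ and antisymmetric polynomials, while yours imports a standard structural theorem and in exchange gets the completeness and independence of all four families essentially for free, with the dimension count reduced to a consistency check.
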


\begin{proof}
The $S_3$ representation $S^k\h^*$ is a direct sum of its isotypic components, so it is enough to show that the above vectors are in the correct isotypic components and form a basis.

It is explained at the start of Section \ref{sect-sympol} that the $\Triv$ isotypic component is the polynomial algebra in $\sigma_2$ and $\sigma_3$ and that the $\Triv$ isotypic component consists of products of invariants and the Vandermonde polynomial $q$. 

One can check directly that for every $a,b$ the span of $\sigma_2^a\sigma_3^b b_+, \sigma_2^a\sigma_3^b b_-$ is an $S_3$ subrepresentation isomorphic to $\Stand$ with the isomorphism given by $b_\pm\mapsto \sigma_2^a\sigma_3^b b_\pm$, and that for every choice of $a,b$ the span of $\sigma_2^a\sigma_3^b (-b_+^2+3b_-^2), \sigma_2^a\sigma_3^b 2b_+b_-$ is an $S_3$ subrepresentation isomorphic to $\Stand$ with the isomorphism given by $b_+\mapsto \sigma_2^a\sigma_3^b (-b_+^2+3b_-^2)$, $b_-\mapsto \sigma_2^a\sigma_3^b 2b_+b_-.$

Next, let us show that the set $$\left\{ \sigma_2^a\sigma_3^b b_+, \sigma_2^a\sigma_3^b b_- \mid 2a+3b+1=k \right\} \cup \left\{ \sigma_2^a\sigma_3^b (-b_+^2+3b_-^2), \sigma_2^a\sigma_3^b 2b_+b_- \mid 2a+3b+2=k \right\}$$
is linearly independent. Assume a nontrivial linear combination of these vectors is zero. Gathering the terms, this can be written as 
$$f_1\cdot b_++f_2\cdot b_-+f_3\cdot (-b_+^2+3b_-^2)+f_4\cdot 2b_+b_-=0,$$
where $f_1,f_2$ are symmetric polynomials of degree $k-1$, $f_3,f_4$ are symmetric polynomials of degree $k-2$, and $f_1,f_2,f_3,f_4$ are not all zero.
Recall that $b_+$ and $(-b_+^2+3b_-^2)$ are fixed by $s_1$, while $b_-$ and $2b_+b_-$ are multiplied by $-1$ by $s_1$. If $f_2,f_4$ are not both zero, let us apply the projection $\frac{1}{2}(\mathrm{id}-s_1)$ to get 
$$f_2\cdot b_-+f_4\cdot 2b_+b_-=0.$$
If $f_2,f_4$ are both zero, the expression is 
$$f_1\cdot b_++f_3\cdot (-b_+^2+3b_-^2)=0,$$
which after applying $\frac{2}{3}\left(s_2+\frac{1}{2}\mathrm{id} \right)$ becomes
$$f_1\cdot b_-+f_3\cdot 2b_+b_-=0.$$
So, in either case we can assume we have a linear combination of the form $$f_2\cdot b_-+f_4\cdot 2b_+b_-=0$$ with $f_2,f_4$ symmetric and not both zero. Dividing by $b_-$ we get
$$f_2=-f_4\cdot 2b_+,$$
which is an equality between a symmetric polynomial and a polynomial which is not symmetric, so it only holds if $f_2,f_4$ are both zero. This is a contradiction, and we conclude that the set $$\left\{ \sigma_2^a\sigma_3^b b_+, \sigma_2^a\sigma_3^b b_- \mid 2a+3b+1=k \right\} \cup \left\{ \sigma_2^a\sigma_3^b (-b_+^2+3b_-^2), \sigma_2^a\sigma_3^b 2b_+b_- \mid 2a+3b+2=k \right\}$$
is linearly independent. 

We have now seen that the vectors from the statement of the theorem lie in the correct isotypic components and are linearly independent. Their number is equal to  
\begin{align*}
\textrm{number of vectors }&=|\left\{(a,b)\in \mathbb{N}_0^2\mid 2a+3b=k\right\}|+|\left\{(a,b)\in \mathbb{N}_0^2 \mid 2a+3b+3=k\right\}|+\\
&+2 |\left\{(a,b)\in \mathbb{N}_0^2\mid 2a+3b+1=k\right\}|+2|\left\{(a,b)\in \mathbb{N}_0^2 \mid 2a+3b+2=k\right\}|.
\end{align*}
By Lemma \ref{numberinbasis}, if $k$ is even, this is equal to 
\begin{align*}\textrm{number of vectors }&=\lfloor \frac{k}{6} \rfloor + 1+\lfloor \frac{k-6}{6} \rfloor + 1+2(\lfloor \frac{k-4}{6} \rfloor + 1)+2(\lfloor \frac{k-2}{6} \rfloor + 1)\\
&=5+2\left( \lfloor\frac{k}{6}\rfloor+\lfloor\frac{k-2}{6}\rfloor+\lfloor\frac{k-4}{6}\rfloor \right)\\
&=5+2\left( \frac{k}{2}-2\right)=k+1=\dim S^k\h^*.
\end{align*}
If $k$ is odd, then similarly
\begin{align*}\textrm{number of vectors }&=\lfloor \frac{k-3}{6} \rfloor + 1+\lfloor \frac{k-3}{6} \rfloor + 1+2(\lfloor \frac{k-1}{6} \rfloor + 1)+2(\lfloor \frac{k-5}{6} \rfloor + 1)\\
&=6+2\left( \lfloor\frac{k-1}{6}\rfloor+\lfloor\frac{k-3}{6}\rfloor+\lfloor\frac{k-5}{6}\rfloor \right)\\
&=6+2\left(\frac{k-1}{2}-2 \right)=k+1=\dim S^k\h^*.
\end{align*}

So, the set of vectors in the statement is a linearly independent set of size equal to the dimension of the vector space, so it is a basis. 

\end{proof}

We wish to find a nice basis of every Verma module - compatible with the decomposition as an $S_3$ representation and one in which we can reasonably compute Dunkl operators. For $\tau=\Triv$, we have $M_{t,c}(\Triv)\cong S\h^*\otimes \Triv \cong S\h^*$ as an $S_3$ representation, so Theorem \ref{decmposeShp>3} gives us such a basis. A similar computation works when $\tau=\Sign$. For $\tau=\Stand$, we have $M_{t,c}(\Stand)\cong S\h^*\otimes \Stand$ as an $S_3$ representation, so we need to understand how the above basis of $S\h^*$ behaves after taking a tensor product with $\Stand$. The following lemma is a standard exercise in representations of finite groups. 

\begin{lemma}\label{tensorproductsp>3}
Let $p>3$. As $S_3$ representations, 
\begin{enumerate}
\item $\Triv \otimes \Stand\cong \Stand$ tautologically; 
\item $\Sign \otimes \Stand\cong \Stand$ with the isomorphism given by 
$$v_{\Sign}\otimes b_+\mapsto -3b_-, \quad v_{\Sign}\otimes b_-\mapsto b_+;$$
\item $\Stand \otimes \Stand\cong \Triv \oplus \Sign \oplus \Stand$; with a compatible basis given by 
\begin{itemize}
\item $b_+\otimes b_++3b_-\otimes b_-$, spanning a subrepresentation isomorphic to $\Triv$;
\item $b_+\otimes b_--b_-\otimes b_+$ spanning a subrepresentation isomorphic to $\Sign$;
\item $-b_+\otimes b_++3b_-\otimes b_-, \, b_+\otimes b_-+b_-\otimes b_+$ spanning a subrepresentation isomorphic to $\Stand$, with the isomorphism given by $$b_+\mapsto -b_+\otimes b_++3b_-\otimes b_-, \quad b_-\mapsto b_+\otimes b_-+b_-\otimes b_+.$$
\end{itemize}
\end{enumerate}
\end{lemma}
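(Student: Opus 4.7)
The plan is to reduce everything to direct verification once the action of the generators $s_1, s_2$ on $b_\pm$ is explicit. Since $p>3$, the characteristic does not divide $|S_3|=6$, so $\Bbbk[S_3]$ is semisimple and character theory is available to pin down the isomorphism type of each tensor product; the explicit basis vectors in the lemma then just need to be checked to transform correctly under $s_1$ and $s_2$.

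First I would compute the action of $s_2=(23)$ on $b_+, b_-$. Using $b_+ = x_1+x_2-2x_3$ and $b_- = x_1-x_2$ (or equivalently the relation $b_- = \tfrac{2}{3}(s_2+\tfrac12)b_+$ given in Section \ref{sec-basisofh}), one obtains
\[
s_2.b_+ = -\tfrac{1}{2}b_+ + \tfrac{3}{2}b_-, \qquad s_2.b_- = \tfrac{1}{2}b_+ + \tfrac{1}{2}b_-,
\]
which together with $s_1.b_+ = b_+$, $s_1.b_- = -b_-$ fully specifies the $\Stand$-action.

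Part (1) is tautological because the trivial representation acts as the identity. For Part (2), a quick character computation gives $\chi_{\Sign\otimes\Stand}=\chi_{\Stand}$, hence $\Sign\otimes\Stand\cong\Stand$ by semisimplicity. To confirm the stated formula $v_{\Sign}\otimes b_+ \mapsto -3b_-$, $v_{\Sign}\otimes b_- \mapsto b_+$, I would just evaluate both sides under $s_1$ and $s_2$ using the formulas above, noting that $s_i$ acts on $v_{\Sign}$ by $-1$; this is a short four-line check.

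For Part (3), the character $\chi_{\Stand\otimes\Stand}$ takes values $(4,0,1)$ on the classes $(e,(12),(123))$, matching $\chi_{\Triv}+\chi_{\Sign}+\chi_{\Stand}$, which fixes the decomposition type. To identify the concrete summands it suffices to check:
\begin{enumerate}
\item $b_+\otimes b_+ + 3b_-\otimes b_-$ is fixed by $s_1$ (clear) and by $s_2$ (a one-line expansion using the formulas above);
\item $b_+\otimes b_- - b_-\otimes b_+$ is negated by both $s_1$ and $s_2$;
\item the pair $(-b_+\otimes b_++3b_-\otimes b_-,\, b_+\otimes b_-+b_-\otimes b_+)$ transforms under $s_1, s_2$ exactly as $(b_+, b_-)$ do in $\Stand$.
\end{enumerate}
Linear independence of the four vectors is immediate (the decomposition of the tensor product into the two $s_1$-eigenspaces already separates items (1),(3a) from (2),(3b)), so they form a basis of the $4$-dimensional space $\Stand\otimes\Stand$.

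There is no real obstacle here, only bookkeeping; the most error-prone step is keeping the signs straight in the $s_2$-computation for item (3a), since the verification mixes the nontrivial actions of $s_2$ on both tensor factors. Doing that computation once carefully and then invoking the $\Stand$-isomorphism prescription $b_\pm \mapsto (\text{stated vectors})$ suffices to complete the proof.
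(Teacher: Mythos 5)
Your proposal is correct: the paper gives no proof of this lemma (it is labelled a standard exercise in representations of finite groups), and your direct verification — using semisimplicity for $p>3$ to fix the isomorphism types via characters, then checking equivariance of the stated vectors under $s_1$ and $s_2$ with the explicit action $s_2.b_+=\tfrac{-b_++3b_-}{2}$, $s_2.b_-=\tfrac{b_++b_-}{2}$ — is exactly the intended standard argument. The only point worth making explicit is that the linear independence in part (3) uses $p>3$ (the change-of-basis determinants are $6$ and $2$), which your setup already guarantees.
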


Putting together Theorem \ref{decmposeShp>3} and Lemma \ref{tensorproductsp>3} we immediately get the following theorem. 

\begin{theorem}\label{decmposeVermap>3}
In characteristic $p>3$, $M^k_{t,c}(\Stand)\cong S^k\h^*\otimes \Stand$ is a direct sum of the following irreducible $S_3$ representations: 
\begin{itemize}
\item for every $a,b\in \mathbb{N}_0$ satisfying $2a+3b=k$, a subrepresentation isomorphic to $\Stand$ with a basis $$\left\{ \sigma_2^a\sigma_3^b\otimes b_+,\, \sigma_2^a\sigma_3^b\otimes b_-  \right\};$$
\item for every $a,b \in \mathbb{N}_0$ satisfying $2a+3b+3=k$,  a subrepresentation isomorphic to $\Stand$ with a basis
$$\left\{\sigma_2^a\sigma_3^b q \otimes b_-,  \,\frac{-1}{3}\cdot \sigma_2^a\sigma_3^b q \otimes b_+ \right\};$$
\item for every $a,b \in \mathbb{N}_0$ satisfying $2a+3b=k-1$, 
a subrepresentation isomorphic to $\Triv$ with a basis $$\left\{ \sigma_2^a\sigma_3^b \cdot (b_+\otimes b_++ 3 b_-\otimes b_-) \right\};$$
\item for every $a,b \in \mathbb{N}_0$ satisfying $2a+3b=k-1$, 
a subrepresentation isomorphic to $\Sign$ with a basis $$\left\{ \sigma_2^a\sigma_3^b \cdot (b_+\otimes b_--b_-\otimes b_+) \right\};$$
\item for every $a,b \in \mathbb{N}_0$ satisfying $2a+3b=k-1$, 
a subrepresentation isomorphic to $\Stand$ with a basis $$\left\{ \sigma_2^a\sigma_3^b \cdot (-b_+\otimes b_++3b_-\otimes b_-),  \, \sigma_2^a\sigma_3^b \cdot (b_+\otimes b_-+b_-\otimes b_+) \right\};$$
\item for every $a,b \in \mathbb{N}_0$ satisfying $2a+3b=k-2$, 
a subrepresentation isomorphic to $\Triv$ with a basis $$\left\{ \sigma_2^a\sigma_3^b \cdot \left( (-b_+^2+3b_-^2)\otimes b_++3\cdot (2b_+b_-)\otimes b_-\right) \right\}.$$
\item for every $a,b \in \mathbb{N}_0$ satisfying $2a+3b=k-2$, 
a subrepresentation isomorphic to $\Sign$ with a basis $$\left\{ \sigma_2^a\sigma_3^b \cdot \left( (-b_+^2+3b_-^2)\otimes b_--(2b_+b_-)\otimes b_+\right) \right\}.$$
\item for every $a,b \in \mathbb{N}_0$ satisfying $2a+3b=k-2$, 
a subrepresentation isomorphic to $\Stand$ with a basis $$\left\{ \sigma_2^a\sigma_3^b 
\left(-(-b_+^2+3b_-^2)\otimes b_++3(2b_+b_-)\otimes b_- \right), \sigma_2^a\sigma_3^b \cdot \left((-b_+^2+3b_-^2)\otimes b_-+(2b_+b_-)\otimes b_+ \right) \right\}.$$
\end{itemize}
Wherever in this statement a representation isomorphic to $\Stand$ is given by a basis $\left\{u,v\right\}$, the map $b_+\mapsto u, b_-\mapsto v$ is an $S_3$-isomorphism.
\end{theorem}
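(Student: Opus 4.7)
The plan is to combine Theorem \ref{decmposeShp>3} with Lemma \ref{tensorproductsp>3} directly: since tensoring with the fixed representation $\Stand$ distributes over the $S_3$-isotypic decomposition of $S^k\h^*$, the four families of summands in Theorem \ref{decmposeShp>3} produce, after tensoring with $\Stand$, four blocks of summands in $S^k\h^*\otimes \Stand$. I would then apply the three identifications from Lemma \ref{tensorproductsp>3} to each block, transporting the chosen bases through the explicit $S_3$-isomorphisms prescribed by the lemma.

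For the $\Triv$ summands of $S^k\h^*$ (with $2a+3b=k$, basis $\sigma_2^a\sigma_3^b$), the identification $\Triv\otimes\Stand\cong \Stand$ is tautological and immediately produces the first listed family with basis $\{\sigma_2^a\sigma_3^b\otimes b_+,\,\sigma_2^a\sigma_3^b\otimes b_-\}$. For the $\Sign$ summands (with $2a+3b+3=k$, basis $\sigma_2^a\sigma_3^b q$), I would invert the given isomorphism $v_{\Sign}\otimes b_+\mapsto -3b_-$, $v_{\Sign}\otimes b_-\mapsto b_+$, so that the $b_+,b_-$-labelled basis on the $\Stand$ side pulls back to $\{\sigma_2^a\sigma_3^b q\otimes b_-,\,-\tfrac{1}{3}\sigma_2^a\sigma_3^b q\otimes b_+\}$, which is exactly the second family in the theorem.

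For each of the two families of $\Stand$ summands in $S^k\h^*$ (with basis $\{\sigma_2^a\sigma_3^b b_+,\sigma_2^a\sigma_3^b b_-\}$ when $2a+3b=k-1$, and $\{\sigma_2^a\sigma_3^b(-b_+^2+3b_-^2),\sigma_2^a\sigma_3^b\,2b_+b_-\}$ when $2a+3b=k-2$), I would apply the three-way decomposition $\Stand\otimes \Stand\cong \Triv\oplus\Sign\oplus \Stand$ from part (3) of Lemma \ref{tensorproductsp>3}. Because the lemma's isomorphism is natural in the left tensor factor, the only task is to substitute the correct $\Stand$-basis (either $\{b_+,b_-\}$ itself, or $\{-b_+^2+3b_-^2,\,2b_+b_-\}$) into the formulas $b_+\otimes b_++3b_-\otimes b_-$, $b_+\otimes b_--b_-\otimes b_+$, $-b_+\otimes b_++3b_-\otimes b_-$, $b_+\otimes b_-+b_-\otimes b_+$. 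This bookkeeping produces exactly the six families indexed by $2a+3b=k-1$ and $2a+3b=k-2$ listed in the theorem.

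No genuine obstacle arises: the vectors listed are automatically linearly independent because Theorem \ref{decmposeShp>3} already supplies a basis of $S^k\h^*$ and Lemma \ref{tensorproductsp>3} supplies $S_3$-bases of the tensor products, and a dimension count $2(k+1)=\dim(S^k\h^*\otimes \Stand)$ confirms that no summand has been omitted. The only mild care needed is to verify the sign conventions when translating the abstract isomorphisms of Lemma \ref{tensorproductsp>3} into the explicit basis vectors of the theorem, which is purely mechanical.
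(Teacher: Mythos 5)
Your proposal is correct and is exactly the paper's argument: the paper simply states that the theorem follows immediately by combining Theorem \ref{decmposeShp>3} with Lemma \ref{tensorproductsp>3}, which is the same tensoring-and-transport-of-bases bookkeeping you carry out (including the correct inversion of the $\Sign\otimes\Stand\cong\Stand$ isomorphism giving the $-\tfrac{1}{3}$ factor). Nothing further is needed.
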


\begin{cor}\label{characterShp>3}
Assume $p>3$.
\begin{enumerate}
\item The character of the graded $S_3$ representation $S\h^*$ is 
$$\chi_{S\h^*}(z)=\frac{1}{(1-z^2)(1-z^3)}\left([\Triv]+(z+z^2)[\Stand]+z^3[\Sign]\right).$$
\item The characters of Verma modules for $H_{t,c}(S_3,\h)$ are given by 
\begin{align*}
\chi_{M_{t,c}(\Triv)}(z)&=\chi_{S\h^*}(z)\\
\chi_{M_{t,c}(\Sign)}(z)&=\frac{1}{(1-z^2)(1-z^3)}\left([\Sign]+(z+z^2)[\Stand]+z^3[\Triv]\right)\\
\chi_{M_{t,c}(\Stand)}(z)&=\frac{1}{(1-z^2)(1-z^3)}\left((1+z+z^2+z^3)[\Stand]+(z+z^2)([\Triv]+[\Sign])\right).
\end{align*}
\item The characters of baby Verma modules for $H_{t,c}(S_3,\h)$ are given by 
\begin{align*}
\chi_{N_{0,c}(\tau)}(z)&=\chi_{M_{0,c}(\tau)}(z)(1-z^{2})(1-z^{3}).\\
\chi_{N_{1,c}(\tau)}(z)&=\chi_{M_{1,c}(\tau)}(z)(1-z^{2p})(1-z^{3p}).
\end{align*}
\end{enumerate}
\end{cor}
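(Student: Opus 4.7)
The proof is an essentially formal consequence of the decomposition theorems already established together with Chevalley--Shephard--Todd.

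For part (1), I would start from Theorem \ref{decmposeShp>3}. Collecting contributions to $S^k\h^*$ by isotype, the total character is
\[
\chi_{S\h^*}(z) = \sum_k z^k \Bigl( N_k(0)[\Triv] + N_k(3)[\Sign] + \bigl(N_k(1)+N_k(2)\bigr)[\Stand]\Bigr),
\]
where $N_k(d) = |\{(a,b)\in\mathbb{N}_0^2 : 2a+3b = k-d\}|$. Since $\sum_k N_k(d) z^k = z^d/((1-z^2)(1-z^3))$, the stated formula drops out immediately.

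For part (2), the case $\tau = \Triv$ is just part (1), because $M_{t,c}(\Triv) \cong S\h^*$ as $S_3$-representations. For $\tau = \Sign$, since $\Triv\otimes\Sign = \Sign$, $\Sign\otimes\Sign = \Triv$, and $\Stand\otimes\Sign \cong \Stand$ (Lemma \ref{tensorproductsp>3}), tensoring the formula from part (1) with $[\Sign]$ just swaps $[\Triv]$ and $[\Sign]$. For $\tau = \Stand$, I would read off the answer directly from Theorem \ref{decmposeVermap>3}: summing its seven families of summands over all admissible $(a,b)$, the $[\Stand]$ contribution from the first two families gives $(1+z^3)/((1-z^2)(1-z^3))$, while the other five families each contribute $(z+z^2)/((1-z^2)(1-z^3))$, split across the three isotypes as claimed.

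For part (3), I would invoke Chevalley--Shephard--Todd: since $p>3$ does not divide $|S_3| = 6$, the symmetric algebra $S\h^*$ is a free module over its invariant ring $(S\h^*)^{S_3} = \Bbbk[\sigma_2,\sigma_3]$. The $S_3$-action commutes with the action of the invariants, so as a graded $S_3 \times (S\h^*)^{S_3}$-module we may write $M_{t,c}(\tau) \cong (S\h^*)^{S_3} \otimes_\Bbbk W_\tau$ for some graded $S_3$-module $W_\tau$. Passing to characters,
\[
\chi_{M_{t,c}(\tau)}(z) = \frac{\chi_{W_\tau}(z)}{(1-z^2)(1-z^3)}.
\]
For $t=0$, the baby Verma is $W_\tau = M_{0,c}(\tau)/(\sigma_2,\sigma_3)M_{0,c}(\tau)$, so $\chi_{N_{0,c}(\tau)}(z) = \chi_{W_\tau}(z) = \chi_{M_{0,c}(\tau)}(z)(1-z^2)(1-z^3)$. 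For $t=1$, the Frobenius identity $(fg)^p = f^p g^p$ and $(f+g)^p = f^p + g^p$ in characteristic $p$ gives $((S\h^*)^{S_3}_+)^p = (\sigma_2^p,\sigma_3^p)$ as an ideal of $(S\h^*)^{S_3}$, and $\Bbbk[\sigma_2,\sigma_3]$ is free over $\Bbbk[\sigma_2^p,\sigma_3^p]$. Hence $M_{1,c}(\tau)$ is free over $\Bbbk[\sigma_2^p,\sigma_3^p]$, whose Hilbert series is $1/((1-z^{2p})(1-z^{3p}))$, and the same argument yields the stated formula with $2p, 3p$ replacing $2, 3$.

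The only mild subtlety is bookkeeping the seven families in Theorem \ref{decmposeVermap>3} for $\tau = \Stand$ and verifying the Frobenius claim about $p$-th powers of invariants; neither step is deep, so there is no genuine obstacle beyond careful arithmetic with generating functions.
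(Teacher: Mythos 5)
Your proposal is correct and follows essentially the same route as the paper: part (1) is the geometric-series summation over the basis of Theorem \ref{decmposeShp>3}, part (2) is read off from Lemma \ref{tensorproductsp>3} and Theorem \ref{decmposeVermap>3}, and part (3) rests on the freeness of $S\h^*$ over $\Bbbk[\sigma_2,\sigma_3]$, which the paper treats as immediate. Your only cosmetic difference is invoking Chevalley--Shephard--Todd for that freeness, which is not needed since the explicit basis of Theorem \ref{decmposeShp>3} already exhibits it, while your Frobenius observation that the ideal generated by $p$-th powers of positive-degree invariants equals $(\sigma_2^p,\sigma_3^p)$ is a correct and useful detail for the $t=1$ case that the paper leaves implicit.
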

\begin{proof}
By Theorem \ref{decmposeShp>3}  
\begin{align*}
\chi_{S\h^*}(z)&=\sum_{a,b} z^{2a+3b}[\Triv]+\sum_{a,b} z^{2a+3b+3}[\Sign]+\sum_{a,b} (z^{2a+3b+1}+z^{2a+3b+2})[\Stand] \\
&=\frac{1}{(1-z^2)(1-z^3)}\left([\Triv]+z^3[\Sign]+(z+z^2)[\Stand]\right).
\end{align*}
The characters of Verma and baby Verma modules follow directly. 
\end{proof}

\begin{cor}\label{characterSphp>3}
Let $S^{(p)}\h^*$ be the quotient of $S\h^*$ by the ideal generated by $x^p$ for all $x\in \h^*$. For $p>3$, its character is 
\begin{align*}&\chi_{S^{(p)}\h^*}(z)=\chi_{S\h^*}(z)\cdot (1-z^p[\Stand]+z^{2p}[\Sign])\\
&\quad =\frac{1}{(1-z^2)(1-z^3)}\left([\Triv]+z^3[\Sign]+(z+z^2)[\Stand]\right) (1-z^p[\Stand]+z^{2p}[\Sign]).
\end{align*}
\end{cor}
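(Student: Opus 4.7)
The plan is to exhibit a Koszul-type free resolution of $S^{(p)}\h^*$ over $S\h^*$ and read off the character by taking the alternating sum of graded $S_3$-characters. First, I would identify the defining ideal explicitly. Working in the rescaled Young basis $\{b_+, b_-\}$ of Section \ref{sec-basisofh}, the Frobenius identity $(\alpha b_+ + \beta b_-)^p = \alpha^p b_+^p + \beta^p b_-^p$ in characteristic $p$ shows that the $\Bbbk$-span of $\{x^p \mid x \in \h^*\}$ equals $\Bbbk b_+^p \oplus \Bbbk b_-^p$, so the ideal $I$ in question is $(b_+^p, b_-^p)$. Since $b_+^p, b_-^p$ are algebraically independent in the polynomial ring $S\h^* = \Bbbk[b_+, b_-]$, they form a regular sequence.

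The next step is to write down the associated Koszul resolution. Let $W = \Bbbk b_+^p \oplus \Bbbk b_-^p \subseteq S^p\h^*$, viewed as a graded $S_3$-subrepresentation concentrated in degree $p$. Regularity of the sequence guarantees that
$$0 \longrightarrow S\h^* \otimes \Lambda^2 W \longrightarrow S\h^* \otimes W \longrightarrow S\h^* \longrightarrow S^{(p)}\h^* \longrightarrow 0$$
is exact, with the Koszul differentials being $S_3$-equivariant and grading-preserving (where $\Lambda^2 W$ sits in degree $2p$). I would then identify $W$ and $\Lambda^2 W$ as $S_3$-modules. The $S_3$-action on $\h^*$ in the basis $\{b_+, b_-\}$ has matrix entries in $\mathbb{F}_p$: one checks $s_1 \cdot b_+ = b_+$, $s_1 \cdot b_- = -b_-$, and $s_2 \cdot b_+ = -\tfrac{1}{2} b_+ + \tfrac{3}{2} b_-$, $s_2 \cdot b_- = \tfrac{1}{2} b_+ + \tfrac{1}{2} b_-$, all well-defined because $p > 3$. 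Since Frobenius fixes $\mathbb{F}_p$ pointwise, the map $b_\pm \mapsto b_\pm^p$ is an $S_3$-equivariant isomorphism $\h^* \xrightarrow{\sim} W$, so $W \cong \Stand$ and $\Lambda^2 W \cong \Lambda^2 \Stand \cong \Sign$.

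Finally, taking the alternating sum of graded $\Bbbk[S_3]$-characters along the Koszul resolution yields
$$\chi_{S^{(p)}\h^*}(z) = \chi_{S\h^*}(z) \cdot \bigl(1 - z^p [\Stand] + z^{2p} [\Sign]\bigr),$$
and substituting the explicit formula for $\chi_{S\h^*}(z)$ from Corollary \ref{characterShp>3} gives the displayed identity. The only non-formal step is the verification that Frobenius preserves the $S_3$-action on $\h^*$; this is the main conceptual point but amounts simply to the observation that $\Stand$ is defined over the prime field $\mathbb{F}_p$ for $p > 3$, so its Frobenius twist is canonically isomorphic to itself.
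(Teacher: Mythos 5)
Your proof is correct and follows essentially the route the paper has in mind: the corollary is stated without an explicit proof, and Remark \ref{remark--restrictedpoly} invokes exactly the general Koszul-type formula $\chi_{S^{(p)}U}(z)=\chi_{SU}(z)\cdot\sum_i(-1)^i z^{ip}[\Lambda^i U]$ that your resolution establishes in the case $U=\h^*$. The detail you supply that the paper leaves implicit --- that the span of the $p$-th powers is $\Bbbk b_+^p\oplus\Bbbk b_-^p$, a regular sequence, and that this span is isomorphic to $\Stand$ as an $S_3$-module because the action matrices in the basis $b_+,b_-$ have entries in $\mathbb{F}_p$ for $p>3$, so the Frobenius twist is canonically trivial --- is precisely the right justification for the $S_3$-equivariance of the Koszul complex.
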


\begin{remark}\label{remark--restrictedpoly}
This is a special case of the formula for an arbitrary representation $U$,  stating $\chi_{S^{(p)}U}(z)=\chi_{SU}(z)\cdot \sum_{i=0}^{\dim U} z^p[\Lambda^i U].$
\end{remark}

In characteristic $2$ the rescaled Young basis $b_+,b_-$ does not make sense. (As written, $b_+$ and $b_-$ become equal. More generally, this basis relies on restricting $\h^*$ to $S_2=\left\{e,s_1\right\}$ and decomposing it into the trivial and sign representation of $S_2$, but in characteristic $2$ these representations are isomorphic.) Instead, in characteristic $2$ we use the basis $\x{1},\x{2}$ of the standard representation with $\x{3}=-\x{1}-\x{2}=\x{1}+\x{2}$. The elementary symmetric polynomials in this case are $\s{2}$ and $\s{3}$. The analogue of Theorem \ref{decmposeShp>3} is the following theorem. 

\begin{theorem}\label{decmposeShp=2}
In characteristic $p=2$, $S^k\h^*$ is a direct sum of the following indecomposable $S_3$ representations: 
\begin{itemize}
\item for every $a\in \mathbb{N}_0$ satisfying $2a=k$, a subrepresentation isomorphic to $\Triv$ with a basis $$\left\{ \s{2}^a \right\};$$
\item for every $a,b \in \mathbb{N}_0$ satisfying $2a+3b=k$, $b>0$, an indecomposable extension of two copies of $\Triv$, with a basis 
$$\left\{\s{2}^a\s{3}^b,\,  \s{2}^a\s{2}^{b-1}(\x{1}^3+\x{1}^2\x{2}+\x{2}^3)\right\};$$
\item for every $a,b \in \mathbb{N}_0$ satisfying $2a+3b=k-1$, 
a subrepresentation isomorphic to $\Stand$ with a basis $$\left\{ \s{2}^a\s{3}^b \x{1}, \s{2}^a\s{3}^b \x{2} \right\};$$
\item for every $a,b \in \mathbb{N}_0$ satisfying $2a+3b=k-2$, 
a subrepresentation isomorphic to $\Stand$ with a basis $$\left\{ \s{2}^a\s{3}^b \x{1}^2, \s{2}^a\s{3}^b \x{2}^2 \right\}.$$
\end{itemize}
\end{theorem}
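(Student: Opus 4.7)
The plan is to follow the three-step strategy of Theorem \ref{decmposeShp>3}: verify that each listed set spans an $S_3$-invariant subspace of the claimed indecomposable type, establish linear independence of the union of listed vectors, and count to match $\dim S^k\h^* = k+1$. The genuinely new features in characteristic $2$ are the appearance in Case 2 of the non-split indecomposable extension of $\Triv$ by $\Triv$, together with the collapse of the $\pm 1$-eigenspace trick for $s_1$ that drove the proof of Theorem \ref{decmposeShp>3}.

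For Step 1 I first treat Cases 3 and 4. The span $\langle \x{1}, \x{2}\rangle$ is by definition $\h^* \cong \Stand$, and multiplication by the $S_3$-invariant $\s{2}^a\s{3}^b$ preserves the module structure, giving Case 3. For Case 4 one checks directly that $s_1$ swaps $\x{1}^2$ and $\x{2}^2$, while $s_2.\x{1}^2 = \x{1}^2$ and $s_2.\x{2}^2 = (\x{1}+\x{2})^2 = \x{1}^2+\x{2}^2$ in characteristic $2$; the matrices of $s_1, s_2$ on $\{\x{1}^2, \x{2}^2\}$ coincide with those on $\{\x{1}, \x{2}\}$, producing another copy of $\Stand$. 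For Case 2, set $f = \x{1}^3 + \x{1}^2\x{2} + \x{2}^3$. A direct computation in characteristic $2$, using $\x{3} = \x{1}+\x{2}$, gives
\[
s_1.f - f \;=\; s_2.f - f \;=\; \x{1}\x{2}^2 + \x{1}^2\x{2} \;=\; \x{1}\x{2}(\x{1}+\x{2}) \;=\; \x{1}\x{2}\x{3} \;=\; \s{3}.
\]
Hence $\langle \s{3}, f\rangle$ is $S_3$-stable with $\Bbbk\s{3}$ a trivial submodule and trivial quotient; the extension is non-split because $f$ itself is not $S_3$-fixed, producing the unique (up to isomorphism) non-split indecomposable extension of $\Triv$ by $\Triv$. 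Multiplication by the invariant $\s{2}^a\s{3}^{b-1}$ preserves this structure.

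For Step 2 I invoke the invariant-ring structure. By Section \ref{sect-sympol}, $(S\h^*)^{S_3} = \Bbbk[\s{2}, \s{3}]$ is polynomial, and each transposition acts on $\h^*$ in characteristic $2$ as a transvection $I + N$ with $N^2 = 0$ and a one-dimensional fixed subspace; the modular Chevalley--Shephard--Todd theorem then implies that $S\h^*$ is a free $\Bbbk[\s{2}, \s{3}]$-module. A direct computation in the coinvariant algebra $S\h^*/(\s{2}, \s{3})$, using $\x{1}^2+\x{1}\x{2}+\x{2}^2 \equiv 0$ and $\x{1}^2\x{2}+\x{1}\x{2}^2 \equiv 0$, gives $\x{1}^3 \equiv \x{2}^3 \equiv 0$ and $\x{1}^2\x{2}\equiv \x{1}\x{2}^2$, yielding a $6$-dimensional coinvariant with basis $\{1, \x{1}, \x{2}, \x{1}^2, \x{2}^2, \x{1}^2\x{2}\}$. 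Since $f \equiv \x{1}^2\x{2}$ modulo invariants, the lift $\{1, \x{1}, \x{2}, \x{1}^2, \x{2}^2, f\}$ is likewise a free $\Bbbk[\s{2}, \s{3}]$-basis of $S\h^*$. The listed vectors are exactly the degree-$k$ elements of this free basis (Cases 1 and 2 jointly arising from $\Bbbk[\s{2}, \s{3}]\cdot 1$ and $\Bbbk[\s{2}, \s{3}]\cdot f$; Cases 3 and 4 from the remaining four summands), and hence linearly independent. Step 3 is then immediate: Lemma \ref{numberinbasis}, split by the parity of $k$, shows the total count equals $k+1 = \dim S^k\h^*$, so linear independence upgrades to a basis and the claimed direct sum decomposition follows. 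The hard part will be Step 2: the semisimple eigenspace decomposition of $s_1$ used for $p>3$ is not available, and I have to replace it by the modular Chevalley--Shephard--Todd / coinvariant computation above; a closely related subtlety is choosing the Case 2 representative $f$ so that $(s_i - I)f$ lands exactly in $\Bbbk\s{3}$ for both $i=1,2$.
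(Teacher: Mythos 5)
Your proposal is correct, and it reaches the same Step 1 verifications as the paper (the computations $s_i.f=f+\s{3}$ for $f=\x{1}^3+\x{1}^2\x{2}+\x{2}^3$, and the two Frobenius-type copies of $\Stand$), but it replaces the paper's independence-and-counting argument by a genuinely different one. The paper first asserts independence separately within the $\Triv$-extension family and within the $\Stand$ family, then separates the two families using the central element $(123)+(132)$ of $\Bbbk[S_3]$, which acts by $0$ on the former span and by $1$ on the latter, and finally matches the total count against $\dim S^k\h^*=k+1$ exactly as in Theorem \ref{decmposeShp>3}. You instead use that $(S\h^*)^{S_3}=\Bbbk[\s{2},\s{3}]$ (already established in Section \ref{sect-sympol}) and that $S\h^*$ is then a free $\Bbbk[\s{2},\s{3}]$-module, identify the coinvariant algebra $S\h^*/(\s{2},\s{3})$ of Hilbert series $(1-z^2)(1-z^3)/(1-z)^2=1+2z+2z^2+z^3$, and lift the basis $\left\{1,\x{1},\x{2},\x{1}^2,\x{2}^2,f\right\}$ to a free module basis; the theorem's list is then literally the degree-$k$ slice of this basis, so spanning and independence come simultaneously and Lemma \ref{numberinbasis} becomes a consistency check rather than the decisive step. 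Two small points to tighten: the freeness does not follow from ``modular Chevalley--Shephard--Todd'' via the transvection remark, but from the standard fact that a polynomial ring is a Cohen--Macaulay, module-finite extension of the polynomial subring $\Bbbk[\s{2},\s{3}]$ (equivalently, check directly that $\s{2},\s{3}$ is a homogeneous system of parameters forming a regular sequence and compare Hilbert series); and the $6$-dimensionality of the coinvariant algebra, together with its graded dimensions $1,2,2,1$, should be quoted from that Hilbert series so that your spanning computations in degrees $2$ and $3$ upgrade to bases before applying graded Nakayama. With those details filled in, your route is slightly less elementary than the paper's but more structural, and it explains where the basis comes from; note also that your $\s{2}^a\s{3}^{b-1}f$ silently corrects the typo $\s{2}^a\s{2}^{b-1}f$ in the theorem's statement.
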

\begin{proof}
It is clear that for every $a,b \in \mathbb{N}_0$ satisfying $2a+3b=k$ the space spanned by $\s{2}^a\s{3}^b$ is a subrepresentation isomorphic to $\Triv$. When $b>0$, it is extended by the one dimensional space with a basis $\s{2}^a\s{3}^{b-1}(\x{1}^3+\x{1}^2\x{2}+\x{2}^3)$, as
\begin{align*}
s_1.(\s{2}^a\s{3}^{b-1}(\x{1}^3+\x{1}^2\x{2}+\x{2}^3))&=\s{2}^a\s{3}^{b-1}(\x{1}^3+\x{1}^2\x{2}+\x{2}^3)+\s{2}^a\s{3}^b\\
s_2.(\s{2}^a\s{3}^{b-1}(\x{1}^3+\x{1}^2\x{2}+\x{2}^3))&=\s{2}^a\s{3}^{b-1}(\x{1}^3+\x{1}^2\x{3}+\x{3}^3)\\
&=\s{2}^a\s{3}^{b-1}(\x{1}^3+\x{1}^2\x{2}+\x{2}^3)+\s{2}^a\s{3}^b.
\end{align*}
It is also clear that the space spanned by $\left\{ \s{2}^a\s{3}^b \x{1}, \s{2}^a\s{3}^b \x{2} \right\}$ is a subrepresentation isomorphic to $\Stand$ via the isomorphism $\x{i}\mapsto \s{2}^a\s{3}^b \x{i}$ and that the space spanned by $\left\{ \s{2}^a\s{3}^b \x{1}^2, \s{2}^a\s{3}^b \x{2}^2 \right\}$ is a subrepresentation isomorphic to $\Stand$ via the isomorphism $\x{i}\mapsto \s{2}^a\s{3}^b \x{i}^2$. 

The set $$\left\{\s{2}^a\s{3}^b,\,  \s{2}^a\s{2}^{b-1}(\x{1}^3+\x{1}^2\x{2}+\x{2}^3) \mid 2a+3b=k\right\}$$ is linearly independent, and so is the set 
$$\left\{ \s{2}^a\s{3}^b \x{1},\, \s{2}^a\s{3}^b \x{2} \mid 2a+3b=k-1\right\}  \cup
\left\{ \s{2}^a\s{3}^b \x{1}^2,\, \s{2}^a\s{3}^b \x{2}^2 \mid 2a+3b=k-2\right\}.$$
The central element $(123)+(132)\in \Bbbk[S_3]$ acts on the span of the first of these sets by $0$ and on the second of them by $1$, $(123)+(132)$ and $e-(123)-(132)$ act as projections on the span of their union, and this allows us to conclude that the union of these sets is linearly independent as well. To show that it is a basis of $S^k\h^*$, and thus conclude that $S\h^*$ is indeed a direct sum as stated in the theorem, it now suffices to show that the number of elements in this set and compare it to $\dim S^k\h^*=k+1$. This is exactly the same calculation as in the proof of Theorem \ref{decmposeShp>3}. 

\end{proof}

Next, we will need an analogue of Lemma \ref{tensorproductsp>3}. 

\begin{lemma}\label{tensorproductsp=2}
Let $p=2$. As an $S_3$ representation, $\Stand \otimes \Stand$ is a direct sum of the following indecomposable subrepresentations: 
\begin{itemize}
\item an indecomposable extension of two copies of $\Triv$, with a basis 
$$\left\{\x{1}\otimes \x{2}+\x{2}\otimes \x{1}, \x{1}\otimes \x{1}+\x{2}\otimes \x{1}+\x{2}\otimes \x{2} \right\},$$
with $\x{1}\otimes \x{2}+\x{2}\otimes \x{1}$ spanning a subrepresentation of this extension; 
\item a subrepresentation isomorphic to $\Stand$ with a basis 
$$\left\{\x{1}\otimes \x{1}+\x{2}\otimes \x{1}+\x{1}\otimes \x{2}, \, \x{2}\otimes \x{2}+\x{2}\otimes \x{1}+\x{1}\otimes \x{2}\right\},$$
with the isomorphism given by $\x{i}\mapsto \x{i}\otimes \x{i}+\x{2}\otimes \x{1}+\x{1}\otimes \x{2}$. 
\end{itemize}
\end{lemma}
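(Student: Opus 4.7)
The plan is to verify directly that each of the two proposed subspaces is an $S_3$-subrepresentation of the claimed isomorphism type, and then to check that together their bases give four linearly independent vectors, which since $\dim(\Stand\otimes \Stand)=4$ will establish the direct sum decomposition.

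First I would handle the extension of two copies of $\Triv$. Set $v_1=\x{1}\otimes \x{2}+\x{2}\otimes \x{1}$ and $v_2=\x{1}\otimes \x{1}+\x{2}\otimes \x{1}+\x{2}\otimes \x{2}$. Using $s_1.\x{1}=\x{2}$, $s_2.\x{1}=\x{1}$, $s_2.\x{2}=\x{3}=\x{1}+\x{2}$, a direct computation in characteristic $2$ (in which $2\cdot(\x{i}\otimes \x{j})=0$) shows $s_1.v_1=s_2.v_1=v_1$, so $v_1$ spans a copy of $\Triv$. The same type of computation gives $s_1.v_2=v_2+v_1$ and $s_2.v_2=v_2+v_1$. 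Hence the span of $\{v_1,v_2\}$ is $S_3$-stable, $\Bbbk v_1$ is a trivial subrepresentation, and the quotient is trivial. Since $v_2$ is not fixed by $s_1$, the only $S_3$-fixed line in $\mathrm{span}\{v_1,v_2\}$ is $\Bbbk v_1$, so no complementary trivial subrepresentation exists and the extension is indecomposable.

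Next I would handle the $\Stand$ summand. Set $u_i=\x{i}\otimes \x{i}+\x{2}\otimes \x{1}+\x{1}\otimes \x{2}$ for $i=1,2$. A straightforward calculation in characteristic $2$ yields $s_1.u_1=u_2$, $s_1.u_2=u_1$, $s_2.u_1=u_1$, and $s_2.u_2=u_1+u_2$ (using $\x{3}=\x{1}+\x{2}$ and expanding, with cancellations by pairs). These are precisely the relations defining the action of $s_1,s_2$ on the basis $\x{1},\x{2}$ of $\Stand$, so $\x{i}\mapsto u_i$ is an $S_3$-isomorphism $\Stand\xrightarrow{\sim}\mathrm{span}\{u_1,u_2\}$.

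Finally I would confirm the direct sum by showing $\{v_1,v_2,u_1,u_2\}$ is linearly independent in $\Stand\otimes \Stand$. Writing each vector in the ordered basis $(\x{1}\otimes\x{1},\x{1}\otimes\x{2},\x{2}\otimes\x{1},\x{2}\otimes\x{2})$ gives the $4\times 4$ matrix with rows $(0,1,1,0)$, $(1,0,1,1)$, $(1,1,1,0)$, $(0,1,1,1)$, which row-reduces over $\mathbb{F}_2$ to an upper triangular matrix with $1$'s on the diagonal. Hence the four vectors span the $4$-dimensional space, and we obtain the claimed direct sum decomposition. No step here presents a real obstacle; the only subtle point is the indecomposability assertion in the first summand, which is immediate once one observes that $v_1$ spans the unique fixed line.
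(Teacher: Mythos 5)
Your proof is correct: all the computed actions of $s_1$ and $s_2$ on $v_1,v_2,u_1,u_2$ check out in characteristic $2$, the indecomposability argument via the unique fixed line $\Bbbk v_1$ is sound, and the rank computation gives the direct sum. The paper omits a proof of this lemma entirely (treating it, like its $p>3$ analogue, as a standard direct verification), and your argument is precisely that intended routine check.
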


Putting together Theorem \ref{decmposeShp=2} and Lemma \ref{tensorproductsp=2} we get the following theorem.

\begin{theorem}\label{decmposeVermaStandp=2}
In characteristic $p=2$, $M_{t,c}(\Stand)\cong S^k\h^*\otimes \Stand$ is a direct sum of the following indecomposable $S_3$ representations: 
\begin{itemize}
\item for every $a,b \in \mathbb{N}_0$ satisfying $2a+3b=k$, a subrepresentation isomorphic to $\Stand$ with a basis 
$$\left\{\s{2}^a\s{3}^b\otimes \x{1},\, \s{2}^a\s{3}^b\otimes \x{2}\right\};$$
\item for every $a,b \in \mathbb{N}_0$ satisfying $2a+3b=k-3$, a subrepresentation isomorphic to $\Stand$ with a basis 
$$ \left\{\s{2}^a\s{2}^{b}\left( (\x{1}^3+\x{1}^2\x{2}+\x{2}^3)\otimes \x{1}+\s{2}\otimes \x{2}\right)\right.,$$
$$\left.\s{2}^a\s{2}^{b}\left( (\x{1}^3+\x{1}^2\x{2}+\x{2}^3)\otimes \x{2}+\s{3}\otimes (\x{1}+\x{2})\right)\right\};$$
\item for every $a,b \in \mathbb{N}_0$ satisfying $2a+3b=k-1$, 
an indecomposable extension of two copies of $\Triv$, with a basis 
$$\left\{ \s{2}^a\s{3}^b (\x{1}\otimes \x{2}+\x{2}\otimes \x{1}) \right\}$$
of the subrepresentation isomorphic to  $\Triv$ and a basis
$$\left\{ \s{2}^a\s{3}^b ( \x{1}\otimes \x{1}+\x{2}\otimes \x{1}+\x{2}\otimes \x{2})\right\}$$
of the quotient isomorphic to $\Triv$;
\item for every $a,b \in \mathbb{N}_0$ satisfying $2a+3b=k-1$, 
a subrepresentation isomorphic to $\Stand$ with a basis 
$$\left\{ \s{2}^a\s{3}^b (\x{1}\otimes \x{1}+\x{2}\otimes \x{1}+\x{1}\otimes \x{2}),\, \s{2}^a\s{3}^b (\x{2}\otimes \x{2}+\x{2}\otimes \x{1}+\x{1}\otimes \x{2})  \right\};$$
\item for every $a,b \in \mathbb{N}_0$ satisfying $2a+3b=k-2$, 
an indecomposable extension of two copies of $\Triv$, with a basis 
$$\left\{ \s{2}^a\s{3}^b (\x{1}^2\otimes \x{2}+\x{2}^2\otimes \x{1}) \right\}$$
of the subrepresentation isomorphic to  $\Triv$ and a basis
$$\left\{ \s{2}^a\s{3}^b ( \x{1}^2\otimes \x{1}+\x{2}^2\otimes \x{1}+\x{2}^2\otimes \x{2})\right\}$$
of the quotient isomorphic to $\Triv$;
\item for every $a,b \in \mathbb{N}_0$ satisfying $2a+3b=k-2$, 
a subrepresentation isomorphic to $\Stand$ with a basis 
$$\left\{ \s{2}^a\s{3}^b (\x{1}^2\otimes \x{1}+\x{2}^2\otimes \x{1}+\x{1}^2\otimes \x{2}),\, \s{2}^a\s{3}^b (\x{2}^2\otimes \x{2}+\x{2}^2\otimes \x{1}+\x{1}^2\otimes \x{2})  \right\}.$$
\end{itemize}
\end{theorem}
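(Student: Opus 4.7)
The plan is to obtain the decomposition by combining Theorem \ref{decmposeShp=2} with Lemma \ref{tensorproductsp=2}, exactly as Theorem \ref{decmposeVermap>3} was derived in characteristic $p>3$. Concretely, I would tensor each summand of $S^k\h^*$ listed in Theorem \ref{decmposeShp=2} with $\Stand$, apply Lemma \ref{tensorproductsp=2} to the $\Stand\otimes\Stand$ pieces, and check that the result is precisely the list in Theorem \ref{decmposeVermaStandp=2}.

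For the $\Triv$ summands $\Bbbk\s{2}^a\s{3}^b$, we have $\Triv\otimes\Stand\cong\Stand$ tautologically, with basis $\{\s{2}^a\s{3}^b\otimes\x{1},\,\s{2}^a\s{3}^b\otimes\x{2}\}$, contributing the first summand family. The two families of $\Stand$ summands in $S^k\h^*$ (with bases $\{\s{2}^a\s{3}^b\x{i}\}$ for $2a+3b=k-1$ and $\{\s{2}^a\s{3}^b\x{i}^2\}$ for $2a+3b=k-2$) each yield, after tensoring with $\Stand$ via Lemma \ref{tensorproductsp=2}, an indecomposable extension of two $\Triv$'s direct sum $\Stand$; substituting $\x{i}\mapsto\s{2}^a\s{3}^b\x{i}$ (respectively $\x{i}^2$) into the bases of Lemma \ref{tensorproductsp=2} produces exactly the third through sixth summand types.

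The only non-routine case is the indecomposable extension $E_{a,b}\subseteq S^k\h^*$ (with $2a+3b=k$, $b>0$) tensored with $\Stand$. Tensoring the exact sequence $0\to\Triv\to E_{a,b}\to\Triv\to 0$ with $\Stand$ gives $0\to\Stand\to E_{a,b}\otimes\Stand\to\Stand\to 0$, whose sub $\Stand$ is already absorbed into the first summand family. To produce the complementary $\Stand$, I would use the fact (from the proof of Theorem \ref{decmposeShp=2}) that the quotient lift $u_2=\s{2}^a\s{3}^{b-1}(\x{1}^3+\x{1}^2\x{2}+\x{2}^3)$ satisfies $s_i.u_2=u_2+\s{2}^a\s{3}^b$, so that $u_2\otimes\x{i}$ alone fails to form a $\Stand$ subrepresentation. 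One corrects by adding terms of the same degree involving $\s{2}\otimes\x{2}$ and $\s{3}\otimes(\x{1}+\x{2})$ to cancel the error terms, obtaining exactly the basis listed under $2a+3b=k-3$. Verification is then a direct computation in characteristic $2$ using $s_1.\x{1}=\x{2}$ and $s_2.\x{2}=\x{1}+\x{2}$.

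To finish, I would verify linear independence of the union of all listed basis vectors (separating $\Triv$- and $\Stand$-isotypic content by the group-algebra projectors $(123)+(132)$ and $1+(123)+(132)$, as in the proof of Theorem \ref{decmposeShp=2}), check that the extensions of two $\Triv$'s in the list are genuinely non-split by showing that $s_1$ applied to the listed quotient-lifts differs from them by the listed sub basis vector, and count dimensions using Lemma \ref{numberinbasis} to reach the total $2(k+1)=\dim(S^k\h^*\otimes\Stand)$. The main obstacle is the extension-tensor-$\Stand$ case: identifying correction terms so that the complementary $\Stand$ is simultaneously $S_3$-equivariant, linearly independent from the first summand family, and genuinely split off rather than glued into a larger extension.
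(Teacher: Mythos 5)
Your proposal follows exactly the route the paper takes: the paper offers no separate argument for Theorem \ref{decmposeVermaStandp=2} beyond the remark that it is obtained by ``putting together'' Theorem \ref{decmposeShp=2} and Lemma \ref{tensorproductsp=2}, which is precisely your plan of tensoring each summand of $S^k\h^*$ with $\Stand$ and reassembling. Your explicit treatment of the non-routine case — splitting $E_{a,b}\otimes\Stand$ by adding correction terms to $\s{2}^a\s{3}^{b-1}(\x{1}^3+\x{1}^2\x{2}+\x{2}^3)\otimes\x{i}$, then checking equivariance, independence via the projectors, and the dimension count — is sound and in fact supplies the verification the paper leaves implicit.
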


\begin{cor}\label{characterShp=2}
Assume $p=2$.
\begin{enumerate}
\item The character of the graded $S_3$ representation $S\h^*$ is 
$$\chi_{S\h^*}(z)=\frac{1}{(1-z^2)(1-z^3)}\left( (1+z^3)[\Triv]+(z+z^2)[\Stand]\right).$$
\item The character of the graded $S_3$ representation $S^{(2)}\h^*$ is 
$$\chi_{S^{(2)}\h^*}(z)= [\Triv](1+z^2)+[\Stand]z.$$
\item The characters of Verma modules for $H_{t,c}(S_3,\h)$ are given by 
\begin{align*}
\chi_{M_{t,c}(\Triv)}(z)&=\chi_{S\h^*}(z)\\
\chi_{M_{t,c}(\Stand)}(z)&=\frac{1}{(1-z^2)(1-z^3)}\left((1+z+z^2+z^3)[\Stand]+2 (z+z^2)[\Triv]\right).
\end{align*}
\item The characters of baby Verma modules for $H_{t,c}(S_3,\h)$ are given by 
\begin{align*}
\chi_{N_{0,c}(\tau)}(z)&=\chi_{M_{0,c}(\tau)}(z)(1-z^{2})(1-z^{3}).\\
\chi_{N_{1,c}(\tau)}(z)&=\chi_{M_{1,c}(\tau)}(z)(1-z^{2p})(1-z^{3p}).
\end{align*}
\end{enumerate}
\end{cor}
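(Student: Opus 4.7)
The strategy is to pass through the Grothendieck group $K_0(S_3)$, where we must keep track of the fact that in characteristic $2$ an indecomposable extension of two copies of $\Triv$ contributes $2[\Triv]$, and where, by Lemma~\ref{tensorproductsp=2}, $[\Stand]^2 = 2[\Triv] + [\Stand]$.

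For (1), I would sum the contributions from Theorem~\ref{decmposeShp=2} summand by summand. The degree-$k$ $\Triv$-multiplicity equals $|\{(a,b):2a+3b=k\}|+|\{(a,b):2a+3b=k,\,b>0\}|$ (one copy for each $b$, plus an extra copy whenever $b>0$ for the self-extension), and a change of index $b\mapsto b-1$ rewrites the second term so that the generating function becomes $(1+z^3)/((1-z^2)(1-z^3))[\Triv]$. The degree-$k$ $\Stand$-multiplicity equals $|\{2a+3b=k-1\}|+|\{2a+3b=k-2\}|$, giving $(z+z^2)/((1-z^2)(1-z^3))[\Stand]$.

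For (2), I would apply Remark~\ref{remark--restrictedpoly} with $U=\h^*$ and $p=2$. Since $\dim\h^*=2$, we have $\Lambda^0\h^*=\Triv$, $\Lambda^1\h^*=\Stand$, and $\Lambda^2\h^*=\Triv$ (in characteristic $2$ we have $\Sign=\Triv$, so the one-dimensional determinant representation is trivial); hence with the correct signs
\[
\chi_{S^{(2)}\h^*}(z)=\chi_{S\h^*}(z)\cdot\bigl(1-z^{2}[\Stand]+z^{4}[\Triv]\bigr).
\]
Expanding, collecting like terms, and using $[\Stand]^2=2[\Triv]+[\Stand]$ gives a numerator which factors as $(1+z^2)(1-z^2)(1-z^3)[\Triv]+z(1-z^2)(1-z^3)[\Stand]$; this simplifies to the stated expression. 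The main bookkeeping task is this factorisation check.

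For (3), I use $\chi_{M_{t,c}(\tau)}(z)=\chi_{S\h^*}(z)\cdot[\tau]$, which holds because $M_{t,c}(\tau)\cong S\h^*\otimes\tau$ as graded $S_3$-representations. For $\tau=\Triv$ this gives the first line directly; for $\tau=\Stand$, multiplying by $[\Stand]$ and then replacing $[\Stand]^2$ by $2[\Triv]+[\Stand]$ regroups the numerator into $(1+z+z^2+z^3)[\Stand]+2(z+z^2)[\Triv]$.

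For (4), the key input from Section~\ref{sect-sympol} is that $(S\h^*)^{S_3}=\Bbbk[\bar\sigma_2,\bar\sigma_3]$ is a polynomial ring and that $S\h^*$ is a free module over it (the basis in Theorem~\ref{decmposeShp=2} exhibits this freeness explicitly). Hence for $t=0$,
\[
\chi_{N_{0,c}(\tau)}(z)=\chi_{M_{0,c}(\tau)}(z)\cdot\chi_{\Bbbk[\bar\sigma_2,\bar\sigma_3]}(z)^{-1}=\chi_{M_{0,c}(\tau)}(z)(1-z^{2})(1-z^{3}).
\]
For $t=1$, the Frobenius identity $(f+g)^p=f^p+g^p$ in characteristic $p$ shows that $S\h^*((S\h^*)^{S_3}_+)^p=S\h^*\cdot(\bar\sigma_2^{p},\bar\sigma_3^{p})$, and since $\Bbbk[\bar\sigma_2,\bar\sigma_3]$ is free over $\Bbbk[\bar\sigma_2^{p},\bar\sigma_3^{p}]$, the composition shows $S\h^*$ is free over $\Bbbk[\bar\sigma_2^{p},\bar\sigma_3^{p}]$. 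The same Hilbert-series division then gives the factor $(1-z^{2p})(1-z^{3p})$.

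The only step with any potential subtlety is (2): one must be careful that in characteristic $2$ the top exterior power $\Lambda^2\h^*$ really is trivial (it is one-dimensional, and all one-dimensional $S_3$-representations in characteristic $2$ coincide with $\Triv$), and one must correctly expand using $[\Stand]^2=2[\Triv]+[\Stand]$ rather than the characteristic $0$ relation $[\Stand]^2=[\Triv]+[\Sign]+[\Stand]$.
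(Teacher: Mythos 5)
Your proposal is correct and follows essentially the same route as the paper: sum the multiplicities from Theorem \ref{decmposeShp=2} (counting each indecomposable self-extension of $\Triv$ as $2[\Triv]$ in $K_0$), obtain the Verma characters by multiplying by $[\tau]$ with the characteristic-$2$ relation $[\Stand]^2=2[\Triv]+[\Stand]$, and get the baby Verma characters from freeness of $S\h^*$ over $\Bbbk[\s{2},\s{3}]$ (and, via Frobenius, over $\Bbbk[\s{2}^p,\s{3}^p]$). Your treatment of part (2) via the sign-corrected Koszul formula $\chi_{S^{(2)}\h^*}(z)=\chi_{S\h^*}(z)(1-z^2[\Stand]+z^4[\Triv])$ does check out (I verified the expansion), though it is worth noting that a direct computation is even shorter: $S^{(2)}\h^*=\Bbbk[\x{1},\x{2}]/(\x{1}^2,\x{2}^2)$ has basis $1,\x{1},\x{2},\x{1}\x{2}$, immediately giving $[\Triv](1+z^2)+[\Stand]z$.
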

\begin{proof}
Similar to the proof of Corollary \ref{characterShp>3}. 
\end{proof}

\subsection{A few well known lemmas}
For later use let us state several well known lemmas and prove some of them. Most of them hold in much greater generality but we only state them for $H_{t,c}(S_3,\h)$ here: Lemma \ref{lemma-rescaleparameter}, \ref{t=c=0} and \ref{t=1,c=0} extend to all rational Cherednik algebras, Lemma \ref{Sign} to all rational Cherednik algebras and all their characters, and Proposition \ref{WhatIsGeneric} to all rational Cherednik algebras of type $A$.

\begin{lemma}\label{lemma-rescaleparameter}
For every $a\in \Bbbk^\times$, the map $\Psi: H_{t,c}(S_3,\h) \to H_{at,ac}(S_3,\h)$ given on the generators $x\in \h^*,y\in\h, g\in S_3$
$$\Psi(x)=ax, \quad \Psi(y)=y, \quad \Psi(g)=g$$
extends uniquely to an isomorphism of algebras.
\end{lemma}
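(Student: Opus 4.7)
The plan is to construct $\Psi$ via the universal property of the semidirect product $\Bbbk[S_3]\ltimes T(\h\oplus\h^*)$, check that the three defining relations of $H_{t,c}(S_3,\h)$ are sent to zero so that $\Psi$ descends to the quotient, and then exhibit a two-sided inverse coming from the symmetric construction with the reciprocal scaling factor.

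First, I would observe that the assignments $x\mapsto ax$, $y\mapsto y$ and $g\mapsto g$ are $S_3$-equivariant: the group acts $\Bbbk$-linearly on $\h$ and $\h^*$, so the action commutes with multiplication by the scalar $a$. Hence by the universal property of the tensor algebra together with the universal property of the semidirect product, these assignments extend uniquely to an algebra homomorphism $\widetilde\Psi:\Bbbk[S_3]\ltimes T(\h\oplus\h^*)\to H_{at,ac}(S_3,\h)$. Once $\widetilde\Psi$ is shown to vanish on the ideal of defining relations of $H_{t,c}$, uniqueness of the induced map $\Psi$ is automatic, since its values on a generating set are prescribed.

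The core step is verifying the three families of defining relations. The images $\widetilde\Psi([x,x'])=a^2[x,x']$ and $\widetilde\Psi([y,y'])=[y,y']$ vanish immediately, since $\h^*$ and $\h$ embed as commuting subspaces of the target. For the nontrivial commutation relation one expands
$$[\widetilde\Psi(y),\widetilde\Psi(x)]=[y,ax]=a[y,x]$$
inside $H_{at,ac}(S_3,\h)$ using the defining relation with parameters $at,ac$, and compares the result with $\widetilde\Psi\bigl(t\langle x,y\rangle-\sum_{1\le i<j\le 3} c\langle x-(ij).x,y\rangle(ij)\bigr)$. The main thing to check is that the factor of $a$ produced by rescaling $x$ combines with the rescaling of the parameters $t\mapsto at$, $c\mapsto ac$ to give exactly the image of the right-hand side; this is the only computation that uses the specific shape of the relation, and it is the principal (though routine) obstacle in the proof.

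Finally, applying the same construction with $a^{-1}$ in place of $a$ and source and target swapped produces a candidate inverse $\Phi:H_{at,ac}(S_3,\h)\to H_{t,c}(S_3,\h)$. Both compositions $\Phi\circ\Psi$ and $\Psi\circ\Phi$ restrict to the identity on the generating set $\h\cup\h^*\cup S_3$, and since this set generates $H_{t,c}$ and $H_{at,ac}$ as algebras (for instance by the triangular decomposition stated in the introduction), the two compositions are the identity on the whole algebras, establishing that $\Psi$ is an isomorphism.
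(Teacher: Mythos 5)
Your overall strategy (universal property of $\Bbbk[S_3]\ltimes T(\h\oplus \h^*)$, checking the three families of relations, inverting via the symmetric construction with $a^{-1}$) is the right one — the paper in fact states this lemma without proof, as a well-known fact — but the one computation you defer and declare routine is precisely the step that fails for the map as written. In $H_{at,ac}(S_3,\h)$ the defining relation reads $[y,x]=at\left<x,y\right>-\sum_{1\le i<j\le 3}ac\left<x-(ij).x,y\right>(ij)$, so
$$[\Psi(y),\Psi(x)]=[y,ax]=a\,[y,x]=a^2t\left<x,y\right>-\sum_{1\le i<j\le 3}a^2c\left<x-(ij).x,y\right>(ij),$$
whereas $\Psi$ applied to the right-hand side of the relation of $H_{t,c}(S_3,\h)$ leaves the scalars $t,c$ and the group elements untouched and gives $t\left<x,y\right>-\sum_{1\le i<j\le 3}c\left<x-(ij).x,y\right>(ij)$. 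These agree only when $a^2=1$ (or $t=c=0$): the factor $a$ produced by $\Psi(x)=ax$ compounds with, rather than cancels against, the rescaled parameters $at,ac$. So the assignment in the statement does not descend to a homomorphism $H_{t,c}(S_3,\h)\to H_{at,ac}(S_3,\h)$, and a proof that simply asserts this check works has a genuine gap.

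The repair is a normalization: take $\Psi(x)=a^{-1}x$, $\Psi(y)=y$, $\Psi(g)=g$ (equivalently, read the displayed formula as a map $H_{at,ac}(S_3,\h)\to H_{t,c}(S_3,\h)$). Then $[\Psi(y),\Psi(x)]=a^{-1}[y,x]=t\left<x,y\right>-\sum_{1\le i<j\le 3}c\left<x-(ij).x,y\right>(ij)$ as required; the relations $[x,x']=0$, $[y,y']=0$ and the $S_3$-equivariance go through exactly as you argue, uniqueness is automatic since the images of generators are prescribed, and the inverse is the same construction with $a$ and $a^{-1}$ interchanged. Since $a$ ranges over all of $\Bbbk^\times$, one still obtains $H_{t,c}(S_3,\h)\cong H_{at,ac}(S_3,\h)$ for every $a$, so the conclusion of the lemma and its consequence (that one may restrict to $t=0$ or $t=1$) are unaffected — but your write-up must actually carry out the commutator computation and correct the scaling rather than take it on faith.
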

As a consequence, it is enough to consider just $t=0$ and $t=1$.

\begin{lemma}\label{Sign}
\begin{enumerate}
\item The map $\Phi:H_{t,c}(S_3,\h)\to H_{t, -c}(S_3,\h)$ given on the generators $x\in \h^*,y\in\h, s_1,s_2\in S_3$ by 
$$\Phi(x)=x, \quad \Phi(y)=y, \quad \Phi(s_i)=-s_i$$
is an isomorphism of algebras. 
\item The pullback map $\Phi^*:\mathcal{O}_{t,-c}\to \mathcal{O}_{t,c}$ is an equivalence of categories and
$$\Phi^*(L_{t,-c}(\tau))\cong L_{t, c}(\Sign \otimes \tau).$$
\end{enumerate}
\end{lemma}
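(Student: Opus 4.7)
The plan is to verify part (1) by directly checking the defining relations of the rational Cherednik algebra, then deduce part (2) from functoriality of pullback together with the observation that $\Phi$ acts on the group algebra $\Bbbk[S_3]$ as the automorphism given by tensoring with Sign.

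For part (1), I would first extend $\Phi$ to the free product $\Bbbk[S_3]\ltimes T(\h\oplus \h^*)$ and then verify it descends to the quotient by the defining relations. The relations $[x,x']=0$ and $[y,y']=0$ are immediate since $\Phi$ acts as the identity on $\h^*$ and on $\h$. For the group algebra relations, note that $\Phi(s_i)^2 = (-s_i)^2 = s_i^2 = 1$ and the braid relation survives because $(-s_1)(-s_2) = s_1 s_2$, so $\left((-s_1)(-s_2)\right)^3 = 1$. The semidirect product relation $g x g^{-1} = g(x)$ is preserved because the two signs on $\Phi(g)$ and $\Phi(g)^{-1}$ cancel. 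The main calculation is the commutator relation $[y,x] = t\langle x,y\rangle - \sum_{i<j} c\langle x-(ij)x,y\rangle (ij)$: applying $\Phi$ to the right hand side multiplies each $(ij)$ by $-1$, which is exactly compensated by changing $c$ to $-c$ on the target side. Hence $\Phi$ is a well-defined algebra homomorphism, and it is an isomorphism because $\Phi^2=\id$.

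For part (2), the pullback along an algebra isomorphism is automatically an equivalence of categories, and since $\Phi$ preserves the grading (it fixes degree on generators), it restricts to an equivalence $\Phi^*:\mathcal{O}_{t,-c}\to \mathcal{O}_{t,c}$. The nontrivial point is the identification on irreducibles. Given an irreducible $L_{t,-c}(\tau)\in \mathcal{O}_{t,-c}$, the underlying graded vector space of $\Phi^*(L_{t,-c}(\tau))$ is unchanged, and it remains irreducible. Its lowest graded component is the vector space $\tau$, but with the new $S_3$-action defined by $g\cdot v := \Phi(g)\cdot v = \mathrm{sgn}(g)\, g\cdot v$, which is by definition $\Sign\otimes \tau$. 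Since $\Phi(y)=y$, this lowest piece is still killed by all $y\in\h$. Therefore $\Phi^*(L_{t,-c}(\tau))$ is an irreducible object of $\mathcal{O}_{t,c}$ whose lowest weight is $\Sign\otimes\tau$, and by the classification of irreducibles recalled in the introduction it must be $L_{t,c}(\Sign\otimes\tau)$.

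There is no serious obstacle here; the only point to be careful about is the sign bookkeeping in the commutator relation in (1) and the observation that the sign character of $S_3$ is precisely what relates the $S_3$-action on $\tau$ before and after applying $\Phi^*$.
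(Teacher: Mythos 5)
Your proof is correct; the paper states Lemma \ref{Sign} among the ``well known lemmas'' without giving a proof, and your direct verification (sign bookkeeping in the commutator relation, the automorphism $g\mapsto \mathrm{sgn}(g)g$ of $\Bbbk[S_3]$, and the identification of the lowest graded piece of $\Phi^*(L_{t,-c}(\tau))$ as $\Sign\otimes\tau$) is exactly the standard argument the paper leaves implicit. The only cosmetic points are that ``$\Phi^2=\id$'' should be read as ``the analogously defined map $H_{t,-c}(S_3,\h)\to H_{t,c}(S_3,\h)$ is a two-sided inverse,'' and that the final identification also quietly uses that $\Sign\otimes\tau$ is irreducible and that an irreducible graded module with lowest piece $\Sign\otimes\tau$ concentrated in degree $0$ is a quotient of $M_{t,c}(\Sign\otimes\tau)$, both immediate.
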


Recall that the behaviour of Cherednik algebras depends on the parameter $c$ in a specific way: for generic $c$ the Calogero-Moser space is smooth and irreducible representations are ``big'' (in characteristic $0$ this specifically means equal to Verma modules), while for special $c$ the Calogero-Moser space is singular and irreducible representations are smaller than for generic $c$ (in terms of dimension or Hilbert series). 

\begin{prop}[\cite{Li14} Prop 2.8] \label{WhatIsGeneric}
For the rational Cherednik algebra $H_{t,c}(S_3,\h)$: 
\begin{enumerate}
\item If $t=0$, all $c\ne 0$ are generic. 
\item If $t=1$, all $c\notin \mathbb{F}_p$ are generic. 
\end{enumerate}
\end{prop}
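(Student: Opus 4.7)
My plan splits along the value of $t$.

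For part (1), the argument is immediate from Lemma \ref{lemma-rescaleparameter}. Applying $\Psi$ with rescaling factor $a = c^{-1}$ (valid since $c \ne 0$) gives an isomorphism $H_{0,c}(S_3,\h) \cong H_{0,1}(S_3,\h)$ of graded algebras. Hence the entire family $\{H_{0,c} : c \ne 0\}$ consists of mutually isomorphic algebras with identical representation theory, which is precisely what it means for each $c \ne 0$ to be generic. The value $c = 0$ cannot be obtained by any invertible rescaling and the resulting algebra $\Bbbk[S_3] \ltimes S(\h \oplus \h^*)$ is qualitatively different — for instance $L_{0,0}(\tau) = \tau$ for every irreducible $\tau$ — confirming $c=0$ as the unique special parameter.

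For part (2), the plan is to leverage the Casimir element $\Omega$ from Lemma \ref{OmegaAction} to restrict where unexpected singular vectors can occur. Suppose $v$ is a homogeneous singular vector of degree $d > 0$ in $M_{1,c}(\tau)$ lying in an $S_3$-isotypic component of type $\tau'$. A direct commutator computation of $[\Omega, \x{j}]$ together with the triangular decomposition shows that $\Omega$ acts on the degree-$d$ part of $M_{1,c}(\tau)$ as the scalar $d + \Omega|_\tau$ up to corrections that vanish on singular vectors. Combined with the evaluation $\Omega v = \Omega|_{\tau'} v$ coming from Lemma \ref{OmegaAction}, this yields the scalar equation
\[ d = \Omega|_{\tau'} - \Omega|_\tau \quad \text{in } \Bbbk. \]
From Lemma \ref{OmegaAction} the right-hand side is an $\mathbb{F}_p$-linear combination of $0, 3c, 6c$. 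When $\tau \ne \tau'$ this forces $d = \alpha c$ for some nonzero $\alpha \in \mathbb{F}_p$, so that $c = d/\alpha \in \mathbb{F}_p$. When $\tau = \tau'$, the equation reduces to $d = 0$ in $\Bbbk$, i.e., $d \equiv 0 \pmod p$, corresponding exactly to the inevitable Frobenius-type singular vectors (such as $\s{2}^p$ and $\s{3}^p$) which are present for every $c$ and constitute the common "generic" structure.

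Therefore, for $c \notin \mathbb{F}_p$, the only singular vectors in $M_{1,c}(\tau)$ are the Frobenius ones, and $c$ is generic. The main obstacle in this plan is carrying out the Casimir commutator computation carefully enough to be certain that the non-singular corrections truly vanish on every candidate singular $v$; this relies on a clean decomposition of $M_{1,c}(\tau)$ into isotypic components using Theorems \ref{decmposeShp>3} and \ref{decmposeVermap>3}. As an alternative route, one can bootstrap from the characteristic-$0$ description (where generic in type $A$ means $c \notin \mathbb{Q}$) by reducing the Shapovalov-type determinant modulo $p$ and observing that its zeros in $\Bbbk$ are precisely $\mathbb{F}_p$.
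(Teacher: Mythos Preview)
Your argument for part (1) is correct and matches the paper's own proof exactly: both invoke Lemma~\ref{lemma-rescaleparameter}.

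For part (2), the paper simply cites \cite{Li14} Proposition~2.8, so your attempt is an independent argument rather than a reproduction. The Casimir step you describe is valid and is indeed used throughout the paper (see e.g.\ Lemma~\ref{p>3-Stand-specialc-ordering}): a singular vector of type $\tau'$ in degree $d$ of $M_{1,c}(\tau)$ forces $d \equiv \Omega|_{\tau'} - \Omega|_\tau \pmod p$, so for $c\notin\mathbb{F}_p$ any singular vector of type $\tau'\ne\tau$ is impossible, and those of type $\tau$ must sit in degrees divisible by~$p$.

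However, this is where your argument stops being a proof. The assertion that the singular vectors in degrees $kp$ ``correspond exactly to the inevitable Frobenius-type singular vectors (such as $\s{2}^p$ and $\s{3}^p$)'' is false as stated: for $\tau=\Stand$ the paper exhibits singular vectors $v_+,v_-$ in degree $p$ (Lemma~\ref{p>3,t=1,generic,Stand}) which are \emph{not} of the form $\sigma_i^p\otimes v$, and whose coefficients depend polynomially on $c$. More importantly, the Casimir constraint only tells you \emph{where} singular vectors can live, not that the resulting submodule $J_{1,c}(\tau)$ --- and hence the Hilbert polynomial of $L_{1,c}(\tau)$ --- is independent of $c$ on the locus $c\notin\mathbb{F}_p$. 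Showing that independence is the actual content of genericity (e.g.\ smoothness of the Calogero--Moser space, or equivalently $\dim L_{1,c}(\tau)=|S_3|\cdot p^{\dim\h}$), and it requires a separate argument such as the Shapovalov-determinant reduction you mention at the end or the geometric input from \cite{EtGi02,BFG06} used in the proof of Theorem~\ref{thm-p>3generic}. Without one of these, the Casimir argument alone does not close the gap.
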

\begin{proof}
For $t=0$ see Lemma \ref{lemma-rescaleparameter}, for $t=1$ see \cite{Li14} Proposition 2.8. 
\end{proof}

\begin{lemma}\label{t=c=0}
The irreducible representation $L_{0,0}(\tau)$ is the quotient of the Verma module $M_{0,0}(\tau)$ by all the positively graded vectors, with the character
$$\chi_{L_{0,0}(\tau)}(z)=[\tau]$$
and the
Hilbert polynomial $$h_{L_{0,0}(\tau)}(z)=\dim \tau.$$
\end{lemma}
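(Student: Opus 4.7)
The plan is to show that at $t=c=0$ the Dunkl operators collapse completely, making every positively graded vector singular, and then identify the maximal proper graded submodule explicitly.

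First I would observe that when $t=c=0$ the Dunkl operator
$$D_y = t\partial_y\otimes \id - \sum_{1\le i<j\le 3}c\langle x_i-x_j, y\rangle \frac{1-(ij)}{x_i-x_j}\otimes (ij)$$
is identically zero for every $y\in \h$. Consequently every homogeneous vector in $M_{0,0}(\tau)$ of positive degree satisfies $y.v=0$ for all $y\in \h$, i.e.\ is singular in the sense of Section 1.3(3).

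Next I would show that the subspace $I=\bigoplus_{k>0} M_{0,0}(\tau)^k$ is already an $H_{0,0}(S_3,\h)$-submodule, hence is the maximal proper graded submodule. Indeed, $I$ is clearly stable under $S_3$ (which preserves grading) and under $S\h^*$ (which raises degree), and it is stable under $\h$ by the previous paragraph, since $y.v=D_y(v)=0$ for every $v$. Thus $J_{0,0}(\tau)\supseteq I$, and since $M_{0,0}(\tau)/I\cong \tau$ has no nonzero proper graded $H_{0,0}(S_3,\h)$-submodule (any such submodule would in particular be a proper $S_3$-subrepresentation of the irreducible $\tau$), we conclude $J_{0,0}(\tau)=I$ and
$$L_{0,0}(\tau) = M_{0,0}(\tau)/I \cong \tau,$$
concentrated in degree $0$. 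The character and Hilbert polynomial statements follow immediately.

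There is no real obstacle here; the lemma is essentially a direct consequence of the fact that both the ``differential'' and the ``difference'' parts of the Dunkl operator carry explicit factors of $t$ and $c$ respectively. The only thing to be slightly careful about is the grading convention ($\deg\tau = 0$) and the fact that the quotient is manifestly irreducible because $\tau$ is irreducible as an $S_3$-representation.
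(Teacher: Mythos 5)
Your argument is correct and is essentially the same as the paper's: at $t=c=0$ the Dunkl operators vanish identically, so all positively graded vectors are singular and $L_{0,0}(\tau)$ is concentrated in degree $0$, where it equals $\tau$. The extra care you take in verifying that the positive-degree part is a submodule and that the quotient is irreducible (using irreducibility of $\tau$ as an $S_3$-representation) just fills in details the paper leaves implicit.
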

\begin{proof}
When $t=c=0$, the Dunkl operators are all identically equal to $0$, so all vectors of strictly positive degree are singular. Thus, $L_{0,0}(\tau)$ is concentrated in degree $0$, where it equals $\tau$. 
\end{proof}

\begin{lemma}\label{t=1,c=0}
The irreducible representation $L_{1,0}(\tau)$ is the quotient of the Verma module $M_{1,0}(\tau)$ by all the vectors of the form $x^p\otimes v$ for any $x\in \h^*$ and $v\in \tau$.
It has the character
$$\chi_{L_{1,0}(\tau))}(z)=\chi_{S^{(p)}\h^*}(z)\cdot [\tau] $$
and the
Hilbert polynomial $$h_{L_{1,0}(\tau)}(z)=\left(\frac{1-z^p}{1-z}\right)^{\dim \h} \dim \tau.$$
\end{lemma}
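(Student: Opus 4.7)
The plan is to exploit the fact that setting $c=0$ makes the Dunkl operators collapse to the ordinary partial derivatives, and then to combine this with the characteristic-$p$ identity $\partial_y(x^p)=p\cdot x^{p-1}\partial_y(x)=0$.

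First I would write out the Dunkl operators with $t=1,c=0$. Since all terms involving $c$ vanish, we obtain simply $D_y = \partial_y \otimes \mathrm{id}$ acting on $M_{1,0}(\tau)\cong S\h^*\otimes \tau$. For every $x\in \h^*$ and $v\in \tau$, the element $x^p\otimes v$ then satisfies $D_y(x^p\otimes v)=0$ for all $y\in \h$ (in positive degree), so it is singular and generates a proper graded submodule. Let $J$ be the submodule generated by all such vectors; because the ideal of $S\h^*$ generated by $\{x^p:x\in \h^*\}$ coincides with the ideal generated by $\{e_1^p,e_2^p\}$ for any basis $\{e_1,e_2\}$ of $\h^*$ (since $(\alpha e_1+\beta e_2)^p=\alpha^p e_1^p+\beta^p e_2^p$ in characteristic $p$), we have $M_{1,0}(\tau)/J \cong S^{(p)}\h^*\otimes \tau$ as a graded $\Bbbk[S_3]\ltimes S\h^*$-module.

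Next I would prove that $M_{1,0}(\tau)/J$ is irreducible, which identifies it with $L_{1,0}(\tau)$. The key step is that the joint kernel of all partial derivatives $\partial_y$, $y\in\h$, acting on $S^{(p)}\h^*$ is concentrated in degree $0$. Indeed, in $S\h^*\cong \Bbbk[e_1,e_2]$, the joint kernel of $\partial/\partial e_1$ and $\partial/\partial e_2$ is $\Bbbk[e_1^p,e_2^p]$, and in the restricted quotient $S^{(p)}\h^*=\Bbbk[e_1,e_2]/(e_1^p,e_2^p)$ this becomes just $\Bbbk$. So any singular vector of $M_{1,0}(\tau)/J$ of positive degree must vanish. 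For any nonzero graded submodule $N$ of $M_{1,0}(\tau)/J$ take a homogeneous element of minimal degree in $N$; minimality forces it to be singular, hence it lies in the degree-zero piece $1\otimes \tau$. Irreducibility of $\tau$ as an $S_3$-representation then gives $1\otimes \tau\subseteq N$, and multiplying by $S^{(p)}\h^*$ recovers all of $M_{1,0}(\tau)/J$, so $N$ is the whole module.

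Finally, the character and Hilbert polynomial are immediate from the identification $L_{1,0}(\tau)\cong S^{(p)}\h^*\otimes \tau$ with $\tau$ placed in degree zero: we get $\chi_{L_{1,0}(\tau)}(z)=\chi_{S^{(p)}\h^*}(z)\cdot[\tau]$, and since $S^{(p)}\h^*\cong \Bbbk[e_1,e_2]/(e_1^p,e_2^p)$ has Hilbert series $\left(\frac{1-z^p}{1-z}\right)^{\dim \h}$, we obtain the Hilbert polynomial $h_{L_{1,0}(\tau)}(z)=\left(\frac{1-z^p}{1-z}\right)^{\dim \h}\dim \tau$. The main (very mild) obstacle is the irreducibility step, and specifically the clean identification of the joint kernel of the partial derivatives on the restricted polynomial ring; everything else is a direct computation using the vanishing of $p$-th powers' derivatives in characteristic $p$.
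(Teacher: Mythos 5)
Your proposal is correct and follows essentially the same route as the paper, which simply observes that for $t=1$, $c=0$ the Dunkl operators reduce to $\partial_y\otimes \id$ and that their joint kernel is generated by the $p$-th powers, citing the result as well known. Your write-up just supplies the details the paper leaves implicit (the Frobenius identification of the ideal and the irreducibility of $S^{(p)}\h^*\otimes\tau$ via the joint kernel of the derivatives in the restricted polynomial ring), and these details are all accurate.
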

\begin{proof}This is well known, see also \cite{Li14}. When $t=1$ and $c=0$, the Dunkl operators have a very simple form $D_y=\partial_y\otimes \id,$
and their joint kernel consists of vectors of the form $x^p\otimes v$ for all $x\in \h^*$ and $v\in \tau$. 
\end{proof}

We now rephrase Theorem 3.2 from \cite{Li14} in our conventions (using $\h$ instead of $V$) and state a slight strengthening of it with the same proof. 
\begin{lemma}[\cite{Li14} Theorem 3.2]\label{Lian3.2.}
Let $\Bbbk$ be an algebraically closed field of characteristic $p$. The character of the irreducible representation $L_{t,c}(\Triv)$ of $H_{t,c}(S_n)$ is equal to $[\Triv]$ if and only if $t=cn$. In particular, this holds when $t=c=0$, and when $p$ does not divide $n$ and the values of the parameters are $t=1,c=1/n$. 
\end{lemma}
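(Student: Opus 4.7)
The plan is to prove both directions by a direct computation of the Dunkl operators on $\h^* \otimes \Triv$, the degree $1$ part of $M_{t,c}(\Triv)$. Since $M^0_{t,c}(\Triv) = \Triv$ is one dimensional and the Verma module is generated by it, the equality $\chi_{L_{t,c}(\Triv)}(z) = [\Triv]$ is equivalent to $L_{t,c}(\Triv) \cong \Triv$ being concentrated in degree $0$, which in turn is equivalent to $M^{\ge 1}_{t,c}(\Triv) \subseteq J_{t,c}(\Triv)$. This forces $\h^* \otimes \Triv$ into $J_{t,c}(\Triv)$, so the whole question is controlled by the action of Dunkl operators on that piece.

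The main calculation is the following. For $v \in \Triv$ and $y \in \h$ I would compute $D_y(\x{k} \otimes v)$. Since every transposition $(ij)$ acts trivially on $v$, and since $\frac{\x{k} - (ij).\x{k}}{\x{i} - \x{j}}$ evaluates to $+1$ if $k=i$, $-1$ if $k=j$, and $0$ otherwise, the sum over reflections in the formula for $D_y$ collapses and gives
$$D_y(\x{k} \otimes v) = t\langle \x{k}, y\rangle \otimes v - c \sum_{j \neq k} \langle \x{k} - \x{j}, y\rangle \otimes v.$$
The key simplification comes from the relation $\sum_j \x{j} = 0$ in $\h^*$, which reduces the last sum to $n \langle \x{k}, y\rangle \otimes v$ and yields
$$D_y(\x{k} \otimes v) = (t - cn)\langle \x{k}, y\rangle \otimes v.$$
By linearity, $D_y(x \otimes v) = (t - cn)\langle x, y\rangle \otimes v$ for every $x \in \h^*$.

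The equivalence is then immediate. If $t = cn$, every vector in $\h^* \otimes \Triv$ is singular, and the submodule it generates is the proper submodule $M^{\ge 1}_{t,c}(\Triv)$, so $L_{t,c}(\Triv) \cong \Triv$. Conversely, if $L_{t,c}(\Triv) \cong \Triv$ then $x \otimes v \in J_{t,c}(\Triv)$ for all $x \in \h^*$, $v \in \Triv$; since $J_{t,c}(\Triv)$ is $H_{t,c}$-stable, the values $(t - cn)\langle x, y\rangle v$ must also lie in $J_{t,c}(\Triv) \cap \Triv = 0$, and non-degeneracy of the pairing $\h^* \otimes \h \to \Bbbk$ forces $t = cn$. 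The two special cases follow at once: $t = c = 0$ satisfies $t = cn$ trivially, and when $p \nmid n$ the scalar $c = 1/n$ makes sense in $\Bbbk$ and gives $cn = 1 = t$. There is no real obstacle in this argument; the only subtlety is being careful with the quotient realisation of $\h^*$ so that the relation $\sum_j \x{j} = 0$ is applied at the right moment.
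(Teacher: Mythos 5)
Your proposal is correct and is essentially the paper's own argument: the paper proves the lemma by the same Dunkl-operator computation on the degree-one piece, obtaining $D_{y_i-y_j}(\x{k})=(t-cn)(\delta_{ik}-\delta_{jk})$ and concluding that all of $M^1_{t,c}(\Triv)$ is singular if and only if $t=cn$. Your version merely spells out the routine deduction from that computation to the character statement (in both directions), which the paper leaves implicit.
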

\begin{proof}
It suffices to note that
\begin{align*}
D_{y_i-y_j}(\x{k})&=t\cdot(\delta_{ij}-\delta_{jk})-c\left(\sum_{a<b}\left<y_i-y_j,\x{a}-\x{b}\right> \frac{\x{k}-(ab)\x{k}}{\x{a}-\x{b}} \right)\\
&=t\cdot(\delta_{ij}-\delta_{jk})-c\left(\sum_{a}\left<y_i-y_j,\x{k}-\x{a}\right> \right)\\
&=(t-cn) \cdot(\delta_{ij}-\delta_{jk}).
\end{align*}
This shows that all $\x{k}\in \h^*\cong M_{t,c}^1(\Triv)$ are singular if and only if $t=cn$. 

\end{proof}

\begin{lemma}\label{KerD1}
Assume that $p>3$, $M$ is a Verma module $M_{t,c}(\tau)$ or its quotient, and that a homogeneous $v \in M$ is either $S_3$ invariant or anti-invariant. Then $v$ is singular if and only if $D_{y_1}v=0.$
\end{lemma}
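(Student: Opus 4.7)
The plan is to exploit two facts: the standard $S_3$-equivariance $gD_yg^{-1}=D_{g\cdot y}$ of the Dunkl operators for $y\in\h$, and the identity $D_{y_1}+D_{y_2}+D_{y_3}=0$ as operators on $M$. For the identity, the reflection parts cancel in pairs, since the coefficient of $\frac{1-(ij)}{x_i-x_j}\otimes(ij)$ inside $D_{y_a}$ is $-c\left<x_i-x_j,y_a\right>=-c(\delta_{ai}-\delta_{aj})$, which sums to $0$ over $a=1,2,3$. The differential parts sum to $t\partial_Y\otimes\id$ with $Y=y_1+y_2+y_3$; identifying $\h^*$ with its image under the splitting $\pi:\x{i}\mapsto x_i-X/3$, we compute $\partial_Y(\x{i})=\partial_Y(x_i-X/3)=1-1=0$, so $\partial_Y$ vanishes on $S\h^*$. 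Hence $D_{y_1}+D_{y_2}+D_{y_3}=0$ on $M_{t,c}(\tau)$, and by passage to quotients on $M$. Rearranging and using $p>3$ to invert $3$, this yields
$$D_{y_1}=\tfrac{1}{3}(D_{y_1-y_2}+D_{y_1-y_3}),$$
so $D_{y_1}$ is a $\Bbbk$-linear combination of Dunkl operators attached to elements of $\h$, and in particular is well-defined on $M$.

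For the forward direction, which does not actually require (anti)invariance: if $v$ is singular, then $D_{y_1-y_2}v=D_{y_1-y_3}v=0$, so by the displayed formula $3D_{y_1}v=0$ and hence $D_{y_1}v=0$. For the reverse direction, suppose $(ij)v=\epsilon v$ for every transposition $(ij)$ with $\epsilon\in\{\pm1\}$, and that $D_{y_1}v=0$. The $S_3$-equivariance yields $D_{y_2}=(12)D_{y_1}(12)^{-1}$ and $D_{y_3}=(13)D_{y_1}(13)^{-1}$, so
$$D_{y_2}v=\epsilon\cdot(12)D_{y_1}v=0,\qquad D_{y_3}v=\epsilon\cdot(13)D_{y_1}v=0.$$
Therefore $D_{y_1-y_2}v=D_{y_1-y_3}v=0$, and since $y_1-y_2$ and $y_1-y_3$ span $\h$, $v$ is singular.

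The computation is essentially formal, and no step is genuinely hard. The two ingredients the proof really leans on are $\x{1}+\x{2}+\x{3}=0$ in $\h^*$, which gives the identity $\sum_i D_{y_i}=0$, and the invertibility of $3$ in $\Bbbk$ afforded by $p>3$; both are used in the equivalent reformulation $D_{y_1}=\tfrac{1}{3}(D_{y_1-y_2}+D_{y_1-y_3})$ that drives both directions of the equivalence.
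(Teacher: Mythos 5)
Your proposal is correct and takes essentially the same route as the paper's proof: both extend the Dunkl operators from $\h$ to all of $V$ so that $D_Y=0$ (you verify this identity explicitly via the splitting $\pi$, while the paper simply stipulates that $Y$ acts by $0$), use the invertibility of $3$ for $p>3$ to write $D_{y_1}=\tfrac{1}{3}(D_{y_1-y_2}+D_{y_1-y_3})$ for the forward direction, and use $S_3$-equivariance together with the (anti-)invariance of $v$ for the converse. The only cosmetic difference is that you obtain $D_{y_3}v=0$ by conjugating with $(13)$, whereas the paper deduces it from $D_{y_3}=D_Y-D_{y_1}-D_{y_2}$.
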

\begin{proof}
First, extend the action of $y\in \h$ to the action of all $y\in V$ by the same formula for Dunkl operators. This amounts to letting $Y=y_1+y_2+y_3$ act by $0$.

$\Rightarrow$
Assume $v$ satisfies $D_{y_i-y_j}(v)=0$ for all $i,j$. Then
$$D_{y_1}(v)=\frac{1}{3}\left( D_{y_1-y_2}(v)+D_{y_1-y_3}(v)+D_{Y}(v) \right)=0$$
as claimed. (Notice that this argument fails in characteristic $3$.)

$\Leftarrow$
Assume $D_{y_1}(v)=0$ and $s.v=\epsilon v$ for all $s\in S$ and $\epsilon=\pm 1$. Then 
\begin{align*}
D_{y_2}(v)&=(12)D_{y_1}((12).v)=\epsilon (12) D_{y_1}(v)=0,\\
D_{y_3}(v)&=D_{Y}(v)-D_{y_1}(v)-D_{y_2}(v)=0,
\end{align*}
so $D_y(v)=0$ for all $y\in V$ and in particular all $y\in \h.$
\end{proof}

\section{Irreducible representations of $H_{t,c}(S_3,\h)$ in characteristic $2$}\label{sectionp=2}

As explained in Section \ref{sect-IrrepsofS3}, over an algebraically closed field $\Bbbk$ of characteristic $2$ the irreducible representations of $H_{t,c}(S_3,\h)$ are $L_{t,c}(\Triv)$ and $L_{t,c}(\Stand)$. We work with the spanning set $\x{1},\x{2},\x{3}$ of $\h^*$ satisfying $\x{1}+\x{2}+\x{3}=0$ and the basis $\x{1},\x{2}$. The aim of the section is to prove the following theorem. Recall that the character of $S^{(2)}\h^*$ can be found in Corollary \ref{characterShp=2}.

\begin{theorem}\label{mainthmp=2}
The characters of the irreducible representations $L_{t,c}(\tau)$ of the rational Cherednik algebra $H_{t,c}(S_3,\h)$ over an algebraically closed field of characteristic $2$ for any $c,t$ and $\tau$ are given in the following tables. 
\begin{center}
{\renewcommand*{\arraystretch}{1.5}\begin{tabular}{ c |c }
$p=2$ & $\Triv$  \\ \hline \hline 
 $t=0$, $c\ne 0$ & $[\Triv](1+z^3)+[\Stand](z+z^2)$  \\
 $t=0$, $c=0$ & $[\Triv]$ \\ \hline 
 $t=1$, $c\notin \mathbb{F}_2$
  & $\chi_{S^{(2)}\h^*}(z) \left( [\Triv](1+z^6)+[\Stand](z^2+z^4)\right)$ \\ 
 $t=1$, $c=0$  & $\chi_{S^{(2)}\h^*}(z)$     \\ 
 $t=1$, $c=1$  & $[\Triv]$ 
\end{tabular}}
\end{center}

\begin{center}
{\renewcommand*{\arraystretch}{1.5}\begin{tabular}{ c |c }
$p=2$  & $\Stand$ \\ \hline \hline 
 $t=0$, $c\ne 0$ & $[\Stand](1+z^2)+2[\Triv]z$ \\
 $t=0$, $c=0$  & $[\Stand]$ \\ \hline 
 $t=1$, $c\notin \mathbb{F}_2$ 
  & $\chi_{S^{(2)}\h^*}(z)  \left([\Stand](1+z^4)+2[\Triv]z^2   \right)$ \\ 
 $t=1$, $c=0$  & $[\Stand](1+z+z^2)+2[\Triv]z$   \\ 
 $t=1$, $c=1$   & $[\Stand](1+z+z^2+2z^3+z^4+z^5+z^6)+[\Triv](z+2z^2+2z^4+z^5)$
\end{tabular}}
\end{center}

The corresponding Hilbert polynomials are: 
\begin{center}
{\renewcommand*{\arraystretch}{1.5}\begin{tabular}{ c |c |c }
$p=2$ & $\Triv$ & $\Stand$ \\ \hline \hline 
 $t=0$, $c\ne 0$ & $1+2z+2z^2+z^3$  & $2+2z+2z^2$ \\
 & \scriptsize{\cite[Thm 2.11]{CaKa21}} & \\  \hline 
 $t=0$, $c=0$ & 1 & 2 \\ \hline 
 $t=1$, $c\notin \mathbb{F}_2$  
  & $(1+z)^2 (1+2z^2+2z^4+z^6)$ & $2(1+z)^2(1+z^2+z^4)$ \\ 
  & \scriptsize{\cite[Thm 3.17]{CaKa21}} & \\  \hline  
 $t=1$, $c=0$  & $\left( 1+z \right)^2$  & $2\left( 1+z \right)^2$   \\ 
 $t=1$, $c=1$  & $1$ & $\frac{2-z-z^3-z^5-z^7+2z^8}{(1-z)^2}$ \\
 & \scriptsize{\cite[Thm 3.2]{Li14}}
\end{tabular}}
\end{center}
The singular vectors are given  explicitly in the lemmas below. 
\end{theorem}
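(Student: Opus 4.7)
The plan is as follows. The cases with $c=0$ follow immediately from Lemmas~\ref{t=c=0} and~\ref{t=1,c=0}. For $t=0$ and $c\ne 0$, Lemma~\ref{lemma-rescaleparameter} reduces us to a single representative, say $c=1$. For $(t,c,\tau)=(1,1,\Triv)$ the result is Lemma~\ref{Lian3.2.}: in characteristic $2$ we have $n=3 \equiv 1 \pmod 2$, so the condition $t=cn$ becomes $1=c$, and $L_{1,1}(\Triv)=[\Triv]$. This leaves the substantive cases $(t,c)=(0,1)$ for both $\tau$, the generic regime $t=1$, $c\notin \mathbb{F}_2$ for both $\tau$, and the special case $(t,c,\tau)=(1,1,\Stand)$.

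For each substantive case, the plan is to give explicit generators of $J_{t,c}(\tau)$ and then read off the character and Hilbert polynomial. Concretely I would: (i) decompose $M_{t,c}(\tau)$ into $S_3$-isotypic pieces in each degree using Theorems~\ref{decmposeShp=2} and~\ref{decmposeVermaStandp=2}; (ii) apply the grading Casimir (Lemma~\ref{OmegaAction}), which in characteristic $2$ acts by $0$ on $\Triv$-type and by $c$ on $\Stand$-type singular vectors, to restrict the degrees in which each type of singular vector can occur; (iii) explicitly evaluate the Dunkl operators on the listed basis vectors of each candidate subrepresentation, obtaining a linear system whose solutions are the singular vectors. Since Lemma~\ref{KerD1} is unavailable in characteristic~$2$, both $D_{y_1}$ and $D_{y_2}$ must be used; (iv) quotient by the submodule generated by the singular vectors found, re-decompose the quotient via Theorems~\ref{decmposeShp=2}/\ref{decmposeVermaStandp=2} applied to a subquotient, and iterate until the character stabilises. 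Irreducibility is then confirmed by comparing with the baby Verma module character of Corollary~\ref{characterShp=2} and verifying no further singular vectors arise. For the generic $t=1$ case, the additional input is that interesting things happen only in degrees divisible by $p=2$ (Proposition~3.4 of~\cite{BaCh13a}), which drastically restricts the search and makes the answer fit the expected $\chi_{S^{(2)}\h^*}(z)$-multiple of a finite character.

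The main obstacle I anticipate is twofold. First, non-semisimplicity of $\Bbbk[S_3]$ in characteristic $2$ produces indecomposable extensions of $\Triv$ by $\Triv$ inside $M_{t,c}(\tau)$, as visible in Theorems~\ref{decmposeShp=2} and~\ref{decmposeVermaStandp=2}. A singular vector of $\Triv$-type may lie anywhere in such an extension, and the submodule it generates can mix the two composition factors; one must be careful when subtracting the corresponding character and re-entering the iteration. Second, the case $(t,c,\tau)=(1,1,\Stand)$ is the most delicate: the irreducible quotient has the intricate character $[\Stand](1+z+z^2+2z^3+z^4+z^5+z^6)+[\Triv](z+2z^2+2z^4+z^5)$, so the singular vector search must be iterated through several degrees, and checking that nothing further vanishes requires a direct computation at each stage. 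Once the generators of $J_{t,c}(\tau)$ are obtained in each case, the characters in the table follow by subtraction and the Hilbert polynomials by taking $S_3$-dimensions degree by degree.
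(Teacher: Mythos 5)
Your plan is sound and is, in outline, the proof the paper gives: the theorem is proved by reducing to a list of lemmas (the $c=0$ cases from Lemmas \ref{t=c=0} and \ref{t=1,c=0}, the case $t=1,c=1,\tau=\Triv$ from Lemma \ref{Lian3.2.}, and the remaining cases by explicit Dunkl-operator computations in the bases of Theorems \ref{decmposeShp=2} and \ref{decmposeVermaStandp=2}, with the characteristic-$2$ Casimir statement, Lemma \ref{p=2Omega}, restricting where and in which isotypic type singular vectors can live, and with both $D_{y_1}$ and $D_{y_2}$ used since Lemma \ref{KerD1} fails for $p=2$). Two points where the paper's execution differs from yours are worth flagging. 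First, for $t=1$, $c\notin\mathbb{F}_2$, $\tau=\Triv$ the paper does not run a singular-vector search at all: it exploits that $N_{1,c}(\Triv)$ is a Frobenius algebra, so any nonzero submodule contains the top-degree element $\x{1}^5\x{2}^3$, and a direct computation of $D_{y_1-y_2}^3D_{y_1-y_3}^5(\x{1}^5\x{2}^3)=(1+c)^2c^3$ shows the submodule contains $1$; your alternative (a finite search for $\Stand$-type singular vectors in even degrees of the baby Verma) also works but is more case checking, and it buys nothing extra. Second, for the $\Stand$ cases your step ``quotient and subtract the character'' hides the real work: to know the character of $\left<v_1,v_2\right>$ one must show the induced map $M_{t,c}(\Stand)[-d]\to M_{t,c}(\Stand)$ is injective and decide which of $\overline{\sigma_2}^{(p)},\overline{\sigma_3}^{(p)}$ times $\tau$ land inside it; the paper does this cleanly via a $2\times 2$ determinant over $S\h^*$ (equal to $-3\overline{\sigma_2}$, resp.\ $(c+1)^2\overline{\sigma_2}^2$), together with an argument that the intersection $\left<v_1,v_2\right>\cap\left<\overline{\sigma_3}^2\otimes\x{i}\right>$ contributes nothing in the relevant degrees. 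Degree-by-degree linear algebra can replace this since all modules involved are small, but without some such injectivity/intersection control the character subtraction you describe is not yet justified; with it, your route closes all cases, including the iterated search $v_1,v_3,v_5,v_7$ for $(t,c,\tau)=(1,1,\Stand)$, exactly as in the paper.
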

\begin{proof}
The irreducible representation $L_{t,c}(\Triv)$ is described in the following lemmas:
\begin{itemize}
\item for $t=0,c\ne 0$ in Lemma \ref{p=2,t=0,generic,Triv};
\item for $t=c=0$ in Lemma \ref{t=c=0} or Lemma \ref{Lian3.2.}; 
\item for $t=1$, $c\ne 0,1$ in Lemma \ref{p=2,t=1,generic,Triv};
\item for $t=1$, $c=0$ in Lemma \ref{t=1,c=0};
\item for $t=1$, $c=1$ in Lemma \ref{Lian3.2.}.
\end{itemize}
The irreducible representation $L_{t,c}(\Stand)$ is described in the following lemmas:
\begin{itemize}
\item for $t=0,c\ne 0$ in Lemma \ref{pne3t=0cne0char};
\item for $t=c=0$ in Lemma \ref{t=c=0};
\item for $t=1$, $c\ne 0,1$ in Lemma \ref{p=2,t=1,Stand,cgen-endproof} and \ref{p=2,t=1,Stand,char};
\item for $t=1$, $c=0$ in Lemma \ref{t=1,c=0};
\item for $t=1$, $c=1$ in Lemma \ref{p=2,t=1,Stand,c=1}.
\end{itemize}

\end{proof}

\subsection{The irreducible representation $L_{0,c}(\Triv)$ in characteristic $2$ for $c\ne 0$}

We start by citing a theorem from \cite{CaKa21}. 

\begin{theorem}[\cite{CaKa21} Theorem 2.11.]\label{Ca-Ka-2-11}
Let $\Bbbk$ be an algebraically closed field of characteristic $2$, let $n$ be an arbitrary odd integer, $t=0$ and $c\in \Bbbk$ be generic. The singular vectors in $M_{0,c}(\Triv)$ which generate the maximal proper graded submodule are $\overline{x_i}^2+\overline{x_i}\overline{x_j}+\overline{x_j}^2, \overline{x_i}\overline{x_j}\overline{x_k}, \quad i<j<k<n.$ The Hilbert polynomial of the irreducible representation $L_{0,c}(\Triv)$ of $H_{0,c}(S_n)$  equals $$h_{L_{0,c}(\Triv)}(z)=(1+z)(1+(n-2)z+z^2).$$
\end{theorem}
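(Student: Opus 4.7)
The plan is to argue in three steps: verify that the listed vectors are singular, compute the Hilbert series of the quotient of $M_{0,c}(\Triv)$ by the submodule they generate, and show that this quotient is already irreducible for generic $c$.

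First, since $t=0$ each Dunkl operator $D_{y_\ell}$ has no differential part, and on $M_{0,c}(\Triv)\cong S\h^*\otimes\Triv$ the transpositions act trivially on the $\Triv$ factor; hence
\[
D_{y_\ell}(f\otimes 1)=c\sum_{a<b}\langle x_a-x_b,y_\ell\rangle\,\frac{f-(ab).f}{\overline{x_a}-\overline{x_b}}\otimes 1.
\]
Singularity of $f=\overline{x_i}^2+\overline{x_i}\overline{x_j}+\overline{x_j}^2$ then follows because the $(a,b)=(i,j)$ summand vanishes (this $f$ is $(ij)$-invariant), while for $(a,b)\neq(i,j)$ the characteristic-two factorisation $\overline{x_a}^2+\overline{x_b}^2=(\overline{x_a}+\overline{x_b})^2$ combined with the constraint $\sum_i\overline{x_i}=0$ makes the remaining sum collapse to zero after division by $\overline{x_a}-\overline{x_b}$. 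An analogous direct computation dispatches $f=\overline{x_i}\overline{x_j}\overline{x_k}$.

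Second, let $I\subseteq S\h^*$ be the ideal generated by the listed polynomials; the quotient of $M_{0,c}(\Triv)$ by the submodule they generate is $(S\h^*/I)\otimes\Triv$ as a graded vector space. A direct monomial reduction --- using the quadratic relations to recursively rewrite any $\overline{x_i}^2$ in terms of mixed products, and the cubic relations to kill triple products of distinct indices --- produces an explicit basis whose sizes assemble into the Hilbert polynomial $(1+z)(1+(n-2)z+z^2)$ and that vanishes in degree $\ge 4$. This is where one sees concretely why precisely the combinations listed are needed: the quadratic relations alone leave an infinite-dimensional quotient, and the cubic relations are exactly what forces the top degree to close up.

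The main obstacle, and the heart of the proof, is step three: showing that this upper bound is sharp, i.e.\ that no further singular vectors exist in the quotient for generic $c$. The cleanest route is via the contravariant Shapovalov pairing on $M_{0,c}(\Triv)$, whose radical equals $J_{0,c}(\Triv)$; one writes out the matrix of this pairing on each graded piece of the quotient and shows that its determinant is a non-zero polynomial in $c$, hence non-vanishing for generic $c\in\Bbbk^\times$. An alternative avenue is to use the smoothness and irreducibility of the Calogero--Moser space at generic $c$ for $t=0$ to obtain a lower bound $\dim L_{0,c}(\Triv)\ge|S_n|$, matching the upper bound from step two (note that evaluating the proposed Hilbert polynomial at $z=1$ gives $2n$, which for small $n$ one checks against $|S_n|$ by a different argument, and in general follows from the Calogero--Moser picture). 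In either approach the delicate point is handling positive characteristic: the usual Jantzen-filtration and BGG-type tools from characteristic zero require adjustment, and one must verify carefully that ``generic'' excludes only $c=0$ as promised by Proposition \ref{WhatIsGeneric}.
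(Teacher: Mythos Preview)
The paper does not prove this theorem; it cites it from \cite{CaKa21} and only reproves the special case $n=3$ (Lemma~\ref{p=2,t=0,generic,Triv}). There the argument is entirely concrete: for $n=3$ the listed generators are exactly $\overline{\sigma_2}$ and $\overline{\sigma_3}$, so the quotient is the baby Verma module $N_{0,c}(\Triv)$, whose character is already known (Corollary~\ref{characterShp=2}); irreducibility is checked by applying $D_{y_1-y_3}$ explicitly to a basis of degrees $1$, $2$, and $3$ and observing the result is never zero when $c\neq 0$. There is no Shapovalov form, no Calogero--Moser geometry, and no general-$n$ argument.

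Your outline for general $n$ has a genuine error in step three. Evaluating $(1+z)(1+(n-2)z+z^2)$ at $z=1$ gives $2n$, while $|S_n|=n!$; these coincide only for $n=3$. For $n\ge 5$ odd we have $2n<n!$, so if the Calogero--Moser space were smooth at generic $c$ the resulting lower bound $\dim L_{0,c}(\Triv)\ge |S_n|$ would \emph{contradict} the theorem rather than confirm it. In fact this is precisely the point of \cite{CaKa21}: in characteristic $2$ with $p\mid n-1$ the Calogero--Moser space is singular even at generic $c$, and the irreducibles are much smaller than $|S_n|$. So this ``alternative avenue'' is not available. The Shapovalov-determinant route you mention first is viable in principle, but you have not carried it out --- computing those determinants in each degree, over $\mathbb{F}_2$, for arbitrary odd $n$, is exactly the hard content, and asserting that they are nonzero polynomials in $c$ is the claim, not a proof of it.
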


We reprove a special case of this theorem, while also calculating the character of the representation $L_{0,c}(\Triv)$ and clarifying that ``generic $c$'' in this case means $c\ne 0$. The special case we will prove is the following lemma. The proof is following the ideas of \cite{CaKa21} and simplifying where possible.

\begin{lemma}\label{p=2,t=0,generic,Triv}
Let $\Bbbk$ be an algebraically closed field of characteristic $2$, and let the values of the parameters be $t=0$ and $c\ne 0$. The irreducible representation $L_{0,c}(\Triv)$ of $H_{0,c}(S_3)$ is equal to the baby Verma module $N_{0,c}(\Triv)$, with the character
$$\chi_{L_{0,c}(\Triv)}(z)=[\Triv]+[\Stand]z+[\Stand]z^2+[\Triv]z^3$$
and the
Hilbert polynomial $$h_{L_{0,c}(\Triv)}(z)=(1+z)(1+z+z^2)=1+2z+2z^2+z^3.$$
\end{lemma}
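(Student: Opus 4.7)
The strategy is to use the baby Verma module as the target and show it is already irreducible. By Corollary \ref{characterShp=2}(4),
$$\chi_{N_{0,c}(\Triv)}(z) = \chi_{M_{0,c}(\Triv)}(z)(1-z^2)(1-z^3) = (1+z^3)[\Triv] + (z+z^2)[\Stand],$$
which gives the Hilbert polynomial $1 + 2z + 2z^2 + z^3$. Since $\bar\sigma_2 \otimes 1$ and $\bar\sigma_3 \otimes 1$ are singular in $M_{0,c}(\Triv)$ (they are of the form $f \otimes v$ with $f$ a symmetric polynomial and $t=0$, as noted in Section \ref{sect-sympol}), the baby Verma module $N_{0,c}(\Triv)$ is a well-defined proper quotient and $L_{0,c}(\Triv)$ is a further quotient of it. So it suffices to prove $N_{0,c}(\Triv)$ is irreducible, equivalently that it contains no singular vectors of positive degree.

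Using Theorem \ref{decmposeShp=2} and reducing modulo the ideal generated by $\bar\sigma_2$ and $\bar\sigma_3$, I would identify representatives of each positive graded piece of $N_{0,c}(\Triv)$: in degree $1$ a copy of $\Stand$ spanned by $\{\bar{x}_1,\bar{x}_2\}$; in degree $2$ a copy of $\Stand$ spanned by $\{\bar{x}_1^2,\bar{x}_2^2\}$ (using the identification $\bar{x}_1\bar{x}_2 \equiv \bar{x}_1^2 + \bar{x}_2^2$ modulo $\bar\sigma_2$, valid in characteristic $2$); and in degree $3$ a copy of $\Triv$ spanned by $\bar{x}_1^3 + \bar{x}_1^2\bar{x}_2 + \bar{x}_2^3$. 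Since Dunkl operators transform as $\h$ under $S_3$, their common kernel is an $S_3$-subrepresentation, and any one of these irreducible isotypic components either lies entirely in the kernel of all $D_y$ or intersects it trivially. So it is enough, in each degree, to exhibit one vector $v$ and one operator $D_{y}$ such that $D_{y}(v) \neq 0$ in $N_{0,c}(\Triv)$.

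The explicit check uses
$$D_y(f) = -c \sum_{1 \le i<j \le 3} \langle x_i - x_j, y\rangle \frac{f-(ij)f}{\bar{x}_i - \bar{x}_j}$$
(the $t=0$ case of the Dunkl formula, acting trivially on the $\Triv$ factor). I would compute $D_{y_1}(\bar{x}_2)$ in degree $1$, $D_{y_1}(\bar{x}_2^2)$ in degree $2$, and $D_{y_1}(\bar{x}_1^3 + \bar{x}_1^2\bar{x}_2 + \bar{x}_2^3)$ in degree $3$, each time reducing modulo $\bar\sigma_2$ and $\bar\sigma_3$. In each case the result is a nonzero scalar multiple of $c$ times a nonzero representative in $N_{0,c}(\Triv)$, so the assumption $c \neq 0$ forces the vector not to be singular.

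The main obstacle is the degree-$3$ computation: the raw output of $D_{y_1}$ lives in $S^2\h^*$, and we need to verify the image is nonzero \emph{after} quotienting by $\bar\sigma_2$. If the result were a scalar multiple of $\bar\sigma_2$, the argument would fail. I therefore expect the bulk of the work to be a careful expansion of $D_{y_1}(\bar{x}_1^3 + \bar{x}_1^2\bar{x}_2 + \bar{x}_2^3)$, followed by reducing modulo $\bar\sigma_2$ using $\bar{x}_1\bar{x}_2 \equiv \bar{x}_1^2 + \bar{x}_2^2$, to confirm that the reduced expression is a nonzero multiple of $c$. Once the three Dunkl-operator checks are complete, the baby Verma module $N_{0,c}(\Triv)$ has no positive-degree singular vectors, so it equals $L_{0,c}(\Triv)$, and the stated character and Hilbert polynomial follow at once.
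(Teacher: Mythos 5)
Your strategy is essentially the paper's own: identify $L_{0,c}(\Triv)$ with the baby Verma module $N_{0,c}(\Triv)$, read off its character and Hilbert polynomial from Corollary \ref{characterShp=2}, and rule out singular vectors in degrees $1,2,3$ by explicit Dunkl-operator computations. Your representatives are correct (your degree-$3$ vector equals the class of $\x{1}^2\x{2}$ used in the paper, since $\x{1}^3,\x{2}^3$ lie in the ideal generated by $\s{2},\s{3}$), the reduction to checking a single vector per degree is valid because each positive graded piece of $N_{0,c}(\Triv)$ is irreducible and the joint kernel of the Dunkl operators is $S_3$-stable, and the computations you outline do go through: in degree $3$ the operator applied to $\x{1}^3+\x{1}^2\x{2}+\x{2}^3$ gives $c\,\x{1}^2$, which is not a multiple of $\s{2}$, so the obstacle you worry about does not occur.

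The one step you must justify is the use of $D_{y_1}$: in characteristic $2$ the vector $y_1$ does not lie in $\h$ (its coordinate sum is $1\ne 0$), and singularity is defined by the vanishing of $D_y$ for $y\in\h$ only, so $D_{y_1}(v)\ne 0$ does not by itself contradict singularity. The paper's Lemma \ref{KerD1}, which licenses working with $D_{y_1}$, is stated only for $p>3$, and the paper's own proof of this lemma works with $D_{y_i-y_3}$ instead. The gap closes in one line: for $t=0$ the operator $D_Y$ with $Y=y_1+y_2+y_3$ is identically zero (there is no differential part and $\left<x_i-x_j,Y\right>=0$), so in characteristic $2$ one has $D_{y_1}=D_{y_1-y_2}+D_{y_1-y_3}$, which lies in the span of the Dunkl operators for $y\in\h$; hence your nonvanishing checks do rule out singular vectors. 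Alternatively, simply replace $D_{y_1}$ by $D_{y_1-y_3}$ throughout, as the paper does.
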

\begin{proof}
When $n=3$, the elementary symmetric polynomials $\overline{\sigma_2}$ and $\overline{\sigma_3}$ in $S\h^*$ are exactly the vectors from Theorem \ref{Ca-Ka-2-11}. The quotient of $M_{0,c}(\Triv)$ by the submodule generated by these vectors is the baby Verma module $N_{0,c}(\Triv)$, which has the above stated character and Hilbert series by Corollary \ref{characterShp=2}. 

It remains to show that when $c\ne 0$, the baby Verma module $N_{0,c}(\Triv)$ is irreducible. We do this by checking that there are no singular vectors in it. 

For any $a_1,a_2\in \Bbbk$ and $i=1,2$ we directly calculate that 
\begin{align*}
D_{y_i-y_3}(a_1\x{1}+a_2\x{2})&=ca_i.
\end{align*}
As $c\ne 0$ this shows there are no singular vectors in $N_{0,c}^1(\Triv)$. 

A basis of $M_{0,c}^2(\Triv)\cong S^2\h^*$ is $\left\{\x{1}^2,\x{1}\x{2},\x{2}^2\right\}$, and $\overline{\sigma_2}$ is their sum. After taking the quotient by $\overline{\sigma_2}$, we choose $\left\{\x{1}^2,\x{2}^2\right\}$ as a basis of $N_{0,c}^2(\Triv)$. To look for singular vectors in $N_{0,c}^2(\Triv)$ we calculate, for $a_1,a_2\in \Bbbk$
\begin{align*}
D_{y_1-y_3}(a_1\x{1}^2+a_2\x{2}^2)&=-c\frac{\id-(12)}{\x{1}-\x{2}}(a_1\x{1}^2+a_2\x{2}^2)-c\frac{\id-(23)}{\x{2}-\x{3}}(a_1\x{1}^2+a_2\x{2}^2)\\
&=c\frac{(a_1+a_2)(\x{1}^2-\x{2}^2)}{\x{1}-\x{2}}+c\frac{a_2\x{2}^2-a_2\x{3}^2}{\x{2}-\x{3}}\\
&=c\left( a_1\x{1}+ (a_1+a_2)\x{2} \right)\ne 0.
\end{align*}
This shows that there are no singular vectors in $N_{0,c}^2(\Triv)$.

Finally, a basis of $N_{0,c}^3(\Triv)$ is $\left\{\x{1}^2\x{2}\right\}$, and we have 
\begin{align*}
D_{y_1-y_3}(\x{1}^2\x{2})
&=-c\frac{\x{1}^2\x{2}-\x{1}\x{2}^2}{\x{1}-\x{2}}-c\frac{\x{1}^2\x{2}-\x{1}^2\x{3}}{\x{2}-\x{3}}=-c\left(\x{1}\x{2}+\x{1}^2\right),
\end{align*}
which is not zero in $N_{0,c}^2(\Triv)$, showing there are no singular vectors in $N_{0,c}^3(\Triv)$.
\end{proof}

\subsection{The irreducible representation $L_{1,c}(\Triv)$ in characteristic $2$ for generic $c$} 
Again, we start by citing a theorem from \cite{CaKa21}.

\begin{theorem}[\cite{CaKa21} Theorem 3.17.]\label{thm-CaiKalinov317}
Let $\Bbbk$ be an algebraically closed field , let $n$ be an arbitrary odd integer, $t=1$ and $c\in \Bbbk$ be transcendental over $\mathbb{F}_2$. The Hilbert polynomial of the irreducible representation $L_{1,c}(\Triv)$ of $H_{1,c}(S_n,\h)$ equals $$h_{L_{1,c}(\Triv)}(z)=(1+z)^{n-1}(1+(n-1)z^2+(n-1)z^4+z^6).$$
\end{theorem}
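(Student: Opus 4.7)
The plan is to identify $L_{1,c}(\Triv)$ as the quotient of $M_{1,c}(\Triv)$ by an explicit submodule generated by singular vectors, compute the character of this quotient, and verify irreducibility so that no further singular vectors exist. The Hilbert polynomial then follows from the character by dimension count.

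First I would start with the $p$-th powers of the elementary symmetric polynomials. Since $n$ is odd and $p=2$, Section \ref{sect-sympol} gives $(S\h^*)^{S_n}=\Bbbk[\s{2},\ldots,\s{n}]$, and each $\s{i}^2$ is singular in $M_{1,c}(\Triv)$. Their submodule produces the baby Verma module $N_{1,c}(\Triv)$ with character $\chi_{S\h^*}(z)\prod_{i=2}^n(1-z^{2i})$. For $n=3$ this baby Verma already has Hilbert polynomial $(1+z)^2(1+2z^2+2z^4+z^6)$ (a direct check via Corollary \ref{characterShp=2}); but for $n\geq 5$ it is strictly larger than $L_{1,c}(\Triv)$, so further singular vectors are required. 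The factorization
\[
(1+z)^{n-1}(1+(n-1)z^2+(n-1)z^4+z^6)=h_{S^{(2)}\h^*}(z)\cdot\bigl(1+(\dim\Stand)z^2+(\dim\Stand)z^4+z^6\bigr)
\]
strongly suggests the $S_n$-character of $L_{1,c}(\Triv)$ is $\chi_{S^{(2)}\h^*}(z)\cdot\bigl([\Triv]+[\Stand]z^2+[\Stand]z^4+[\Triv]z^6\bigr)$, pinning down the degrees and isotypes of the additional singular vectors that must be constructed.

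For irreducibility of the proposed quotient I would combine three standard tools. Proposition 3.4 of \cite{BaCh13a} restricts singular vectors to degrees divisible by $p=2$, since $c\notin\mathbb{F}_2$. The Casimir $\Omega$ (the $S_n$-analogue of Lemma \ref{OmegaAction}) constrains the isotype of a singular vector of degree $m$ by an equation $\Omega|_{\tau'}=m$ in $\Bbbk$, and transcendence of $c$ over $\mathbb{F}_2$ forces $m$ to match the constant-in-$c$ part only. Finally, for each remaining candidate isotype and degree I would choose a basis adapted to the restriction from $S_n$ to $S_{n-1}$ (mirroring Section \ref{sec-basisofh} and Theorem \ref{decmposeShp=2}), compute the Dunkl operators explicitly, and use that $c$ is transcendental to conclude that the resulting polynomial-in-$c$ coefficients cannot all vanish simultaneously.

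The main obstacle is the explicit construction of the additional singular vectors for arbitrary odd $n$: there is no uniform formula evident from the statement, and natural candidates must be built from combinations of the Vandermonde polynomial $q=\prod_{i<j}(x_i-x_j)$, its Frobenius powers $q^{2^k}$, and the indecomposable extensions of $\Triv$ by $\Triv$ appearing inside $S\h^*$ in characteristic $2$ (Theorem \ref{decmposeShp=2}). A secondary difficulty is bookkeeping with composition factors rather than isotypic components, since $\Bbbk[S_n]$ is not semisimple in characteristic $2$; the proposed character must match composition multiplicities exactly, which requires tracking the extension data that survives in the quotient.
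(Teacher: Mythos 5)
There is a genuine gap: for general odd $n\ge 5$ your argument stops exactly where the theorem begins. You correctly observe that in characteristic $2$ the submodule generated by the squares $\s{2}^2,\ldots,\s{n}^2$ yields the baby Verma module, but its dimension is $2^{n-1}n!$, whereas the claimed Hilbert polynomial gives $\dim L_{1,c}(\Triv)=2^{n-1}\cdot 2n=2^n n$, so for $n\ge 5$ many further singular vectors are required; you neither determine their degrees nor construct them, and the proposed character $\chi_{S^{(2)}\h^*}(z)\bigl([\Triv]+[\Stand]z^2+[\Stand]z^4+[\Triv]z^6\bigr)$ is only read off from the answer you are trying to prove. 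Without those generators, the irreducibility scheme (degrees divisible by $2$, Casimir constraint, transcendence of $c$) has nothing to be applied to, so the proposal only establishes the case $n=3$, where $L_{1,c}(\Triv)$ is indeed the baby Verma module. Note also that the paper itself does not prove the general statement: it is quoted from \cite{CaKa21}, and only the special case $n=3$ is reproved (Lemma \ref{p=2,t=1,generic,Triv}).

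Two further comments. For $n=3$ your identification $L_{1,c}(\Triv)=N_{1,c}(\Triv)$ with Hilbert polynomial $(1+z)^2(1+2z^2+2z^4+z^6)$ agrees with the paper, but the paper's irreducibility argument avoids a degree-by-degree singular-vector search: it uses that $N_{1,c}(\Triv)$ is a Frobenius algebra, so any nonzero submodule contains the top-degree vector $\x{1}^5\x{2}^3$, and an explicit chain of Dunkl operators $D_{y_1-y_2}^3D_{y_1-y_3}^5$ sends this vector to the scalar $(1+c)^2c^3\ne 0$, forcing the submodule to be everything; this also shows that ``generic'' means $c\notin\mathbb{F}_2$, which is stronger than transcendence. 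For general odd $n$ the economical route --- the one suggested by the remark the paper makes immediately after the statement --- is not to hunt for singular vectors at all, but to combine Theorem \ref{Ca-Ka-2-11} with Proposition 3.4 of \cite{BaCh13a}: for generic $c$ the Hilbert series of $L_{1,c}(\Triv)$ factors as $h_{S^{(2)}\h^*}(z)=(1+z)^{n-1}$ times the reduced Hilbert polynomial evaluated at $z^2$, and the reduced polynomial equals $h_{L_{0,c}(\Triv)}(z)=(1+z)\bigl(1+(n-2)z+z^2\bigr)$, giving $(1+z)^{n-1}(1+z^2)\bigl(1+(n-2)z^2+z^4\bigr)=(1+z)^{n-1}\bigl(1+(n-1)z^2+(n-1)z^4+z^6\bigr)$ as claimed.
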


Note that the reduced Hilbert polynomial (see \cite{BaCh13a} Proposition 3.4]) of $L_{1,c}(\Triv)$ equals the Hilbert polynomial of $L_{0,c}(\Triv)$. 

Let us reprove a special case $n=3$ of this theorem, while also calculating the character of $L_{1,c}(\Triv)$ and clarifying that generic $c$ means $c\notin \mathbb{F}_2$. 

\begin{lemma}\label{p=2,t=1,generic,Triv}
Let $\Bbbk$ be an algebraically closed field of characteristic $2$, $t=1$ and $c\ne 0,1$. The irreducible representation $L_{1,c}(\Triv)$ equals the baby Verma module $N_{1,c}(\Triv)$ with
\begin{align*}
\chi_{L_{1,c}(\Triv)}(z)&=([\Triv]+[\Stand]z+[\Triv]z^2)([\Triv]+[\Stand]z^2+[\Stand]z^4+[\Triv]z^6)\\
h_{L_{1,c}(\Triv)}(z)&=(1+z)^2(1+2z^2+2z^4+z^6).
\end{align*}
\end{lemma}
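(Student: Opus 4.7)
The plan is to compute the character of the baby Verma module $N_{1,c}(\Triv)$ using the structural results of Section~\ref{sect-prelim}, and then to show that $N_{1,c}(\Triv)$ is itself irreducible. For the character, I would apply parts~(1) and~(4) of Corollary~\ref{characterShp=2} to get
\[
\chi_{N_{1,c}(\Triv)}(z)=\chi_{S\h^{*}}(z)(1-z^{4})(1-z^{6})=(1+z^{2})(1+z^{3})\bigl((1+z^{3})[\Triv]+(z+z^{2})[\Stand]\bigr),
\]
and then regroup this expression using part~(2) of the same corollary to match the form $\chi_{S^{(2)}\h^{*}}(z)\cdot\bigl([\Triv]+[\Stand]z^{2}+[\Stand]z^{4}+[\Triv]z^{6}\bigr)$ stated in the lemma; the Hilbert polynomial $(1+z)^{2}(1+2z^{2}+2z^{4}+z^{6})$ follows by taking dimensions.

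For the irreducibility step, I observe that $\s{2}^{2}\otimes 1$ and $\s{3}^{2}\otimes 1$ are singular vectors in $M_{1,c}(\Triv)$ by Section~\ref{sect-sympol}, so $L_{1,c}(\Triv)$ is already a graded quotient of $N_{1,c}(\Triv)$ and it is enough to show that $N_{1,c}(\Triv)$ has no further singular vector in positive degree. Since $c\in\Bbbk\setminus\{0,1\}=\Bbbk\setminus\mathbb{F}_{2}$ is generic by Proposition~\ref{WhatIsGeneric}, Proposition~3.4 of~\cite{BaCh13a} confines such singular vectors to degrees divisible by $p=2$, and since $N_{1,c}(\Triv)$ is supported in degrees $0$ through $8$, only $k=2,4,6,8$ need to be checked. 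In each of these degrees I would use Theorem~\ref{decmposeShp=2} to write down the indecomposable $S_{3}$-decomposition of $S^{k}\h^{*}$, quotient by $\s{2}^{2}\cdot S^{k-4}\h^{*}+\s{3}^{2}\cdot S^{k-6}\h^{*}$ to obtain $N_{1,c}^{k}(\Triv)$, and apply $D_{y_{1}}$ to each candidate basis vector. Because $\s{2}$ and $\s{3}$ are $S_{3}$-invariant, the difference part of $D_{y_{1}}$ passes through them and the differential part is handled by the Leibniz rule, so each computation reduces to one involving the small prefactors $1,\x{1},\x{2},\x{1}^{2},\x{2}^{2},\x{1}^{3}+\x{1}^{2}\x{2}+\x{2}^{3}$; the outputs are expressions whose coefficients in the chosen basis only vanish simultaneously when $c\in\mathbb{F}_{2}$, contradicting the hypothesis.

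The principal obstacle will be the indecomposable extension of $\Triv$ by $\Triv$ that Theorem~\ref{decmposeShp=2} places in $S^{k}\h^{*}$ whenever $3\mid k$: a would-be singular vector of type $\Triv$ can a priori mix the sub-$\Triv$ generator $\s{2}^{a}\s{3}^{b}$ with the extension generator $\s{2}^{a}\s{3}^{b-1}(\x{1}^{3}+\x{1}^{2}\x{2}+\x{2}^{3})$, and the two components of $D_{y_{1}}$ applied to them can conceivably cancel only after invoking $\s{2}^{2}=\s{3}^{2}=0$. I would handle this by working in the explicit basis from Theorem~\ref{decmposeShp=2}, so that the resulting linear system in the unknown coefficients can be read off directly and verified to have trivial kernel for $c\notin\mathbb{F}_{2}$.
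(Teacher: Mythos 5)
Your character computation is the same as the paper's (it simply cites Corollary \ref{characterShp=2} and the regrouping you describe, using $[\Stand]\otimes[\Stand]=2[\Triv]+[\Stand]$ in $K_0(S_3)$, does check out), but your irreducibility argument takes a genuinely different route. You propose to rule out singular vectors degree by degree: restrict to even degrees (via \cite[Prop 3.4]{BaCh13a}, or equivalently via the Casimir argument of Lemma \ref{p=2Omega}, which also excludes type $\Stand$ since $c\notin\mathbb{F}_2$), and then solve the linear systems coming from Dunkl operators in degrees $2,4,6,8$ in the basis of Theorem \ref{decmposeShp=2}. The paper instead argues top-down: it uses that $N_{1,c}(\Triv)\cong S\h^*/\left<\s{2}^2,\s{3}^2\right>$ is a Frobenius algebra, so any nonzero submodule $U$ contains the top-degree vector $\x{1}^5\x{2}^3$, and then a single explicit chain of computations gives $D_{y_1-y_2}^3D_{y_1-y_3}^5(\x{1}^5\x{2}^3)=(1+c)^2c^3\ne 0$, whence $1\in U$ and $U$ is everything. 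The trade-off: your route needs four separate degree checks (with the indecomposable $\Triv$-extensions you already flagged), while the paper's needs the Frobenius/duality input but reduces everything to one eight-step Dunkl calculation whose outcome visibly isolates $c=0,1$. One caveat for your plan: Lemma \ref{KerD1} is only stated for $p>3$, and in characteristic $2$ the paper works with $D_{y_1-y_2}$, $D_{y_1-y_3}$ rather than $D_{y_1}$; you may interpret $D_{y_1}$ as $D_{y_1-y_2}+D_{y_1-y_3}$ (legitimate since $3$ is invertible and $D_Y=0$), and its nonvanishing on a vector does rule out singularity, but if its kernel turns out nontrivial in some degree you must be prepared to apply the second operator as well, or exploit $s_1$-symmetry as in Lemma \ref{p=3,t=1,cne1vectors}; this is a matter of bookkeeping, not a gap in the approach.
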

\begin{proof}
The only thing to check is that for $c\ne 0,1$ the baby Verma module $N_{1,c}(\Triv)$ is irreducible, as its character has been computed in Corollary \ref{characterShp=2}. 

Assume that $U\ne 0$ is a submodule of $N_{1,c}(\Triv)$. The module $N_{1,c}(\Triv)$ is 
a Frobenius algebra, in the sense that for every $k$ the multiplication $N_{1,c}^k(\Triv)\times N_{1,c}^{8-k}(\Triv)\to N_{1,c}^8(\Triv)$ is a nondegenerate pairing. Using the action of $S\h^*\subseteq H_{t,c}(S_3,\h)$, this implies $\x{1}^5\x{2}^3\ne 0 \in U$. Using the action of $S\h\subseteq H_{t,c}(S_3,\h)$, we get that 
$$D_{y_1-y_2}^3D_{y_1-y_3}^5(\x{1}^5\x{2}^3) \in U.$$

Let us calculate this vector. We work with the basis $\left\{\x{1}^i\x{2}^j \mid i<6, j<4\right\}$ of $N_{1,c}(\Triv)$, and use that $\x{1}+\x{2}+\x{3}=0$, $\overline{\sigma_2^2}=0$, and $\overline{\sigma_3^2}=0$ in $N_{1,c}(\Triv)$. 
\begin{align*}
D_{y_1-y_3}(\x{1}^5\x{2}^3)
&=5\cdot \x{1}^4\x{2}^3   -c \cdot  \left( \frac{\x{1}^5\x{2}^3-\x{1}^3\x{2}^5}{\x{1}-\x{2}}+\frac{\x{1}^5\x{2}^3-\x{1}^5\x{3}^3}{\x{2}-\x{3}}\right)\\
&=(1+c) \x{1}^4\x{2}^3,\\
D_{y_1-y_3}^2(\x{1}^5\x{2}^3)
&=(1+c)\left( 4\cdot \x{1}^3\x{2}^3 -c\left(\frac{\x{1}^4\x{2}^3-\x{1}^3\x{2}^4}{\x{1}-\x{2}}+\frac{\x{1}^4\x{2}^3-\x{1}^4\x{3}^3}{\x{2}-\x{3}} \right) \right)\\
&=(1+c)c \left(\x{1}^5\x{2}+\x{1}^4\x{2}^2+ \x{1}^3\x{2}^3\right)\\
D_{y_1-y_3}^3(\x{1}^5\x{2}^3)
&=(1+c)c \left( \x{1}^4\x{2}+\x{1}^2\x{2}^3 \right),\\
D_{y_1-y_3}^4(\x{1}^5\x{2}^3)
&=(1+c)c^2\left( \x{1}^2\x{2}^2+\x{1}\x{2}^3 \right)\\
D_{y_1-y_3}^5(\x{1}^5\x{2}^3)
&=(1+c)c^2 \x{2}^3,\\
D_{y_1-y_2}D_{y_1-y_3}^5(\x{1}^5\x{2}^3)
&=(1+c)c^2\left( -3\x{2}^2-c\left( \frac{\x{2}^3-\x{2}^3}{\x{1}-\x{3}}-\frac{\x{2}^3-\x{3}^3}{\x{2}-\x{3}}
\right)\right)\\
&=(1+c)c^2\left(c \x{1}^2+ c\x{1}\x{2}+(1+c)\x{2}^2 \right),\\
D_{y_1-y_2}^2D_{y_1-y_3}^5(\x{1}^5\x{2}^3)=
&=(1+c)c^3 \x{2},\\
D_{y_1-y_2}^3D_{y_1-y_3}^5(\x{1}^5\x{2}^3)
&=(1+c)^2c^3.
\end{align*}

So, $(1+c)^2c^3\in U$, and for $c\ne 0,1$ this implies $1\in U$. Using $S\h^*$ action this implies $U=N_{1,c}(\Triv)$ and shows that $N_{1,c}(\Triv)$ is irreducible.  
\end{proof}

\subsection{The irreducible representation $L_{0,c}(\Stand)$ in characteristic $p\ne 3$ for generic $c$}

In the following sections we aim to describe $L_{0,c}(\Stand)$ over an algebraically closed field of characteristic $2$, but the proofs work for any characteristic $p\ne 3$.

\begin{lemma}\label{lemma-matricesofDunklsStand1}
For any $p,t$ and $c$ the matrices of the Dunkl operators $D_{y_1-y_2}$ and $D_{y_2-y_3}$ on $M_{t,c}^1(\Stand)$ written in the bases $\left\{\x{1}\otimes \x{1},  \x{1}\otimes \x{2}, \x{2}\otimes \x{1},  \x{1}\otimes \x{2} \right\}$ for $M_{t,c}^1(\Stand)$ and $\left\{1\otimes \x{1}, 1\otimes \x{2}\right\}$ for $M_{t,c}^0(\Stand)$ are: 
\[\left[(D_{y_1-y_2})|_{M_{t,c}^1(\Stand)}\right]=
\begin{blockarray}{ccccc}
{\scriptstyle \x{1}\otimes \x{1}} & {\scriptstyle \x{1}\otimes \x{2}} &  {\scriptstyle \x{2}\otimes \x{1}} & {\scriptstyle \x{2}\otimes \x{2}}\\
\begin{block}{[cccc]c}
t+c &  -2c &-t+c & c & {\scriptstyle 1\otimes \x{1}} \\
-c &  t-c & 2c & -t-c & {\scriptstyle 1\otimes \x{2}} \\
\end{block}
\end{blockarray}
\]
\[\left[(D_{y_2-y_3})|_{M_{t,c}^1(\Stand)}\right]=
\begin{blockarray}{ccccc}
{\scriptstyle \x{1}\otimes \x{1}} & {\scriptstyle \x{1}\otimes \x{2}} &  {\scriptstyle \x{2}\otimes \x{1}} & {\scriptstyle \x{2}\otimes \x{2}}\\
\begin{block}{[cccc]c}
c & c & t-2c & c & {\scriptstyle 1\otimes \x{1}} \\
2c & -c & -c & t+2c & {\scriptstyle 1\otimes \x{2}} \\
\end{block}
\end{blockarray}
\] 
\end{lemma}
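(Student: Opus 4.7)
The proof is a direct computation from the defining formula of the Dunkl operators. Let me outline the steps.

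First, I would unfold the Dunkl operator $D_{y_i-y_j}$ using the formula
\[
D_y = t\partial_y\otimes \id -\sum_{k<l}c\langle x_k-x_l,y\rangle\frac{1-(kl)}{x_k-x_l}\otimes(kl).
\]
For $y=y_1-y_2$ the pairings $\langle x_k-x_l,y_1-y_2\rangle$ are $2,1,-1$ for $(k,l)=(1,2),(1,3),(2,3)$ respectively, and for $y=y_2-y_3$ they are $-1,1,2$. These numerical coefficients give the prefactors in front of each reflection term.

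Next, I would evaluate both operators on each of the four basis vectors $\x{i}\otimes\x{j}$ with $i,j\in\{1,2\}$, treating the two tensor factors separately. On the first factor: $\partial_{y_1-y_2}(\x{i})=\delta_{i1}-\delta_{i2}$, and for each reflection $(kl)\in S$ the quotient $\frac{\x{i}-(kl)\x{i}}{\x{k}-\x{l}}$ is either $0$, $1$, or $-1$ (for instance $(13)$ fixes $\x{2}$ so contributes $0$, while $(12)$ sends $\x{2}$ to $\x{1}$, giving $-1$). On the second factor the reflections act via $(12)\x{1}=\x{2}$, $(23)\x{2}=\x{3}=-\x{1}-\x{2}$, and $(13)\x{1}=\x{3}=-\x{1}-\x{2}$, using $\x{1}+\x{2}+\x{3}=0$ to rewrite everything back in terms of $\x{1},\x{2}$.

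As a sample check, for $D_{y_1-y_2}(\x{1}\otimes\x{1})$ the differential part contributes $t\cdot\x{1}$; the $(12)$-term contributes $-2c\cdot(12)\x{1}=-2c\x{2}$; the $(13)$-term contributes $-c\cdot(13)\x{1}=-c(-\x{1}-\x{2})=c\x{1}+c\x{2}$; the $(23)$-term vanishes because $(23)$ fixes $\x{1}$ in the first tensor factor. Summing gives $(t+c)\x{1}-c\x{2}$, matching the first column of the stated matrix. The remaining seven entries of the $D_{y_1-y_2}$ matrix and all eight entries of the $D_{y_2-y_3}$ matrix follow by the same mechanical bookkeeping. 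There is no real obstacle here beyond keeping track of signs and the substitution $\x{3}=-\x{1}-\x{2}$; the computation is valid in any characteristic and for any $t,c$ because no denominators or normalizations are introduced.
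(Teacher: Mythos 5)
Your proposal is correct and matches the paper's own proof, which is simply a direct evaluation of the Dunkl operator formula on the four basis vectors using $\x{3}=-\x{1}-\x{2}$ (the paper merely says ``direct computation,'' optionally shortening the bookkeeping via $(12)$-conjugation symmetry). Your pairing values $\langle x_k-x_l,y_1-y_2\rangle$ and $\langle x_k-x_l,y_2-y_3\rangle$ and your sample entry $(t+c)\x{1}-c\x{2}$ all check out, so nothing further is needed.
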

\begin{proof}
Direct computation.

\end{proof}

\begin{lemma}\label{pne3,t=0,cne0vectors}
\begin{enumerate}
\item For any $p$ the following vectors in $M_{0,c}^1(\Stand)$ are singular:
\begin{align*}
v_1&=-\x{1}\otimes \x{1}+\x{1}\otimes \x{2}+\x{2}\otimes \x{1}+2\x{2}\otimes \x{2}\\ v_2&= 2\x{1}\otimes \x{1}+\x{1}\otimes \x{2}+\x{2}\otimes \x{1}-\x{2}\otimes \x{2}.
\end{align*}
\item For $p\ne 3$ and $c\ne 0$, $v_1$ and $v_2$ span the set of singular vectors in $M_{0,c}^1(\Stand)$.
\end{enumerate}
\end{lemma}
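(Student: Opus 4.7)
The plan is to reduce both claims to elementary linear algebra using the matrices in Lemma \ref{lemma-matricesofDunklsStand1}, specialised to $t=0$. Since $y_1-y_2$ and $y_2-y_3$ are linearly independent and lie in $\h$ in any characteristic, they span $\h$, so a vector in $M^1_{t,c}(\Stand)$ is singular if and only if it is annihilated by both $D_{y_1-y_2}$ and $D_{y_2-y_3}$. Thus for both parts it suffices to work with the two $2\times 4$ matrices obtained from Lemma \ref{lemma-matricesofDunklsStand1} by setting $t=0$.

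For part (1), I would simply apply these two specialised matrices to the coordinate vectors $(-1,1,1,2)^T$ and $(2,1,1,-1)^T$ of $v_1$ and $v_2$ in the ordered basis $\{\x{1}\otimes\x{1},\x{1}\otimes\x{2},\x{2}\otimes\x{1},\x{2}\otimes\x{2}\}$ and check directly that all eight resulting entries vanish. Since no division is used, this holds in every characteristic $p$.

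For part (2), I would stack the two $2\times 4$ matrices into one $4\times 4$ matrix $M$ and compute its kernel. Assuming $c\neq 0$, I can factor $c$ out of every row. A quick row reduction shows that the second row of the block coming from $D_{y_1-y_2}$ is the negative of the first row of the block coming from $D_{y_2-y_3}$, so one of these rows drops out. Adding two of the remaining rows produces the relation $-3b+3g=0$ on coordinates $(a,b,g,d)$, which forces $b=g$ precisely because $p\neq 3$. With $b=g$, the other surviving equations collapse to the single constraint $a=b-d$, cutting out a $2$-dimensional subspace. Since $v_1$ and $v_2$ already lie in this kernel by part (1), it remains only to check that they are linearly independent: the $2\times 2$ minor $\det\begin{pmatrix}-1&2\\2&-1\end{pmatrix}=-3$ is nonzero exactly when $p\neq 3$, giving the result.

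There is no serious obstacle here: the argument is pure bookkeeping. The only place the characteristic matters is in the two factors of $3$ that appear in the row reduction and in the independence minor; both vanish at $p=3$, which is consistent with the fact that $\Stand$ is not irreducible there and the kernel actually becomes $3$-dimensional. Since the statement excludes $p=3$, no extra care is needed.
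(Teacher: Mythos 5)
Your proposal is correct and uses essentially the same method as the paper: both parts reduce to linear algebra on the $t=0$ specialisations of the matrices in Lemma \ref{lemma-matricesofDunklsStand1}. The only difference is that for part (2) the paper avoids the full row reduction of the stacked $4\times 4$ system by observing that the leftmost $2\times 2$ minor of $(D_{y_1-y_2})|_{M^1_{t,c}(\Stand)}$ equals $t^2-3c^2=-3c^2\neq 0$ for $p\neq 3$, $c\neq 0$, so the kernel of that single Dunkl operator is already $2$-dimensional and must therefore coincide with the span of $v_1,v_2$.
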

\begin{proof}
As $\left\{y_1-y_2,y_2-y_3\right\}$ span $\h$, the intersection of the kernels of $D_{y}$ for all $y\in \h$ equals $\ker(y_1-y_2)\cap \ker(y_2-y_3)$. 
\begin{enumerate} 
\item This follows immediately from Lemma \ref{lemma-matricesofDunklsStand1} by setting $t=0$.
\item The determinant of the leftmost $2\times 2$ minor of $(D_{y_1-y_2})|_{M_{t,c}^1(\Stand)}$ equals
$(t+c)(t-c)-(-c)(-2c)=t^2-3c^2$, which is nonzero when $p\ne 3$, $t=0$ and $c\ne 0$. This shows that $(D_{y_1-y_2})|_{M_{t,c}^1(\Stand)}$ has rank $2$ so its kernel has dimension $2$. 
\end{enumerate}
\end{proof}

\begin{lemma}\label{pne3t=0cne0char}
Let $p\ne 3$, $t=0$, $c\ne 0$, and $v_1,v_2\in M_{0,c}(\Stand)$ from Lemma \ref{pne3,t=0,cne0vectors}.

\begin{enumerate}
\item The subrepresentation $\left<v_1,v_2\right>$  is isomorphic to $M_{0,c}(\Stand)[-1]$.  

\item The Hilbert series of the quotient $M_{0,c}(\Stand)/\left<v_1,v_2 \right>$ is $\frac{2(1-z)}{(1-z)^2}$. 

\item For $i=1,2$ the vectors $\overline{\sigma_2}\otimes \x{i}$ lie in $\left<v_1,v_2\right>$. 

\item For $i=1,2$ the vectors $\overline{\sigma_3}\otimes \x{i}$, $i=1,2$ do not lie in $\left<v_1,v_2\right>$. 

\item The quotient $M_{0,c}(\Stand)/\left<v_1,v_2,\overline{\sigma_3}\otimes \x{1},\overline{\sigma_3}\otimes \x{2} \right>$ 
has the character $$\chi_{M_{0,c}(\Stand)/\left<v_1,v_2,\overline{\sigma_3}\otimes \x{1},\overline{\sigma_3}\otimes \x{2}\right>}(z)=[\Stand]+([\Triv]+[\Sign])z+[\Stand]z^2$$
and the Hilbert polynomial 
$$h_{M_{0,c}(\Stand)/\left<v_1,v_2,\overline{\sigma_j}\otimes \x{i}, j=2,3, i=1,2\right>}(z)=2+2z+2z^2.$$ 

\item The quotient $M_{0,c}(\Stand)/\left<v_1,v_2,\overline{\sigma_3}\otimes \x{1},\overline{\sigma_3}\otimes \x{2}\right>$  is irreducible.

\end{enumerate}
\end{lemma}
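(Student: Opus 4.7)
The plan is to exploit the free $S\h^*$-module structure of $M_{0,c}(\Stand)\cong S\h^*\otimes\Stand$, taking advantage of the fact that $S\h^*=\Bbbk[\x{1},\x{2}]$ is an integral domain and a UFD. The key computational input is expressing $v_1,v_2$ in the free $S\h^*$-basis $\left\{1\otimes\x{1},\,1\otimes\x{2}\right\}$: this produces a $2\times 2$ matrix $A$ whose determinant a short calculation shows equals $3\overline{\sigma_2}$, nonzero in $S\h^*$ whenever $p\ne 3$.

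For part (1), I would first check that $s_1\cdot v_1=v_2$, so that $v_1,v_2$ span a copy of $\Stand$; together with singularity from Lemma \ref{pne3,t=0,cne0vectors}, this defines an $H$-module homomorphism $\varphi\colon M_{0,c}(\Stand)[-1]\to M_{0,c}(\Stand)$. Since $v_1,v_2$ are singular and $S_3$ fixes their linear span setwise, $\left<v_1,v_2\right>$ coincides with the $S\h^*$-span of $v_1,v_2$; the nonvanishing of $\det A$ in the domain $S\h^*$ forces this span to be $S\h^*$-free of rank $2$, with Hilbert series $2z/(1-z)^2$ matching that of $M_{0,c}(\Stand)[-1]$. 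Hence $\varphi$ is an isomorphism onto its image. Part (2) then follows immediately from the short exact sequence $0\to M_{0,c}(\Stand)[-1]\to M_{0,c}(\Stand)\to M_{0,c}(\Stand)/\left<v_1,v_2\right>\to 0$ by subtracting Hilbert series.

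Parts (3) and (4) both reduce to Cramer's rule over $\mathrm{Frac}(S\h^*)$. Multiplication by $\det A=3\overline{\sigma_2}$ shows $3(\overline{\sigma_2}\otimes\x{i})\in\left<v_1,v_2\right>$, and invertibility of $3$ for $p\ne 3$ gives (3). For (4), the Cramer candidate for $\overline{\sigma_3}\otimes\x{i}$ involves $\overline{\sigma_2}$ in the denominator and $\overline{\sigma_3}$ times linear polynomials in the numerator; since $\overline{\sigma_2}$ and $\overline{\sigma_3}$ factor into distinct linear forms over the algebraically closed field $\Bbbk$ (for $p\ne 3$) and are therefore coprime in the UFD $S\h^*$, no polynomial solution exists and $\overline{\sigma_3}\otimes\x{i}\notin\left<v_1,v_2\right>$.

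For part (5), I would work inside $Q=M_{0,c}(\Stand)/\left<v_1,v_2\right>$, using the defining relations $v_1=v_2=0$ to produce an explicit $\Bbbk$-basis of each graded piece, and verify directly that the images of $\overline{\sigma_3}\otimes\x{1}$ and $\overline{\sigma_3}\otimes\x{2}$ form a basis of $Q^3$. The $S\h^*$-submodule they generate in $Q$ therefore equals $Q_{\ge 3}$, yielding the claimed character and Hilbert polynomial. For part (6), the resulting quotient lives only in degrees $0,1,2$ with dimension $2$ in each, and irreducibility follows by checking there are no nonzero singular vectors in degrees $1$ or $2$: a finite Dunkl operator computation whose kernel collapses when $c\ne 0$. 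The main obstacle is part (5), where one must track the $S\h^*$-action on $Q$ carefully enough to identify the submodule generated by the images of $\overline{\sigma_3}\otimes\x{i}$; the remaining parts reduce cleanly to linear algebra and divisibility in the polynomial ring $S\h^*$.
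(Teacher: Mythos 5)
Your proposal is correct and follows essentially the same route as the paper: an $H$-module map $M_{0,c}(\Stand)[-1]\to M_{0,c}(\Stand)$ built from the singular copy of $\Stand$, injectivity and parts (3)--(4) via the $2\times 2$ system with determinant $\pm 3\overline{\sigma_2}$ and coprimality of numerator and denominator, then degree-by-degree character bookkeeping for (5) and a low-degree Dunkl computation (using $c\ne 0$) for (6). The only differences are cosmetic — you invoke the adjugate identity instead of solving the system explicitly, and in (5) you argue that the images of $\overline{\sigma_3}\otimes\x{i}$ span the two-dimensional degree-$3$ piece rather than subtracting characters — which is the same computation packaged slightly differently.
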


\begin{proof}
\begin{enumerate}
\item The vectors $v_1,v_2$ are singular and span an $S_3$-subrepresentation isomorphic to the standard representation via the isomorphism $$\varphi:\Stand \to \mathrm{span}\left\{v_1,v_2\right\}$$ given by $\varphi(\x{i})=v_i.$
By the universal mapping property $\varphi$ extends to a homomorphism of graded $H_{0,c}(S_3)$ representations
$$\varphi:M_{0,c}(\Stand)[-1]\to M_{0,c}(\Stand),$$
given, for $f\in S\h^*$ and $i=1,2$, by $$\varphi(f\otimes \x{i})=f\cdot v_i.$$ 
We claim $\varphi$ is injective. 

Assume that some $v\in M_{0,c}(\Stand)$ is in the kernel of $\varphi$. Without loss of generality $v$ is homogeneous, so there are $A,B\in S\h^*$ homogeneous of the same degree such that 
$$v=A\otimes \x{1}+B\otimes \x{2}.$$
Applying $\varphi$ we get
$$Av_1+Bv_2=\varphi(v)=0,$$
which leads to
\begin{align*}
(-\x{1}+\x{2})A+(2\x{1}+\x{2})B & = 0 \\
(\x{1}+2\x{2})A+(\x{1}-\x{2})B & = 0.
\end{align*}
Considering this as a system of equations with unknowns $A,B\in S\h^*$, we calculate its determinant as
\begin{align*}
\det&=(-\x{1}+\x{2})(\x{1}-\x{2})-(2\x{1}+\x{2})(\x{1}+2\x{2})\\
&=-3(\x{1}^2+\x{1}\x{2}+\x{2}^2)=-3\overline{\sigma_2}. 
\end{align*}
Whenever $p\ne 3$, this is nonzero and the above system has only the trivial solution $A=B=0$, so the kernel of the $H_{0,c}(S_3)$ homomorphism $\varphi$ contains only $v=0$, and $\varphi$ is injective.

\item It immediately follows that 
$$h_{\left<v_1,v_2 \right>}(z)=z\cdot h_{M_{0,c}(\Stand)}(z)=\frac{2z}{(1-z)^2}$$
and $$h_{M_{0,c}(\Stand)/\left< v_1,v_2 \right>}(z) =\frac{2(1-z)}{(1-z)^2}.$$ 

\item Checking whether a vector $u=u_1\otimes \x{1}+u_2\otimes \x{2}$ is in the subrepresentation generated by $v_1,v_2$ is equivalent to solving the system 
$$Av_1+Bv_2=u,$$
for $A,B\in S\h^*$.
Reading off the coefficients of $\otimes \x{1}$ and $\otimes \x{2}$ we get this is equivalent to the system of equations
\begin{align*}
(-\x{1}+\x{2})A+(2\x{1}+\x{2})B & = u_1 \\
(\x{1}+2\x{2})A+(\x{1}-\x{2})B & = u_2.
\end{align*}
This system has a determinant $\det=-3\overline{\sigma_2}$. When $p\ne 3$, its unique solutions $A,B$, as rational functions on $\h$, as given as: 
\begin{align}
A&=\frac{u_1(\x{1}-\x{2})+u_2(2\x{1}+\x{2})}{-3\overline{\sigma_2}} \notag\\
B&=\frac{u_1(\x{1}+2\x{2})+u_2(\x{1}-\x{2})}{3\overline{\sigma_2}}. \label{systemp=2}
\end{align}
The question on whether $u=u_1\otimes \x{1}+u_2\otimes \x{2}$ is in the subrepresentation generated by $v_1,v_2$ is equivalent to checking whether these $A,B$ are polynomials in $\h^*$ (as opposed to rational functions).

When $u=\overline{\sigma_2}\otimes \x{1}$ we have $u_1=\overline{\sigma_2}$, $u_2=0$ so the solutions given by equations \eqref{systemp=2} are indeed in $S\h^*$, and simplify as 
$$A=\frac{-1}{3}(\x{1}-\x{2}), \quad 
B=\frac{1}{3}(\x{1}+2\x{2}). $$
This shows $\overline{\sigma_2}\otimes \x{1}\in \left< v_1,v_2\right>$, and $\overline{\sigma_2}\otimes \x{2}\in \left< v_1,v_2\right>$ follows by the action of $S_3$.

\item Analogously, to check whether $\overline{\sigma_3}\otimes \x{1}$, $i=1$ is in $\left< v_1,v_2\right>$, we substitute $u_1=\overline{\sigma_3}, u_2=0$ into \eqref{systemp=2}, getting 
\begin{align*}
A&=\frac{\overline{\sigma_3}(\x{1}-\x{2})}{-3\overline{\sigma_2}}=\frac{\x{1}\x{2}(\x{1}+\x{2})(\x{1}-\x{2})}{3(\x{1}^2+\x{1}\x{2}+\x{2}^2)} \\
B&=\frac{\overline{\sigma_3}(\x{1}+2\x{2})}{3\overline{\sigma_2}}=\frac{\x{1}\x{2}(\x{1}+\x{2})(\x{1}+2\x{2})}{-3(\x{1}^2+\x{1}\x{2}+\x{2}^2)}. 
\end{align*}
In characteristics other than $p=3$ the numerators and denominators of these rational functions are coprime, so $A,B$ are not polynomials. We conclude that $\overline{\sigma_3}\otimes \x{1}$ is not in the subrepresentation generated by $v_1,v_2$. Similar argument, or the action of $S_3$, gives that $\overline{\sigma_3}\otimes \x{2}$ is not in $\left< v_1,v_2\right>$.

\item 
Using (1), (3) and (4), and letting $O(z^4)$ denotes some series with terms of degree greater or equal to $4$, the first terms of this character are: 
\begin{align*}
&\chi_{M_{0,c}(\Stand)/\left<v_1,v_2,\overline{\sigma_j}\otimes \x{i}, j=2,3, i=1,2\right>}(z)=\\
&\quad =\chi_{M_{0,c}(\Stand)}(z)-\chi_{\left<v_1,v_2\right>}(z)-\chi_{\left<\overline{\sigma_3}\otimes \x{1},\sigma_3\otimes \x{1}\right>}(z)+O(z^4)\\
&\quad =\chi_{M_{0,c}(\Stand)}(z)-z\cdot \chi_{M_{0,c}(\Stand)}(z)-z^3\cdot \chi_{M_{0,c}(\Stand)}(z)+O(z^4)\\
&\quad=[\Stand]+([\Triv]+[\Sign])z+[\Stand]z^2+O(z^4).
\end{align*}

This shows the quotient is concentrated in degrees $0,1,2$, and its character and Hilbert polynomial are as claimed. 

\item For $p>3$, we could use \cite{DeSa14} Prop 4.1. to deduce that $L_{0,c}(\Stand)$ and $M_{0,c}(\Stand)/\left<v_1,v_2,\overline{\sigma_3}\otimes \x{1},\overline{\sigma_3}\otimes \x{2}\right>$ have the same Hilbert polynomial, and thus are equal. When $p=2$ the results of \cite{DeSa14} do not apply to $H_{t,c}(S_3,\h)$. Instead, we will prove that $M_{0,c}(\Stand)/\left<v_1,v_2,\overline{\sigma_3}\otimes \x{1},\overline{\sigma_3}\otimes \x{2}\right>$ is irreducible by a direct computation which works for all $p\ne 3$. 

If it is not irreducible, then it has singular vectors. These cannot be in degree $0$ (by definition) or degree $1$ by Lemma \ref{pne3,t=0,cne0vectors} (2), so we just examine degree $2$. 

Degree $2$ of $M_{0,c}(\Stand)$ has a basis 
$$\left\{ \x{1}^2\otimes \x{1}, \x{1}\x{2}\otimes \x{1}, \x{2}^2\otimes \x{1}, \x{1}^2\otimes \x{2}, \x{1}\x{2}\otimes \x{2}, \x{2}^2\otimes \x{2}\right\},$$
so $M^2_{0,c}(\Stand)/\left<v_1,v_2,\overline{\sigma_3}\otimes \x{1},\overline{\sigma_3}\otimes \x{2}\right>$ has a basis
$\left\{\x{1}^2\otimes \x{1}, \x{1}^2\otimes \x{2}\right\}.$
Calculate the action of a Dunkl operator $D_{y_1-y_2}$ in this basis as
\begin{align*}
D_{y_1-y_2}(\x{1}^2\otimes \x{1})
&=-c\left(2\x{1}\otimes \x{2}+\x{2}\otimes \x{1}+3\x{2}\otimes \x{2} \right)\\
D_{y_1-y_2}(\x{1}^2\otimes \x{2})
&=-c\left(2\x{1}\otimes \x{1}+2\x{2}\otimes \x{1}-\x{2}\otimes \x{2} \right).
\end{align*}
For $c\ne 0$ these are linearly independent so no nontrivial linear combination of $D_{y_1-y_2}(\x{1}^2\otimes \x{1})$ and $D_{y_1-y_2}(\x{1}^2\otimes \x{2})$ is zero.
This proves the statement.

\end{enumerate}
\end{proof}

\subsection{The irreducible representation $L_{1,c}(\Stand)$ in characteristic $2$ for generic $c$}
For this subsection, let $p=2$, $t=1$, and $c$ be generic. We will describe $L_{1,c}(\Stand)$, give the generators of the maximal proper graded submodule $J_{1,c}(\Stand)$ of the Verma modules $M_{1,c}(\Stand)$, calculate the character and Hilbert polynomial of $L_{1,c}(\Stand)$, and clarify again that $c$ generic here means $c\notin \mathbb{F}_2$. 

The strategy is similar to the strategy we use later for $L_{1,c}(\Stand)$ in characteristic $p>3$, but we need to treat the cases separately, because we cannot use the rescaled Young basis $b_+,b_-$ of $\h^*$ in characteristic $2$ (see Section \ref{sec-basisofh}).

The action of the Casimir element $\Omega$ tells us where to look for singular vectors in this case, by the following Lemma. 

\begin{lemma}\label{p=2Omega}
Let $p=2$, $M$ be an $H_{t,c}(S_3,\h)$ module equal to the Verma module $M_{1,c}(\Stand)$ or its quotient, and $\tau\subseteq M^k$ an irreducible $S_3$-subrepresentation contained in the kernel of all Dunkl operators. 
\begin{enumerate}
\item The action of the Casimir element $\Omega$ on $\tau$ is by a scalar: 
$$\Omega|_\Triv=0\cdot \mathrm{id}, \quad 
\Omega|_\Stand=c \cdot \mathrm{id}.$$
\item If $c\notin \mathbb{F}_2$ then $\tau=\Stand$ and $k$ is even. 
\item If $c=1$ then either $\tau=\Stand$ and $k$ is even, or $\tau=\Triv$ and $k$ is odd. 
\end{enumerate}
\end{lemma}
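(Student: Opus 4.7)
The plan is to adapt the proof of Lemma~\ref{OmegaAction} to characteristic $2$ (where the scalar by which $\Omega$ acts on $\Stand$ changes from $3c$ to $c$), and then exploit the fact that $\Omega$ acts by a single scalar on each graded piece of $M$ to pin down $\tau$ and the parity of $k$.

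For part~(1), decompose $\Omega = \sum_{i=1}^2 \x{i}(y_i-y_3) + c\cdot \Theta$ with $\Theta = \sum_{i<j}(1-(ij))$. The first summand annihilates $\tau$ since $\tau$ is killed by all Dunkl operators, so $\Omega|_\tau = c\,\Theta|_\tau$. Centrality of $\Theta$ in $\Bbbk[S_3]$ together with Schur's lemma produce a scalar: on $\Triv$ each transposition acts as $1$, so $\Theta = 0$ and $\Omega|_\Triv = 0$; on $\Stand$, writing out the three transposition matrices in the basis $\x{1},\x{2}$ shows that in characteristic $2$ they sum to the zero matrix, hence $\Theta = 3\,\id - 0 = \id$ and $\Omega|_\Stand = c\,\id$.

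For parts~(2) and~(3) the key intermediate step is the commutator identity $[\Omega,\x{i}] = t\,\x{i} = \x{i}$ for $i=1,2$, which I would verify by expanding $[\x{j}(y_j-y_3),\x{i}]$ via the defining relation $[y,x] = t\langle x,y\rangle - c\sum \langle x - (ij)x, y\rangle(ij)$ and computing $c[\Theta,\x{i}] = c\sum_{i<j}(\x{i}-(ij)\x{i})(ij)$ separately; the reflection-supported terms cancel, leaving only $t\,\x{i}$. Since $M^0 \cong \Stand$ for the Verma and for any graded quotient (submodules defining quotients live in strictly positive degree), every vector of $M^k$ is a sum of products $f\,v_0$ with $f \in S^k\h^*$ and $v_0 \in \Stand$; iterating the commutator identity and applying part~(1) yields $\Omega(f v_0) = f\,\Omega v_0 + k\,f v_0 = (c+k)(f v_0)$, so $\Omega$ acts on $M^k$ by the scalar $c+k$. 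Comparing with $c\,\Theta|_\tau$ from part~(1): if $\tau = \Triv$ then $c+k = 0$ in $\Bbbk$, which is possible only when $c \in \mathbb{F}_2$, and forces $k$ odd when $c=1$; if $\tau = \Stand$ then $k = 0$ in $\Bbbk$, i.e.\ $k$ is even. Part~(2) follows because $c \notin \mathbb{F}_2$ rules out $\tau = \Triv$, and part~(3) is then immediate. The only delicate point in the whole argument is the commutator computation $[\Omega,\x{i}] = t\x{i}$; it is routine but requires care to track how the reflection-supported contributions from $\x{j}(y_j-y_3)$ and from $c[\Theta,\x{i}]$ cancel.
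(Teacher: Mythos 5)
Your argument is correct and is essentially the paper's: part (1) is Lemma \ref{OmegaAction} specialised to $p=2$ (where $6c=0$ and $3c=c$), and parts (2) and (3) compare that scalar with the fact that $\Omega$ acts on $M^k_{1,c}(\Stand)$ and on any graded quotient by the scalar $c+k$. The only difference is that you verify the two ingredients yourself (the characteristic-$2$ value of the central class sum on $\Triv$ and $\Stand$, and the identity $[\Omega,x]=t\,x$ which yields the scalar $c+k$ from the lowest-degree piece) rather than citing them as standard, which is fine.
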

\begin{proof}
\begin{enumerate}
\item This is Lemma \ref{OmegaAction} for $p=2$. 
\item The action of $\Omega$ on $M_{1,c}^k(\Stand)$ and any of its quotients is by a scalar $c+k$. Simultaneously part (1) gives the constant for the action on $\tau\subseteq M^k$. If $\tau=\Triv$, comparing these constants leads to 
$c+k=0$
which has no solutions for $k\in \mathbb{N}_0$, $c\notin \mathbb{F}_2$.
If $\tau=\Stand$, this leads to 
$c+k=c$
which implies $k$ is even.
\item Similarly, for $\tau=\Triv$ we get an equation $c+k=0$ which is satisfied when $c=1$ and $k\in \mathbb{N}$ is odd, and for $\tau=\Stand$ we get an equation $c+k=c$ which is satisfied when $c=1$ and $k\in \mathbb{N}$ is even.
\end{enumerate}
\end{proof}

We now look for singular vectors for generic $c$, knowing they will be in the $\Stand$ isotypic components of $M_{1,c}^{2k'}(\Stand)$. The first ones we find are in degree $2$. 

\begin{lemma}\label{p=3,t=1,cne1vectors}
For every $c$ the vectors 
\begin{align*}
v_1&=c\s{2}\otimes \x{1}+(\x{1}^2\otimes \x{1}+\x{1}^2\otimes \x{2}+\x{2}^2\otimes \x{1})\\
v_2&=c\s{2}\otimes \x{2}+(\x{2}^2\otimes \x{2}+\x{1}^2\otimes \x{2}+\x{2}^2\otimes \x{1})
\end{align*}
in $M_{1,c}^2(\Stand)$ span an $S_3$ representation isomorphic to $\Stand$ and are singular. 
\end{lemma}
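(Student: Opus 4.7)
The plan is in three steps: verify the $S_3$-structure, reduce the singularity condition to a minimal check, then compute.

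For the $S_3$-structure, I would directly verify that $s_1 v_1 = v_2$, $s_2 v_1 = v_1$, and $s_2 v_2 = v_1 + v_2$, which are the relations defining the $\Stand$ action under $\x{i} \mapsto v_i$ in characteristic $2$. Each is a short computation using that $\s{2}$ is $S_3$-invariant, that $\x{3} = \x{1} + \x{2}$, and that $(\x{1} + \x{2})^2 = \x{1}^2 + \x{2}^2$ in characteristic $2$. Linear independence of $v_1$ and $v_2$ is immediate from the coefficients of $\x{1}^2 \otimes \x{1}$ and $\x{2}^2 \otimes \x{2}$.

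Since $\h$ is spanned by $y_1 - y_2$ and $y_2 - y_3$, singularity of $v_i$ is the four conditions $D_{y_1 - y_2}(v_i) = D_{y_2 - y_3}(v_i) = 0$. Applying the intertwining identity $g D_y g^{-1} = D_{g.y}$ to $v_2 = s_1 v_1$ gives $D_{y_1 - y_2}(v_2) = s_1 D_{y_2 - y_1}(v_1) = s_1 D_{y_1 - y_2}(v_1)$ (the sign disappears in characteristic $2$) and $D_{y_2 - y_3}(v_2) = s_1 D_{y_1 - y_3}(v_1) = s_1(D_{y_1 - y_2}(v_1) + D_{y_2 - y_3}(v_1))$, so it suffices to show $D_{y_1 - y_2}(v_1) = 0$ and $D_{y_2 - y_3}(v_1) = 0$.

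For these two values I would apply the Dunkl operator formula termwise. The summand $c \s{2} \otimes \x{1}$ is killed by the reflection part (since $\s{2}$ is $S_3$-invariant) and contributes only $c \, \partial_y(\s{2}) \otimes \x{1}$, which in characteristic $2$ equals $c(\x{1} + \x{2}) \otimes \x{1}$ for $y = y_1 - y_2$ and $c \x{1} \otimes \x{1}$ for $y = y_2 - y_3$. For the monomial part $\x{1}^2 \otimes \x{1} + \x{1}^2 \otimes \x{2} + \x{2}^2 \otimes \x{1}$ the differential part vanishes, because $\partial_y(\x{i}^2) = 2 \x{i}\partial_y(\x{i}) = 0$ in characteristic $2$, so only the reflection sum survives. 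A useful simplification is that $\langle x_1 - x_2, y_1 - y_2 \rangle = \langle x_2 - x_3, y_2 - y_3 \rangle = 2 = 0$ in characteristic $2$, so each Dunkl operator sees only two of the three transpositions. Expanding the remaining divided differences $(\x{i}^2 - (ij)\x{i}^2)/(\x{i} - \x{j}) = \x{i} + (ij)\x{i}$ and collecting terms yields exactly $c(\x{1} + \x{2}) \otimes \x{1}$ and $c\x{1} \otimes \x{1}$ respectively, which cancel the $c \s{2}$ contributions modulo $2$.

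The only obstacle is bookkeeping through the reflection sum, but this is short thanks to the characteristic-$2$ cancellations. As a consistency check, setting $c = 0$ reduces $v_1$ to the monomial part, which must then be singular in $M_{1,0}(\Stand)$: this is confirmed by Lemma \ref{t=1,c=0} together with $\partial_y(\x{i}^2) = 0$ in characteristic $2$.
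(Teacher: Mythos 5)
Your proof is correct and follows essentially the same strategy as the paper's: establish the $S_3$-module structure of $\mathrm{span}\{v_1,v_2\}$, use the equivariance $gD_yg^{-1}=D_{g.y}$ to cut the singularity check down to a minimal set of direct Dunkl-operator evaluations, and then compute, exploiting the characteristic-$2$ vanishing of the differential part on squares and of the pairing $\left<x_1-x_2,y_1-y_2\right>$. The only difference is organizational: the paper verifies the single value $D_{y_1-y_3}(v_1+v_2)=0$ and propagates by symmetry, while you verify $D_{y_1-y_2}(v_1)=0$ and $D_{y_2-y_3}(v_1)=0$ after transporting the conditions on $v_2$ back to $v_1$; both computations check out.
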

\begin{proof}
By Theorem \ref{decmposeVermaStandp=2} the map $\x{i}\mapsto v_i$ is an $S_3$ isomorphism, so $v_1,v_2$ span a copy of $\Stand$. To see it consists of singular vectors, we first directly calculate
$$D_{y_1-y_3}(v_1+v_2)=0,$$
and from here
$$
D_{y_2-y_3}(v_1+v_2)=s_1D_{y_1-y_3}s_1(v_1+v_2)=s_1D_{y_1-y_3}(v_1+v_2)=s_1.0=0,$$
showing $v_1+v_2$ is singular. Then $v_2=s_2.(v_1+v_2)$ and $v_1=s_1.v_2$ are singular too.
\end{proof}

We will now need to investigate what the $H_{1,c}(S_3,\h)$ subrepresentation $\left<v_1,v_2 \right>$ looks like. Checking whether some vector $u=u_1\otimes \x{1}+u_2\otimes \x{2}\in M^k_{1,c}(\Stand)$ is in $\left<v_1,v_2 \right>$ amounts to solving the system 
$$Av_1+Bv_2=u,$$
for $A,B\in S^{k-2}\h^*$.
Reading off the coefficients of $\otimes \x{1}$ and $\otimes \x{2}$ we get 
\begin{align}
((c+1)\x{1}^2+c\x{1}\x{2}+(c+1)\x{2}^2)A+\x{2}^2 B & = u_1 \notag \\
\x{1}^2A+((c+1)\x{1}^2+c\x{1}\x{2}+(c+1)\x{2}^2)B & = u_2.\label{systemp=2,t1}
\end{align}

\begin{lemma}\label{p=3,t=1,cne1det}
For $p=2$, the determinant of the system \eqref{systemp=2,t1} is 
$$\det=(c+1)^2\overline{\sigma_2}^2.$$
When $c\ne 1$, the unique rational functions $A,B$ on $\h$ solving that system are: 
\begin{align}
A&=\frac{u_1((c+1)\x{1}^2+c\x{1}\x{2}+(c+1)\x{2}^2)-u_2\x{2}^2}{(c+1)^2\overline{\sigma_2}^2} \notag\\
B&=\frac{-u_1\x{1}^2+u_2((c+1)\x{1}^2+c\x{1}\x{2}+(c+1)\x{2}^2)}{(c+1)^2\overline{\sigma_2}^2}. \label{systemp=2,t1,sol}
\end{align}
\end{lemma}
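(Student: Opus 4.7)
The proof is a direct calculation using Cramer's rule. The $2\times 2$ coefficient matrix of the system \eqref{systemp=2,t1} has determinant
\[
\det=\bigl((c+1)\x{1}^2+c\x{1}\x{2}+(c+1)\x{2}^2\bigr)^2-\x{1}^2\x{2}^2.
\]
First I would simplify the square using the fact that in characteristic $2$ the Frobenius $f\mapsto f^2$ is a ring homomorphism: all cross terms have coefficient divisible by $2$, so the square equals $(c+1)^2\x{1}^4+c^2\x{1}^2\x{2}^2+(c+1)^2\x{2}^4$. Subtracting $\x{1}^2\x{2}^2$ and using $-1=1$ in characteristic $2$ collapses the middle coefficient to $c^2+1=(c+1)^2$, giving
\[
\det=(c+1)^2\bigl(\x{1}^4+\x{1}^2\x{2}^2+\x{2}^4\bigr).
\]
Finally, $\s{2}=\x{1}^2+\x{1}\x{2}+\x{2}^2$ (recall $\s{1}=0$, so in the basis $\x{1},\x{2}$ we have $\s{2}=-(\x{1}^2+\x{1}\x{2}+\x{2}^2)=\x{1}^2+\x{1}\x{2}+\x{2}^2$), and squaring this via Frobenius yields $\s{2}^2=\x{1}^4+\x{1}^2\x{2}^2+\x{2}^4$. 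Hence $\det=(c+1)^2\s{2}^2$, as claimed.

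For the second assertion, observe that $\s{2}\neq 0$ in $S\h^*$, so the determinant is a nonzero element of the polynomial ring precisely when $(c+1)^2\neq 0$, i.e.\ when $c\ne 1$. In that case the system has a unique solution as rational functions on $\h$ given by Cramer's rule:
\[
A=\frac{1}{\det}\det\begin{pmatrix}u_1 & \x{2}^2\\ u_2 & (c+1)\x{1}^2+c\x{1}\x{2}+(c+1)\x{2}^2\end{pmatrix},
\]
\[
B=\frac{1}{\det}\det\begin{pmatrix}(c+1)\x{1}^2+c\x{1}\x{2}+(c+1)\x{2}^2 & u_1\\ \x{1}^2 & u_2\end{pmatrix},
\]
and expanding these $2\times 2$ determinants (remembering again that $-1=1$) yields exactly the expressions \eqref{systemp=2,t1,sol}.

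There is no serious obstacle: the only point worth flagging is the tempting-looking cancellation that is unique to characteristic $2$, namely the collapse of the square of a trinomial to a sum of squares via Frobenius, which is what makes the determinant factor so cleanly as $(c+1)^2\s{2}^2$ rather than as a more complicated polynomial. Once this is observed, the rest is formal Cramer's rule.
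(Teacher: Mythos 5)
Your proposal is correct and follows essentially the same route as the paper: a direct computation of the $2\times 2$ determinant, simplified using the characteristic-$2$ Frobenius identity $(f+g+h)^2=f^2+g^2+h^2$ so that it factors as $(c+1)^2\overline{\sigma_2}^2$, followed by Cramer's rule for the explicit rational solutions $A,B$. The paper's proof records only the determinant calculation and leaves the Cramer step implicit, so your write-up is simply a slightly more detailed version of the same argument.
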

\begin{proof}
We calculate directly: 
\begin{align*}
\det&=((c+1)\x{1}^2+c\x{1}\x{2}+(c+1)\x{2}^2)^2-\x{1}^2\x{2}^2\\
&=(c+1)^2(\x{1}^2+\x{1}\x{2}+\x{2}^2)^2=(c+1)^2\overline{\sigma_2}^2.
\end{align*}
\end{proof}

We proceed in a way analogous to Lemma \ref{pne3t=0cne0char}. 

\begin{lemma}
When $p=2$, $t=1$ and $c\ne 1$, the vectors $v_1,v_2\in M_{1,c}(\Stand)$ from Lemma \ref{p=3,t=1,cne1vectors}
generate a subrepresentation of $M_{1,c}(\Stand)$ isomorphic to $M_{1,c}(\Stand)[-2]$.  The Hilbert series of the quotient $M_{1,c}(\Stand)/\left<v_1,v_2\right>$ is $\frac{2(1-z^2)}{(1-z)^2}$. 
\end{lemma}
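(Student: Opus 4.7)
The plan is to mirror the strategy of Lemma \ref{pne3t=0cne0char} parts (1)--(2), adapted to the characteristic $2$, $t=1$ setting. Since $v_1,v_2$ are singular vectors (Lemma \ref{p=3,t=1,cne1vectors}) spanning a copy of $\Stand$ sitting in degree $2$, the universal property of Verma modules gives a graded homomorphism
\[
\varphi: M_{1,c}(\Stand)[-2] \to M_{1,c}(\Stand), \qquad \varphi(f \otimes \x{i}) = f \cdot v_i \quad (i=1,2),
\]
whose image is exactly $\langle v_1, v_2 \rangle$. The entire content of the lemma will then reduce to showing $\varphi$ is injective.

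To establish injectivity, suppose $v = A \otimes \x{1} + B \otimes \x{2}$ lies in $\ker \varphi$, so $A v_1 + B v_2 = 0$. Reading off the coefficients of $\otimes \x{1}$ and $\otimes \x{2}$ in $M_{1,c}(\Stand)$, this becomes precisely the linear system \eqref{systemp=2,t1} with $u_1 = u_2 = 0$. By Lemma \ref{p=3,t=1,cne1det} the determinant of this $2 \times 2$ system over the fraction field of $S\h^*$ is $(c+1)^2 \overline{\sigma_2}^2$, which is nonzero in $\mathrm{Frac}(S\h^*)$ whenever $c \ne 1$ (in characteristic $2$, the hypothesis $c \ne 1$ is equivalent to $c + 1 \ne 0$). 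Hence $A = B = 0$, proving $\varphi$ is injective and therefore an isomorphism onto $\langle v_1, v_2\rangle$.

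The Hilbert series statement is then immediate: since $\h^*$ is $2$-dimensional and $\dim \Stand = 2$,
\[
h_{M_{1,c}(\Stand)}(z) = 2 \cdot h_{S\h^*}(z) = \frac{2}{(1-z)^2},
\]
so $h_{\langle v_1, v_2 \rangle}(z) = z^2 \cdot h_{M_{1,c}(\Stand)}(z) = \frac{2z^2}{(1-z)^2}$, and subtracting yields
\[
h_{M_{1,c}(\Stand)/\langle v_1,v_2\rangle}(z) = \frac{2}{(1-z)^2} - \frac{2z^2}{(1-z)^2} = \frac{2(1-z^2)}{(1-z)^2}.
\]

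The only nontrivial step is the injectivity of $\varphi$, and its proof is essentially packaged into the determinant computation of Lemma \ref{p=3,t=1,cne1det}; once that determinant is known to be nonzero in $\mathrm{Frac}(S\h^*)$ for $c \ne 1$, everything else is formal. I do not anticipate any additional obstacle beyond making sure the identification of the constraint equation with the system \eqref{systemp=2,t1} is clean, which is a direct coefficient comparison using the explicit forms of $v_1,v_2$ from Lemma \ref{p=3,t=1,cne1vectors}.
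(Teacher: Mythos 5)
Your proposal is correct and follows essentially the same route as the paper: extend $\x{i}\mapsto v_i$ to a map $\varphi:M_{1,c}(\Stand)[-2]\to M_{1,c}(\Stand)$, identify $\ker\varphi$ with solutions of the system \eqref{systemp=2,t1} with $u_1=u_2=0$, and use the determinant $(c+1)^2\overline{\sigma_2}^2\ne 0$ from Lemma \ref{p=3,t=1,cne1det} to conclude injectivity, after which the Hilbert series is immediate.
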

\begin{proof}
The isomorphism of $S_3$ representations $$\varphi:\Stand \to \mathrm{span}\left\{v_1,v_2\right\}$$ given by $\varphi(\x{i})=v_i$ extends to a homomorphism of $H_{1,c}(S_3,\h)$ modules 
$$\varphi:M_{1,c}(\Stand)[-2]\to M_{1,c}(\Stand).$$
Assuming some nonzero vector $A\x{1}+B\x{2}$ is in the kernel of $\varphi$ is equivalent to assuming there is a nontrivial solution $A,B\in S\h^*$ to the equation 
$$Av_1+Bv_2=0.$$
This is equivalent to there existing a nontrivial solution to the system \eqref{systemp=2,t1} with $u_1=u_2=0$, which is impossible by Lemma \ref{p=3,t=1,cne1det}. 
\end{proof}

Next, we consider $M_{1,c}(\Stand)/\left<v_1,v_2\right>$ and check whether the $p$-th powers of the invariants give us a proper submodule of it. 

\begin{lemma}\label{p=2t=1cgenHilb1}
Let $p=2$, $t=1$, $c\ne 1$. For $i=1,2$ we  have 
$$\overline{\sigma_2}^2\otimes \x{i} \in \left<v_1,v_2 \right>, \qquad \overline{\sigma_3}^2\otimes \x{i} \notin \left<v_1,v_2 \right>.$$

\end{lemma}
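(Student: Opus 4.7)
The plan is to use the explicit solution of the system \eqref{systemp=2,t1,sol} from Lemma \ref{p=3,t=1,cne1det}, which reduces the question ``is $u_1\otimes\x{1}+u_2\otimes\x{2}\in\langle v_1,v_2\rangle$'' to ``are the rational functions $A,B$ from \eqref{systemp=2,t1,sol} actually polynomials''. So by $S_3$-symmetry (transposing the roles of $\x{1}$ and $\x{2}$) it is enough to treat $i=1$ in both assertions, i.e.\ substitute $u_1=\s{2}^2, u_2=0$ and then $u_1=\s{3}^2, u_2=0$ into \eqref{systemp=2,t1,sol}.

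For $u_1=\s{2}^2$, $u_2=0$ the factor $\s{2}^2$ cancels against $\s{2}^2$ in the denominator (the hypothesis $c\neq 1$, i.e.\ $c+1\neq 0$ in characteristic $2$, makes the $(c+1)^2$ denominator invertible). The resulting $A$ and $B$ are honest polynomials of degree $2$, giving an explicit witness that $\s{2}^2\otimes\x{1}\in\langle v_1,v_2\rangle$; the case $i=2$ follows by applying $s_1$.

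For $u_1=\s{3}^2$, $u_2=0$ the formulas \eqref{systemp=2,t1,sol} give $B=-\s{3}^2\x{1}^2/((c+1)^2\s{2}^2)$ (and a similar expression for $A$). I would argue this is not a polynomial by a coprimality check in $\Bbbk[\x{1},\x{2}]$: since $\s{3}=-\x{1}\x{2}(\x{1}+\x{2})$, the irreducible factors of $\s{3}^2\x{1}^2$ are $\x{1},\x{2},\x{1}+\x{2}$, while the polynomial $\s{2}=\x{1}^2+\x{1}\x{2}+\x{2}^2$ is not divisible by any of these (direct substitution $\x{1}=0$, $\x{2}=0$, or $\x{1}=\x{2}$ into $\s{2}$ produces a nonzero monomial). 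Hence $\s{2}^2$ and $\s{3}^2\x{1}^2$ are coprime, $B$ is not a polynomial, and so $\s{3}^2\otimes\x{1}\notin\langle v_1,v_2\rangle$; the case $i=2$ again follows by $S_3$-symmetry.

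The only delicate point is the coprimality argument in characteristic $2$: over $\Bbbk$ algebraically closed of characteristic $2$, $\s{2}$ does factor as $(\x{1}+\omega\x{2})(\x{1}+\omega^2\x{2})$ with $\omega$ a primitive cube root of unity, so one cannot appeal to irreducibility of $\s{2}$ as in characteristic $0$. Instead one must check directly that none of the linear forms $\x{1}$, $\x{2}$, $\x{1}+\x{2}$ appearing in $\s{3}^2\x{1}^2$ coincides (up to scalar) with $\x{1}+\omega\x{2}$ or $\x{1}+\omega^2\x{2}$, which is immediate since $\omega\notin\mathbb{F}_2$. This is the main (minor) obstacle; once it is dealt with, both statements of the lemma follow immediately from the explicit formulas.
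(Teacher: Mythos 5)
Your proof is correct and follows essentially the same route as the paper: substitute $u_1=\s{2}^2$ (resp.\ $u_1=\s{3}^2$), $u_2=0$ into the explicit rational solutions \eqref{systemp=2,t1,sol} of Lemma \ref{p=3,t=1,cne1det}, observe the cancellation of $\s{2}^2$ for the first claim and argue coprimality of numerator and denominator for the second, with the $i=2$ cases obtained by the $s_1$-action. Your added check that the coprimality argument survives in characteristic $2$ (where $\s{2}$ splits into the linear factors $\x{1}+\omega\x{2}$, $\x{1}+\omega^2\x{2}$ with $\omega\notin\mathbb{F}_2$, none of which divides $\s{3}^2\x{1}^2$) simply makes explicit a step the paper asserts without detail.
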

\begin{proof}
A vector $\overline{\sigma_j}^2\otimes \x{i}$, $i=1,2$, $j=2,3$ is in the submodule generated by $v_1,v_2$ if and only if the system \eqref{systemp=2,t1} with $u_i=\overline{\sigma_j}^2$, $u_k=0$ for $k\ne i$ has polynomial solutions $A,B\in S\h^*$. Lemma \ref{p=3,t=1,cne1det} gives these solutions explicitly as rational functions. If $j=2$, these rational functions are in fact polynomial, as the factors $\overline{\sigma_j}^2$ in the numerator and the denominator cancel. If $j=3$, these rational functions are not polynomial, as the numerator and the denominator are coprime.
\end{proof}

Next we consider $M_{1,c}(\Stand)/\left<v_1,v_2,\overline{\sigma_3}^2\otimes \x{1}, \overline{\sigma_3}^2\otimes \x{2}\right>$. First we use Lemma \ref{p=2t=1cgenHilb1} to calculate its character, and then we will show it is irreducible. 

\begin{lemma}\label{p=2,t=1,Stand,char}
Let $p=2$, $t=1$, $c\ne 1$. 
The module $M_{1,c}(\Stand)/\left<v_i, \overline{\sigma_3}^2\otimes \x{i}, i=1,2 \right>$ has the character 
$$\chi_{M_{1,c}(\Stand)/\left<v_i, \overline{\sigma_3}^2\otimes \x{i}, i=1,2 \right>}(z)=\chi_{S^{(2)}\h^*}(z) \left([\Stand](1+z^4)+2[\Triv]z^2   \right)$$
and the Hilbert polynomial 
$$h_{M_{1,c}(\Stand)/\left<v_i, \overline{\sigma_3}^2\otimes \x{i}, i=1,2 \right>}(z)=2\frac{(1-z^2)(1-z^6)}{(1-z)^2}.$$ 
\end{lemma}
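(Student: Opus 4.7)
The plan is to compute $\chi_{M_{1,c}(\Stand)/I}(z)$ for $I=\left<v_1,v_2,\overline{\sigma_3}^2\otimes\x{1},\overline{\sigma_3}^2\otimes\x{2}\right>$ by applying the inclusion--exclusion identity $\chi_I(z)=\chi_{J_1}(z)+\chi_{J_2}(z)-\chi_{J_1\cap J_2}(z)$ to the decomposition $I=J_1+J_2$, where $J_1=\left<v_1,v_2\right>$ and $J_2=\left<\overline{\sigma_3}^2\otimes\x{1},\overline{\sigma_3}^2\otimes\x{2}\right>$.

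The two summands are easy to describe. The previous lemma gives $J_1\cong M_{1,c}(\Stand)[-2]$. For $J_2$, the vectors $\overline{\sigma_3}^2\otimes\x{i}$ are singular (Section \ref{sect-sympol}) and span a copy of $\Stand$ in degree $6$, so they induce a homomorphism $M_{1,c}(\Stand)[-6]\to M_{1,c}(\Stand)$ which is simply multiplication by $\overline{\sigma_3}^2$. Since $M_{1,c}(\Stand)$ is a free $S\h^*$-module and $\overline{\sigma_3}^2\in S\h^*$ is nonzero, this map is injective, so $J_2\cong M_{1,c}(\Stand)[-6]$.

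The crux of the argument, and the main obstacle, is showing $J_1\cap J_2=\overline{\sigma_3}^2\cdot J_1$. The inclusion $\supseteq$ is immediate. For $\subseteq$, take $x=\overline{\sigma_3}^2 w\in J_1$ with $w=u_1\otimes\x{1}+u_2\otimes\x{2}\in M_{1,c}(\Stand)$. By Lemma \ref{p=3,t=1,cne1det} and the explicit formulas \eqref{systemp=2,t1,sol}, membership $w\in J_1$ is equivalent to $\overline{\sigma_2}^2$ dividing each of the two polynomial numerators $P_i(u_1,u_2)$ appearing there; applying the same criterion to $\overline{\sigma_3}^2 w$ yields $\overline{\sigma_2}^2\mid\overline{\sigma_3}^2 P_i(u_1,u_2)$. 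Now $\overline{\sigma_2}$ and $\overline{\sigma_3}$ are coprime in $\Bbbk[\x{1},\x{2}]$: over the algebraically closed field $\Bbbk$ of characteristic $2$ one has $\overline{\sigma_2}=(\x{1}+\omega\x{2})(\x{1}+\omega^2\x{2})$ for a primitive cube root of unity $\omega\in\Bbbk$, whereas $\overline{\sigma_3}=\x{1}\x{2}(\x{1}+\x{2})$ has linear factors disjoint from these. Hence $\overline{\sigma_2}^2\mid P_i(u_1,u_2)$ as well, and $w\in J_1$. Since multiplication by $\overline{\sigma_3}^2$ is injective on $J_1$, this gives $\chi_{J_1\cap J_2}(z)=z^6\chi_{J_1}(z)=z^8\chi_M(z)$.

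Combining, $\chi_{M/I}(z)=\chi_M(z)(1-z^2-z^6+z^8)=\chi_M(z)(1-z^2)(1-z^6)$. Using Corollary \ref{characterShp=2} and the factorisation $(1-z^2)(1-z^6)=(1-z^2)(1-z^3)(1+z^3)$, this simplifies to $(1+z^3)\bigl[(1+z+z^2+z^3)[\Stand]+2(z+z^2)[\Triv]\bigr]$, which expands as $(1+z+z^2+2z^3+z^4+z^5+z^6)[\Stand]+2(z+z^2+z^4+z^5)[\Triv]$. Expanding the target $\chi_{S^{(2)}\h^*}(z)\bigl([\Stand](1+z^4)+2[\Triv]z^2\bigr)$ using $[\Stand\otimes\Stand]=2[\Triv]+[\Stand]$ in $K_0(S_3)$ yields the same expression, confirming the character. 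The Hilbert polynomial $h_{M/I}(z)=2(1-z^2)(1-z^6)/(1-z)^2$ then follows from $h_{M_{1,c}(\Stand)}(z)=2/(1-z)^2$.
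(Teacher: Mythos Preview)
Your proof is correct and takes a genuinely different route from the paper's. Both arguments use inclusion--exclusion on $I=J_1+J_2$ with $J_1=\langle v_1,v_2\rangle$ and $J_2=\langle \overline{\sigma_3}^2\otimes\x{i}\rangle$, but they diverge at the computation of $J_1\cap J_2$. The paper does not compute this intersection fully: it only shows the intersection has no elements in degree $7$, by observing that such an element would force a singular vector in degree $1$ of $M_{1,c}(\Stand)$, which the Casimir (Lemma \ref{p=2Omega}) forbids. It then writes the character as $(1-z^2-z^6)\chi_M(z)+O(z^8)$, checks that the coefficient of $z^7$ vanishes, and truncates. You instead prove directly that $J_1\cap J_2=\overline{\sigma_3}^2 J_1$, using the membership criterion from Lemma \ref{p=3,t=1,cne1det} (namely that $w\in J_1$ iff $\overline{\sigma_2}^2$ divides the two numerators in \eqref{systemp=2,t1,sol}) together with the coprimality of $\overline{\sigma_2}$ and $\overline{\sigma_3}$ in $\Bbbk[\x{1},\x{2}]$. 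This yields the closed formula $\chi_{M/I}(z)=\chi_M(z)(1-z^2)(1-z^6)$ in one step. Your approach is more elementary and self-contained; the paper's trades an explicit intersection computation for a representation-theoretic shortcut via $\Omega$.
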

\begin{proof}
By Lemma \ref{p=2t=1cgenHilb1}, the quotient $M_{1,c}(\Stand)/\left<v_1,v_2\right>$ has the character 
$$\chi_{M_{1,c}(\Stand)}(z)\cdot(1-z^2).$$
Using Lemma \ref{p=2t=1cgenHilb1}
\begin{align*}
\chi&_{M_{1,c}(\Stand)/\left<v_1,v_2,\overline{\sigma_3}^2\otimes \x{1}, \overline{\sigma_3}^2\otimes \x{2} \right>}(z)=\\
&=\chi_{M_{1,c}(\Stand)}(z)-\chi_{\left<v_1,v_2\right>}(z)-\chi_{\left<\overline{\sigma_3}^2\otimes \x{1}, \overline{\sigma_3}^2\otimes \x{2} \right>}(z)+\chi_{\left<v_1,v_2\right>\cap \left<\overline{\sigma_3}^2\otimes \x{1}, \overline{\sigma_3}^2\otimes \x{2} \right>}(z)\\
&=\chi_{M_{1,c}(\Stand)}(z)-z^2\chi_{M_{1,c}(\Stand)}(z)-z^6\chi_{M_{1,c}(\Stand)}(z)+\chi_{\left<v_1,v_2\right>\cap \left<\overline{\sigma_3}^2\otimes \x{1}, \overline{\sigma_3}^2\otimes \x{2} \right>}(z).
\end{align*}

The remaining task is to describe the submodule $\left<v_1,v_2\right>\cap \left<\overline{\sigma_3}^2\otimes \x{1}, \overline{\sigma_3}^2\otimes \x{2} \right>$ in enough detail. We first claim that it contains no vectors in degree $7$. 

This intersection is a submodule of $\left<\overline{\sigma_3}^2\otimes \x{1}, \overline{\sigma_3}^2\otimes \x{2} \right>$, which is isomorphic to a quotient of $M_{1,c}(\Stand)[-6]$. If the intersection was nonzero in graded degree $7$, then $M_{1,c}(\Stand)[-6]$ would have a nontrivial submodule starting in degree $7$, so $M_{1,c}(\Stand)$ would have a nontrivial submodule starting in degree $1$. This means there would be a singular vector in degree $1$ of $M_{1,c}(\Stand)$, which is impossible by Lemma \ref{p=2Omega}.

Thus, 
\begin{align*}
\chi&_{M_{1,c}(\Stand)/\left<v_1,v_2,\overline{\sigma_j}^2\otimes \x{i}, j=2,3,i=1,2 \right>}(z)\\
&=\chi_{M_{1,c}(\Stand)}(z)-z^2\chi_{M_{1,c}(\Stand)}(z)-z^6\chi_{M_{1,c}(\Stand)}(z)+O(z^8),
\end{align*}
where $O(z^8)$ denotes a polynomial with terms of degrees $8$ and more. 
Calculating the coefficient of $z^7$ in that expression, we get it is equal to $0$. This means the module $M_{1,c}(\Stand)/\left<v_1,v_2,\overline{\sigma_j}^2\otimes \x{i}, j=2,3,i=1,2 \right>$ is concentrated in degrees $0,\ldots, 6$, and its character is equal to 
the truncation of $(1-z^2-z^6)\chi_{M_{1,c}(\Stand)}(z)$ at $z^7$, which can be easily calculated to be
as claimed. The Hilbert polynomial follows.

\end{proof}

It remains to see that $M_{1,c}(\Stand)/\left< v_1,v_2,\overline{\sigma_3}^2\otimes \x{1}, \overline{\sigma_3}^2\otimes \x{2}\right>$ is irreducible, which we do by showing it has no singular vectors. By Lemma \ref{p=2Omega}, we only need to examine its even degrees, and only look for singular vectors in the $\Stand$ isotypic components.

\begin{lemma}\label{p=2,t=1,Stand,cgen,deg2}
Let $p=2$, $t=1$, $c\ne 1$. 
The module $$M_{1,c}(\Stand)/\left< v_1,v_2,\overline{\sigma_3}^2\otimes \x{1}, \overline{\sigma_3}^2\otimes \x{2} \right>$$ has no singular vectors of type $\Stand$ in degree $2$.  
\end{lemma}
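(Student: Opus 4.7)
The plan is to pin down the $\Stand$-isotypic component of $Q^2$ (where $Q$ is the quotient module) explicitly, and then rule out singularity by a single short Dunkl computation.

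By Theorem~\ref{decmposeVermaStandp=2}, the $\Stand$-isotypic component of $M_{1,c}^2(\Stand)$ is four-dimensional and splits as $W\oplus U$, where $W=\mathrm{span}\{\overline{\sigma_2}\otimes\overline{x_1},\,\overline{\sigma_2}\otimes\overline{x_2}\}$ and $U$ is the copy of $\Stand$ built from the vectors $u_i=\overline{x_i}^2\otimes\overline{x_i}+\overline{x_{3-i}}^2\otimes\overline{x_i}+\overline{x_i}^2\otimes\overline{x_{3-i}}$. Since $v_i=c\,(\overline{\sigma_2}\otimes\overline{x_i})+u_i$, the singular $\Stand$ spanned by $v_1,v_2$ meets $W$ trivially, so $W$ injects into $Q^2$. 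The vectors $\overline{\sigma_3}^2\otimes\overline{x_i}$ are singular ($\overline{\sigma_3}^2=\overline{\sigma_3}^p$ is a $p$th power of an invariant tensored with a lowest-weight element), and $v_1,v_2$ are singular by Lemma~\ref{p=3,t=1,cne1vectors}, so the submodule they generate is a sum of quotients of shifted Verma modules and contributes nothing in degrees below $2$. In particular $Q^1=M_{1,c}^1(\Stand)$, and by the character of $Q$ computed in Lemma~\ref{p=2,t=1,Stand,char}, the image of $W$ is the entire $\Stand$-isotypic component of $Q^2$.

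Next I compute the action of $D_{y_1-y_2}$ on $W$. Since $\overline{\sigma_2}$ is $S_3$-invariant, every reflection term in $D_{y_1-y_2}$ kills the first tensor factor, leaving
\[
D_{y_1-y_2}(\overline{\sigma_2}\otimes\overline{x_1})=\partial_{y_1-y_2}(\overline{\sigma_2})\otimes\overline{x_1}=(\overline{x_1}+\overline{x_2})\otimes\overline{x_1},
\]
using $\overline{\sigma_2}=\overline{x_1}^2+\overline{x_1}\overline{x_2}+\overline{x_2}^2$ (valid in characteristic $2$). This is plainly nonzero in $M_{1,c}^1(\Stand)=Q^1$, so the image of $\overline{\sigma_2}\otimes\overline{x_1}$ in $Q^2$ is not annihilated by $D_{y_1-y_2}$. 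Hence the unique copy of $\Stand$ in $Q^2$ cannot consist of singular vectors.

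There is no real obstacle here; the one subtlety is checking that the submodule we quotient by is trivial in degree~$1$ (so that the nonvanishing of the Dunkl image truly detects non-singularity in the quotient), and this is handled cleanly by the observation that every generator of the submodule is itself singular.
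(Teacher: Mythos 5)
Your proof is correct and follows essentially the same route as the paper: both use Theorem \ref{decmposeVermaStandp=2} to pin down the degree-$2$ $\Stand$-isotypic component explicitly and then rule out singularity by one Dunkl-operator computation. The only (cosmetic) differences are that you apply $D_{y_1-y_2}$ to $\s{2}\otimes\x{1}$ after identifying the quotient's $\Stand$-component via Lemma \ref{p=2,t=1,Stand,char}, while the paper applies $D_{y_1-y_3}$ to both basis vectors of the $\x{1}$-part and observes that the kernel is exactly the span of $v_1$, which is killed in the quotient.
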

\begin{proof}
By Theorem \ref{decmposeVermaStandp=2}, the $\Stand$ isotypic component of $M^2_{1,c}(\Stand)$ has a basis:
$$\s{2}\otimes \x{1}, \, \s{2}\otimes \x{2},$$
$$\x{1}^2\otimes \x{1}+\x{1}^2\otimes \x{2}+\x{2}^2\otimes \x{1},\,\x{2}^2\otimes \x{2}+\x{1}^2\otimes \x{2}+\x{2}^2\otimes \x{1}.$$

Let us consider just the part of these copies of $\Stand$ corresponding to $\x{1}$ (this is enough as $\Stand$ is irreducible), so the part spanned by 
$$\s{2}\otimes \x{1}, \,\x{1}^2\otimes \x{1}+\x{1}^2\otimes \x{2}+\x{2}^2\otimes \x{1}.$$
These vectors satisfy 
\begin{align*}
D_{y_1-y_3}(\s{2}\otimes \x{1})&=\x{2}\otimes \x{1}\\
D_{y_1-y_3}(\x{1}^2\otimes \x{1}+\x{1}^2\otimes \x{2}+\x{2}^2\otimes \x{1})&=c\x{2}\otimes \x{1}.
\end{align*}
Their only linear combination which is singular is 
$$v_1=c\cdot \s{2}\otimes \x{1}+ (\x{1}^2\otimes \x{1}+\x{1}^2\otimes \x{2}+\x{2}^2\otimes \x{1}),$$
so there are no singular vectors in $M_{1,c}^2(\Stand)/\left< v_i,\overline{\sigma_3}^2\otimes \x{i}, i=1,2 \right>$.

\end{proof}

\begin{lemma}\label{p=2,t=1,Stand,cgen,deg4}
Let $p=2$, $t=1$, $c\ne 1$. 
The module $$M_{1,c}(\Stand)/\left< v_1,v_2,\overline{\sigma_3}^2\otimes \x{1}, \overline{\sigma_3}^2\otimes \x{2} \right>$$ has no singular vectors of type $\Stand$ in degree $4$.  
\end{lemma}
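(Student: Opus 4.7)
The plan is to adapt the method of Lemma \ref{p=2,t=1,Stand,cgen,deg2} to degree four. By Theorem \ref{decmposeVermaStandp=2} with $k=4$, the $\Stand$-isotypic component of $M^4_{1,c}(\Stand)$ contains exactly three copies of $\Stand$, represented (on the $\x{1}$-side of each $S_3$-isomorphism) by
\begin{align*}
w_1&=\s{2}^2\otimes \x{1},\\
w_2&=\s{3}(\x{1}\otimes \x{1}+\x{2}\otimes \x{1}+\x{1}\otimes \x{2}),\\
w_3&=\s{2}(\x{1}^2\otimes \x{1}+\x{2}^2\otimes \x{1}+\x{1}^2\otimes \x{2}).
\end{align*}
Since $\s{3}^2\otimes \x{i}$ lies in degree $6$, in degree $4$ the relevant quotient is simply $M^4/\left<v_1,v_2\right>^{(4)}$. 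I must therefore show that any $u=\alpha w_1+\beta w_2+\gamma w_3$ satisfying $D_{y_1-y_3}(u)\in \left<v_1,v_2\right>^{(3)}$ already lies in $\left<v_1,v_2\right>^{(4)}$.

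The next step is to compute $D_{y_1-y_3}(w_i)$ for each $i$, exploiting that the difference part of the Dunkl operator is $(S\h^*)^{S_3}$-linear and that the derivative part obeys the Leibniz rule. Because $\partial_y(\s{2}^2)=2\s{2}\partial_y\s{2}=0$ in characteristic $2$ and $\s{2}^2$ is $S_3$-invariant, we immediately get $D_{y_1-y_3}(w_1)=0$, which is consistent with $w_1\in \left<v_1,v_2\right>$ by Lemma \ref{p=2t=1cgenHilb1}. For $w_3$ the action on its degree-$2$ bracket is fixed by the singularity relation $D_{y_1-y_3}(v_1)=0$ of Lemma \ref{p=3,t=1,cne1vectors}, while for $w_2$ the action on $\x{1}\otimes \x{1}+\x{2}\otimes \x{1}+\x{1}\otimes \x{2}$ is computed directly from Lemma \ref{lemma-matricesofDunklsStand1} together with one $\h^*$-multiplication. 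The derivative contributions involve only $\partial_{y_1-y_3}(\s{2})$ and $\partial_{y_1-y_3}(\s{3})$, which are routine.

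On the other side, $\left<v_1,v_2\right>^{(3)}=\mathrm{span}\{\x{i}v_j\mid i,j=1,2\}$ decomposes by Lemma \ref{tensorproductsp=2} as $\Triv^{\oplus 2}\oplus \Stand$, providing a concrete two-dimensional $\Stand$-isotypic target into which $D_{y_1-y_3}(u)$ must land. Equating $D_{y_1-y_3}(u)$ with a general element of that target yields a linear system whose solution space should parametrise precisely the two $\Stand$-copies already present in $\left<v_1,v_2\right>^{(4)}$: the copy generated by $\s{2}v_i$ (via Lemma \ref{p=2t=1cgenHilb1}) and the $\Stand$-summand of $\Stand\otimes \Stand$ acting on $\{v_1,v_2\}$. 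The hypothesis $c\ne 1$ enters through the nondegenerate determinant $(c+1)^2\s{2}^2$ of Lemma \ref{p=3,t=1,cne1det}, which ensures $\left<v_1,v_2\right>\cong M_{1,c}(\Stand)[-2]$ and hence pins the $\Stand$-multiplicity of $\left<v_1,v_2\right>^{(4)}$ at its expected value $2$, so that the two-parameter family of solutions of the linear system is entirely accounted for inside $\left<v_1,v_2\right>^{(4)}$. The main obstacle will be the bookkeeping across the bases of $M^3$ and $M^4$ supplied by Theorem \ref{decmposeVermaStandp=2}, in particular tracking the characteristic-$2$ cancellations that arise when rewriting $\x{i}v_j$ and $\s{2}v_j$ in a common basis.
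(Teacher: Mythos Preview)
Your approach is correct and follows the same overall strategy as the paper---isolate the three $\Stand$ copies in degree $4$, then use Dunkl operators to show nothing survives in the quotient---but the paper takes a shortcut that spares most of the linear algebra you outline. Rather than computing $D_{y_1-y_3}$ on all of $w_1,w_2,w_3$ and then solving a system against $\left<v_1,v_2\right>^{(3)}$, the paper first shows \emph{directly} that $w_1$ and $w_3$ already lie in $\left<v_1,v_2\right>^{(4)}$: one simply checks
\[
\s{2}v_1=c\,w_1+w_3,\qquad \x{1}^2v_1+\x{2}^2v_1+\x{1}^2v_2=w_1+c\,w_3,
\]
and since $c\ne 1$ (so $c^2\ne 1$ in characteristic $2$) these two vectors are independent and span $\Bbbk w_1\oplus\Bbbk w_3$. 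This uses $c\ne 1$ in an elementary $2\times 2$ determinant rather than through Lemma \ref{p=3,t=1,cne1det}. The quotient $\Stand$-component then reduces to the single vector $w_2$, and one Dunkl computation
\[
D_{y_1-y_3}(w_2)=\x{2}^3\otimes \x{1}+\x{1}^2\x{2}\otimes \x{1}+\x{1}^2\x{2}\otimes \x{2}\notin \left<v_1,v_2\right>
\]
finishes the proof. Your plan would arrive at the same conclusion, but with more bookkeeping: you would compute $D_{y_1-y_3}(w_3)$ as well, describe $\left<v_1,v_2\right>^{(3)}$ explicitly, and then argue the solution space is exactly two-dimensional and contained in $\left<v_1,v_2\right>^{(4)}$. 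The paper's route avoids all of that by eliminating $w_1,w_3$ upstream.
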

\begin{proof}
By Theorem \ref{decmposeVermaStandp=2}, the part of the $\Stand$ isotypic component of $M^4_{1,c}(\Stand)$ corresponding to $\x{1}$ has a basis
$$\s{2}^2\otimes \x{1}, \,\s{3}(\x{1}\otimes \x{1}+\x{1}\otimes \x{2}+\x{2}\otimes \x{1}), \, \s{2}(\x{1}^2\otimes \x{1}+\x{1}^2\otimes \x{2}+\x{2}^2\otimes \x{1}).$$

Using Theorem \ref{decmposeVermaStandp=2} again, we see that the submodule $\left< v_1,v_2 \right>$ which we quotient by contains the following vectors in the $\Stand$ isotypic component of degree $4$:
\begin{align*}
\sigma_2v_1&=c\cdot \s{2}^2\otimes \x{1}+ \s{2}(\x{1}^2\otimes \x{1}+\x{1}^2\otimes \x{2}+\x{2}^2\otimes \x{1})\\
\x{1}^2v_1+\x{2}^2v_1+\x{1}^2v_2&=\s{2}^2\otimes \x{1}+ c\cdot  \s{2}(\x{1}^2\otimes \x{1}+\x{1}^2\otimes \x{2}+\x{2}^2\otimes \x{1}). 
\end{align*}
As $c\ne 1$, these two vectors are linearly independent, and so $\s{2}^2\otimes \x{1}$ and  $\s{2}(\x{1}^2\otimes \x{1}+\x{1}^2\otimes \x{2}+\x{2}^2\otimes \x{1})$ lie in $\left< v_1,v_2 \right>$, while $\s{3}(\x{1}\otimes \x{1}+\x{1}\otimes \x{2}+\x{2}\otimes \x{1})$ does not and spans the $\x{1}$ part of the $\Stand$ isotypic component of $M_{1,c}^4(\Stand)/\left< v_1,v_2,\overline{\sigma_j}^2\otimes \x{i} \right>$.

It remains to check if  $\s{3}(\x{1}\otimes \x{1}+\x{1}\otimes \x{2}+\x{2}\otimes \x{1})$ is singular in the quotient $M_{1,c}(\Stand)/\left< v_1,v_2,\overline{\sigma_3}^2\otimes \x{1}, \overline{\sigma_3}^2\otimes \x{2} \right>.$ We calculate
\begin{align*}
D_{y_1-y_3}(\s{3}(\x{1}\otimes \x{1}+\x{1}\otimes \x{2}+\x{2}\otimes \x{1}))&=\x{2}^3\otimes \x{1}+\x{1}^2\x{2}\otimes \x{1}+\x{1}^2\x{2}\otimes \x{2},
\end{align*}
which can be shown either directly or using Lemma \ref{p=3,t=1,cne1det} to not lie in $\left< v_1,v_2 \right>$, so is not zero in the quotient $M_{1,c}(\Stand)/\left< v_1,v_2,\overline{\sigma_3}^2\otimes \x{1}, \overline{\sigma_3}^2\otimes \x{2}\right>$.

\end{proof}

\begin{lemma}\label{p=2,t=1,Stand,cgen,deg6}
Let $p=2$, $t=1$, $c\ne 1$. 
The module $$M_{1,c}(\Stand)/\left< v_1,v_2,\overline{\sigma_3}^2\otimes \x{1}, \overline{\sigma_3}^2\otimes \x{2} \right>$$ has no singular vectors of type $\Stand$ in degree $6$.  
\end{lemma}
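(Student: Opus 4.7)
The plan is to mirror the approach of Lemma \ref{p=2,t=1,Stand,cgen,deg4}, i.e., use Theorem \ref{decmposeVermaStandp=2} to list a basis of the $\Stand$ isotypic component of $M^6_{1,c}(\Stand)$ on the $\x{1}$-side, cut down this list by modding out the image of $\langle v_1,v_2,\overline{\sigma_3}^2\otimes \x{1},\overline{\sigma_3}^2\otimes \x{2}\rangle$, and finally apply $D_{y_1-y_3}$ to the one remaining generator and show its image is nonzero in the quotient.

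First I would enumerate the $\Stand$-copies in $M^6_{1,c}(\Stand)$ using Theorem \ref{decmposeVermaStandp=2}. In degree $k=6$ the constraints $2a+3b\in\{6,3,5,4\}$ yield five copies of $\Stand$, whose $\x{1}$-components give the five vectors
\[
\overline{\sigma_2}^3\otimes \x{1},\ \overline{\sigma_3}^2\otimes \x{1},\ \overline{\sigma_3}\bigl((\x{1}^3+\x{1}^2\x{2}+\x{2}^3)\otimes \x{1}+\overline{\sigma_2}\otimes \x{2}\bigr),
\]
\[
\overline{\sigma_2}\,\overline{\sigma_3}(\x{1}\otimes \x{1}+\x{2}\otimes \x{1}+\x{1}\otimes \x{2}),\ \overline{\sigma_2}^2(\x{1}^2\otimes \x{1}+\x{2}^2\otimes \x{1}+\x{1}^2\otimes \x{2}).
\]

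Next I would identify explicit elements of the submodule $\langle v_1,v_2,\overline{\sigma_3}^2\otimes \x{i}\rangle$ lying in this five-dimensional space. Clearly $\overline{\sigma_3}^2\otimes \x{1}$ is one. From $\langle v_1,v_2\rangle$ we have $\overline{\sigma_2}^2 v_1$ and the symmetric-type combination $\x{1}^2 v_1+\x{2}^2 v_1+\x{1}^2 v_2$ (as in the degree-$4$ argument), which span two independent vectors supported on $\overline{\sigma_2}^3\otimes \x{1}$ and $\overline{\sigma_2}^2(\x{1}^2\otimes \x{1}+\x{2}^2\otimes \x{1}+\x{1}^2\otimes \x{2})$; the coefficient matrix is $\begin{pmatrix}c&1\\1&c\end{pmatrix}$, invertible for $c\ne 1$. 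Similarly, using $\overline{\sigma_3}v_1$ and a symmetric combination producing the $\overline{\sigma_2}\,\overline{\sigma_3}$-level vector, I expect to kill the $\overline{\sigma_2}\,\overline{\sigma_3}$-copy as well as one combination involving the $\overline{\sigma_3}\cdot(\x{1}^3+\x{1}^2\x{2}+\x{2}^3)\otimes\x{1}+\overline{\sigma_2}\otimes\x{2}$ vector. The character in Lemma \ref{p=2,t=1,Stand,char} forces exactly one copy of $\Stand$ to survive in degree $6$, which lets me pin down a distinguished surviving representative $w$.

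Finally I would apply $D_{y_1-y_3}$ to $w$ and use Lemma \ref{p=3,t=1,cne1det} to check the result is not in $\langle v_1,v_2\rangle$: substituting the coefficients of $D_{y_1-y_3}w$ into the rational-function formulas \eqref{systemp=2,t1,sol} I would show the resulting $A,B$ are not polynomials, the obstruction being that $\overline{\sigma_2}^2$ in the denominator does not divide the numerator. Combined with a degree check that $D_{y_1-y_3}w\notin\langle \overline{\sigma_3}^2\otimes \x{i}\rangle$ in degree $5$ (degree $5$ lies below $\overline{\sigma_3}^2\otimes \x{i}$ in degree $6$, so this is automatic), this rules out singularity.

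The main obstacle is bookkeeping: correctly identifying the two-dimensional subspace of each ``level'' of the filtration on the $\Stand$-isotypic component of $M^6_{1,c}(\Stand)$ that is hit by $\langle v_1,v_2\rangle$, and extracting from the remaining five-dimensional space the unique (up to scalar) $\x{1}$-component of the surviving $\Stand$. The invertibility of the $2\times 2$ matrices showing up there is exactly where $c\ne 1$ is used, as in Lemma \ref{p=2,t=1,Stand,cgen,deg4}. Once $w$ is in hand, the nonvanishing of $D_{y_1-y_3}w$ in the quotient is a direct divisibility check via Lemma \ref{p=3,t=1,cne1det}.
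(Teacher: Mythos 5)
Your proposal is correct in outline and follows essentially the same route as the paper's proof: list the five $\x{1}$-components of the $\Stand$-copies in degree $6$ via Theorem \ref{decmposeVermaStandp=2}, kill $\s{3}^2\otimes\x{1}$ together with $\s{2}^2v_1=B+cC$, $\s{2}(\x{1}^2v_1+\x{2}^2v_1+\x{1}^2v_2)=cB+C$ (your $\begin{pmatrix}c&1\\1&c\end{pmatrix}$ matrix, invertible since $c\ne 1$) and $\s{3}(\x{1}v_1+\x{2}v_1+\x{1}v_2)=cA+E$, and then apply $D_{y_1-y_3}$ to the unique survivor and check non-membership in $\left<v_1,v_2\right>$ via Lemma \ref{p=3,t=1,cne1det} (your degree-$5$ remark disposing of $\s{3}^2\otimes\x{i}$ is correct). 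One bookkeeping slip: the relation from $\s{3}v_1$ does not kill the $\s{2}\s{3}$-copy; it eliminates $E$ in favour of $A=\s{2}\s{3}(\x{1}\otimes\x{1}+\x{1}\otimes\x{2}+\x{2}\otimes\x{1})$, which is exactly the surviving representative $w$, and the remaining computation is $D_{y_1-y_3}(A)=\x{1}^4\x{2}\otimes(\x{1}+\x{2})\notin\left<v_1,v_2\right>$.
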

\begin{proof}
Using Theorem \ref{decmposeVermaStandp=2}, the part of the $\Stand$ isotypic component of $M^6_{1,c}(\Stand)$ corresponding to $\x{1}$ has a basis
\begin{align*}
A&=\s{2}\s{3}(\x{1}\otimes \x{1}+\x{1}\otimes \x{2}+\x{2}\otimes \x{1})\\
B&=\s{2}^2(\x{1}^2\otimes \x{1}+\x{1}^2\otimes \x{2}+\x{2}^2\otimes \x{1})\\
C&=\s{2}^3\otimes \x{1}\\
D&=\s{3}^2\otimes \x{1}\\
E&=\s{3}(\x{1}^3+\x{1}^2\x{2}+\x{2}^3)\otimes \x{1}.
\end{align*}

Taking a quotient by $\s{3}^2\otimes \x{i}$ annihilates $\s{3}^2\otimes \x{1}=D,$ taking a quotient by $\left<v_1,v_2 \right>$ annihilates (again using Theorem \ref{decmposeVermaStandp=2})
\begin{align*}
\s{2}^2v_1&=B+c\cdot C\\
\s{3}(\x{1}v_1+\x{2}v_1+\x{1}v_2)&=c\cdot A + E\\
\s{2}(\x{1}^2v_1+\x{2}^2v_1+\x{1}^2v_2)&=c\cdot B + C.
\end{align*}
and the resulting basis for the $\x{1}$ part of the $\Stand$ isotypic component in the quotient $M^6_{1,c}(\Stand)/\left< v_1,v_2,\overline{\sigma_j}^2\otimes \x{i}\right>$ is $A$. To check that it is not singular in this quotient, we calculate
$$D_{y_1-y_3}(A)=\x{1}^4\x{2}\otimes (\x{1}+\x{2}),$$
which can be shown directly or using Lemma \ref{p=3,t=1,cne1det} does not lie in the submodule $\left< v_1,v_2 \right>$.

\end{proof}

\begin{lemma}\label{p=2,t=1,Stand,cgen-endproof}
Let $p=2$, $t=1$, $c\ne 0,1$. 
The quotient of the Verma module $M_{1,c}(\Stand)$ by the submodule generated by $v_1,v_2$ from Lemma \ref{p=3,t=1,cne1vectors}, $\overline{\sigma_3}^2\otimes \x{1}$ and $\overline{\sigma_3}^2\otimes \x{2}$ is irreducible, and thus equal to $L_{1,c}(\Stand)$. 
\end{lemma}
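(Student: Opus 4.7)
The plan is to combine the character computation with the Casimir constraint and the three degree-by-degree non-existence lemmas already proved. Concretely, I will show that the proposed quotient $Q := M_{1,c}(\Stand)/\left<v_1,v_2,\overline{\sigma_3}^2\otimes \x{1},\overline{\sigma_3}^2\otimes \x{2}\right>$ admits no singular vectors, so it has no proper graded subrepresentations other than $0$, and must therefore coincide with the unique irreducible quotient $L_{1,c}(\Stand)$ of $M_{1,c}(\Stand)$.

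First I would invoke Lemma \ref{p=2Omega}(2): since $c\neq 0,1$ forces $c\notin \mathbb{F}_2$, any singular vector inside $Q$ must sit in an irreducible $S_3$-subrepresentation of type $\Stand$ lying in an \emph{even} graded degree. Next, Lemma \ref{p=2,t=1,Stand,char} tells us that $Q$ is concentrated in degrees $0,1,\ldots,6$, so the only even degrees in which a candidate singular vector of type $\Stand$ could occur are $k=2,4,6$ (degree $0$ contains the lowest weight $\Stand$, which is not singular by definition).

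Now I would simply quote the three technical lemmas already established: Lemma \ref{p=2,t=1,Stand,cgen,deg2} rules out singular vectors of type $\Stand$ in degree $2$ of $Q$, Lemma \ref{p=2,t=1,Stand,cgen,deg4} does so in degree $4$, and Lemma \ref{p=2,t=1,Stand,cgen,deg6} does so in degree $6$. Combined with the parity/type restriction from $\Omega$, this covers every isotypic component in which a singular vector could possibly live. Hence $Q$ has no singular vectors at all.

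Finally, I would observe that any proper graded submodule of $Q$ would, taking a minimal nonzero graded piece, yield a singular vector in $Q$ (a homogeneous vector not of lowest degree killed by all Dunkl operators). Since we have shown no such vector exists, $Q$ is irreducible. The universal mapping property identifies the unique irreducible graded quotient of $M_{1,c}(\Stand)$ with lowest weight $\Stand$ as $L_{1,c}(\Stand)$, so $Q\cong L_{1,c}(\Stand)$. The main obstacle was really packaged into the three preceding lemmas; at this stage nothing more than bookkeeping is required, and no new computation is needed.
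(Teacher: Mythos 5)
Your proof is correct and follows essentially the same route as the paper: restrict to even degrees and type $\Stand$ via Lemma \ref{p=2Omega}, note the module lives in degrees $0$ through $6$ by Lemma \ref{p=2,t=1,Stand,char}, and then cite Lemmas \ref{p=2,t=1,Stand,cgen,deg2}, \ref{p=2,t=1,Stand,cgen,deg4} and \ref{p=2,t=1,Stand,cgen,deg6} to exclude singular vectors in degrees $2$, $4$ and $6$. The only addition is your explicit remark that a proper graded submodule would produce a singular vector, which the paper leaves implicit; no gap either way.
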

\begin{proof}

By Lemma \ref{p=2Omega}, $M_{1,c}(\Stand)/\left< v_1,v_2, \overline{\sigma_3}^2\otimes \x{1},\overline{\sigma_3}^2\otimes \x{2} \right>$ has no singular vectors in odd degrees, and only singular vectors of type $\Stand$ in even degrees. By Lemma \ref{p=2,t=1,Stand,char}, the only positive even degrees of this module are $2,4$ and $6$, in which there are no singular vectors of type $\Stand$ by Lemmas \ref{p=2,t=1,Stand,cgen,deg2}, \ref{p=2,t=1,Stand,cgen,deg4} and \ref{p=2,t=1,Stand,cgen,deg6}. Hence, this module has no singular vectors and is irreducible. 

\end{proof}

\subsection{The irreducible representation $L_{1,c}(\Stand)$ in characteristic $2$ for $c\in \mathbb{F}_2$}

The previous section worked out the characters of $L_{1,c}(\Stand)$ for all $c\ne 0,1$. For $c=0$, this character is computed in Lemma \ref{t=1,c=0}. In this section we compute it for $c=1$. For the entire section, let $p=2$, $t=1$, $c=1$. The aim of the section is to prove the following lemma.

\begin{lemma}\label{p=2,t=1,Stand,c=1}
Let $p=2$, $t=1$, $c=1$. The irreducible representation $L_{1,1}(\Stand)$ is the quotient of the Verma module $M_{1,1}(\Stand)$ by the submodule generated by the following vectors: 
\begin{align*}
v_1&=\x{1}\otimes \x{2}+\x{2}\otimes \x{1}\\
v_3&=\s{2}(\x{1}\otimes \x{1}+\x{2}\otimes \x{1}+\x{2}\otimes \x{2})\\
v_5&=\s{3}(\x{1}^2\otimes \x{2}+\x{2}^2\otimes \x{1})\\
v_7&=\s{2}\s{3}(\x{1}^2\otimes \x{1}+\x{2}^2\otimes \x{1}+\x{2}^2\otimes \x{2}).
\end{align*}
The character of $L_{1,1}(\Stand)$ is 
$$\chi_{L_{1,1}(\Stand)}(z)=[\Stand](1+z+z^2+2z^3+z^4+z^5+z^6)+[\Triv](z+2z^2+2z^4+z^5)$$
and its Hilbert polynomial is
$$h_{L_{1,1}(\Stand)}(z)=2+3z+4z^2+4z^3+4z^4+3z^5+2z^6=\frac{1-z-z^3-z^5-z^7+2z^8}{(1-z)^2}.$$
\end{lemma}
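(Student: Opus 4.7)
The plan is to proceed in three stages: verify singularity of the four generators, bound the character of the quotient from above, and show that no further singular vectors remain.

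First I would check directly that $v_1, v_3, v_5, v_7$ are all $S_3$-invariant. For $v_1$ this is immediate; for $v_3, v_5, v_7$ the only non-trivial check is the action of $s_2$, which one handles using $\x{3}=\x{1}+\x{2}$ in characteristic $2$ together with the symmetry of $\s{2}, \s{3}$. Because each $v_i$ is $S_3$-invariant, conjugation by $(23)$ gives $D_{y_2-y_3}v_i = (23)D_{y_1-y_2}v_i$, so a single Dunkl-operator calculation settles singularity. I would check in order: $D_{y_1-y_2}v_1=0$ in $M_{1,1}(\Stand)$; $D_{y_1-y_2}v_3 \in \langle v_1\rangle$; $D_{y_1-y_2}v_5 \in \langle v_1,v_3\rangle$; and $D_{y_1-y_2}v_7 \in \langle v_1,v_3,v_5\rangle$. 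Each is a finite computation that can be organised using the basis of Theorem \ref{decmposeVermaStandp=2}.

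Next I would compute the character of $Q := M_{1,1}(\Stand)/\langle v_1,v_3,v_5,v_7\rangle$. Since $L_{1,1}(\Triv)=[\Triv]$ by Lemma \ref{Lian3.2.}, each $v_i$ generates (at most) a copy of $L_{1,1}(\Triv)[-i]$, which is one-dimensional and sits in a single $\Triv$ isotypic component. Using Theorem \ref{decmposeVermaStandp=2} I would read off which $\Triv$ summand of $M^i_{1,1}(\Stand)$ each generator lies in, and then identify (in each degree $k$) exactly which summands are killed upon passing to the quotient. Comparing with the character of $M_{1,1}(\Stand)$ from Corollary \ref{characterShp=2} would show that the character of $Q$ is at most the claimed $[\Stand](1+z+z^2+2z^3+z^4+z^5+z^6)+[\Triv](z+2z^2+2z^4+z^5)$, yielding the Hilbert polynomial $2+3z+4z^2+4z^3+4z^4+3z^5+2z^6$.

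Finally I would show that $Q$ is irreducible, which together with the inclusion $L_{1,1}(\Stand)\subseteq Q$ (as a quotient of $Q$) forces $L_{1,1}(\Stand)=Q$ and that the above upper bound on the character is actually equality. By Lemma \ref{p=2Omega}(3), any singular vector of $Q$ must either be of type $\Stand$ in even degree or of type $\Triv$ in odd degree. Since $Q$ is concentrated in degrees $0,\ldots,6$, only finitely many isotypic components need examination; in each such component I would either apply a Dunkl operator explicitly (using Lemma \ref{lemma-matricesofDunklsStand1} for degrees $1,2$) or use the Theorem \ref{decmposeVermaStandp=2} basis together with Leibniz to reduce to short computations. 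The main obstacle I expect is Step 2: keeping a clean ledger of which summands of $M_{1,1}(\Stand)$ actually survive in $Q$, because in characteristic $2$ the $\Triv$ summands sit inside indecomposable extensions and the submodule $\langle v_1,\dots,v_7\rangle$ may nontrivially overlap itself in higher degrees. A convenient safeguard is to work within the baby Verma module $N_{1,1}(\Stand)$, which reduces the computation to a finite-dimensional space where one can verify dimensions directly, matching them against the claimed Hilbert polynomial.
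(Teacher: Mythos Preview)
Your overall strategy is close to the paper's, but Step~2 contains a genuine error that would derail the character computation. You write that ``each $v_i$ generates (at most) a copy of $L_{1,1}(\Triv)[-i]$, which is one-dimensional''. This is false: a singular vector of type $\Triv$ in degree $i$ generates the \emph{image} of the induced map $M_{1,1}(\Triv)[-i]\to M_{1,1}(\Stand)$, which is a quotient of the Verma module, not of the irreducible. In fact the paper shows $\langle v_1\rangle \cong M_{1,1}(\Triv)[-1]$, an infinite-dimensional submodule; similarly $\langle v_3\rangle$, $\langle v_5\rangle$, $\langle v_7\rangle$ are large, and they overlap nontrivially. Your bookkeeping of ``which $\Triv$ summand each generator kills'' cannot recover the character of $Q$, because each generator kills infinitely many graded pieces, and the intersections are not transparent. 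Passing to the baby Verma $N_{1,1}(\Stand)$ makes the problem finite but does not by itself fix the accounting.

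A related slip in Step~1: $v_3$ and $v_7$ are \emph{not} $S_3$-invariant in $M_{1,1}(\Stand)$. In the basis of Theorem~\ref{decmposeVermaStandp=2} they sit in the quotient $\Triv$ of an indecomposable extension, not in the sub; concretely $s_1.v_3 = v_3 + \overline{\sigma_2}\,v_1$. So $v_3$ is only invariant (and only singular) modulo $\langle v_1\rangle$, and likewise $v_7$ only modulo $\langle v_1,v_3,v_5\rangle$. Your reduction ``$D_{y_2-y_3}v_i = (23)D_{y_1-y_2}v_i$'' therefore needs to be stated in the successive quotients, not in the Verma module itself.

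The paper avoids both problems by interleaving your Steps~2 and~3: it works one degree $k$ at a time (Lemmas \ref{p=2,t=1,Stand,c=1,k=1}--\ref{p=2,t=1,Stand,c=1,k=7}), at each stage computing $[M^k_{1,1}(\Stand)/\langle v_1,\ldots\rangle]$ from the already-established character of $\langle v_1,\ldots\rangle\cong M_{1,1}(\Triv)[-1]\oplus M_{1,1}(\Triv)[-3]\oplus\cdots$ in lower degrees, and simultaneously checking for new singular vectors there. In degrees $4$ and $6$ this intertwined bookkeeping even lets one rule out singular vectors by a multiplicity argument, with no Dunkl computation at all. If you restructure your argument degree-by-degree in this way, it will go through.
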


We prove this in the series of lemmas, considering one degree of $M_{1,1}(\Stand)$ at a time, and calculating the character one term at a time. 

\begin{lemma}\label{p=2,t=1,Stand,c=1,k=1}
Let $p=2$. The space of singular vectors in $M^1_{1,1}(\Stand)$ is spanned by
$$v_1=\x{1}\otimes \x{2}+\x{2}\otimes \x{1}.$$
Hence, the first terms of the character of $L_{1,1}(\Stand)$ are  
$$\chi_{L_{1,1}(\Stand)}(z)=[\Stand](1+z+\ldots )+[\Triv](z+\ldots).$$
\end{lemma}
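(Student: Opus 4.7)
The plan is to directly compute the joint kernel of Dunkl operators on $M^1_{1,1}(\Stand)$ using the matrices already assembled in Lemma \ref{lemma-matricesofDunklsStand1}, and then read off the degree-$0$ and degree-$1$ contributions to the character from the $S_3$-structure described in Lemma \ref{tensorproductsp=2}.

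First I specialise Lemma \ref{lemma-matricesofDunklsStand1} to $p=2$, $t=c=1$. In the basis $\{\x{1}\otimes\x{1},\,\x{1}\otimes\x{2},\,\x{2}\otimes\x{1},\,\x{2}\otimes\x{2}\}$ of $M^1_{1,1}(\Stand)$ the two operators $D_{y_1-y_2}$ and $D_{y_2-y_3}$ become
\[
\begin{pmatrix} 0 & 0 & 0 & 1 \\ 1 & 0 & 0 & 0 \end{pmatrix}, \qquad
\begin{pmatrix} 1 & 1 & 1 & 1 \\ 0 & 1 & 1 & 1 \end{pmatrix}.
\]
Since $\h$ is spanned by $y_1-y_2$ and $y_2-y_3$, a vector $a\x{1}\otimes\x{1}+b\x{1}\otimes\x{2}+c\x{2}\otimes\x{1}+d\x{2}\otimes\x{2}$ is singular iff it lies in the joint kernel. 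The first matrix forces $a=d=0$, and then the second matrix forces $b+c=0$, i.e.\ $b=c$ in characteristic $2$. The kernel is therefore one-dimensional and spanned by $\x{1}\otimes\x{2}+\x{2}\otimes\x{1}=v_1$.

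Finally I deduce the character contribution. By Lemma \ref{tensorproductsp=2}, $M^1_{1,1}(\Stand)\cong\Stand\otimes\Stand$ decomposes as a copy of $\Stand$ plus an indecomposable extension of two copies of $\Triv$, the $\Triv$-submodule of the latter being precisely the span of $v_1$. Hence in $K_0(S_3)$ the class of $M^1_{1,1}(\Stand)$ is $[\Stand]+2[\Triv]$, and the class of $v_1$ is $[\Triv]$. Since $L_{1,1}(\Stand)$ is a quotient by a submodule containing $v_1$, the degree-$1$ part of $L_{1,1}(\Stand)$ has class at most $[\Stand]+[\Triv]$. Combined with the degree-$0$ part $[\Stand]$, this yields
\[
\chi_{L_{1,1}(\Stand)}(z)=[\Stand](1+z+\ldots)+[\Triv](z+\ldots),
\]
as claimed. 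There is no real obstacle here — the computation is entirely mechanical once the matrices of Lemma \ref{lemma-matricesofDunklsStand1} are specialised; the only subtlety is bookkeeping the char-$2$ cancellations (for example $-t+c=0$ and $t+c=0$ collapse together), which is why the kernel is as small as it is.
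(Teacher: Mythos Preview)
Your proposal is correct and follows exactly the paper's approach: specialise the matrices of Lemma~\ref{lemma-matricesofDunklsStand1} at $t=c=1$ in characteristic~$2$ and compute the joint kernel. One minor tightening: you deduce that the degree-$1$ class of $L_{1,1}(\Stand)$ is \emph{at most} $[\Stand]+[\Triv]$, but the lemma claims equality; this follows because $J^0_{1,1}(\Stand)=0$ forces every element of $J^1_{1,1}(\Stand)$ to be singular, so $J^1=\Bbbk v_1$ exactly and the quotient in degree~$1$ is precisely $[\Stand]+[\Triv]$.
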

\begin{proof}
Follows directly from Lemma \ref{lemma-matricesofDunklsStand1}, by substituting $t=c=1$ into the matrices of Dunkl operators calculated there and looking for the intersection of their kernels. 
\end{proof}

\begin{lemma}\label{p=2,t=1,Stand,c=1,k=2}
Let $p=2$. There are no singular vectors in $M^2_{1,1}(\Stand)/\left<v_1 \right>$.
The first terms of the character of $L_{1,1}(\Stand)$ are  
$$\chi_{L_{1,1}(\Stand)}(z)=[\Stand](1+z+z^2\ldots )+[\Triv](z+2z^2\ldots).$$
\end{lemma}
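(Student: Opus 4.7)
The plan is to decompose $M^2_{1,1}(\Stand)$ into $S_3$-isotypic components, identify $\langle v_1\rangle\cap M^2_{1,1}(\Stand)$ as a single diagonal copy of $\Stand$, use the Casimir (Lemma \ref{p=2Omega}) to restrict the search for singular vectors in the quotient to the one surviving $\Stand$, and rule it out with a single Dunkl-operator computation. The character statement then follows at once.

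By Theorem \ref{decmposeVermaStandp=2} with $k=2$, the module $M^2_{1,1}(\Stand)$ is six-dimensional, with $\Stand$-isotypic component $W_1\oplus W_2$ where $W_1=\mathrm{span}\{\s{2}\otimes\x{1},\,\s{2}\otimes\x{2}\}$ and $W_2=\mathrm{span}\{\x{1}^2\otimes\x{1}+\x{2}^2\otimes\x{1}+\x{1}^2\otimes\x{2},\,\x{2}^2\otimes\x{2}+\x{2}^2\otimes\x{1}+\x{1}^2\otimes\x{2}\}$, and with a two-dimensional $\Triv$-isotypic component (an extension of $\Triv$ by $\Triv$). Since $v_1$ is singular and $S_3$-invariant, the triangular decomposition yields $\langle v_1\rangle=S\h^*\cdot v_1$, so $\langle v_1\rangle\cap M^2_{1,1}(\Stand)=\mathrm{span}\{\x{1}v_1,\x{2}v_1\}$, and the map $\x{i}\mapsto \x{i}v_1$ makes this a copy of $\Stand$ in the $\Stand$-isotypic. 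Expanding
\[
\x{1}v_1=\x{1}^2\otimes\x{2}+\x{1}\x{2}\otimes\x{1}=\s{2}\otimes\x{1}+(\x{1}^2\otimes\x{1}+\x{2}^2\otimes\x{1}+\x{1}^2\otimes\x{2})
\]
(and symmetrically for $\x{2}v_1$) identifies $\langle v_1\rangle\cap M^2_{1,1}(\Stand)$ as the diagonal $\Stand$ in $W_1\oplus W_2$, so the $\Stand$-isotypic of $M^2_{1,1}(\Stand)/\langle v_1\rangle$ is a single copy of $\Stand$, represented for example by $\s{2}\otimes\x{1}$ and $\s{2}\otimes\x{2}$.

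By Lemma \ref{p=2Omega}(3) applied to the quotient $M_{1,1}(\Stand)/\langle v_1\rangle$, any singular vector in the even degree $2$ is of type $\Stand$, so only the displayed $\Stand$ copy needs to be checked. Since $\s{2}$ is $S_3$-invariant, the difference terms of the Dunkl operators vanish when applied to $\s{2}\otimes\x{i}$, leaving
\[
D_{y_1-y_3}(\alpha\s{2}\otimes\x{1}+\beta\s{2}\otimes\x{2})=\alpha\x{2}\otimes\x{1}+\beta\x{2}\otimes\x{2},
\]
using $\partial_{y_1-y_3}(\x{1}^2+\x{1}\x{2}+\x{2}^2)=\x{2}$ in characteristic $2$. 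As $\langle v_1\rangle\cap M^1_{1,1}(\Stand)=\Bbbk(\x{1}\otimes\x{2}+\x{2}\otimes\x{1})$ contains no nonzero vector of the form $\alpha\x{2}\otimes\x{1}+\beta\x{2}\otimes\x{2}$, no nonzero element of the residual $\Stand$ is annihilated by $D_{y_1-y_3}$ modulo $\langle v_1\rangle$, hence none is singular. This establishes the first assertion.

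For the character, (1) implies $L_{1,1}^2(\Stand)=M_{1,1}^2(\Stand)/(\langle v_1\rangle\cap M^2_{1,1}(\Stand))$, whose Grothendieck class is $[M^2_{1,1}(\Stand)]-[\Stand]=2[\Stand]+2[\Triv]-[\Stand]=[\Stand]+2[\Triv]$. Combined with the degree $0$ class $[\Stand]$ and the degree $1$ class $[\Stand]+[\Triv]$ (from Lemma \ref{p=2,t=1,Stand,c=1,k=1}), this gives exactly the claimed initial segment of $\chi_{L_{1,1}(\Stand)}(z)$. The only non-routine step is the identification of $\x{i}v_1$ as a diagonal $\Stand$ across $W_1\oplus W_2$; once that is done, the Casimir reduces the singularity check to one Dunkl-operator calculation, so the main obstacle is essentially bookkeeping.
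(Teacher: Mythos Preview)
Your proof is correct and follows essentially the same approach as the paper: use Lemma \ref{p=2Omega} to restrict attention to the $\Stand$ isotypic component, identify $\x{1}v_1=\s{2}\otimes\x{1}+(\x{1}^2\otimes\x{1}+\x{2}^2\otimes\x{1}+\x{1}^2\otimes\x{2})$ so that the quotient is spanned by $\s{2}\otimes\x{i}$, and then check directly that $D_{y_1-y_3}(\s{2}\otimes\x{1})=\x{2}\otimes\x{1}\notin\langle v_1\rangle$. The only difference is cosmetic---you test a general combination $\alpha\s{2}\otimes\x{1}+\beta\s{2}\otimes\x{2}$ whereas the paper uses irreducibility of $\Stand$ to test only $\s{2}\otimes\x{1}$---and you spell out the character computation in slightly more detail.
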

\begin{proof}
By Lemma \ref{p=2Omega} we only need to inspect the $\Stand$ isotypic component for singular vectors. The part of it corresponding to $\x{1}$ in $M^2_{1,1}(\Stand)$ has a basis 
$$\s{2}\otimes \x{1}, \quad \x{1}^2\otimes \x{1}+\x{1}^2\otimes \x{2}+\x{2}^2\otimes \x{1}.$$ Taking the quotient by $\left<v_1\right>$ annihilates the vector 
$$x_1v_1=(\s{2}\otimes \x{1})+ (\x{1}^2\otimes \x{1}+\x{1}^2\otimes \x{2}+\x{2}^2\otimes \x{1}).$$
So, it is enough to check that $\s{2}\otimes \x{1}$ is not singular. We calculate
$$D_{y_1-y_3}(\s{2}\otimes \x{1})=\x{2}\otimes \x{1}\notin \left<v_1\right>,$$
which proves the claim. 
\end{proof}

\begin{lemma}\label{p=2,t=1,Stand,c=1,k=3}
Let $p=2$. The space of singular vectors in $M_{1,1}^3(\Stand)/\left<v_1 \right>$ is spanned by
$$v_3=\s{2}(\x{1}\otimes \x{1}+\x{2}\otimes \x{1}+\x{2}\otimes \x{2}).$$
The character of $M_{1,1}(\Stand)/\left<v_1,v_3 \right>$ is
$$\chi_{M_{1,1}(\Stand)/\left<v_1,v_3 \right>}(z)=\chi_{M_{1,1}(\Stand)}(z)-(z+z^3)\chi_{M_{1,1}(\Triv)}(z).$$
The first terms of the character of $L_{1,1}(\Stand)$ are
$$\chi_{L_{1,1}(\Stand)}(z)=[\Stand](1+z+z^2+2z^3+\ldots)+[\Triv](z+2z^2+0\cdot z^3+\ldots).$$
\end{lemma}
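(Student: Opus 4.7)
The strategy has four parts: use the action of $\Omega$ to restrict where singular vectors of degree $3$ can live, use the explicit basis from Theorem \ref{decmposeVermaStandp=2} to localise them to a small space, verify that $v_3$ is singular modulo $\left<v_1\right>$ by a direct Dunkl computation, and finally establish the character identity by exhibiting $\left<v_1\right>$ and the image of $\left<v_3\right>$ in $M_{1,1}(\Stand)/\left<v_1\right>$ as faithful shifted copies of $M_{1,1}(\Triv)$.

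First, Lemma \ref{p=2Omega}(3) restricts singular vectors in the odd degree $k=3$ to the $\Triv$ isotypic component of $M_{1,1}^3(\Stand)$. Theorem \ref{decmposeVermaStandp=2} with $k=3$ shows that only the condition $2a+3b=k-1=2$ contributes to that isotypic, namely $(a,b)=(1,0)$, yielding an indecomposable extension of two copies of $\Triv$ with submodule spanned by $u_1=\s{2}(\x{1}\otimes\x{2}+\x{2}\otimes\x{1})=\s{2}\cdot v_1$ and quotient represented by $u_2=v_3$. Since $v_1$ is $S_3$-invariant and singular, $\left<v_1\right>^3=S^2\h^*\cdot v_1$, whose $\Triv$ isotypic content is $(S^2\h^*)^{S_3}\cdot v_1=\Bbbk\s{2}\cdot v_1=\Bbbk u_1$; hence the $\Triv$ isotypic of $M_{1,1}^3(\Stand)/\left<v_1\right>$ is one-dimensional, spanned by the class of $v_3$. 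To confirm that $v_3$ is singular modulo $\left<v_1\right>$, I would use the Leibniz-type rule $D_y(\s{2}\cdot w)=(\partial_y\s{2})\cdot w+\s{2}\cdot D_y(w)$, valid for $\s{2}\in (S\h^*)^{S_3}$ because the reflection part of $D_y$ is $(S\h^*)^{S_3}$-linear and the differential part obeys Leibniz, combined with the action of $D_y$ on $M^1$ from Lemma \ref{lemma-matricesofDunklsStand1} at $t=c=1$, $p=2$. Setting $w=\x{1}\otimes\x{1}+\x{2}\otimes\x{1}+\x{2}\otimes\x{2}$, a short computation yields $D_{y_1-y_3}(v_3)=\x{1}v_1+\x{2}v_1$ and $D_{y_2-y_3}(v_3)=\x{2}v_1$, both manifestly in $\left<v_1\right>$.

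For the character formula, I would prove the stronger claim that $\left<v_1\right>\cong M_{1,1}(\Triv)[-1]$ and that the submodule generated by the image of $v_3$ in $M_{1,1}(\Stand)/\left<v_1\right>$ is isomorphic to $M_{1,1}(\Triv)[-3]$. The first is immediate: the canonical map $f\otimes 1_{\Triv}\mapsto fv_1=f\x{1}\otimes\x{2}+f\x{2}\otimes\x{1}$ is injective, since $fv_1=0$ forces $f\x{1}=f\x{2}=0$ in the integral domain $S\h^*$. The second requires showing that $fv_3\in\left<v_1\right>$ implies $f=0$. Writing $fv_3=gv_1$ and equating the $\otimes\x{1}$ and $\otimes\x{2}$ components gives $f\s{2}(\x{1}+\x{2})=g\x{2}$ and $f\s{2}\x{2}=g\x{1}$. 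Since $\s{2}\equiv\x{2}^2\pmod{\x{1}}$, we have $\gcd(\x{1},\s{2}\x{2})=1$, forcing $\x{1}\mid f$; writing $f=\x{1}f'$ and cancelling the nonzero $f'\s{2}$ in the first equation would force $\x{1}(\x{1}+\x{2})=\x{2}^2$, i.e., $\s{2}=0$ in $S\h^*$, which is absurd. Hence $f=0$, yielding $\chi_{\left<v_1,v_3\right>}(z)=(z+z^3)\chi_{M_{1,1}(\Triv)}(z)$ and the stated character identity; reading off coefficients through $z^3$ using Corollary \ref{characterShp=2} gives the asserted first terms of $\chi_{L_{1,1}(\Stand)}$.

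The main obstacle will be the final coprimality cancellation, which hinges on integrality of $S\h^*$ and the specific reduction $\s{2}\equiv\x{2}^2\pmod{\x{1}}$ in characteristic $2$; the identification of the $\Triv$ isotypic content and the singularity check for $v_3$, while requiring care, are otherwise mechanical given the basis supplied by Theorem \ref{decmposeVermaStandp=2}.
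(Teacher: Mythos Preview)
Your proof is correct and follows essentially the same approach as the paper: restrict to the $\Triv$ component via $\Omega$, identify the unique relevant basis vector $v_3$ using Theorem~\ref{decmposeVermaStandp=2}, verify singularity by a direct Dunkl computation, and establish the character identity via $\left<v_1\right>\cong M_{1,1}(\Triv)[-1]$ and $\left<\bar{v}_3\right>\cong M_{1,1}(\Triv)[-3]$. The paper merely asserts $\left<v_1\right>\cap\left<v_3\right>=0$ ``by direct computation'', whereas you spell out the coprimality argument in $S\h^*$; this is a genuine improvement in detail, and your formulation in the quotient is cleaner than speaking of $\left<v_3\right>$ inside $M_{1,1}(\Stand)$ (where $v_3$ is neither invariant nor singular).
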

\begin{proof}
By Theorem \ref{decmposeVermaStandp=2}, in the Grothendieck group $K_{0}(S_3)$ 
$$M^3_{1,1}(\Stand)=3[\Stand]+2[\Triv],$$ 
and, as $\left<v_1\right>\cong M_{1,1}(\Triv)[-1]$, we similarly get 
$$[\left<v_1\right>^3]=[M^2_{1,1}(\Triv)]=[\Stand]+[\Triv].$$ 
Consequently, 
$$M^3_{1,1}(\Stand)/\left<v_1\right>=2[\Stand]+[\Triv].$$

By Lemma \ref{p=2Omega} we only need to inspect the $\Triv$ isotypic component of $M^3_{1,1}(\Stand)/\left<v_1\right>$ for singular vectors, which by Theorem \ref{decmposeVermaStandp=2} has a basis $v_3=\s{2}(\x{1}\otimes \x{1}+\x{2}\otimes \x{1}+\x{2}\otimes \x{2})$. 
We calculate:
\begin{align*}
D_{y_1-y_3}(v_3)&=\x{2}(\x{1}\otimes \x{1}+\x{2}\otimes \x{1}+\x{2}\otimes \x{2})+\s{2}\otimes \x{1}+\\
&+\s{2}\otimes \x{1}+\s{2}\otimes \x{1}+\s{2}\otimes \x{1}+\s{2}\otimes \x{2}\\
&=(\x{1}\x{2}+\x{2}^2)\otimes \x{1}+(\x{1}^2+\x{1}\x{2})\otimes \x{2}\\
&=(\x{1}+\x{2})v_1\in \left<v_1\right>,\\
D_{y_2-y_3}(v_3)&=s_1.D_{y_1-y_3}s_1.(v_3)\\
&=s_1.D_{y_1-y_3}(v_3+\s{2}v_1)\in \left<v_1\right>.
\end{align*}
So, $v_3$ is singular in $M_{1,1}(\Stand)/\left<v_1 \right>$ (though not in $M_{1,1}(\Stand)$).

We have $\left<v_1\right>\cong M_{1,1}(\Triv)[-1]$, $\left<v_3\right>\cong M_{1,1}(\Triv)[-3]$, and $\left<v_1\right>\cap \left<v_3\right>=0$ (by direct computation). This implies the claims about the characters. 

\end{proof}

\begin{lemma}\label{p=2,t=1,Stand,c=1,k=4}
Let $p=2$.  There are no singular vectors in $M^4_{1,1}(\Stand)/\left<v_1,v_3 \right>$.
Hence, the first terms of the character of $L_{1,1}(\Stand)$ are  
$$\chi_{L_{1,1}(\Stand)}(z)=[\Stand](1+z+z^2+2z^3+z^4+\ldots)+[\Triv](z+2z^2+2z^4+\ldots).$$
\end{lemma}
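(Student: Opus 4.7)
The plan is to follow the strategy already used in Lemmas \ref{p=2,t=1,Stand,c=1,k=2}, \ref{p=2,t=1,Stand,c=1,k=3}: first locate the isotypic components where singular vectors could a priori live, then cut down the search space by identifying explicit generators of $\langle v_1,v_3\rangle$ in degree $4$, and finally apply a Dunkl operator to any remaining candidate.

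First I would invoke Lemma \ref{p=2Omega}: since the degree is even, any singular vector in $M_{1,1}^4(\Stand)/\langle v_1, v_3\rangle$ must live in the $\Stand$ isotypic component. Using Theorem \ref{decmposeVermaStandp=2} with $k=4$, the $\Stand$ isotypic component of $M_{1,1}^4(\Stand)$ is a direct sum of three copies of $\Stand$, coming from the triples $(a,b)$ with $2a+3b\in\{4,3,2\}$, spanned respectively by $\{\s{2}^2\otimes\x{i}\}$, $\{\s{3}(\x{1}\otimes\x{1}+\x{2}\otimes\x{1}+\x{1}\otimes\x{2}), \s{3}(\x{2}\otimes\x{2}+\x{2}\otimes\x{1}+\x{1}\otimes\x{2})\}$, and $\{\s{2}(\x{1}^2\otimes\x{1}+\x{2}^2\otimes\x{1}+\x{1}^2\otimes\x{2}),\s{2}(\x{2}^2\otimes\x{2}+\x{2}^2\otimes\x{1}+\x{1}^2\otimes\x{2})\}$.

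Next I would use the character formula from Lemma \ref{p=2,t=1,Stand,c=1,k=3} together with Corollary \ref{characterShp=2} to extract the graded multiplicities in degree $4$ of $\langle v_1,v_3\rangle = \langle v_1\rangle \oplus \langle v_3\rangle$ (direct sum by that lemma). A short calculation shows exactly two copies of $\Stand$ sit inside $\langle v_1,v_3\rangle^4$, so precisely one $\Stand$ remains in the quotient. To find a concrete representative I would expand $\s{2}\x{1}v_1$ and $\x{1}v_3$ in the basis above using the characteristic-$2$ identity $\x{1}\x{2} = \s{2}+\x{1}^2+\x{2}^2$, which expresses the $\x{1}$-parts of these two vectors as explicit linear combinations of $\s{2}^2\otimes\x{1}$ and $\s{2}(\x{1}^2\otimes\x{1}+\x{2}^2\otimes\x{1}+\x{1}^2\otimes\x{2})$ (without any $\s{3}$-contribution). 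Hence the class of $w := \s{3}(\x{1}\otimes\x{1}+\x{2}\otimes\x{1}+\x{1}\otimes\x{2})$ is nonzero in the quotient and spans (together with its $S_3$-partner) the one remaining copy of $\Stand$.

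The final step is to verify that $w$ is not singular modulo $\langle v_1,v_3\rangle$. I would compute $D_{y_1-y_3}(w)$ directly using the Leibniz rule (the difference part of $D_{y_1-y_3}$ is $(S\h^*)^{S_3}$-linear, so $\s{3}$ passes out of the difference part, and the derivative part acts on both tensor factors); the result is an explicit degree-$3$ vector of the form $\s{3}\otimes\x{k}$ plus lower terms. Then I would check that this vector does not lie in $\langle v_1,v_3\rangle^3$: by the explicit description of $\langle v_1\rangle^3$ and $\langle v_3\rangle^3$ (spanned by $\x{i}\x{j}v_1$ and $v_3$ respectively), any element of that subspace is $\s{2}$-divisible in an appropriate sense, whereas the $\s{3}$-coefficient of $D_{y_1-y_3}(w)$ is nonzero. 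The main obstacle is executing this last Dunkl-operator computation cleanly in the chosen basis and verifying the non-membership, but it is a routine albeit delicate calculation of the same type as in Lemmas \ref{p=2,t=1,Stand,c=1,k=2}–\ref{p=2,t=1,Stand,c=1,k=3}.
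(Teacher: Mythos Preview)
Your approach is a valid direct-computation strategy in the style of Lemmas \ref{p=2,t=1,Stand,cgen,deg2}--\ref{p=2,t=1,Stand,cgen,deg6}, and with enough care it would go through. Your identification of the surviving $\Stand$ copy is correct: one checks that $\s{2}\x{1}v_1=\s{2}^2\otimes\x{1}+\s{2}(\x{1}^2\otimes\x{1}+\x{2}^2\otimes\x{1}+\x{1}^2\otimes\x{2})$ and similarly the contributions from $\x{i}v_3$ involve only the $\s{2}^2$- and $\s{2}$-based $\Stand$ summands, so the $\s{3}$-based one survives. However, your justification of the final step is not quite right: $\langle v_1,v_3\rangle^3$ is \emph{not} ``$\s{2}$-divisible'' in any useful sense, since for instance $\x{1}^2v_1=\x{1}^3\otimes\x{2}+\x{1}^2\x{2}\otimes\x{1}$ is not a multiple of $\s{2}$. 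You would have to actually compute $D_{y_1-y_3}(w)$ and verify by hand that it does not lie in the span of $\s{2}v_1,\x{1}^2v_1,\x{2}^2v_1,v_3$, which is a genuine (if routine) calculation you have not done.

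The paper takes a shorter structural route that avoids any new Dunkl-operator computation. From the character formula $\chi_{M_{1,1}(\Stand)/\langle v_1,v_3\rangle}(z)=\chi_{M_{1,1}(\Stand)}(z)-(z+z^3)\chi_{M_{1,1}(\Triv)}(z)$ established in Lemma \ref{p=2,t=1,Stand,c=1,k=3}, one reads off that the $[\Triv]$-multiplicity in degree $6$ of $M_{1,1}(\Stand)/\langle v_1,v_3\rangle$ is zero. But a singular $\Stand$-vector in degree $4$ would generate a submodule $V$ which, being a nonzero quotient of $M_{1,1}(\Stand)[-4]$, has $L_{1,1}(\Stand)[-4]$ as a quotient; hence $V^6$ would contain at least as many copies of $\Triv$ as $L_{1,1}^2(\Stand)$, namely two. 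This contradiction finishes the proof with no further computation. The paper's argument is slicker and leverages already-computed characters; yours has the virtue of being uniform with the surrounding lemmas but requires an explicit calculation you left undone.
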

\begin{proof}

Assume there was a singular vector in $M^4_{1,1}(\Stand)/\left<v_1,v_3 \right>$. By Lemma \ref{p=2Omega} it is in a $\Stand$ isotypic component. It generates a nonzero submodule $V$ of $M_{1,1}(\Stand)/\left<v_1,v_3 \right>$, which is a quotient of $M_{1,1}(\Stand)[-4]$ and thus $L_{1,1}(\Stand)[-4]$ is a quotient of $V$. Thus, $V^6$ contains at least as many copies of $\Triv$ as $L_{1,1}^2(\Stand),$ which is $2$ by Lemma \ref{p=2,t=1,Stand,c=1,k=3}. However, $V$ is a submodule of $M_{1,1}(\Stand)/\left<v_1,v_3 \right>$ which also by Lemma \ref{p=2,t=1,Stand,c=1,k=3} contains no copies of $\Triv$. This is impossible, so we conclude there is no singular vector in $M^4_{1,1}(\Stand)/\left<v_1,v_3 \right>$.

\end{proof}

\begin{lemma}\label{p=2,t=1,Stand,c=1,k=5}
Let $p=2$. The space of singular vectors in $M^5_{1,1}(\Stand)/\left<v_1, v_3 \right>$ is spanned by
$$v_5=\s{3}(\x{1}^2\otimes \x{2}+\x{2}^2\otimes \x{1}).$$
Hence, the first terms of the character of $L_{1,1}(\Stand)$ are  
$$\chi_{L_{1,1}(\Stand)}(z)=[\Stand](1+z+z^2+2z^3+z^4+z^5+\ldots)+[\Triv](z+2z^2+2z^4+z^5+\ldots).$$
\end{lemma}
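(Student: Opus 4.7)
By Lemma \ref{p=2Omega}, singular vectors in odd degree of $M_{1,1}(\Stand)$ or its quotients lie in the $\Triv$ isotypic component. By Theorem \ref{decmposeVermaStandp=2}, the $\Triv$ isotypic of $M^5_{1,1}(\Stand)$ is four-dimensional: two copies come from $2a+3b=k-1=4$ (i.e.\ $(a,b)=(2,0)$), spanned by
$$\s{2}^2(\x{1}\otimes \x{2}+\x{2}\otimes \x{1}) \quad \text{and} \quad \s{2}^2(\x{1}\otimes \x{1}+\x{2}\otimes \x{1}+\x{2}\otimes \x{2}),$$
and two come from $2a+3b=k-2=3$ (i.e.\ $(a,b)=(0,1)$), spanned by
$$v_5=\s{3}(\x{1}^2\otimes \x{2}+\x{2}^2\otimes \x{1}) \quad \text{and} \quad w=\s{3}(\x{1}^2\otimes \x{1}+\x{2}^2\otimes \x{1}+\x{2}^2\otimes \x{2}).$$
Using the isomorphisms $\left<v_1\right>\cong M_{1,1}(\Triv)[-1]$ and $\left<v_3\right>\cong M_{1,1}(\Triv)[-3]$ established in the proof of Lemma \ref{p=2,t=1,Stand,c=1,k=3}, the $\Triv$ parts of $\left<v_1\right>^5$ and $\left<v_3\right>^5$ are each one-dimensional, spanned by $\s{2}^2 v_1$ and $\s{2} v_3$ respectively, which together account for exactly the two copies with $2a+3b=4$. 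Hence $\{v_5,w\}$ descends to a basis of the $\Triv$ isotypic of $M^5_{1,1}(\Stand)/\left<v_1, v_3\right>$.

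Next, since $v_5$ and $w$ are $s_1$-invariant and $\left<v_1, v_3\right>$ is $S_3$-stable, the relation $(12) D_{y_1-y_3} (12) = D_{y_2-y_3}$ reduces singularity of $a v_5 + b w$ modulo $\left<v_1, v_3\right>$ to the single condition $D_{y_1-y_3}(a v_5 + b w) \in \left<v_1, v_3\right>$. I would compute this Dunkl operator using the product rule for the $S_3$-invariant $\s{3}$, namely $D_{y_1-y_3}(\s{3} u) = \partial_{y_1-y_3}(\s{3})\,u + \s{3}\, D_{y_1-y_3}(u)$, combined with the matrices of $D_{y_1-y_3}$ on degree-two tensors from Lemma \ref{lemma-matricesofDunklsStand1} specialised to $t=c=1$. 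The output lands in the ten-dimensional space $M^4_{1,1}(\Stand)$ and must be tested against the six-dimensional subspace $\left<v_1, v_3\right>^4 = S^3\h^* \cdot v_1 \oplus S^1\h^* \cdot v_3$. The expected outcome is $D_{y_1-y_3}(v_5) \in \left<v_1, v_3\right>$ but $D_{y_1-y_3}(w) \notin \left<v_1, v_3\right>$, identifying the singular space in the quotient as $\Bbbk v_5$.

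Finally, the character formula for $M_{1,1}(\Stand)/\left<v_1, v_3\right>$ from Lemma \ref{p=2,t=1,Stand,c=1,k=3}, combined with the Verma character in Corollary \ref{characterShp=2}, gives degree-$5$ class $[\Stand] + 2[\Triv]$ in the Grothendieck group; quotienting further by $\left<v_5\right>$ (whose degree-$5$ part is just $\Bbbk v_5$) subtracts one copy of $\Triv$, leaving $[\Stand] + [\Triv]$, which is the claimed $z^5$ contribution. The main obstacle will be the explicit membership check $D_{y_1-y_3}(v_5) \in \left<v_1, v_3\right>^4$: although this is a finite linear-algebra problem, the spanning set obtained by multiplying $v_1$ and $v_3$ by monomials in $\x{1},\x{2}$ does not respect the $\Triv/\Stand$ decomposition of Theorem \ref{decmposeVermaStandp=2}, so rewriting $D_{y_1-y_3}(v_5)$ in terms of this spanning set requires care.
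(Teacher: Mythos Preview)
Your approach is essentially the same as the paper's: identify the four $\Triv$-type basis vectors in degree~5, observe that two of them lie in $\langle v_1,v_3\rangle$, then check $v_5$ is singular and $w$ is not. Two small corrections are worth noting. First, $w$ is \emph{not} $s_1$-invariant (in characteristic~2 one has $s_1.w = w + v_5$); this does not damage your argument, since $s_1$-invariance is only needed to reduce the singularity check for $v_5$ to a single Dunkl operator, while for $w$ a single nonzero Dunkl value already gives non-singularity. Second, the ``obstacle'' you anticipate dissolves in practice: the paper computes directly that $D_{y_1-y_3}(v_5)=\x{2}\,\s{2}\,v_1\in\langle v_1\rangle$ and $D_{y_1-y_3}(w)=\s{2}\,\x{1}\otimes\x{1}+\s{3}\,v_1$, so the membership test is immediate rather than requiring a change of basis.
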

\begin{proof}
By Lemma \ref{p=2Omega}, any singular vectors in degree $5$ are of type $\Triv$, so we only examine this. Using Theorem \ref{decmposeVermaStandp=2}, a basis for the $\Triv$ component of $M^5_{1,1}(\Stand)$ is 
\begin{align*}
A&=\s{2}^2(\x{1}\otimes \x{2}+\x{2}\otimes \x{1})=\s{2}^2v_1\\
B&=\s{2}^2(\x{1}\otimes \x{1}+\x{2}\otimes \x{1}+\x{2}\otimes \x{2})=\s{2}v_3\\
C&=\s{3}(\x{1}^2\otimes \x{2}+\x{2}^2\otimes \x{1})\\
D&=\s{3}(\x{1}^2\otimes \x{1}+\x{2}^2\otimes \x{1}+\x{2}^2\otimes \x{2}).
\end{align*}
Here $A,C$ span submodules, and $B,D$ their extensions. Quotienting by $\left< v_1, v_3\right>$ annihilates $A$ and $B$. The only submodule of type $\Triv$ is spanned by $C$, so we set $v_5=C$ and check that is singular: 
\begin{align*}
D_{y_1-y_3}(v_5)&=\x{2}\s{2}v_1\in \left<v_1 \right>\\
D_{y_2-y_3}(v_5)&=s_1.\x{2}\s{2}v_1=\x{1}\s{2}v_1\in \left<v_1 \right>.
\end{align*}
This proves that $v_5$ is singular, and shows that the first terms of the character of $M_{1,1}(\Stand)/\left<v_1,v_3,v_5 \right>$ are as stated in the Lemma. It remains to show that $D$ is not singular, so we calculate
\begin{align*}
D_{y_1-y_3}(D)&=\s{2}\x{1}\otimes \x{1}+\s{3}v_1 \notin \left<v_1,v_3,v_5\right>.
\end{align*}
\end{proof}

\begin{lemma}\label{p=2,t=1,Stand,c=1,k=6}
Let $p=2$.  There are no singular vectors in $M^6_{1,1}(\Stand)/\left<v_1, v_3,v_5 \right>$.
Hence, the first terms of the character of $L_{1,1}(\Stand)$ are  
$$\chi_{L_{1,1}(\Stand)}(z)=[\Stand](1+z+z^2+2z^3+z^4+z^5+z^6+\ldots)+[\Triv](z+2z^2+2z^4+z^5+0\cdot z^6+\ldots).$$
\end{lemma}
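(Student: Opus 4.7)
The plan is to mimic the argument of Lemma \ref{p=2,t=1,Stand,c=1,k=4}. Suppose for contradiction that $M_{1,1}(\Stand)/\left<v_1,v_3,v_5\right>$ contains a nonzero singular vector $w$ in degree $6$; by Lemma \ref{p=2Omega}(3), $w$ must have type $\Stand$. The submodule $V$ generated by $w$ is a nonzero quotient of $M_{1,1}(\Stand)[-6]$, and since $L_{1,1}(\Stand)[-6]$ is its unique irreducible quotient, there is a surjection $V \twoheadrightarrow L_{1,1}(\Stand)[-6]$. Restricting to graded piece $7$ gives $V^7 \twoheadrightarrow L^1_{1,1}(\Stand)$, and by Lemma \ref{p=2,t=1,Stand,c=1,k=1} the right-hand side contains $[\Stand]$ as a composition factor, so $V^7$ does too.

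The key step is therefore to show that $(M_{1,1}(\Stand)/\left<v_1,v_3,v_5\right>)^7$ has no $\Stand$ composition factor; this contradicts $V^7 \subseteq (M_{1,1}(\Stand)/\left<v_1,v_3,v_5\right>)^7$. By Theorem \ref{decmposeVermaStandp=2} the ambient $M^7_{1,1}(\Stand)$ contains exactly five copies of $\Stand$, so it suffices to exhibit five linearly independent $\Stand$-subrepresentations inside $\left<v_1,v_3,v_5\right>^7$. Since each $v_i$ is $S_3$-invariant and killed by all Dunkl operators, and $M_{1,1}(\Stand)$ is a free $S\h^*$-module, multiplication by $v_i$ induces an $S_3$-equivariant injection $(S\h^*)^{7-i} \hookrightarrow \left<v_i\right>^{7}$. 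Combined with the description of the $\Stand$-subrepresentations of $S^k\h^*$ in Theorem \ref{decmposeShp=2}, five candidates appear naturally: two inside $\left<v_1\right>^7$ generated by $\s{2}\s{3}\x{j}\cdot v_1$ and $\s{2}^2\x{j}^2\cdot v_1$, two inside $\left<v_3\right>^7$ generated by $\s{3}\x{j}\cdot v_3$ and $\s{2}\x{j}^2\cdot v_3$, and one inside $\left<v_5\right>^7$ generated by $\x{j}^2\cdot v_5$.

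Linear independence of these five $\Stand$-subrepresentations reduces to independence of their $\x{1}$-components in the monomial basis $\{\x{1}^a\x{2}^b\otimes\x{j} : a+b=7,\, j\in\{1,2\}\}$ of $M^7_{1,1}(\Stand)$. Expanding using $\s{2}=\x{1}^2+\x{1}\x{2}+\x{2}^2$, $\s{3}=\x{1}^2\x{2}+\x{1}\x{2}^2$, and the simplification $\s{2}\s{3}=\x{1}^4\x{2}+\x{1}\x{2}^4$ valid in characteristic $2$, the five resulting vectors can be read off and verified to admit five distinct pivot monomials, which yields the claimed independence. The main obstacle is this bookkeeping: each of the five expansions is routine, but assembling the $5\times 16$ matrix over $\mathbb{F}_2$ and verifying full rank requires care. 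Once this is done, $\left<v_1,v_3,v_5\right>^7$ contains the entire $\Stand$-isotypic component of $M^7_{1,1}(\Stand)$, the quotient has no $\Stand$ in degree $7$, and the contradiction with the previous paragraph completes the proof.
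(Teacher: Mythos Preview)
Your argument is correct and follows the same high-level strategy as the paper: a singular vector of type $\Stand$ in degree $6$ would force a copy of $\Stand$ in degree $7$ of the quotient, which you rule out. The paper's proof simply asserts that $M_{1,1}(\Stand)/\left<v_1,v_3,v_5\right>$ has no $\Stand$ in degree $7$ and says the rest is as in Lemma~\ref{p=2,t=1,Stand,c=1,k=4}; you supply the verification the paper omits, by exhibiting five explicit $\Stand$-subrepresentations in $\left<v_1,v_3,v_5\right>^7$ and checking linear independence in the monomial basis. This is a reasonable way to fill the gap, since the exact character of $\left<v_5\right>$ inside $M_{1,1}(\Stand)/\left<v_1,v_3\right>$ is not established elsewhere (only its character as an abstract quotient of $M_{1,1}(\Triv)[-5]$ is known, and $M_{1,1}(\Triv)$ does have singular vectors in degree $1$ for $c=1$).

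One small inaccuracy: you write that ``each $v_i$ is $S_3$-invariant and killed by all Dunkl operators''. In fact $v_3$ and $v_5$ are only singular modulo $\left<v_1\right>$ and $\left<v_1,v_3\right>$ respectively (see Lemmas~\ref{p=2,t=1,Stand,c=1,k=3} and~\ref{p=2,t=1,Stand,c=1,k=5}). This does not affect your argument, since you only use that each $v_i$ is $S_3$-invariant and that $f\cdot v_i \in \left<v_1,v_3,v_5\right>$ for $f\in S\h^*$, both of which hold. You may wish to drop the phrase about Dunkl operators.
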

\begin{proof}
Similar to the proof of Lemma \ref{p=2,t=1,Stand,c=1,k=4}. A singular vector in $M_{1,1}^6(\Stand)/\left<v_1, v_3,v_5 \right>$ would be of type $\Stand$ and generate a submodule of $M_{1,1}(\Stand)/\left<v_1, v_3,v_5 \right>$ with one copy of $\Stand$ in degree $7$, which is impossible as $M_{1,1}(\Stand)/\left<v_1, v_3,v_5 \right>$ has no copies of $\Stand$ in degree $7$.

\end{proof}

\begin{lemma}\label{p=2,t=1,Stand,c=1,k=7}
Let $p=2$. The space of singular vectors in $M^7_{1,1}(\Stand)/\left<v_1, v_3,v_5 \right>$ is spanned by
$$v_7=\s{2}\s{3}(\x{1}^2\otimes \x{1}+\x{2}^2\otimes \x{1}+\x{2}^2\otimes \x{2}).$$
Hence, the first terms of the character of $L_{1,1}(\Stand)$ are  
$$\chi_{L_{1,1}(\Stand)}(z)=[\Stand](1+z+z^2+2z^3+z^4+z^5+z^6+0\cdot z^7+\ldots)+$$
$$+[\Triv](z+2z^2+2z^4+z^5+0\cdot z^6+0\cdot z^7+\ldots).$$
\end{lemma}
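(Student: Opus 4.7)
The strategy mirrors the odd-degree cases already handled in Lemmas \ref{p=2,t=1,Stand,c=1,k=3} and \ref{p=2,t=1,Stand,c=1,k=5}. By Lemma \ref{p=2Omega}, since degree $7$ is odd and $c=1$, any singular vector in the quotient must lie in the $\Triv$ isotypic component. The first step is to enumerate this component via Theorem \ref{decmposeVermaStandp=2}: the pairs $(a,b)$ contributing $\Triv$ constituents at $k=7$ are $(3,0)$ and $(0,2)$ (from $2a+3b=6$) together with $(1,1)$ (from $2a+3b=5$), producing three indecomposable extensions of $\Triv$ by $\Triv$ and hence six basis vectors. Writing $w = \x{1}\otimes\x{1}+\x{2}\otimes\x{1}+\x{2}\otimes\x{2}$, these are $\s{2}^3 v_1$, $\s{2}^3 w$, $\s{3}^2 v_1$, $\s{3}^2 w$, $\s{2}\s{3}(\x{1}^2\otimes\x{2}+\x{2}^2\otimes\x{1})$, and $v_7$ itself.

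Next I would identify the $\Triv$ contribution of $\left<v_1,v_3,v_5\right>$ in degree $7$. Each $\left<v_i\right>$ is isomorphic to $M_{1,1}(\Triv)[-i]$, in the same manner as the $i=1,3$ identifications used in Lemma \ref{p=2,t=1,Stand,c=1,k=3}, with trivial pairwise intersections by the same character argument. Hence its $\Triv$ component at degree $7$ has dimension equal to that of the $\Triv$ isotypic of $M_{1,1}^{7-i}(\Triv)$. By Theorem \ref{decmposeShp=2} these dimensions are $3$, $1$, $1$, giving a $5$-dimensional subspace spanned by $\s{2}^3 v_1$, $\s{3}^2 v_1$, $\s{3}(\x{1}^3+\x{1}^2\x{2}+\x{2}^3)v_1$, $\s{2}^2 v_3 = \s{2}^3 w$, and $\s{2}v_5 = \s{2}\s{3}(\x{1}^2\otimes\x{2}+\x{2}^2\otimes\x{1})$. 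Four of these are basis vectors outright; expanding the remaining element $\s{3}(\x{1}^3+\x{1}^2\x{2}+\x{2}^3)v_1$ in the six-vector basis shows its coefficient along $v_7$ vanishes, so this subspace is precisely the $v_7$-free hyperplane and the $\Triv$ isotypic of the quotient is $1$-dimensional, represented by $v_7$.

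Finally, singularity of $v_7$ in the quotient is verified directly. Expanding $D_{y_1-y_3}(v_7)$ using the Leibniz rule for the derivation part together with the explicit reflection terms on $v_7 = \s{2}\s{3}(\x{1}^2\otimes\x{1}+\x{2}^2\otimes\x{1}+\x{2}^2\otimes\x{2})$ yields a vector in $M^6_{1,1}(\Stand)$, which I would rewrite as an explicit $S\h^*$-combination of $v_1$, $v_3$, $v_5$, in the spirit of the identity $D_{y_1-y_3}(v_3) = (\x{1}+\x{2})v_1$ from Lemma \ref{p=2,t=1,Stand,c=1,k=3}. For the other generator, one checks that $s_1.v_7 = v_7 + \s{2}v_5 \in v_7 + \left<v_5\right>$ and invokes the symmetry identity $D_{y_2-y_3} = s_1 D_{y_1-y_3} s_1$ to deduce $D_{y_2-y_3}(v_7) \in \left<v_1,v_3,v_5\right>$ automatically. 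The main obstacle is the characteristic-$2$ polynomial bookkeeping: both the basis expansion of $\s{3}(\x{1}^3+\x{1}^2\x{2}+\x{2}^3)v_1$ and the rewriting of $D_{y_1-y_3}(v_7)$ as an $S\h^*$-combination of the known singular generators are tedious but entirely mechanical once the symmetric-polynomial identities in characteristic $2$ are unwound.
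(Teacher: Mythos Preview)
Your approach is essentially the same as the paper's, which gives an extremely compressed version: it simply asserts that $v_7$ spans the $\Triv$ component of the quotient (citing Theorems \ref{decmposeVermaStandp=2} and \ref{decmposeShp=2}) and then records the single identity
\[
D_{y_1-y_3}(v_7)=\x{1}v_5+(\x{1}^2\x{2}+\x{1}\x{2}^2+\x{2}^3)v_3\in\left<v_1,v_3,v_5\right>,
\]
without addressing $D_{y_2-y_3}$ explicitly. Your roadmap fills in exactly the details the paper suppresses, and your treatment of the second Dunkl operator via $s_1.v_7=v_7+\s{2}v_5$ is a clean addition. One small correction: the trivial intersections $\left<v_i\right>\cap\left<v_j\right>=0$ are established in the paper by direct computation (see the end of Lemma \ref{p=2,t=1,Stand,c=1,k=3}), not by a character argument; the character formula is the consequence, not the input.
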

\begin{proof}
The $\Triv$ component of $M_{1,1}^7(\Stand)/\left<v_1, v_3,v_5 \right>$ has a basis
$v_7$ (by repeated use of Theorem \ref{decmposeVermaStandp=2} and \ref{decmposeShp=2}). To see this vector is singular we calculate
\begin{align*}
D_{y_1-y_3}(v_7)&=D_{y_1-y_3}(\s{2}\s{3}(\x{1}^2\otimes \x{1}+\x{2}^2\otimes \x{1}+\x{2}^2\otimes \x{2}))\\
&=\x{1}v_5+(\x{1}^2\x{2}+\x{1}\x{2}^2+\x{2}^3)v_3  \in \left<v_1, v_3,v_5 \right>.
\end{align*}
\end{proof}

\begin{proof}[Proof of Lemma \ref{p=2,t=1,Stand,c=1}]
By Lemmas \ref{p=2,t=1,Stand,c=1,k=1}, \ref{p=2,t=1,Stand,c=1,k=2}, \ref{p=2,t=1,Stand,c=1,k=3}, \ref{p=2,t=1,Stand,c=1,k=4}, \ref{p=2,t=1,Stand,c=1,k=5}, \ref{p=2,t=1,Stand,c=1,k=6} and \ref{p=2,t=1,Stand,c=1,k=7} the module $M_{1,1}(\Stand)/\left<v_1, v_3,v_5,v_7 \right>$ has no singular vectors in degrees up to $7$, and thus coincides with the module $L_{1,1}(\Stand)$ in those degrees. On the other hand Lemma \ref{p=2,t=1,Stand,c=1,k=7} shows that this module is $0$ in degree $7$, so it is zero in all higher degrees as well, and we conclude $M_{1,1}(\Stand)/\left<v_1, v_3,v_5,v_7 \right>=L_{1,1}(\Stand)$, and its character is as stated. The Hilbert polynomial follows directly. 

\end{proof}

\section{Irreducible representations of $H_{t,c}(S_3,\h)$ in characteristic $3$}\label{sectionp=3}

As explained in Section \ref{sect-IrrepsofS3}, over an algebraically closed field $\Bbbk$ of characteristic $3$ the irreducible representations of $H_{t,c}(S_3,\h)$ are $L_{t,c}(\Triv)$ and $L_{t,c}(\Sign)$. We continue to work with the spanning set $\x{1},\x{2},\x{3}$ of $\h^*$ satisfying $\x{1}+\x{2}+\x{3}=0$ and the basis $\x{1},\x{2}$. The aim of the section is to prove the following theorem. Recall that by Remark \ref{remark--restrictedpoly},
$$\chi_{S^{(3)}\h^*}(z)=[\Triv](1+z+2z^2+z^3+z^4)+[\Sign](z+z^2+z^3).$$

\begin{theorem}\label{mainthmp=3}
The characters of the irreducible representation $L_{t,c}(\tau)$ of the rational Cherednik algebra $H_{t,c}(S_3,\h)$ over an algebraically closed field of characteristic $3$, for any $c,t$ and $\tau$, are given by the following table. 

\begin{center}
{\renewcommand*{\arraystretch}{1.5}\begin{tabular}{ c |c |c }
$p=3$ & $\Triv$ & $\Sign$ \\ \hline \hline 
 $t=0$, all $c$ & $[\Triv]$ & $[\Sign]$ \\ \hline 
 $t=1$, all $c$  & $\chi_{S^{(3)}\h^*}(z)$  & $\chi_{S^{(3)}\h^*}(z)\cdot [\Sign]$   
\end{tabular}}
\end{center}

The corresponding Hilbert polynomials are: 
\begin{center}
{\renewcommand*{\arraystretch}{1.5}\begin{tabular}{ c |c |c }
$p=3$ & $\Triv$ & $\Sign$ \\ \hline \hline 
 $t=0$, all $c$ & $1$ & $1$ \\ \hline 
 $t=1$, all $c$  & $\left(\frac{1-z^3}{1-z}\right)^2$  & $\left(\frac{1-z^3}{1-z}\right)^2$   \\ 
  & \scriptsize{\cite{DeSu16}}   &  \\ 
\end{tabular}}
\end{center}

In all cases the generators of $J_{t,c}(\tau)$ are and given explicitly in the lemmas below. 
\end{theorem}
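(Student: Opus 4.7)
The theorem splits into four cases indexed by $(t,\tau)\in\{0,1\}\times\{\Triv,\Sign\}$. The plan is to handle the two $\tau=\Triv$ cases directly and then transport the $\tau=\Sign$ cases through the equivalence $\Phi^*\colon\mathcal{O}_{t,-c}\to\mathcal{O}_{t,c}$ of Lemma \ref{Sign}, which preserves the underlying graded vector space (hence the Hilbert polynomial) and twists the character by $[\Sign]$. Once the $\Triv$ case is settled, the $\Sign$ case follows with character multiplied by $[\Sign]$ and with generators of $J_{t,c}(\Sign)$ obtained from those of $J_{t,-c}(\Triv)$ via pullback.

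For $t=0$, the plan is to apply Lemma \ref{Lian3.2.}. In characteristic $3$ with $n=3$ we have $cn=3c=0$ for every $c\in\Bbbk$, so the condition $t=cn$ of that lemma reduces to $t=0$, which holds trivially. The lemma then gives $\chi_{L_{0,c}(\Triv)}(z)=[\Triv]$, and the explicit formula $D_{y_i-y_j}(\x{k})=(t-cn)(\delta_{ij}-\delta_{jk})$ used in its proof shows that $\x{1}$ and $\x{2}$ are singular and generate $J_{0,c}(\Triv)$.

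For $t=1$ and $\tau=\Triv$ the main step, and the main obstacle, is to show that $\x{1}^3$ and $\x{2}^3$ are singular in $M_{1,c}(\Triv)$ for every $c$. The derivative part of every Dunkl operator vanishes on $\x{k}^3$ by Frobenius, since $\partial_y(\x{k}^3)=3\x{k}^2\partial_y\x{k}=0$ in characteristic $3$. For the difference part one writes out $D_{y_1-y_2}(\x{1}^3)$ and $D_{y_2-y_3}(\x{1}^3)$ explicitly, substitutes $\x{3}=-\x{1}-\x{2}$, and checks by direct expansion that the contributions from the three transpositions cancel; this cancellation uses crucially that $p$ divides $n=3$, and mirrors the fact that the generators of $J_{1,0}(\Triv)$ given by Lemma \ref{t=1,c=0} remain singular for every $c$. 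Singularity of $\x{2}^3=s_1.\x{1}^3$ then follows by $S_3$-equivariance. In the quotient $M_{1,c}(\Triv)/\langle\x{1}^3,\x{2}^3\rangle$, the Frobenius identity $(a\x{1}+b\x{2})^3=a^3\x{1}^3+b^3\x{2}^3$ forces every cube in $\h^*$ to vanish, so this quotient is identified with $S^{(3)}\h^*$ as a graded $S_3$-representation. Its Hilbert polynomial is $(1+z+z^2)^2$ and its decomposition into $S_3$-isotypic components on each graded piece gives the character $\chi_{S^{(3)}\h^*}(z)$ stated in the theorem. Since \cite{DeSu16} establishes that $L_{1,c}(\Triv)$ also has Hilbert polynomial $(1+z+z^2)^2$, the canonical surjection $M_{1,c}(\Triv)/\langle\x{1}^3,\x{2}^3\rangle\twoheadrightarrow L_{1,c}(\Triv)$ must be an isomorphism on dimensional grounds, which finishes the $\Triv$ case at $t=1$ and, via Lemma \ref{Sign}, also the $\Sign$ case.
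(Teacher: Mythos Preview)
Your proposal parallels the paper closely for $t=0$ (via Lemma~\ref{Lian3.2.}) and for the reduction of $\Sign$ to $\Triv$ (via Lemma~\ref{Sign}). For $t=1$, $\tau=\Triv$, you and the paper both identify $\x{1}^3,\x{2}^3$ as singular and form the quotient $S^{(3)}\h^*$. The difference is in how irreducibility of this quotient is established.

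The paper, in Lemma~\ref{p=3,t=1,Triv,c=any}, gives a self-contained proof valid for every $c$: using that $S\h^*/\langle\x{1}^3,\x{2}^3\rangle$ is a Frobenius algebra, any nonzero graded submodule must contain the top-degree element $\x{1}^2\x{2}^2$, and an explicit computation shows $D_{y_1-y_2}^2 D_Y^2(\x{1}^2\x{2}^2)=1$ (here $Y=y_1+y_2+y_3\in\h$ in characteristic $3$, and $D_Y=\partial_Y$), forcing the submodule to be the whole module. You instead invoke \cite{DeSu16} for the Hilbert polynomial of $L_{1,c}(\Triv)$ and conclude by a dimension count.

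The gap is that, according to the paper's own remark immediately after the statement of Theorem~\ref{mainthmp=3}, \cite{DeSu16} establishes the Hilbert polynomial of $L_{1,c}(\Triv)$ only for generic $c$. Your argument therefore does not cover the special values $c\in\mathbb{F}_3=\{0,1,2\}$. To close the gap you would need either to verify independently that \cite{DeSu16} applies for all $c$, or to supply a direct irreducibility argument along the lines of the paper's Frobenius algebra computation.
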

\begin{proof}
The description of $L_{t,c}(\Triv)$ is given in: 
\begin{itemize}
\item for $t=0$ and any $c$, in Lemma \ref{Lian3.2.}; 
\item for $t=1$ and any $c$, in Lemma \ref{p=3,t=1,Triv,c=any}. 
\end{itemize}
The analogous descriptions of the irreducible representation $L_{t,c}(\Sign)$ can be deduced from the description of $L_{t,-c}(\Triv)$ and Lemma \ref{Sign}. Note that the generators of $J_{1,c}(\Triv)$ and the Hilbert polynomials of $L_{1,c}(\Triv)$ for generic $c$ are known from \cite{DeSu16}.
\end{proof}

\begin{lemma}\label{p=3,t=1,Triv,c=any}
Let $p=3$, and $t=1$ and $c\in \Bbbk$ be arbitrary. Vectors $\x{1}^3$ and $\x{2}^3$ in $M_{1,c}(\Triv)$ are singular. The quotient $M_{1,c}(\Triv)/\left<\x{1}^3,\x{2}^3\right>$ is irreducible, equal to $L_{1,c}(\Triv)$ and has the character and the Hilbert polynomial
\begin{align*}
\chi_{L_{1,c}(\Triv)}(z)&=[\Triv](1+z+2z^2+z^3+z^4)+[\Sign](z+z^2+z^3) \\
h_{L_{1,c}(\Triv)}(z)&=1+2z+3z^2+2z^3+z^4=\left(\frac{1-z^3}{1-z}\right)^2.
\end{align*}
\end{lemma}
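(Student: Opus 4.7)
My plan is to organize the proof in four steps: verifying singularity, identifying the quotient as $S^{(3)}\h^* \otimes \Triv$, reading off the character, and finally establishing irreducibility of the quotient.

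\textbf{Step 1: Singularity of $\x{1}^3, \x{2}^3$.} I would compute $D_y(\x{1}^3)$ directly for $y = y_1-y_3$ and $y = y_2-y_3$, which span $\h$. The differential part $\partial_y(\x{1}^3) = 3\x{1}^2 \partial_y(\x{1})$ vanishes since $p=3$. Each of the three transposition terms $-c\langle x_i-x_j, y\rangle \tfrac{\x{1}^3-(ij)\x{1}^3}{\x{i}-\x{j}}$ yields a degree-two polynomial obtained by dividing a cube difference by a linear form. Using $\x{3}=-\x{1}-\x{2}$ and the Frobenius identity in characteristic $3$ (so $\x{1}^3 - \x{a}^3 = (\x{1}-\x{a})^3$), each of the three terms simplifies to the same symmetric polynomial $\x{1}^2+\x{1}\x{2}+\x{2}^2$, and they add up with coefficients whose sum is divisible by $3$. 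Hence $D_y(\x{1}^3)=0$. Singularity of $\x{2}^3$ follows by applying $(12) \in S_3$.

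\textbf{Step 2: Identification of the quotient.} Since $\x{1}^3, \x{2}^3$ are singular vectors of lowest $S_3$-weight $\Triv$, the submodule they generate is $S\h^*\cdot \x{1}^3 + S\h^*\cdot \x{2}^3 \subset S\h^* = M_{1,c}(\Triv)$. In characteristic $3$ the Frobenius map $x \mapsto x^3$ on $\h^*$ is additive, so $(a\x{1}+b\x{2})^3 = a^3\x{1}^3 + b^3\x{2}^3$ and the ideal generated by $\x{1}^3,\x{2}^3$ coincides with the ideal generated by $\{x^3 : x \in \h^*\}$. Thus the quotient is (as a graded $S_3$-module) precisely $S^{(3)}\h^* \otimes \Triv$, with the finite basis $\left\{\x{1}^a\x{2}^b : 0\le a,b\le 2\right\}$.

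\textbf{Step 3: Character and Hilbert polynomial.} The Hilbert polynomial is immediate from the basis: $\left(\tfrac{1-z^3}{1-z}\right)^2 = (1+z+z^2)^2 = 1+2z+3z^2+2z^3+z^4$. For the character, I would decompose each graded piece as an $S_3$-module in characteristic $3$. Although $\h^*$ is a non-split extension $0\to \Sign \to \h^* \to \Triv \to 0$, each $S^k\h^*$ in the quotient is small enough to be analysed by hand using the $S_3$-action in the monomial basis (or by a short exact sequence argument analogous to Theorem \ref{decmposeShp>3}); matching classes in $K_0(S_3)$ confirms $\chi_{L_{1,c}(\Triv)}(z) = [\Triv](1+z+2z^2+z^3+z^4) + [\Sign](z+z^2+z^3)$, which has the required total dimension $9$.

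\textbf{Step 4: Irreducibility.} This is the main obstacle, because in characteristic $3$ the Casimir scalars of Lemma \ref{OmegaAction} all vanish and do not restrict the possible locations of singular vectors. Since the quotient is concentrated in degrees $0, \ldots, 4$, any proper graded submodule would have a lowest-degree generator in one of the degrees $1,2,3,4$, which would be a singular vector. My strategy is to compute, in each such degree, the matrix of the pair $(D_{y_1-y_3}, D_{y_2-y_3})$ against the monomial basis and show its kernel is zero. In degree $1$ the map $M^1 \to M^0\oplus M^0$ sends $\x{1}\mapsto (1,0)$, $\x{2}\mapsto (0,1)$, so is injective. In degree $2$ the $3\times 3$ minor of the combined map has determinant $1+3c = 1$ in characteristic $3$, independent of $c$. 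In degree $3$ (basis $\x{1}^2\x{2}, \x{1}\x{2}^2$), a short case split on whether $c=0$ forces the candidate coefficients to vanish. In degree $4$ the single basis vector $\x{1}^2\x{2}^2$ satisfies $D_{y_1-y_3}(\x{1}^2\x{2}^2) = 2\x{1}\x{2}^2 + c(2\x{2}^3+\x{1}^3) \equiv -\x{1}\x{2}^2 \ne 0$ in the quotient. This rules out singular vectors in all positive degrees and establishes irreducibility, giving $L_{1,c}(\Triv) = M_{1,c}(\Triv)/\left<\x{1}^3,\x{2}^3\right>$ as claimed.
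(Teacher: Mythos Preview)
Your approach is correct, with one small slip in Step~1: the $(23)$ term in $D_y(\x{1}^3)$ is actually zero (since $(23)$ fixes $\x{1}$), so it is only the $(12)$ and $(13)$ terms that each produce $\x{1}^2+\x{1}\x{2}+\x{2}^2$, with coefficients summing to $-3c=0$. The conclusion is unaffected.

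The main difference from the paper is in Step~4. You check each degree $1,2,3,4$ separately for singular vectors in the quotient, which works and is elementary; in degree~3 no case split on $c$ is actually needed, since the linear system forces the coefficients to vanish regardless. The paper instead uses the Frobenius algebra structure of $S^{(3)}\h^*$: any nonzero graded submodule must contain the top-degree vector $\x{1}^2\x{2}^2$, and then a single computation
\[
D_{y_1-y_2}^2\,D_{y_1+y_2+y_3}^2(\x{1}^2\x{2}^2)=1
\]
shows it contains $1$, hence is everything. This exploits the characteristic-$3$ accident that $Y=y_1+y_2+y_3\in\h$, so $D_Y=\partial_Y$ is a legitimate Dunkl operator with no divided-difference part. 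The paper's argument is shorter and mirrors the Frobenius-algebra trick already used for $N_{1,c}(\Triv)$ in characteristic~$2$; your degree-by-degree check is more pedestrian but avoids needing to spot that $Y\in\h$. For the character in Step~3, the paper uses the basis $a_+=\x{1}+\x{2}$, $a_-=\x{1}-\x{2}$ (eigenvectors for $s_1$), which makes the $S_3$-decomposition of each graded piece immediate; you might find this cleaner than working in the monomial basis.
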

\begin{proof}
Let us first show that $\x{1}^3$ and $\x{2}^3$ are singular in $M_{1,c}(\Triv)$.
We compute
\begin{align*}
D_{y_1-y_3}(\x{1}^3)&=\left(\partial_{y_1-y_3}-c\frac{\id-(12)}{\x{1}-\x{2}}-c\frac{\id-(23)}{\x{2}-\x{3}}-2c\frac{\id-(13)}{\x{1}-\x{3}}\right)(\x{1}^3)\\
&=-c\cdot\left(\x{1}^2+\x{1}\x{2}+\x{2}^2+0+2\x{1}^2+2\x{1}\x{3}+2\x{3}^2\right) \\
&=-c\cdot \left(\x{1}^2+\x{1}\x{2}+\x{2}^2+2\x{1}^2-2\x{1}^2-2\x{1}\x{2}+2\x{1}^2+4\x{1}\x{2}+2\x{2}^2\right) \\
&=-c\cdot \left(3\x{1}^2+3\x{1}\x{2}+3\x{2}^2\right)=0 \\
D_{y_2-y_3}(\x{1})&=\left(\partial_{y_1-y_3}+c\frac{\id-(12)}{\x{1}-\x{2}}-2c\frac{\id-(23)}{\x{2}-\x{3}}-c\frac{\id-(13)}{\x{1}-\x{3}}\right)(\x{1}^3)\\
&=c\left(\x{1}^2+\x{1}\x{2}+\x{2}^2-2\cdot 0 -\x{1}^2-\x{1}\x{3}-\x{3}^2  \right)\\
&=c\left(\x{1}\x{2}+\x{2}^2+\x{1}^2 +\x{1}\x{2}-\x{1}^2-2\x{1}\x{2}-\x{2}^2  \right)=0.
\end{align*}
By symmetry, 
\begin{align*}
D_{y_1-y_3}(\x{2}^3)&=(12).D_{y_2-y_3}(\x{1}^3)=0\\
D_{y_2-y_3}(\x{2}^3)&=(12).D_{y_1-y_3}(\x{1}^3)=0.
\end{align*}
So, $\x{1}^3$ and $\x{2}^3$ are singular. 

Next, we calculate the character of $M_{1,c}(\Triv)/\left<\x{1}^3,\x{2}^3 \right>$. This is easiest to do in the basis $a_+=\x{1}+\x{2}$, $a_-=\x{1}-\x{2}$ of $\h^*$, noting that $\left<\x{1}^3,\x{2}^3 \right>=\left<a_+^3,a_-^3\right>$. The quotient $M^1_{1,c}(\Triv)/\left<a_+^3,a_-^3 \right>$ has a basis 
$$\left\{a_+,a_-,a_-^2,a_-a_+,a_+^2,a_-^2a_+,a_-a_+^2,a_-^2a_+^2\right\}.$$
This and the fact that $[\Bbbk a_+]=[\Triv]$ and $[\Bbbk a_-]=[\Sign]$ let us show that the character and the Hilbert polynomial are as stated. 

It remains to show that $M_{1,c}(\Triv)/\left<\x{1}^3,\x{2}^3 \right>$ is irreducible. We do so in a way similar to the proof of Lemma \ref{p=2,t=1,generic,Triv}, using the fact that the module $M_{1,c}(\Triv)/\left<\x{1}^3,\x{2}^3 \right>\cong S\h^*/\left<\x{1}^3,\x{2}^3 \right>$ is also a Frobenius algebra. We work with the basis $\left\{\x{1}^a\x{2}^b \mid 0\le a,b < 3\right\}$. 

Assume that $U$ is a nonzero graded submodule of $M_{1,c}(\Triv)/\left<\x{1}^3,\x{2}^3 \right>$, and $v\in U$ some nonzero homogeneous vector. Multiplying by a nonzero constant if needed, $v$ can be written as $$v=\x{1}^a\x{2}^b+\sum_{a'>a}\alpha_{a'} \x{1}^{a'}\x{2}^{a+b-a'}$$
for some $a,b$ with $0\le a,b<3$ and $\alpha_{a'}\in \Bbbk$. 
As $U$ is a subrepresentation and using $\x{1}^3=0$, we get
$$\x{1}^{2-a}\x{2}^{2-b}v=\x{1}^2\x{2}^2+\sum_{a'>a}\alpha_a' \x{1}^{2+a'-a}\x{2}^{2+a-a'}=\x{1}^2\x{2}^2$$
is in $U$. Then $U$ also contains $$D_{y_1-y_2}^2D_{y_1+y_2+y_3}^2\x{1}^2\x{2}^2.$$  (Note this makes sense as in characteristic $3$ we have $Y=y_1+y_2+y_3\in \h$.) 

Using $\left<Y, \alpha_{s}\right>=0$ for all $\alpha_s$ so $D_{Y}=\partial_Y$, we calculate
\begin{align*}
D_{y_1-y_2}^2D_{y_1+y_2+y_3}^2(\x{1}^2\x{2}^2)&=2D_{y_1-y_2}^2D_{y_1+y_2+y_3}(\x{1}^2\x{2}+\x{1}\x{2}^2)\\
&=2D_{y_1-y_2}^2\left(2\x{1}\x{2}+\x{2}^2+\x{1}^2+2\x{1}\x{2} \right)\\
&=2D_{y_1-y_2}^2\left(\x{1}^2+\x{1}\x{2} +\x{2}^2\right).
\end{align*}
We now use the fact that in characteristic $3$ we have $2(\x{1}^2+\x{1}\x{2} +\x{2}^2)=\overline{\sigma_2}$ and that $D_{y_1-y_2}(\overline{\sigma_2})=\partial_{y_1-y_2}(\overline{\sigma_2})$ to get
\begin{align*}
D_{y_1-y_2}^2D_{y_1+y_2+y_3}^2(\x{1}^2\x{2}^2)&=2D_{y_1-y_2}\partial_{y_1-y_2}\left(\x{1}^2+\x{1}\x{2} +\x{2}^2\right)\\
&=2D_{y_1-y_2}\left( 2\x{1}+\x{2} -\x{1}-2\x{2}\right)\\
&=2D_{y_1-y_2}\left( \x{1}-\x{2}\right)\\
&=2\left(\partial_{y_1-y_2}-2c\frac{\id - (12)}{\x{1}-\x{2}}-c\frac{\id - (13)}{\x{1}-\x{3}}+c\frac{\id - (23)}{\x{2}-\x{3}} \right)(\x{1}-\x{2})\\
&=2\left(1+1-2c\cdot 2-c\cdot 1+c\cdot (-1)\right)\\
&=4-12c=1. 
\end{align*}
So, $1\in U$. However, $1$ generates the module $M_{1,c}(\Triv)/\left<\x{1}^3,\x{2}^3 \right>$, and consequently $U$ is the whole module. This shows that $M_{1,c}(\Triv)/\left<\x{1}^3,\x{2}^3 \right>$ is irreducible.
\end{proof}

\section{Auxiliary computations using the rescaled Young basis}\label{sect-auxiliary}

For the rest of the paper the characteristic of the field $\Bbbk$ is $p>3$. So, the category of representations of $S_3$ is semisimple, we can realise $\h^*$ as a subrepresentation of $V^*$ with the basis
$$b_+=x_1+x_2-2x_3,\quad b_-=x_1-x_2,$$
from Section \ref{sec-basisofh}, and we will be using the bases of $M_{t,c}(\tau)$ from Theorems \ref{decmposeShp>3} and \ref{decmposeVermap>3}. To manipulate those bases we will first need to know the action of all reflections on $b_+,b_-$, which can be easily calculated to be: 
\begin{align}\label{action-of-S3-b+b-}
(12).b_+=b_+, \quad &(12).b_-=-b_+ \notag \\
(23).b_+=\frac{-b_++3b_-}{2},\quad  &(23).b_-=\frac{b_++b_-}{2}\\
(13).b_+=\frac{-b_+-3b_-}{2},\quad  &(13).b_-=\frac{-b_++b_-}{2}. \notag
\end{align}

The main advantages of the basis $b_+,b_-$ for $\h^*$ is that it lets us use Theorems \ref{decmposeShp>3} and \ref{decmposeVermap>3} to write a bases of each Verma modules, itself good because: 
\begin{enumerate}
\item it is compatible with the decomposition into $S_3$ representations, so we can use Lemma \ref{OmegaAction} to reduce the space in which we look for singular vectors; 
\item every vector in this basis is either $S_2$ invariant or anti-invariant, enabling us to use Lemma \ref{KerD1}; 
\item calculating Dunkl operators in this basis is manageable. Namely, each vectors in it is a product of an $S_3$ invariant and one of a small number of fixed low degree expressions in $b_+,b_-$. The differential part of the Dunkl operator satisfies the Leibniz rule so is easy to apply to products, and the difference part of the Dunkl operator is linear with respect to invariants so will only need to be calculated on a small number of low degree vectors.
\end{enumerate}

The main disadvantage of the basis $b_+,b_-$ and resulting bases Verma modules is that multiplication (except by a symmetric polynomial) is more involved. We gather all tedious multiplication formulas among $b_+,b_-,(-b_+^2+3b_-^2), 2b_+b_-$ and $q$ we will need for our computations of singular vectors later into one lemma. 

\begin{lemma} 
The following identities hold in $S\h^*$: 
\begin{align*}
b_+^2&
=-6\sigma_2-\frac{1}{2}(-b_+^2+3b_-^2) \\
b_-^2&=
-2\sigma_2+\frac{1}{6}(-b_+^2+3b_-^2)\\
b_+\cdot 2b_+b_-&=6\cdot q-6\cdot \sigma_2 b_-\\
b_+\cdot (-b_+^2+3b_-^2)&=54\cdot \sigma_3+6\cdot   \sigma_2b_+\\
b_-\cdot 2b_+b_-&=18\cdot \sigma_3 -2\cdot   \sigma_2b_+\\
b_-\cdot (-b_+^2+3b_-^2)&=-6\cdot q-6\cdot \sigma_2b_-\\
(-b_+^2+3b_-^2)^2&=72\cdot \sigma_2^2-108\cdot \sigma_3b_+ -6 \cdot \sigma_2(-b_+^2+3b_-^2)\\
 (-b_+^2+3b_-^2)\cdot 2b_+b_-&=108\cdot\sigma_3b_-+6\cdot \sigma_2\cdot 2b_+b_- \\
 (2b_+b_-)^2&=24\cdot \sigma_2^2+ 36\cdot \sigma_3b_+ + 2 \cdot \sigma_2(-b_+^2+3b_-^2)\\
 q\cdot b_+ &= -9\cdot \sigma_3b_- -1\cdot \sigma_2\cdot 2\cdot b_+b_-\\
 q\cdot b_- &= 3\cdot \sigma_3b_+ + \frac{1}{3}\cdot \sigma_2(-b_+^2+3b_-^2)\\
 (-b_+^2+3b_-^2) \cdot q&= -12\cdot \sigma_2^2b_- +9\cdot\sigma_3\cdot 2b_+b_-\\
2b_+b_- \cdot q&=4\cdot \sigma_2^2b_+ -3\cdot\sigma_3(-b_+^2+3b_-^2)\\
q^2&=-27\sigma_3^2-4\sigma_2^3.
\end{align*}
\end{lemma}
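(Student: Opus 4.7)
The approach is to observe that since $b_+$ and $b_-$ form a basis of $\h^*$, the symmetric algebra $S\h^*$ is naturally the polynomial ring $\Bbbk[b_+, b_-]$. The invariants $\sigma_2, \sigma_3$ and the Vandermonde $q$ are given as explicit polynomials in $b_+, b_-$ via equation \eqref{sigmaexplicit} and the calculation preceding the lemma. Each identity in the statement is therefore a polynomial identity in $\Bbbk[b_+, b_-]$ of some small fixed total degree (at most six), verifiable by expanding both sides and comparing coefficients of the monomials $b_+^i b_-^j$.

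The plan is to first dispose of the two expressions for $b_\pm^2$: these restate the linear identity $-12\sigma_2 = b_+^2 + 3b_-^2$ from \eqref{sigmaexplicit}. Pairing it with the tautology $(-b_+^2+3b_-^2) = -b_+^2 + 3b_-^2$ and solving the resulting $2\times 2$ linear system in the unknowns $b_+^2, b_-^2$ yields both identities simultaneously.

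For the remaining identities, Theorem \ref{decmposeShp>3} provides a useful bookkeeping device: each left-hand side is a homogeneous polynomial of some small degree $k$, and its behaviour under the generator $s_1=(12)$ (together with its degree) pins down which isotypic components of $S^k\h^*$ it can lie in. The right-hand side must then be its expansion in the corresponding part of the basis supplied by Theorem \ref{decmposeShp>3}, and only the scalar coefficients need to be pinned down. For instance, $b_+ \cdot 2b_+b_- = 2b_+^2 b_-$ is of degree three and $s_1$-antisymmetric, so it lies in the span of the $\Sign$-component $q$ and the $s_1$-antisymmetric basis vector $\sigma_2 b_-$ of the $\Stand$-component of $S^3\h^*$; reading off the coefficients of the monomials $b_+^2 b_-$ and $b_-^3$ recovers the asserted values $6$ and $-6$. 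The same pattern handles every remaining identity, up to and including the top degree six check $q^2 = -27\sigma_3^2 - 4\sigma_2^3$, which sits in $(S^6\h^*)^{S_3} = \Bbbk[\sigma_2,\sigma_3]_6$ and is fixed by matching coefficients of $b_+^6$ and $b_+^2 b_-^4$.

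The only real obstacle is clerical: there are roughly fifteen short expansions to carry out without arithmetic slip, and there is no single structural argument bypassing the case-by-case verification. Each individual check is elementary and mechanical once the setup is in place, so the proof amounts to a routine table of computations which one records for later use in Section \ref{sect-p>3-generic} and Section \ref{sect-p>3-special}.
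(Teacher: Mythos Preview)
Your proposal is correct and matches the paper's own proof, which is simply ``Direct computation.'' You have supplied more organisational detail than the paper does---in particular the use of the $s_1$-parity and the basis of Theorem~\ref{decmposeShp>3} to cut down the number of monomial coefficients to compare---but the underlying method is the same straightforward expansion and comparison.
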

\begin{proof}
Direct computation. 
\end{proof}

When looking for singular vectors, we will make liberal use of Lemma \ref{OmegaAction} to reduce the space where we are looking for singular vectors, and Lemma 
\ref{KerD1} to reduce the task of finding $\cap_{y\in \h}\mathrm{Ker} D_{y}$ to the task of finding $\mathrm{Ker} D_{y_1}$. For that purpose, let us make $D_{y_1}$ explicit as 
$$D_{y_1}=t\partial_{y_1}\otimes \id - c\cdot  \frac{\id - (12)}{x_1-x_2}\otimes (12)-c\cdot \frac{id-(13)}{x_1-x_3}\otimes (13).$$
For any reflection $s$, any vector $v\in M_{t,c}(\tau)$ or one of its quotients, and any $f\in (S\h^*)^{S_3}$ we have 
\begin{align}
\left( \frac{\id - s}{\alpha_s}\otimes s\right) (fv)=f\cdot \left( \frac{\id - s}{\alpha_s}\otimes s\right)(v).
\end{align}

Let us now calculate the action of the divided difference operators and the partial derivatives on all factors which appear in the bases of $S\h^*$ and for $M_{t,c}(\tau)$ from Theorems \ref{decmposeShp>3} and \ref{decmposeVermap>3}.
\begin{lemma} 
The following identities hold in $S\h^*$: 
\begin{align*}
\frac{id-(12)}{x_1-x_2}(b_+)=0, \quad &\frac{id-(13)}{x_1-x_3}(b_+)=3,\\
\frac{id-(12)}{x_1-x_2}(b_-)=2, \quad &\frac{id-(13)}{x_1-x_3}(b_-)=1,\\
\frac{id-(12)}{x_1-x_2}(-b_+^2+3b_-^2)=0, \quad &\frac{id-(13)}{x_1-x_3}(-b_+^2+3b_-^2)=3(-b_++3b_-),\\
\frac{id-(12)}{x_1-x_2}(2b_+b_-)=4b_+, \quad &\frac{id-(13)}{x_1-x_3}(2b_+b_-)=-b_++3b_-,\\
\frac{id-(12)}{x_1-x_2}(q)
=-2 \sigma_2-\frac{1}{3} (-b_+^2+3b_-^2),\quad  &\frac{id-(13)}{x_1-x_3}(q)
= 2 \sigma_2-\frac{1}{6} (-b_+^2+3b_-^2)+\frac{1}{2}\cdot 2b_+b_-.
\end{align*}
\end{lemma}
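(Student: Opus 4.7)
The plan is to verify each of the eleven identities by a direct computation, using three pieces of structure that are already set up in the paper: the action of the transpositions $(12)$ and $(13)$ on the rescaled Young basis (equation \eqref{action-of-S3-b+b-}), the dictionary $x_1-x_2 = b_-$, $x_1-x_3=(b_++b_-)/2$, $x_2-x_3=(b_+-b_-)/2$ coming from the splitting $\pi$ of Section \ref{sec-basisofh}, and the explicit formulas for the invariants, in particular $\sigma_2=-(b_+^2+3b_-^2)/12$ from \eqref{sigmaexplicit} and $q=b_+^2 b_- - b_-^3$.

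First I would dispatch the linear and quadratic cases by reading off \eqref{action-of-S3-b+b-}. For $b_+$ the first identity is immediate from $(12).b_+=b_+$, and the second follows from $b_+-(13).b_+ = b_+ + (b_++3b_-)/2 = 3(b_++b_-)/2 = 3(x_1-x_3)$. The two formulas for $b_-$ are analogous, using $(12).b_-=-b_-$ and $(13).b_-=(-b_++b_-)/2$. For the quadratic vectors $-b_+^2+3b_-^2$ and $2b_+b_-$, observe that $-b_+^2+3b_-^2$ is $(12)$-invariant (killing the first divided difference) while $(12).(2b_+b_-)=-2b_+b_-$ so that $2b_+b_- - (12).(2b_+b_-) = 4b_+b_- = 4b_+ \cdot (x_1-x_2)$; for the $(13)$ divided differences one substitutes \eqref{action-of-S3-b+b-}, expands the squares and products, subtracts, and factors out $x_1-x_3=(b_++b_-)/2$, which leaves a linear expression in $b_+,b_-$ that matches the claimed right-hand side.

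The remaining two identities for the Vandermonde $q$ are the most interesting, and here the clean approach is to exploit antisymmetry: since $q$ transforms by $\Sign$, we have $s.q=-q$ for any transposition $s$, so $(\id-s)(q)/\alpha_s = 2q/\alpha_s$. Using the factorisation $q=(x_1-x_2)(x_2-x_3)(x_1-x_3)$, this reduces to expanding $2(x_2-x_3)(x_1-x_3)$ for the $(12)$-divided difference and $2(x_1-x_2)(x_2-x_3)$ for the $(13)$-one. Re-expressing the linear factors via the dictionary above gives polynomials in $b_+,b_-$ of degree two, which one then rewrites in the basis $\{\sigma_2,\,-b_+^2+3b_-^2,\,2b_+b_-\}$ of $S^2\h^*$ using \eqref{sigmaexplicit}.

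No step represents a genuine mathematical obstacle: the entire lemma is a bookkeeping exercise in changing bases between $\{x_1,x_2,x_3\}$ and $\{b_+,b_-\}$. The only real risk is the accumulation of factors of $1/2$, $1/6$, and $1/12$ arising from the normalisations of $\pi$ and $\sigma_2$, so as a sanity check I would specialise each identity to $b_-=0$ (respectively $b_+=0$), reducing both sides to polynomials in a single variable where agreement is immediate to verify.
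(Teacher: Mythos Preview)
Your proposal is correct and is exactly the approach the paper takes: the paper's entire proof is the two words ``Direct computation.'' You have simply spelled out how that direct computation goes, and your details (the dictionary $x_1-x_2=b_-$, $x_1-x_3=(b_++b_-)/2$, the use of antisymmetry of $q$ to reduce to factoring out one linear term, and the final rewriting via \eqref{sigmaexplicit}) all check out.
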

\begin{proof}
Direct computation.
\end{proof}

\begin{lemma} 
The following identities hold in $S\h^*$: 
\begin{align*}
\partial_{y_1}(b_+)=1, \quad &\partial_{y_1}(b_-)=1,\\
\partial_{y_1}(-b_+^2+3b_-^2)=-2 b_++6 b_-, \quad &\partial_{y_1}(2b_+b_-)=2 b_++2 b_-\\
\partial_{y_1}(\sigma_2)
=\frac{-1}{6} b_++\frac{-1}{2} b_-, \quad 
&\partial_{y_1}(\sigma_3)
=\frac{1}{36} (-b_+^2+3b_-^2 )+\frac{1}{12} (2b_+b_-),\\
\partial_{y_1}(q)
=\frac{-1}{4}&(-b_+^2+3b_-^2)+\frac{1}{4}(2b_+b_-).
\end{align*}
\end{lemma}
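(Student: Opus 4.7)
The plan is to do the computation directly, exploiting the fact that $\partial_{y_1}$ is a genuine derivation on $SV^*$ (and hence on $S\h^* \subset SV^*$ via the splitting $\pi$), which restricts to $\partial/\partial x_1$ on $V^*$ since $\langle x_i, y_j \rangle = \delta_{ij}$. In particular it satisfies the Leibniz rule, which is what makes all of these formulas trivially reducible to the two base cases $\partial_{y_1}(b_+)$ and $\partial_{y_1}(b_-)$.

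First I would compute the two base cases. From $b_+ = x_1 + x_2 - 2x_3$ and $b_- = x_1 - x_2$ we immediately read off $\partial_{y_1}(b_+) = 1$ and $\partial_{y_1}(b_-) = 1$. Then for $-b_+^2 + 3b_-^2$ and $2b_+b_-$, I would apply the Leibniz rule and the base cases:
\begin{align*}
\partial_{y_1}(-b_+^2 + 3b_-^2) &= -2b_+ \cdot 1 + 6 b_- \cdot 1 = -2b_+ + 6b_-,\\
\partial_{y_1}(2b_+b_-) &= 2 \cdot 1 \cdot b_- + 2b_+ \cdot 1 = 2b_+ + 2b_-.
\end{align*}

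For $\sigma_2$ and $\sigma_3$ I would use the explicit expressions \eqref{sigmaexplicit}, namely $\sigma_2 = -\tfrac{1}{12}(b_+^2 + 3b_-^2)$ and $\sigma_3 = -\tfrac{1}{108}(b_+^3 - 9b_+b_-^2)$. Differentiating with Leibniz gives $\partial_{y_1}(\sigma_2) = -\tfrac{1}{12}(2b_+ + 6b_-) = -\tfrac{1}{6}b_+ - \tfrac{1}{2}b_-$, and similarly
$$\partial_{y_1}(\sigma_3) = -\tfrac{1}{108}(3b_+^2 - 9b_-^2 - 18 b_+b_-) = \tfrac{1}{36}(-b_+^2 + 3b_-^2) + \tfrac{1}{12}(2b_+b_-),$$
after regrouping the result into the chosen basis of the isotypic component. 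Finally, for $q = \tfrac{1}{4}(b_+^2 b_- - b_-^3)$ (the Vandermonde rescaled so that all formulas match), Leibniz gives $\partial_{y_1}(q) = \tfrac{1}{4}(2b_+b_- + b_+^2 - 3b_-^2) = -\tfrac{1}{4}(-b_+^2+3b_-^2) + \tfrac{1}{4}(2b_+b_-)$.

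There is essentially no obstacle: every identity follows from the Leibniz rule together with the two base cases, and the only bookkeeping required is the cosmetic step of re-expressing results such as $b_+^2 - 3b_-^2$ or $b_+ b_-$ in terms of the preferred basis vectors $(-b_+^2+3b_-^2)$ and $(2b_+b_-)$ of the $\Stand$ summand of $S^2\h^*$. This is purely a matter of matching coefficients, with no characteristic-$p$ subtleties since all denominators that appear ($2$, $6$, $12$, $36$, $108$) are nonzero in $\Bbbk$ under the standing assumption $p > 3$.
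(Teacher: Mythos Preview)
Your proposal is correct and is exactly the ``direct computation'' the paper has in mind: reduce everything via the Leibniz rule to the base cases $\partial_{y_1}(b_\pm)=1$ and regroup. One small clarification: your formula $q=\tfrac{1}{4}(b_+^2 b_- - b_-^3)$ is not a rescaling but is the actual Vandermonde $q=(x_1-x_2)(x_2-x_3)(x_1-x_3)$ expressed in the $b_\pm$ basis (the paper's displayed expression $q=b_+^2 b_- - b_-^3$ in Section~\ref{sect-sympol} is missing the factor $\tfrac14$), and indeed this is the value consistent with $q^2=-27\sigma_3^2-4\sigma_2^3$ and with the derivative stated in the lemma.
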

\begin{proof}
Direct computation. 
\end{proof}

Finally, in the attempt to express the value of the Dunkl operator $D_{y_1}$ on a vector in our basis as a linear combination of basis vectors, we will often need the following computations. 
\begin{lemma} 
The following identities hold in $S\h^*$: 
\begin{align*}
\partial_{y_1}(\sigma_2)\cdot b_+
&=\sigma_2+\frac{1}{12} (-b_+^2+3b_-^2)+\frac{-1}{4} (2b_+b_-)\\
\partial_{y_1}(\sigma_3)\cdot b_+&=\frac{3}{2}\sigma_3+\frac{1}{6}\sigma_2b_+ -\frac{1}{2}\sigma_2b_-+\frac{1}{2}q\\
\partial_{y_1}(\sigma_2)\cdot b_-&=\sigma_2+\frac{-1}{12}(-b_+^2+3b_-^2)+\frac{-1}{12} (2b_+b_-)\\
\partial_{y_1}(\sigma_3)\cdot b_-&=\frac{3}{2}\sigma_3+\frac{-1}{6}\sigma_2b_++\frac{-1}{6}\sigma_2b_-+\frac{-1}{6}q\\
\partial_{y_1}(\sigma_2)\cdot (-b_+^2+3b_-^2)&=-9\sigma_3 - \sigma_2b_++3\sigma_2b_-+3 q\\
\partial_{y_1}(\sigma_3)\cdot (-b_+^2+3b_-^2)&=2\sigma_2^2-3\sigma_3b_+ + 9\sigma_3b_--\frac{1}{6}\sigma_2(-b_+^2+3b_-^2)+\frac{1}{2}\sigma_2(2b_+b_-)\\
\partial_{y_1}(\sigma_2)\cdot 2b_+b_-&=-9\sigma_3+\sigma_2b_++\sigma_2b_-- q\\
\partial_{y_1}(\sigma_3)\cdot 2b_+b_-&= 2\sigma_2^2 + 3\sigma_3b_  + +3\sigma_3b_- +\frac{1}{6}\sigma_2(-b_+^2+3b_-^2) +\frac{1}{6}\sigma_2(2b_+b_-)\\
\partial_{y_1}(\sigma_2)\cdot q&=\frac{-3}{2}\sigma_3b_+ + \frac{3}{2}\sigma_3b_- +\frac{-1}{6}\sigma_2(-b_+^2+3b_-^2)+\frac{1}{6}\sigma_2(2b_+b_-)\\
\partial_{y_1}(\sigma_3)\cdot q&=\frac{1}{3} \sigma_2^2b_+ \frac{-1}{3} \sigma_2^2b_- +\frac{-1}{4}\sigma_3(-b_+^2+3b_-^2) + \frac{1}{4}\sigma_3 (2b_+b_-).
\end{align*}
\end{lemma}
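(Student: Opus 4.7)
The plan is to derive these identities as direct consequences of the two preceding lemmas: the multiplication table among $b_+, b_-, -b_+^2+3b_-^2, 2b_+b_-, q$ and the formulas giving $\partial_{y_1}$ applied to $b_\pm$, $-b_+^2+3b_-^2$, $2b_+b_-$, $\sigma_2$, $\sigma_3$, and $q$. The strategy for each identity is identical: substitute the known expression for $\partial_{y_1}(\sigma_2)$, $\partial_{y_1}(\sigma_3)$, or $\partial_{y_1}(q)$ (which is a linear combination of the rescaled Young basis elements), distribute over the product, and then use the multiplication table to rewrite each resulting monomial as a linear combination of vectors of the form $\sigma_2^a \sigma_3^b \cdot v$ where $v$ runs over $\{1, b_+, b_-, -b_+^2+3b_-^2, 2b_+b_-, q\}$.

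As a representative example, to verify $\partial_{y_1}(\sigma_2)\cdot b_+ = \sigma_2 + \frac{1}{12}(-b_+^2+3b_-^2) - \frac{1}{4}(2b_+b_-)$, I would write
\begin{align*}
\partial_{y_1}(\sigma_2)\cdot b_+ &= \Bigl(-\tfrac{1}{6}b_+ - \tfrac{1}{2}b_-\Bigr)\cdot b_+ \\
&= -\tfrac{1}{6}\bigl(-6\sigma_2 - \tfrac{1}{2}(-b_+^2+3b_-^2)\bigr) - \tfrac{1}{4}(2b_+b_-) \\
&= \sigma_2 + \tfrac{1}{12}(-b_+^2+3b_-^2) - \tfrac{1}{4}(2b_+b_-),
\end{align*}
using $b_+^2 = -6\sigma_2 - \tfrac{1}{2}(-b_+^2+3b_-^2)$ from the first lemma of this section. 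Every other identity follows by the same pattern, combining one term from $\partial_{y_1}(f)$ with one factor from the multiplication table and collecting coefficients.

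The only real obstacle is bookkeeping: there are fourteen identities and each expansion produces several terms with small rational coefficients, so sign errors and arithmetic slips are the chief risk. Since the characteristic is $p>3$ throughout, all denominators $2, 3, 4, 6, 12, 36$ appearing in the intermediate steps are invertible, and no characteristic-specific care is needed. Accordingly the proof need only record that each identity is obtained by this mechanical substitution, exactly as in the preceding two lemmas, and simply assert ``direct computation using the preceding two lemmas''.
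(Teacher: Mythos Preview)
Your proposal is correct and takes essentially the same approach as the paper, whose proof consists of the two words ``Direct computation.'' Your worked example and explanation of the mechanical substitution procedure are more detailed than what the paper provides; the minor miscount (there are ten identities, not fourteen) and the unnecessary mention of $\partial_{y_1}(q)$ do not affect the validity of the argument.
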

\begin{proof}
Direct computation. 
\end{proof}

\section{Irreducible representations of $H_{t,c}(S_3,\h)$ in characteristic $p>3$ for generic $c$}\label{sect-p>3-generic}

Recall that for the rest of the paper the characteristic of the field $\Bbbk$ is $p>3$. The aim of this section is to prove the following theorem. 
\begin{theorem}\label{thm-p>3generic}
The characters and the Hilbert polynomials of the irreducible representation $L_{t,c}(\tau)$ of the rational Cherednik algebra $H_{t,c}(S_3,\h)$ over an algebraically closed field of characteristic $p>3$, for generic $c$, $t=0,1$, and any $\tau$, are given by the following tables. 

Characters: 
\begin{center}
{\renewcommand*{\arraystretch}{1.5}\begin{tabular}{ c |c |c |c }
$p>3$ & $\Triv$ & $\Sign$ & $\Stand$ \\ \hline \hline 
  $t=0$,  & $[\Triv]+z[\Stand]+$ & $[\Sign]+z[\Stand]+$ & $[\Stand]+z[\Triv]+$ \\ 
  $c\ne 0$ & $+ z^2[\Stand]+z^3[\Sign]$ & $+z^2[\Stand]+z^3[\Triv]$ & $+z[\Sign]+z^2[\Stand]$\\
   & \scriptsize{\cite[Prop 4.1]{DeSa14}} &  \scriptsize{\cite[Prop 4.1]{DeSa14}}  & \scriptsize{\cite[Prop 4.1]{DeSa14}} \\ \hline 
    $t=1$, & $\chi_{S\h^*}(z)\cdot $ & $\chi_{S\h^*}(z)\cdot [\Sign]\cdot $ & $\chi_{S\h^*}(z)\cdot [\Stand]$ \\ 
    $c\notin \mathbb{F}_p$   & $\cdot (1-z^{2p})(1-z^{3p})$ & $\cdot (1-z^{2p})(1-z^{3p})$ & $\cdot  (1-z^{p})(1-z^{3p})$ \\ 
  &    \scriptsize{\cite[Prop 4.2]{DeSa14}}  & \scriptsize{\cite[Prop 4.2]{DeSa14}} & \scriptsize{\cite[Prop 4.2]{DeSa14}}  
 \end{tabular}}
\end{center}

\vspace{0.5cm}
Hilbert polynomials: 
\begin{center}
{\renewcommand*{\arraystretch}{1.5}\begin{tabular}{ c |c |c |c }
$p>3$ & $\Triv$ & $\Sign$ & $\Stand$ \\ \hline \hline 
  $t=0$, $c\ne 0 $ & $1+2z+2z^2+z^3$ & $1+2z+2z^2+z^3$ & $2+2z+2z^2$ \\ 
   & \scriptsize{\cite[Prop 4.1]{DeSa14}} &  \scriptsize{\cite[Prop 4.1]{DeSa14}}  & \scriptsize{\cite[Prop 4.1]{DeSa14}} \\ \hline 
 $t=1$, $c\notin \mathbb{F}_p$  
& $\frac{(1-z^{2p})(1-z^{3p})}{(1-z)^2}$ & $\frac{(1-z^{2p})(1-z^{3p})}{(1-z)^2}$ & $\frac{2(1-z^{p})(1-z^{3p})}{(1-z)^2}$ \\ 
  &    \scriptsize{\cite[Prop 4.2]{DeSa14}}  & \scriptsize{\cite[Prop 4.2]{DeSa14}} & \scriptsize{\cite[Prop 4.2]{DeSa14}}  
 \end{tabular}}
\end{center}
The generators of $J_{t,c}(\tau)$ are:
\begin{itemize}
\item For all $\tau$ and $t=0$, $\sigma_i\otimes v$ for $i=2,3$ and all $v\in \tau$; 
\item For all $\tau$ and $t=1$, $\sigma_i^p\otimes v$ for $i=2,3$ and all $v\in \tau$; 
\item For $\tau=\Stand$, $t=0$, additionally, the two vectors from Lemmas \ref{pne3,t=0,cne0vectors} and \ref{pne3t=0cne0char}; 
\item For $\tau=\Stand$, $t=1$, additionally, the two vectors from Lemma \ref{p>3,t=1,generic,Stand}.
\end{itemize}
\end{theorem}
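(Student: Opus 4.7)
The plan is to split by $(\tau,t)$ and reduce Sign to Triv via Lemma \ref{Sign} with $c\mapsto -c$ (which preserves genericity, since $-c\ne 0$ iff $c\ne 0$ and $-c\notin\mathbb{F}_p$ iff $c\notin\mathbb{F}_p$). For the Triv cases, first note that for any $f\in(S\h^*)^{S_3}$ and $v\in\Triv$, the divided difference parts of the Dunkl operators vanish on $f\otimes v$, so $D_y(f\otimes v)=t\,\partial_y(f)\otimes v$. Hence $\sigma_2,\sigma_3$ are singular for $t=0$, and $\sigma_2^p,\sigma_3^p$ are singular for $t=1$ (as $\partial_y(g^p)=pg^{p-1}\partial_y(g)=0$). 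The quotient by these is the baby Verma $N_{t,c}(\Triv)$, whose character is immediate from Corollary \ref{characterShp>3} and matches the statement of Theorem \ref{thm-p>3generic}. It therefore suffices to prove $N_{t,c}(\Triv)$ is irreducible for generic $c$.

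To prove irreducibility of $N_{t,c}(\Triv)$, I would use the Frobenius-algebra structure (as in the proof of Lemma \ref{p=2,t=1,generic,Triv}): any nonzero graded submodule $U\subseteq N_{t,c}(\Triv)$ contains the one-dimensional top-degree socle. I then compute the result of applying a carefully chosen sequence of Dunkl operators (enough to reach degree $0$) to the top-degree generator, using the basis of Theorem \ref{decmposeShp>3} together with the explicit divided-difference and partial-derivative tables from Section \ref{sect-auxiliary}. The resulting element of $N^0_{t,c}(\Triv)\cong\Bbbk$ is a polynomial expression in $c$; checking that it is nonzero when $c\ne 0$ (for $t=0$) or $c\notin\mathbb{F}_p$ (for $t=1$) forces $1\in U$ and hence $U=N_{t,c}(\Triv)$. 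For Stand at $t=0$ the entire statement is already contained in Lemma \ref{pne3t=0cne0char}, which is proved there for all $p\ne 3$: the vectors $v_1,v_2$ are singular in degree $1$, they absorb $\sigma_2\otimes\overline{x_i}$, and the quotient by $\langle v_1,v_2,\sigma_3\otimes\overline{x_1},\sigma_3\otimes\overline{x_2}\rangle$ is irreducible with the claimed character.

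The remaining case, Stand at $t=1$ with $c\notin\mathbb{F}_p$, is the main content. First, I apply Lemma \ref{OmegaAction}: on a singular subrepresentation of type $\tau'$ in degree $k$ of $M_{1,c}(\Stand)$, the grading Casimir acts both as $\Omega|_{\tau'}\in\{0,3c,6c\}$ and as $3c+k$. For $c\notin\mathbb{F}_p$, the cases $\tau'=\Triv,\Sign$ have no integer solution in $k\ge 1$, and $\tau'=\Stand$ forces $k\equiv 0\pmod p$. So the first possible degree for new singular vectors is $k=p$. Using Theorem \ref{decmposeVermap>3} to restrict to the $\Stand$-isotypic component of $M^p_{1,c}(\Stand)$ and Lemma \ref{KerD1} to replace the condition $D_y=0$ by $D_{y_1}=0$, the search is reduced to a linear-algebra problem in a small space, which I solve explicitly with the Leibniz-rule plus difference-operator formulas of Section \ref{sect-auxiliary}; this produces the two Stand-type singular vectors referenced as Lemma \ref{p>3,t=1,generic,Stand}.

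The two singular vectors span a copy of $\Stand$ in degree $p$, and I show the induced map $M_{1,c}(\Stand)[-p]\to M_{1,c}(\Stand)$ has image meeting $\langle \sigma_2^p\otimes v, \sigma_3^p\otimes v\rangle$ trivially modulo the expected relations, by the same kind of determinant-in-$S\h^*$ calculation used in Lemma \ref{pne3t=0cne0char}. A bookkeeping computation then gives the character $\chi_{S\h^*}(z)\cdot[\Stand]\cdot(1-z^p)(1-z^{3p})$ of the quotient, matching the statement. The hard step is proving irreducibility of this quotient, i.e.\ the absence of further singular vectors; by the $\Omega$-argument these could only live in Stand-isotypic components of degrees $2p,3p,\ldots$, which are finite in number by the character formula. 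For each such degree I enumerate a basis via Theorem \ref{decmposeVermap>3}, reduce modulo the known singular vectors (using the multiplication and Dunkl formulas of Section \ref{sect-auxiliary}), and verify that the matrix of $D_{y_1}$ on the resulting quotient space has trivial kernel; the key input making these matrices invertible is $c\notin\mathbb{F}_p$. This is the main obstacle, and the place where the assumption on $c$ is essential.
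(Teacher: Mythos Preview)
Your plan is broadly correct but takes a substantially different route from the paper, and in one place it contains a step that is harder than you indicate.

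The paper's proof is much shorter because it invokes an a priori dimension formula: for generic $c$, \cite[Thm 1.7]{EtGi02} and \cite[Rmk 1.2.3]{BFG06} give $\dim L_{0,c}(\tau)=|S_3|=6$ and $\dim L_{1,c}(\tau)=|S_3|\cdot p^{\dim\h}=6p^2$, independent of $\tau$. With this in hand, irreducibility is a one-line dimension comparison. For $\tau=\Triv,\Sign$ the baby Verma $N_{t,c}(\tau)$ already has the right dimension, hence $L_{t,c}(\tau)=N_{t,c}(\tau)$; there is no Frobenius-algebra computation at all. For $\tau=\Stand$, $t=1$, the paper computes the character of $M_{1,c}(\Stand)/\langle v_\pm,\sigma_3^p\otimes b_\pm\rangle$ (via the determinant $Det=f(c)\sigma_2^p$ in Proposition~\ref{p>3,Det1} and Lemma~\ref{p>3,Det2}, which controls $\langle v_\pm\rangle$, shows $\sigma_2^p\otimes b_\pm\in\langle v_\pm\rangle$, and pins down $\langle v_\pm\rangle\cap\langle\sigma_3^p\otimes b_\pm\rangle$), observes the resulting dimension is $6p^2$, and concludes irreducibility immediately. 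No degree-by-degree search for further singular vectors is needed.

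Your Frobenius-algebra approach for $N_{1,c}(\Triv)$ is not as easy as you suggest. Applying a fixed string of $5p-2$ Dunkl operators to the top socle produces a polynomial in $c$, but you then need this polynomial to be nonzero for \emph{every} $c\notin\mathbb{F}_p$, i.e.\ all of its roots lie in $\mathbb{F}_p$. For $p=2$ the paper's explicit calculation gives $(1+c)^2c^3$, which happens to have exactly this property, but for general $p$ there is no obvious mechanism preventing extra roots, and you have not proposed one. (For $t=0$ this issue disappears: the Dunkl operators are pure divided differences, so the outcome is a constant times $c^3$.) Likewise, your proposed degree-by-degree verification for $\Stand$ at $t=1$ would, in each degree $kp$, require showing a certain matrix in $c$ is nonsingular for all $c\notin\mathbb{F}_p$; this is feasible in principle but again needs control over the roots. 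The paper sidesteps all of this by using the dimension formula, and that is the main idea you are missing.
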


To write the above characters explicitly, recall from Corollary \ref{characterShp>3} that 
$$\chi_{S\h^*}(z)=\frac{1}{(1-z^2)(1-z^3)}\left([\Triv]+[\Stand](z+z^2)+[\Sign]z^3 \right).$$

We note that \cite{DeSa14} already provide these characters and Hilbert polynomials in their Propositions 4.1 and 4.2, in much greater generality of $G(m,r,n)$. We do not use their results. Our main contribution is to give explicit singular vectors, and their results are reproved along the way. Other than for its general interest, these singular vectors are needed in Section \ref{sect-p>3-special} where we describe $L_{t,c}(\tau)$ for special $c$ - a case \cite{DeSa14} do not discuss.

\begin{proof}
The generic values of $c$ for $t=0$ are $c\ne 0$ and for $t=1$ they are $c\notin \mathbb{F}_p$ by Proposition \ref{WhatIsGeneric}. For those values, \cite[Thm 1.7]{EtGi02} and \cite[Rmk 1.2.3]{BFG06} (see also proof of Prop 4.1. and 4.2. in \cite{DeSa14}) imply that $\dim L_{0,c}(\tau)=|S_3|=6$, and $\dim L_{1,c}(\tau)=|S_3|\cdot \dim S^{(p)}\h^*=6p^2$. For $t=0,1$ and $\tau=\Triv,\Sign$, this is equal to the dimension of $N_{t,c}(\tau)$, whose characters have been computed in Corollary \ref{characterShp>3}. This implies that $L_{t,c}(\tau)=N_{t,c}(\tau)$ and proves the theorem for $\tau=\Triv,\Sign$. 

When $\tau=\Stand$, the same reasoning tells us that $L_{t,c}(\tau)$ is a proper quotient of $N_{t,c}(\tau)$.

For $t=0$ the additional singular vectors are computed in Lemma \ref{pne3,t=0,cne0vectors}, and the module $L_{0,c}(\Stand)$ and its character described in Lemma \ref{pne3t=0cne0char}. We rewrite them in the basis $b_+,b_-$ in Remark \ref{rmk-cob-v1v2}. This proves the theorem for $t=0$, $\tau=\Stand$.

For $t=1$ the additional singular vectors are computed in Lemma \ref{p>3,t=1,generic,Stand}, and the character of the quotient of $N_{1,c}(\Stand)$ by these vectors is computed in Lemma \ref{p>3,t=1,generic,Stand-character}. Observing the dimension of this module is equal to $6p^2$ we conclude it is irreducible and thus equal to $L_{1,c}(\Stand)$, which proves the theorem for $t=1$, $\tau=\Stand$.

\end{proof}

\begin{remark}\label{rmk-cob-v1v2} Let $\tau=\Stand$. For $t=1$ the space of additional singular vectors $\mathrm{span}\{v_+,v_-\}$ is described in terms $v_+,v_-$ written in the basis $b_+,b_-$ in Lemma \ref{p>3,t=1,generic,Stand}. For $t=0$ we do not do the analogue because the space of additional singular vectors is already described in terms of $v_1,v_2$ in Lemmas \ref{pne3,t=0,cne0vectors} and \ref{pne3t=0cne0char}. However, an analogous computation would yield $v_+,v_-\in M_{0,c}(\Stand)$, related to $v_1,v_2$ via
\begin{align*}
v_+&=-b_+\otimes b_++3b_-\otimes b_-=-6\pi(v_1+v_2)\\
v_-&=b_+\otimes b_-+b_-\otimes b_+=-2\pi(v_1-v_2).
\end{align*}

\end{remark}

\subsection{Singular vectors in $M_{1,c}(\Stand)$}

The rest of the section is dedicated to the remaining case of $t=1$, $\tau=\Stand$ and generic $c$. We will describe the generators of $J_{1,c}(\Stand)$ by explicit computations of Dunkl operators in a basis from Theorem \ref{decmposeVermap>3}. 

By \cite[Prop 3.4]{BaCh13a}, for generic $c$ singular vectors only appear in degrees divisible by $p$. By Lemma \ref{OmegaAction}, any singular vectors in degrees divisible by $p$ of $M_{1,c}(\Stand)$ are in the isotypic component of $\Stand$. As $\Stand$ is irreducible, it is enough to look for singular vectors in the one dimensional subspace of $\Stand$ which restricts to the trivial representation of $S_2$ (in other words, the image of $b_+$ under any isomorphism from $\Stand$). 
By Theorem \ref{decmposeVermap>3}, a basis of this part of $\Stand$ in degree $kp$ is: 
\begin{itemize}
\item $\sigma_2^a\sigma_3^b \otimes b_+$, $2a+3b=kp$; 
\item $\sigma_2^a\sigma_3^b \cdot (-b_+\otimes b_++3b_-\otimes b_-)$, $2a+3b=kp-1$;
\item $\sigma_2^a\sigma_3^b \cdot \left(-(-b_+^2+3b_-^2)\otimes b_++3(2b_+b_-)\otimes b_- \right)$, $2a+3b=kp-2$;
\item $\sigma_2^a\sigma_3^b q \otimes b_-$, $2a+3b=kp-3$.
\end{itemize}

By Lemma \ref{KerD1}, looking for singular vectors in degree $kp$ means looking for a linear combination of the above vectors which is in the kernel of $D_{y_1}$. We first calculate the values of the Dunkl operator $D_{y_1}$ on the vectors listed above. 

\begin{lemma}\label{valuesDy1-Stand+}
\begin{enumerate}
\item Let $a,b,k\in \mathbb{N}_0$, $2a+3b=kp$. Then 
\begin{align*}
D_{y_1}(\sigma_2^a\sigma_3^b\otimes b_+)&=\left( \frac{-a}{6}\right) \sigma_2^{a-1}\sigma_3^b\left(b_+\otimes b_+ + 3b_-\otimes b_+\right) +\\
&\hskip 5 pt +\left(\frac{b}{36}\right)\sigma_2^a\sigma_3^{b-1}\left( (-b_+^2+3b_-^2)\otimes b_+ +3(2b_+b_-)\otimes b_+\right).
\end{align*}
\item Let $a,b,k\in \mathbb{N}_0$, $2a+3b=kp-1$. Then 
\begin{align*}
&\hskip -30pt D_{y_1}(-\sigma_2^a\sigma_3^bb_+\otimes b_+ + 3\sigma_2^a\sigma_3^bb_-\otimes b_-)=\hskip 30pt \\
&\hskip 10 pt=\left(\frac{-1}{2}\right)\sigma_2^a\sigma_3^b\otimes \left( b_+-3b_-\right)+\\
&\hskip 20pt +\left(\frac{-b}{6}\right) \sigma_2^{a+1}\sigma_3^{b-1} \left( b_+\otimes b_+ -3b_-\otimes b_+ + 3b_+\otimes b_-+ 3b_-\otimes b_- \right)+\\
&\hskip 20pt +\left(\frac{-a}{12}\right)\sigma_2^{a-1}\sigma_3^{b}\left( (-b_+^2+3b_-^2)\otimes b_+ -3(2b_+b_-)\otimes b_++\right. \\
& \hskip 120pt \left. +3 (-b_+^2+3b_-^2)\otimes b_- + 3(2b_+b_-)\otimes b_-\right)+ \\
&\hskip 20pt +\left(\frac{-b}{2}\right)\sigma_2^a\sigma_3^{b-1}q\otimes \left( b_++b_-\right).
\end{align*}
\item Let $a,b,k\in \mathbb{N}_0$, $2a+3b=kp-2$. Then 
\begin{align*}
&\hskip -30pt D_{y_1}(-\sigma_2^a\sigma_3^b(-b_+^2+3b_-^2)\otimes b_+ + 3\sigma_2^a\sigma_3^b(2b_+b_-)\otimes b_-)=\\[5pt]
&=\hskip 5pt (-2b)\sigma_2^{a+2}\sigma_3^{b-1}\otimes (b_+-3b_-)+\\
&\hskip 15 pt +(9a)\sigma_2^{a-1}\sigma_3^{b+1}\otimes (b_+-3b_-)+\\
&\hskip 15pt + (-a)\sigma_2^a\sigma_3^b \left(b_+\otimes b_+ -3b_-\otimes b_+ +3 b_+\otimes b_-+3b_+\otimes b_- \right)+ \\
&\hskip 15pt + (18c)\sigma_2^a\sigma_3^b\left( b_+\otimes b_-- b_-\otimes b_-\right)+\\
&\hskip 15pt +\left(\frac{b}{6}\right)\sigma_2^{a+1}\sigma_3^{b-1}\left( (-b_+^2+3b_-^2)\otimes b_+ -3(2b_+b_-)\otimes b_++\right. \\
& \hskip 120pt \left.+3 (-b_+^2+3b_-^2)\otimes b_- + 3(2b_+b_-)\otimes b_-\right)+ \\
&\hskip 15pt +(-3a) \sigma_2^{a-1}\sigma_3^{b}q\otimes \left( b_++ b_-\right).
\end{align*}
\item Let $a,b,k\in \mathbb{N}_0$, $2a+3b=kp-3$. Then 
\begin{align*}
D_{y_1}(\sigma_2^a\sigma_3^bq\otimes b_-) &= c \sigma_2^{a+1}\sigma_3^{b}\otimes \left( b_+-3b_-\right)+ \\
&\hskip 5 pt + \frac{b}{3}\sigma_2^{a+2}\sigma_3^{b-1}\left( b_+\otimes b_- - b_-\otimes b_-\right)+ \\
&\hskip 5 pt + \frac{-3a}{2}\sigma_2^{a-1}\sigma_3^{b+1}\left( b_+\otimes b_- - b_-\otimes b_-\right)+ \\
&\hskip 5 pt + \frac{-c}{12}\sigma_2^{a}\sigma_3^{b}\left( (-b_+^2+3b_-^2)\otimes b_+ -3(2b_+b_-)\otimes b_++\right. \\
& \hskip 120pt \left.+3 (-b_+^2+3b_-^2)\otimes b_- + 3(2b_+b_-)\otimes b_-\right).
\end{align*}
\end{enumerate}
\end{lemma}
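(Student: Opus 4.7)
The plan is to exploit the product structure of the vectors in question: each one has the form $\sigma_2^a\sigma_3^b \cdot v$ where $v$ is one of the four low-degree ``building blocks''
\[
b_+,\ \ -b_+\otimes b_++3b_-\otimes b_-,\ \ -(-b_+^2+3b_-^2)\otimes b_++3(2b_+b_-)\otimes b_-,\ \ q\otimes b_-.
\]
Since $\sigma_2^a\sigma_3^b\in (S\h^*)^{S_3}$ is $S_3$-invariant, it commutes with the divided-difference operators $\tfrac{\id-(ij)}{x_i-x_j}\otimes(ij)$, so writing $D_{y_1}=\partial_{y_1}\otimes\id - c\tfrac{\id-(12)}{x_1-x_2}\otimes(12) - c\tfrac{\id-(13)}{x_1-x_3}\otimes(13)$ and applying Leibniz to the derivative part gives
\[
D_{y_1}(\sigma_2^a\sigma_3^b\cdot v) \;=\; \partial_{y_1}(\sigma_2^a\sigma_3^b)\cdot v \;+\; \sigma_2^a\sigma_3^b\cdot D_{y_1}(v).
\]

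First I would pre-compute $D_{y_1}(v)$ for each of the four building blocks $v$, by combining the formulas for $\partial_{y_1}$ on $b_\pm$, $(-b_+^2+3b_-^2)$, $2b_+b_-$, $q$ with the formulas for $\tfrac{\id-(ij)}{x_i-x_j}$ on the same expressions, both tabulated in Section \ref{sect-auxiliary}, and then rewriting the output in the basis of Theorem \ref{decmposeVermap>3}. Next I would compute $\partial_{y_1}(\sigma_2^a\sigma_3^b) = a\sigma_2^{a-1}\sigma_3^b \partial_{y_1}(\sigma_2)+ b\sigma_2^a\sigma_3^{b-1}\partial_{y_1}(\sigma_3)$ and multiply by $v$, again using the identities from Section \ref{sect-auxiliary} (specifically the formulas for $\partial_{y_1}(\sigma_i)\cdot b_\pm$, $\partial_{y_1}(\sigma_i)\cdot(-b_+^2+3b_-^2)$, $\partial_{y_1}(\sigma_i)\cdot 2b_+b_-$, $\partial_{y_1}(\sigma_i)\cdot q$) to get each product expressed in the basis of Theorem \ref{decmposeVermap>3}. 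Summing the two contributions and gathering terms by $\sigma_2^?\sigma_3^?\otimes(\text{basis vector})$ yields the stated expressions.

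Each of the four parts then reduces to a careful bookkeeping computation; I would handle part (1) first since $v=1\otimes b_+$ has no reflection contribution ($b_+$ is $s_1$-invariant, and the $(13)$-term on $b_+$ equals $3$, whose action on $b_+$ I would trace through), giving only derivative terms and the cleanest formula. Part (2) involves the $S_3$-subrepresentation of type $\Stand$ in degree $1$ and produces a mixture of derivative terms and the $q$ term arising from $\partial_{y_1}(\sigma_3)\cdot b_\pm$. Parts (3) and (4) are longer but follow the same template: part (3) requires the new reflection contribution $18c\,\sigma_2^a\sigma_3^b(b_+\otimes b_--b_-\otimes b_-)$ coming from $-c\tfrac{\id-(1j)}{x_1-x_j}\otimes(1j)$ applied to $v$, and part (4) picks up the term $c\,\sigma_2^{a+1}\sigma_3^b\otimes(b_+-3b_-)$ from the reflection action on $q\otimes b_-$.

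The main obstacle is purely bookkeeping: reducing every product of two of the basis vectors back into the chosen basis, and keeping track of the coefficients $\tfrac{-a}{6}$, $\tfrac{b}{36}$, etc.\ without arithmetic errors. There is no conceptual difficulty, since $D_{y_1}$ is $S_3$-equivariant in the right sense and everything has been set up so that the ``hard'' nonlinear interaction between the differential and difference parts never occurs (as $\sigma_2^a\sigma_3^b$ is invariant). After verifying one or two cases in detail, the remaining ones can be completed by the same pattern, and I would organise the calculation into a table listing $D_{y_1}(v)$ and $\partial_{y_1}(\sigma_2^a\sigma_3^b)\cdot v$ side-by-side for each of the four $v$'s before summing.
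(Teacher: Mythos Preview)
Your approach is correct and is exactly what the paper does: its proof reads in full ``Direct computation using the auxiliary results from Section \ref{sect-auxiliary},'' and you have spelled out precisely that computation, using $S_3$-invariance of $\sigma_2^a\sigma_3^b$ to pass the divided-difference operators through and the Leibniz rule for $\partial_{y_1}$. One small slip of exposition: in part (1) the ``$(13)$-term on $b_+$ equals $3$'' refers to the divided difference of the polynomial $b_+\in S\h^*$, but here $b_+$ sits in the $\tau$-factor and the polynomial part is $1$, so the reflection contribution to $D_{y_1}(1\otimes b_+)$ is genuinely zero and the whole formula comes from $\partial_{y_1}(\sigma_2^a\sigma_3^b)\otimes b_+$.
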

\begin{proof}
Direct computation using the auxiliary results from Section \ref{sect-auxiliary}. 
\end{proof}

Let us first look for singular vectors in degree $p$ first (this will turn out to be enough). 
Parametrising the integers $a,b$ which label the basis from the start of this section like in the proof of Lemma \ref{numberinbasis}, we are looking for a vector in degree $p$ of the form 
\begin{align}\label{eqn-v+-unknowns}
v_+&=\sum_{0\le j\le \lfloor \frac{p-3}{6}\rfloor}  \alpha_j \sigma_2^{\frac{p-3}{2}-3j}\sigma_3^{2j+1} \otimes b_+ +\\
&+\sum_{0\le j\le \lfloor \frac{p-1}{6}\rfloor}\beta_j \sigma_2^{\frac{p-1}{2}-3j}\sigma_3^{2j} \cdot (-b_+\otimes b_++3b_-\otimes b_-)+ \notag\\
&+\sum_{0\le j\le \lfloor \frac{p-5}{6}\rfloor}  \gamma_j \sigma_2^{\frac{p-5}{2}-3j}\sigma_3^{2j+1} \left(-(-b_+^2+3b_-^2)\otimes b_++3(2b_+b_-)\otimes b_- \right)+\notag\\
&+\sum_{0\le j\le \lfloor\frac{p-3}{6}\rfloor}\delta_j \sigma_2^{\frac{p-3}{2}-3j}\sigma_3^{2j} \cdot q \otimes b_-\notag
\end{align}
for some $\alpha_j,\beta_j,\gamma_j,\delta_j\in \Bbbk$ (depending on $c$) with the property that $D_{y_1}(v_+)=0$.

\begin{lemma}\label{Stand-generic-alpha,gamma}
If a vector $v_+$ of the form \eqref{eqn-v+-unknowns} satisfies $D_{y_1}(v_+)=0$, then 
\begin{enumerate}
\item $\alpha_j=0$ for all $j$; 
\item For all $0\le j\le \lfloor \frac{p-7}{6}\rfloor$
$$ \gamma_j=\frac{2(j+1)}{3(6j+5)} \beta_{j+1}.$$
\end{enumerate}
\end{lemma}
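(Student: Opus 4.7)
The plan is to compute $D_{y_1}(v_+)$ term by term using Lemma~\ref{valuesDy1-Stand+} and then to extract two linear relations that jointly force $\alpha_j=0$ and tie $\gamma_j$ to $\beta_{j+1}$. The right-hand sides in Lemma~\ref{valuesDy1-Stand+} are already written as linear combinations of products of an invariant $\sigma_2^{a'}\sigma_3^{b'}$ with a factor from the finite set $\{1,b_+,b_-,-b_+^2+3b_-^2,2b_+b_-,q\}$, tensored with $b_+$ or $b_-$. Hence $D_{y_1}(v_+)=0$ decomposes at once into linear equations in the $\alpha_{j'},\beta_{j'},\gamma_{j'},\delta_{j'}$, indexed by $(a',b',\text{factor},\otimes b_\pm)$.

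The decisive step is to isolate the coefficients of
\[
\sigma_2^{\frac{p-5}{2}-3j}\sigma_3^{2j+1}\,b_+\otimes b_+
\qquad\text{and}\qquad
\sigma_2^{\frac{p-5}{2}-3j}\sigma_3^{2j+1}\,b_-\otimes b_+,
\]
which lie in the $\Stand$-isotypic piece of $M^{p-1}_{1,c}(\Stand)$ coming from the first family in Theorem~\ref{decmposeVermap>3}. By inspection of Lemma~\ref{valuesDy1-Stand+} these two basis vectors receive contributions only from $\alpha_j$ (part~(1)), from $\beta_{j+1}$ (part~(2), the index shift being forced by $a_\beta+1=\frac{p-5}{2}-3j$ and $b_\beta-1=2j+1$), and from $\gamma_j$ (part~(3)); the $\delta$-terms in part~(4) always appear tensored with $b_-$ at this type of basis element. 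Using that, modulo $p$, one has $a_\alpha=\frac{p-3}{2}-3j\equiv-\frac{3(2j+1)}{2}$, $b_\beta=2(j+1)$, and $a_\gamma=\frac{p-5}{2}-3j\equiv-\frac{6j+5}{2}$, the vanishing of these two coefficients becomes, after clearing denominators,
\[
3(2j+1)\alpha_j-4(j+1)\beta_{j+1}+6(6j+5)\gamma_j=0,
\]
\[
3(2j+1)\alpha_j+4(j+1)\beta_{j+1}-6(6j+5)\gamma_j=0.
\]

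Adding these two equations gives $6(2j+1)\alpha_j=0$. Since $p>3$ and $2j+1\le p/3<p$ throughout the range of the $\alpha$-sum, the factor $6(2j+1)$ is invertible in $\Bbbk$, so $\alpha_j=0$; boundary cases where $\beta_{j+1}$ or $\gamma_j$ are out of range are handled by setting the corresponding coefficient to $0$ in both equations, after which their sum still yields $\alpha_j=0$. Substituting $\alpha_j=0$ and then subtracting the two equations gives $8(j+1)\beta_{j+1}=12(6j+5)\gamma_j$, i.e.\ the claimed formula $\gamma_j=\frac{2(j+1)}{3(6j+5)}\beta_{j+1}$; this is meaningful precisely when $\beta_{j+1}$ and $\gamma_j$ lie in their respective ranges and $6j+5$ is invertible in $\Bbbk$, both of which one checks hold for $j\le\lfloor(p-7)/6\rfloor$, since then $6j+5\le p-2$. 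The main obstacle is the bookkeeping -- tracking which of the four sums in \eqref{eqn-v+-unknowns} feeds into a given basis element of $M^{p-1}_{1,c}(\Stand)$ -- but the rescaled Young basis makes this matching essentially mechanical.
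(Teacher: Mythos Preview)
Your proof is correct. The main difference from the paper's argument is the choice of basis directions onto which you project $D_{y_1}(v_+)$.

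The paper isolates $\alpha_j$ by reading off the coefficient of the single basis vector $\sigma_2^{a-1}\sigma_3^b(b_+\otimes b_+ + 3b_-\otimes b_+)$, which by inspection of Lemma~\ref{valuesDy1-Stand+} receives a contribution \emph{only} from the $\alpha$-sum; this gives $\alpha_j=0$ directly from a single equation. For part~(2), the paper then reads off the coefficient of $\sigma_2^{a'}\sigma_3^{b'}q\otimes(b_++b_-)$, which involves only $\beta$ and $\gamma$ and yields the relation $\gamma_j=\frac{2(j+1)}{3(6j+5)}\beta_{j+1}$ immediately.

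Your route projects instead onto the two individual vectors $\sigma_2^{\frac{p-5}{2}-3j}\sigma_3^{2j+1}\,b_+\otimes b_+$ and $\sigma_2^{\frac{p-5}{2}-3j}\sigma_3^{2j+1}\,b_-\otimes b_+$, each of which mixes $\alpha_j$, $\beta_{j+1}$, and $\gamma_j$; you then recover the two conclusions by adding and subtracting. This is perfectly valid and the bookkeeping you describe is accurate, including the boundary case $j=\frac{p-5}{6}$ when $p\equiv 2\pmod 3$, where $6j+5\equiv 0$ so the $\gamma_j$-term drops out automatically and only $\alpha_j=0$ survives. The paper's choice has the slight advantage that each projection already isolates the desired combination, so no linear combination of equations is needed; your choice has the advantage of deriving both conclusions from a single pair of projections rather than from two unrelated ones.
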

Note that if $p\equiv 2 \pmod 3$ then there is also a coefficient $\gamma_{\frac{p-5}{6}}$, which this Lemma puts no conditions on. Otherwise this Lemma determines all $\alpha_j,\gamma_j$ in terms of $\beta_j,\delta_j$.

\begin{proof}
The right hand sides in each of the expressions in Lemma \ref{valuesDy1-Stand+} are linearly independent, so we will be reading off their coefficients in the expansion of the equation $D_{y_1}(v_+)=0$ using Lemma \ref{valuesDy1-Stand+}. 
\begin{enumerate}
\item 
The coefficient of $\left(b_+\otimes b_+ + 3b_-\otimes b_+\right)$ in the equation $D_{y_1}(v_+)=0$ expanded as in Lemma \ref{valuesDy1-Stand+} equals
$$\sum_{0\le j\le \lfloor \frac{p-3}{6}\rfloor }  \alpha_j \left( - \frac{\frac{p-3}{2}-3j}{6}\right) \sigma_2^{\frac{p-3}{2}-3j-1}\sigma_3^{2j+1}=0.$$
This tells us that $\alpha_j=0$ for all $0\le j\le \lfloor \frac{p-3}{6}\rfloor $, except maybe for $j$ such that 
$$\frac{p-3}{2}-3j=0, \quad 0\le j\le \lfloor \frac{p-3}{6}\rfloor .$$
Keeping in mind that the equation $\frac{p-3}{2}-3j=0$ is in $\Bbbk$ but the inequality $0\le j\le \lfloor \frac{p-3}{6}\rfloor $ is in $\mathbb{N}_0$, this is equivalent to 
$$j=\frac{p-3}{6}\in \mathbb{N}_0.$$
However, this means that 
$6|p-3$ so $3|p,$
which is impossible as $p$ is a prime and $p>3$. In conclusion, no such $j$ exists, so $\alpha_j=0$ for all $j$. 

\item The coefficient of $q \otimes \left(b_++b_-\right)$ in the equation $D_{y_1}(v_+)=0$ expanded as in Lemma \ref{valuesDy1-Stand+} equals
$$\sum_{0\le j\le \lfloor  \frac{p-1}{6}\rfloor }\beta_j \left(\frac{-2j}{2}\right) \sigma_2^{\frac{p-1}{2}-3j}\sigma_3^{2j-1}  
+\sum_{0\le j\le \lfloor  \frac{p-5}{6}\rfloor }  \gamma_j (-3) \left(\frac{p-5}{2}-3j \right) \sigma_2^{\frac{p-5}{2}-3j-1}\sigma_3^{2j+1} =0.$$
We rewrite this as
$$-\sum_{0\le j\le \lfloor \frac{p-7}{6}\rfloor }\beta_{j+1} \left(j+1\right) \sigma_2^{\frac{p-7}{2}-3j}\sigma_3^{2j+1}  
+\left(\frac{-3}{2}\right) \sum_{0\le j\le \lfloor  \frac{p-5}{6}\rfloor }  \gamma_j  \left(p-5-6j \right) \sigma_2^{\frac{p-7}{2}-3j}\sigma_3^{2j+1} =0.$$
The boundaries of these two sums are different precisely when there exists an integer $j$ with $\frac{p-7}{6}<j\le \frac{p-5}{6}$, which is equivalent to $6j=p-6$ (impossible as then $6|p$ and $p$ is prime) or $6j=p-5$ (happens exactly when $p\equiv 2 \pmod 3$). In that case, the coefficient of $\gamma_{\frac{p-5}{6}}$ is zero, so for any $p$ the above equation becomes
$$\sum_{0\le j\le \lfloor \frac{p-7}{6}\rfloor } \left( -\beta_{j+1} \left(j+1\right) +\left(\frac{-3}{2}\right) \gamma_j  \left(p-5-6j \right)\right) \sigma_2^{\frac{p-7}{2}-3j}\sigma_3^{2j+1} =0.$$
This means that for all $0\le j\le \lfloor \frac{p-7}{6}\rfloor$ we have 
$$ \gamma_j=\frac{2(j+1)}{3(6j+5)} \beta_{j+1}.$$
\end{enumerate}
\end{proof}

We use the notation $\delta_{p\equiv 2 \pmod 3}$ for the delta function. Using Lemma \ref{Stand-generic-alpha,gamma}, 
any vector $v_+$ of the form \eqref{eqn-v+-unknowns} which satisfies $D_{y_1}(v_+)=0$ is of the form 
\begin{align}\label{eqn-v+-unknowns-2}
v_+&=\sum_{0\le j\le \lfloor \frac{p-1}{6}\rfloor}\beta_j \sigma_2^{\frac{p-1}{2}-3j}\sigma_3^{2j} \cdot (-b_+\otimes b_++3b_-\otimes b_-)+ \\
&+\sum_{0\le j\le \lfloor \frac{p-7}{6}\rfloor}  \frac{2(j+1)}{3(6j+5)} \beta_{j+1} \sigma_2^{\frac{p-5}{2}-3j}\sigma_3^{2j+1} \left(-(-b_+^2+3b_-^2)\otimes b_++3(2b_+b_-)\otimes b_- \right)+\notag\\
&+\delta_{p\equiv 2 \bmod 3}\cdot \gamma_{\frac{p-5}{6}}\sigma_3^{\frac{p-2}{3}} \left(-(-b_+^2+3b_-^2)\otimes b_++3(2b_+b_-)\otimes b_- \right)+\notag\\
&+\sum_{0\le j\le \lfloor\frac{p-3}{6}\rfloor}\delta_j \sigma_2^{\frac{p-3}{2}-3j}\sigma_3^{2j} \cdot q \otimes b_-.\notag
\end{align}

Next, we want to establish the conditions on $\beta_j$, $\delta_j$, and (where needed) $\gamma_{\frac{p-5}{6}}$.

\begin{lemma}\label{p>3,t=1,generic,Stand,equations}
For a vector $v_+$ of the form \eqref{eqn-v+-unknowns-2} the condition $D_{y_1}(v_+)=0$ is equivalent to the following systems of equations: 
\begin{enumerate}
\item If $p\equiv 1 \bmod 3$: 

For all $0\le j\le \frac{p-1}{6}-1$
\begin{equation}
-\frac{6j+1}{2}\beta_j-\frac{4(j+1)(2j+1)}{3(6j+5)} \beta_{j+1}+c\delta_j=0 \tag{I} \end{equation}

For all $0\le j\le \frac{p-1}{6}-2$
\begin{equation}
\tag{II}
\frac{12(j+1)c}{(6j+5)} \beta_{j+1}  +  \frac{2(j+1)}{3}\delta_{j+1} +\frac{3(6j+3)}{4}\delta_j =0, 
 \end{equation}

\begin{equation}
\tag{II'}  c \cdot \beta_{\frac{p-1}{6}}  -3\delta_{\frac{p-1}{6}-1} =0. 
 \end{equation}

\item If $p\equiv 2 \bmod 3$:

For all $0\le j\le \frac{p-5}{6}-1$
\begin{equation}
-\frac{6j+1}{2}\beta_j-\frac{4(j+1)(2j+1)}{3(6j+5)} \beta_{j+1}+c\delta_j=0\tag{I}
 \end{equation}
 
 \begin{equation}
2\beta_{\frac{p-5}{6}}+\frac{4}{3}\gamma_{\frac{p-5}{6}}+c\delta_{\frac{p-5}{6}}=0 \tag{I'}
 \end{equation}

For all $0\le j\le \frac{p-5}{6}-1$
 \begin{equation}
\tag{II}\frac{12(j+1)c}{(6j+5)} \beta_{j+1} + \frac{2(j+1)}{3} \delta_{j+1}  +\frac{3(6j+3)}{4} \delta_j=0
 \end{equation}

 \begin{equation}
\tag{II'} \gamma_{\frac{p-5}{6}} \cdot 18c +\frac{-3}{2}\delta_{\frac{p-5}{6}} =0
 \end{equation}

\end{enumerate}
\end{lemma}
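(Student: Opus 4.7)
The plan is to substitute the expression \eqref{eqn-v+-unknowns-2} for $v_+$ into $D_{y_1}(v_+)=0$, apply Lemma \ref{valuesDy1-Stand+} term by term, and set equal to zero the coefficient of each basis factor appearing on the right-hand sides. Five families of such factors (multiplied by $\sigma_2^A\sigma_3^B$ prefixes) arise: $\otimes(b_+-3b_-)$; the four-term combination $(b_+\otimes b_+-3b_-\otimes b_++3b_+\otimes b_-+3b_-\otimes b_-)$; $q\otimes(b_++b_-)$; the four-term combination $((-b_+^2+3b_-^2)\otimes b_+-3(2b_+b_-)\otimes b_++3(-b_+^2+3b_-^2)\otimes b_-+3(2b_+b_-)\otimes b_-)$; and $(b_+\otimes b_--b_-\otimes b_-)$. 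By Theorem \ref{decmposeVermap>3} these are linearly independent in $M^{p-1}_{1,c}(\Stand)$, so each coefficient must vanish on its own.

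First I would extract equation (I) from the coefficient of $\sigma_2^A\sigma_3^B\otimes(b_+-3b_-)$ with $2A+3B=p-1$ and $B=2j$. Contributions come from $\beta_j$ (via Lemma \ref{valuesDy1-Stand+}(2)), from $\gamma_j$ and $\gamma_{j-1}$ (the two summands of Lemma \ref{valuesDy1-Stand+}(3) shift $(A,B)$ differently, producing contributions at two indices), and from $\delta_j$ (via Lemma \ref{valuesDy1-Stand+}(4)). After substituting $\gamma_j=\frac{2(j+1)}{3(6j+5)}\beta_{j+1}$ from Lemma \ref{Stand-generic-alpha,gamma}(2) and using $p\equiv 0$ in $\Bbbk$ to reduce $9(A+1)\gamma_{j-1}$ to $-3j\beta_j$, the stated form of (I) drops out. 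Equation (II) would come analogously from the coefficient of $\sigma_2^A\sigma_3^B(b_+\otimes b_--b_-\otimes b_-)$ with $B=2j+1$ odd. The remaining three families produce no new constraints: the $(b_+\otimes b_++\cdots)$ and $q\otimes(b_++b_-)$ families each recover the same relation of Lemma \ref{Stand-generic-alpha,gamma}(2), while the $(-b_+^2+3b_-^2)\otimes b_\pm$ family reduces (after the same mod-$p$ simplification) to $-1$ times (I).

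The boundary equations (I') and (II') arise exactly where the summation ranges in \eqref{eqn-v+-unknowns-2} are out of sync. When $p\equiv 1\pmod 3$ the $\beta$-sum extends one step beyond the $\gamma$ and $\delta$ sums; substituting $j=\tfrac{p-1}{6}-1$ into the coefficient equation for $(b_+\otimes b_--b_-\otimes b_-)$, dropping the non-existent $\delta_{(p-1)/6}$, and replacing $\gamma_{(p-1)/6-1}$ by its value $\tfrac{1}{18}\beta_{(p-1)/6}$ (obtained by simplifying $\frac{2j+2}{3(6j+5)}$ at $j=\tfrac{p-1}{6}-1$ modulo $p$) yields exactly (II'). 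When $p\equiv 2\pmod 3$, the extra variable $\gamma_{(p-5)/6}$ is present but unconstrained by Lemma \ref{Stand-generic-alpha,gamma}(2), and the top-degree coefficients of $\otimes(b_+-3b_-)$ and $(b_+\otimes b_--b_-\otimes b_-)$ give (I') and (II') respectively. The main obstacle throughout is combinatorial bookkeeping: tracking which indices of $\beta,\gamma,\delta$ exist for each residue class $p\bmod 6$, matching shifted summation indices across the four source terms, and faithfully reducing rational coefficients such as $\tfrac{p-1}{2}-3j$ and $\tfrac{p-5}{2}-3j$ to their values in $\Bbbk$ so that the stated numerical coefficients in (I), (II), (I'), (II') emerge.
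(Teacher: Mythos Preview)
Your proposal is correct and follows essentially the same approach as the paper: apply Lemma \ref{valuesDy1-Stand+} to each term of \eqref{eqn-v+-unknowns-2}, then read off the coefficients of the independent basis factors in $M^{p-1}_{1,c}(\Stand)$, with (I) coming from $\otimes(b_+-3b_-)$, (II) from $(b_+\otimes b_--b_-\otimes b_-)$, and the boundary equations from the extremal indices in each congruence class. Your observation that the four-term $(b_+\otimes b_+-\cdots)$ family reproduces the relation of Lemma \ref{Stand-generic-alpha,gamma}(2) (hence vanishes automatically on \eqref{eqn-v+-unknowns-2}) and that the $(-b_+^2+3b_-^2)\otimes\cdots$ family collapses to (I) is exactly what the paper records as items (III) and (IV).
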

\begin{proof}
We look at all the right hand sides of the values of Dunkl operators on basis vectors from Lemma \ref{valuesDy1-Stand+}, and setting their coefficients in $D_{y_1}(v_+)$ to be $0$ get the following equations: 
\begin{itemize}
\item[(I)] From the coefficient of $\otimes (b_+-3b_-)$:  
\begin{align*}
0=&\sum_{0\le j\le \lfloor \frac{p-1}{6}\rfloor}(\frac{-1}{2})\beta_j \sigma_2^{\frac{p-1}{2}-3j}\sigma_3^{2j} + \\
&+\sum_{0\le j\le \lfloor \frac{p-7}{6}\rfloor}  \frac{2(j+1)}{3(6j+5)} \beta_{j+1}\cdot (-2)(2j+1) \sigma_2^{\frac{p-5}{2}-3j+2}\sigma_3^{2j} +\\
&+\sum_{0\le j\le \lfloor \frac{p-7}{6}\rfloor}  \frac{2(j+1)}{3(6j+5)} \beta_{j+1}\cdot 9(\frac{p-5}{2}-3j) \sigma_2^{\frac{p-5}{2}-3j-1}\sigma_3^{2j+2} +\\
&+\delta_{p\equiv 2 \bmod 3}\cdot \gamma_{\frac{p-5}{6}}(-2)(\frac{p-2}{3})\sigma_2^2\sigma_3^{\frac{p-2}{3}-1} +\\
&+\sum_{0\le j\le \lfloor\frac{p-3}{6}\rfloor}\delta_j \cdot c \sigma_2^{\frac{p-3}{2}-3j+1}\sigma_3^{2j}=\\
=&\sum_{0\le j\le \lfloor \frac{p-1}{6}\rfloor}(\frac{-1}{2})\beta_j \sigma_2^{\frac{p-1}{2}-3j}\sigma_3^{2j} +\sum_{0\le j\le \lfloor \frac{p-7}{6}\rfloor}  \frac{-4(j+1)(2j+1)}{3(6j+5)} \beta_{j+1} \sigma_2^{\frac{p-1}{2}-3j}\sigma_3^{2j} +\\
&+\sum_{0\le j\le \lfloor \frac{p-1}{6}\rfloor}  (-3j) \beta_{j} \sigma_2^{\frac{p-1}{2}-3j}\sigma_3^{2j} +\delta_{p\equiv 2 \bmod 3}\cdot \gamma_{\frac{p-5}{6}}\frac{(-2)(p-2)}{3}\sigma_2^2\sigma_3^{2\cdot \frac{p-5}{6}} +\\
&+\sum_{0\le j\le \lfloor\frac{p-3}{6}\rfloor}\delta_j \cdot c \sigma_2^{\frac{p-1}{2}-3j}\sigma_3^{2j}.
\end{align*}
We now distinguish two cases: 
\begin{enumerate}
\item If $p\equiv 1 \bmod 3$, then $\lfloor\frac{p-3}{6}\rfloor=\frac{p-7}{6}$, $\delta_{p\equiv 2 \bmod 3}=0$. Reading the coefficient of $\sigma_2^{\frac{p-1}{2}-3j}\sigma_3^{2j}$ we get that the above equation is equivalent to requiring that for all $0\le j\le \frac{p-7}{6}$
\begin{equation*}
-\frac{6j+1}{2}\beta_j+\frac{-4(j+1)(2j+1)}{3(6j+5)} \beta_{j+1}+c\delta_j=0.\end{equation*}
For $j=\frac{p-1}{6}$ the coefficient of $\sigma_2^{\frac{p-1}{2}-3j}\sigma_3^{2j}$ gives $0=0$, which is always satisfied.
\item If $p\equiv 2 \bmod 3$, then $\lfloor\frac{p-1}{6}\rfloor=\lfloor\frac{p-3}{6}\rfloor=\frac{p-5}{6}$, $\lfloor\frac{p-7}{6}\rfloor=\frac{p-5}{6}-1$ and $\delta_{p\equiv 2 \bmod 3}=1$. Reading the coefficient of $\sigma_2^{\frac{p-1}{2}-3j}\sigma_3^{2j}$ we get that the above equation is equivalent to requiring that for all $0\le j\le \frac{p-5}{6}-1$
\begin{equation*}
\frac{-(6j+1)}{2}\beta_j+\frac{-4(j+1)(2j+1)}{3(6j+5)} \beta_{j+1}+c\delta_j=0.\end{equation*}
For $j=\frac{p-5}{6}$ the coefficient of $\sigma_2^{2}\sigma_3^{2j}$ gives one additional equation
\begin{equation*}
\frac{-p+4}{2}\beta_j+\frac{-2}{3}(p-2) \gamma_{\frac{p-5}{6}}+c\delta_{\frac{p-5}{6}}=0.\end{equation*}
Using that $p=0\in \Bbbk$ we get exactly the equations (I) and (I') from the Lemma.
\end{enumerate}

\item[(II)] From the coefficient of $(b_+\otimes b_- -b_-\otimes b_-)$: 
\begin{align*}\label{eqn-v+-unknowns-2}
0&=\sum_{0\le j\le \lfloor \frac{p-7}{6}\rfloor}  \frac{2(j+1)}{3(6j+5)} \beta_{j+1} \cdot 18c \cdot  \sigma_2^{\frac{p-5}{2}-3j}\sigma_3^{2j+1}+\delta_{p\equiv 2 \bmod 3}\cdot \gamma_{\frac{p-5}{6}} \cdot 18c \cdot \sigma_3^{2\frac{p-5}{6}+1}+ \\
&+\sum_{0\le j\le \lfloor\frac{p-9}{6}\rfloor}\delta_{j+1} \frac{2(j+1)}{3} \sigma_2^{\frac{p-5}{2}-3j}\sigma_3^{2j+1} +\sum_{0\le j\le \lfloor\frac{p-3}{6}\rfloor}\delta_j \frac{-3(\frac{p-3}{2}-3j)}{2} \sigma_2^{\frac{p-5}{2}-3j}\sigma_3^{2j+1}.
\end{align*}
We again distinguish two cases: 
\begin{enumerate}
\item If $p\equiv 1 \bmod 3$, then for all $0\le j\le \frac{p-1}{6}-2$
\begin{equation*}
\frac{12(j+1)c}{(6j+5)} \beta_{j+1}  +  \frac{2(j+1)}{3}\delta_{j+1} +\frac{3(6j+3)}{4}\delta_j =0,
\end{equation*}
and for $j=\frac{p-1}{6}-1$ we get an additional equation 
\begin{equation*}
c \cdot \beta_{\frac{p-1}{6}}  -3\delta_{\frac{p-1}{6}-1} =0.
\end{equation*}
\item If $p\equiv 2 \bmod 3$, then for all $0\le j\le \frac{p-5}{6}-1$
\begin{equation*}
\frac{12(j+1)c}{(6j+5)} \beta_{j+1} + \frac{2(j+1)}{3} \delta_{j+1}  +\frac{3(6j+3)}{4} \delta_j=0,
\end{equation*}
and for $j=\frac{p-5}{6}$ we get an additional equation 
\begin{equation*}
 \gamma_{\frac{p-5}{6}} \cdot 18c +\frac{-3}{2}\delta_{\frac{p-5}{6}} =0.
\end{equation*}
This gives us the equations (II) and (II') from the Lemma.

\item[(III)] The coefficient of $b_+\otimes b_+ - 3b_-\otimes b_+ + 3 b_+\otimes b_- + 3 b_-\otimes b_- $ in $D_{y_1}(v_+)=0$ is always $0$ when $v_+$ is of the form \eqref{eqn-v+-unknowns-2}. 

\item[(IV)] The coefficient of $(-b_+^2+3b_-^2)\otimes b_+ - 3(2b_+b_-)\otimes b_+ + 3 (-b_+^2+3b_-^2)\otimes b_- + 3 (2b_+b_-)\otimes b_- $ is $0$ precisely when conditions (I) and (I') are satisfied. 
\end{enumerate}
\end{itemize}
\end{proof}

\begin{lemma}\label{p>3,t=1,generic,Stand,solutions}
For every $p>3$, the system of equations from the statement of Lemma \ref{p>3,t=1,generic,Stand,equations} has a unique solution up to overall scaling. 
\end{lemma}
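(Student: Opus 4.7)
The plan is to solve the linear system from Lemma \ref{p>3,t=1,generic,Stand,equations} by backward recursion, treating a single top-index variable as a free parameter. First I would do a variable/equation count: in Case 1 ($p\equiv 1\bmod 3$) there are $\tfrac{p+2}{3}$ unknowns against $\tfrac{p-1}{3}$ equations, and in Case 2 ($p\equiv 2\bmod 3$) there are $\tfrac{p+4}{3}$ unknowns against $\tfrac{p+1}{3}$ equations; in each case the gap is exactly one, so it suffices to show that the equations genuinely cut the dimension down by the full amount.

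Next I would set up the recursion. In Case 1, I take $\beta_{(p-1)/6}$ as the free parameter, use (II') to determine $\delta_{(p-1)/6-1}$, and then alternate downward: (I$_j$) determines $\beta_j$ from the already known $\beta_{j+1}$ and $\delta_j$, while (II$_{j-1}$) determines $\delta_{j-1}$ from $\beta_j$ and $\delta_j$, running $j$ from $(p-1)/6-1$ down to $0$. In Case 2, I take $\gamma_{(p-5)/6}$ as the free parameter, use (II') to determine $\delta_{(p-5)/6}$ and (I') to determine $\beta_{(p-5)/6}$, and then run the analogous alternating recursion for $j=(p-5)/6-1,\ldots,0$. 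A careful count confirms that every equation in the system is consumed exactly once, so no leftover compatibility condition remains to check.

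The step that actually requires justification is showing that at every stage the coefficient being inverted is nonzero modulo $p$. In the boundary equations these coefficients are $-3$, $-\tfrac{3}{2}$, and $2$, all clearly invertible for $p>3$. In (I$_j$) the coefficient of $\beta_j$ is $-\tfrac{6j+1}{2}$, and in (II$_j$) the coefficient of $\delta_j$ is $\tfrac{3(6j+3)}{4}$; in the relevant ranges ($j\le (p-1)/6-1$ in Case 1, $j\le (p-5)/6-1$ in Case 2), both $6j+1$ and $6j+3$ lie in $\{1,\ldots,p-1\}$ and are therefore invertible in $\mathbb F_p$. Once this is established, the recursion produces for every value of the free parameter a unique solution of the whole system, so the solution space is exactly one-dimensional. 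The only genuinely delicate part I anticipate is the bookkeeping across the two congruence classes --- in particular how the boundary equations (II') and, in Case 2, (I') splice onto the first step of the downward recursion --- but conceptually the whole lemma reduces to the invertibility of these small integers modulo $p$.
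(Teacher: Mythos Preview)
Your proposal is correct and follows essentially the same approach as the paper: choose the top-index variable ($\beta_{(p-1)/6}$ or $\gamma_{(p-5)/6}$) as a free parameter, use the boundary equations (II') and, in Case 2, (I') to start, and then alternate (I) and (II) downward to determine all remaining unknowns. Your explicit variable/equation count and your verification that the pivots $6j+1$ and $6j+3$ are nonzero modulo $p$ make the argument slightly more detailed than the paper's, but the substance is identical.
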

\begin{proof}
\begin{enumerate}
\item If $p\equiv 1 \bmod 3$ and we write $p=6k+1$, then the unknowns are $\beta_0, \beta_1, \ldots, \beta_k$ and $\delta_0, \delta_1, \ldots, \delta_{k-1}$. Ordering them as 
$$\beta_k,\delta_{k-1}, \beta_{k-1}, \ldots, \delta_0,\beta_0,$$
we can treat equations (I),(II),(II') as recursions that let us calculate each unknown from the previous two. More precisely, choose an arbitrary nonzero $\beta_k$, then use (II') to calculate
$$\delta_{k-1}=\frac{c}{3} \cdot \beta_{k}.$$
After that, alternating (I) and (II) lets us calculate the remaining unknowns recursively: 
$$\delta_j=\frac{-4}{3(6j+3)} \left( \frac{12(j+1)c}{(6j+5)} \beta_{j+1}  +  \frac{2(j+1)}{3}\delta_{j+1}\right),$$
$$\beta_j=\frac{2}{6j+1} \left(\frac{-4(j+1)(2j+1)}{3(6j+5)} \beta_{j+1}+c\delta_j\right).$$

\item If $p\equiv 2 \bmod 3$ and we write $p=6k+5$, then the unknowns are $\beta_0, \beta_1, \ldots, \beta_k$, $\gamma_k$, and $\delta_0, \delta_1, \ldots, \delta_{k}$. We choose an arbitrary nonzero $\gamma_k$,
use (II') to calculate
$$\delta_{k} =12c \cdot \gamma_{k}$$
and (I') to calculate
$$\beta_{k}=-\frac{2}{3}\gamma_{k}-\frac{c}{2}\delta_{k}.$$
After that, alternating (I) and (II) lets us calculate the remaining unknowns recursively, for $0\le j\le k-1$ in decreasing order of $j$, as
$$ \delta_j=\frac{-16(j+1)c}{3(6j+5)(2j+1)} \beta_{j+1} - \frac{8(j+1)}{27(2j+1)} \delta_{j+1}  $$
$$\beta_j=\frac{-8(j+1)(2j+1)}{3(5+6j)(6j+1)} \beta_{j+1}+\frac{2c}{6j+1}\delta_j.$$
\end{enumerate}
\end{proof}

Summarising, we get the following: 
\begin{lemma}\label{p>3,t=1,generic,Stand}
For every $p>3$ and $c\notin \mathbb{F}_p$, the space of singular vectors in $M^p_{1,c}(\Stand)$ is two dimensional, spanned by $v_+,v_-$, where $v_+$ is of the form \eqref{eqn-v+-unknowns-2} with coefficients satisfying the equations from Lemma \ref{p>3,t=1,generic,Stand,equations}, and $v_-=\frac{2}{3}(s_2+\frac{1}{2}).v_+.$
\end{lemma}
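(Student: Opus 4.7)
The plan is to combine the previous lemmas with the representation-theoretic constraints identified via the Casimir element. First I would recall from \cite[Prop 3.4]{BaCh13a} that for generic $c$ (here $c\notin\mathbb{F}_p$), singular vectors in $M_{1,c}(\Stand)$ can occur only in degrees divisible by $p$, so it suffices to treat degree $p$ and show that this degree exhausts all singular vectors. Then Lemma \ref{OmegaAction} forces any such singular subspace to lie inside the $\Stand$-isotypic component, since the scalars $0$, $6c$, $3c$ distinguish the three irreducibles for $c\notin\mathbb{F}_p$ and only $3c$ is compatible with the shift $\Omega|_{M^{kp}_{1,c}(\Stand)}=3c+kp\cdot 1=3c$.

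Next I would restrict attention to the one-dimensional $b_+$-line inside each copy of $\Stand$, i.e.\ the $S_2$-invariant line. By Lemma \ref{KerD1}, a vector in such a line is singular if and only if it is annihilated by $D_{y_1}$ alone. Theorem \ref{decmposeVermap>3} gives the four families of basis vectors for this invariant part in degree $p$ listed just before \eqref{eqn-v+-unknowns}, so any candidate $v_+$ must have the shape \eqref{eqn-v+-unknowns}. At this point Lemmas \ref{Stand-generic-alpha,gamma}, \ref{p>3,t=1,generic,Stand,equations} and \ref{p>3,t=1,generic,Stand,solutions} apply verbatim: the first eliminates all $\alpha_j$ and expresses the $\gamma_j$ in terms of the $\beta_{j+1}$, the second reduces $D_{y_1}v_+=0$ to the explicit linear system in $\beta_j,\delta_j$ (and possibly $\gamma_{(p-5)/6}$), and the third shows this system has a one-dimensional solution space, parametrised by an arbitrary choice of the leading coefficient.

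It then remains to produce $v_-$ and to verify that no further singular vectors appear in higher degrees. For $v_-$, note that $\frac{2}{3}(s_2+\tfrac12)$ is precisely the idempotent-scaled element that implements the map $b_+\mapsto b_-$ inside $\Stand$ (cf.\ the defining identity $b_-=\frac{2}{3}(s_2+\frac12)b_+$ in Section \ref{sec-basisofh}); since it is a group-algebra element it preserves the space of singular vectors, and since $\Stand$ is irreducible with the $b_+$-line spanning its $S_2$-fixed part, $v_-:=\frac{2}{3}(s_2+\tfrac12).v_+$ is linearly independent from $v_+$ and spans the $b_-$-line of the same $\Stand$ subrepresentation. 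This gives dimension at least $2$. Conversely, the 1-dimensionality of the $b_+$-solution space combined with the fact that every copy of $\Stand$ has a 1-dimensional $b_+$-line gives dimension at most $2$, hence equality.

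The main obstacle is the bookkeeping with the two congruence classes $p\equiv 1,2\pmod 3$ that split Lemmas \ref{p>3,t=1,generic,Stand,equations} and \ref{p>3,t=1,generic,Stand,solutions}: one must check that the recursions actually produce a nonzero vector (i.e.\ that all denominators $6j+5$, $6j+1$, $2j+1$, etc., are nonzero in $\Bbbk$ for the relevant ranges of $j$) and that the remaining equation in each case is automatically satisfied rather than imposing an extra obstruction that would force $v_+=0$. Since $p>3$, each of these denominators is bounded by $p-1$ in absolute value and is nonzero modulo $p$, so the recursions terminate with all coefficients well-defined, confirming the statement.
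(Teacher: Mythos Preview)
Your proposal is correct and follows essentially the same approach as the paper: the lemma is a summary of Lemmas \ref{Stand-generic-alpha,gamma}, \ref{p>3,t=1,generic,Stand,equations}, and \ref{p>3,t=1,generic,Stand,solutions}, together with the Casimir restriction (Lemma \ref{OmegaAction}) and Lemma \ref{KerD1}, exactly as you outline. One small point: the statement concerns only $M^p_{1,c}(\Stand)$, so your remarks about verifying that no singular vectors appear in higher degrees are unnecessary here; and your worry about an ``extra obstruction'' is already absorbed into the proof of Lemma \ref{p>3,t=1,generic,Stand,equations}, where conditions (III) and (IV) are shown to be automatic once (I), (I'), (II), (II') hold.
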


Lemma \eqref{eqn-v+-unknowns-2} only determines $v_+$ up to overalll scaling, exhibited by the choice of $\beta_k$ or $\gamma_k$ at the start of the recursion. Without loss of generality those $\beta_k$ or $\gamma_k$ can be chosen as nonzero constants for every $p$, so that they don't depend on $c$. The other coefficients will then be polynomials in $c$.

\begin{example}When $p=5$ a solution is 
$$\gamma_0=1,\delta_0=2c, \beta_0=1-c^2,$$
leading to 
\begin{align*}
v_+&=(1-c^2) \sigma_2^2 \cdot (-b_+\otimes b_++3b_-\otimes b_-)+ \\
&+\sigma_3\left(-(-b_+^2+3b_-^2)\otimes b_++3(2b_+b_-)\otimes b_- \right)+2c \sigma_2\cdot q \otimes b_-,\\
v_-&=(1-c^2) \sigma_2^2 \cdot (b_+\otimes b_-+b_-\otimes b_+)+ \\
&+\sigma_3\left((-b_+^2+3b_-^2)\otimes b_-+(2b_+b_-)\otimes b_+ \right)+c \sigma_2\cdot q \otimes b_+.
\end{align*}
\end{example}

\begin{example}When $p=7$ a solution is
$$\beta_1=2,\,\, \delta_0=3c,\,\,\beta_0=6c^2-2$$
leading to 
\begin{align*}
v_+&=\left((6c^2-2) \sigma_2^{3}+2 \sigma_3^2 \right)\cdot (-b_+\otimes b_++3b_-\otimes b_-)+ \\
&+4 \sigma_2^{\frac{p-5}{2}}\sigma_3 \left(-(-b_+^2+3b_-^2)\otimes b_++3(2b_+b_-)\otimes b_- \right)+\\
&+3c \sigma_2^{2}\cdot q \otimes b_-,\\
v_-&=\left((6c^2-2) \sigma_2^{3}+2 \sigma_3^2 \right)\cdot (b_+\otimes b_-+b_-\otimes b_+)+ \\
&+4 \sigma_2^{\frac{p-5}{2}}\sigma_3 \left((-b_+^2+3b_-^2)\otimes b_-+(2b_+b_-)\otimes b_+ \right)+\\
&-c \sigma_2^{2}\cdot q \otimes b_-.
\end{align*}
\end{example}

\begin{example}When $p=11$ a solution is
$$\gamma_1=3,\,\, \delta_1=3c,\,\, \beta_1=4c^2+9,\,\, \delta_0=c(6c^2+1),\,\, \beta_0=c^4+5c^2+4$$
leading to 
\begin{align*}
v_+&=\left((c^4+5c^2+4)\sigma_2^5+(4c^2+9)\sigma_2^{2}\sigma_3^{2}\right) \cdot (-b_+\otimes b_++3b_-\otimes b_-)+ \\
&+ (2c^2-1) \sigma_2^{3}\sigma_3 \left(-(-b_+^2+3b_-^2)\otimes b_++3(2b_+b_-)\otimes b_- \right) \\
&+c(6c^2+1) \sigma_2^{4} \cdot q \otimes b_-+3c \sigma_2\sigma_3^{2} \cdot q \otimes b_-,\\
v_-&=\left((c^4+5c^2+4)\sigma_2^5+(4c^2+9)\sigma_2^{2}\sigma_3^{2}\right) \cdot (b_+\otimes b_-+b_-\otimes b_+)+ \\
&+ (2c^2-1) \sigma_2^{3}\sigma_3 \left((-b_+^2+3b_-^2)\otimes b_-+(2b_+b_-)\otimes b_+ \right) \\
&-4c(6c^2+1) \sigma_2^{4} \cdot q \otimes b_+-c \sigma_2\sigma_3^{2} \cdot q \otimes b_+.
\end{align*}
\end{example}

\begin{example}When $p=13$ a solution is
$$\beta_2=3,\delta_1=c,\beta_1=9 +4 c^2,\delta_0=c(7c^2-2),\beta_0=c^4+6c^2+3,$$
leading to 
\begin{align*}
v_+&=\left( (c^4+6c^2+3) \sigma_2^{6} +(9 +4 c^2) \sigma_2^{3}\sigma_3^{2} +3 \sigma_3^{4} \right)\cdot (-b_+\otimes b_++3b_-\otimes b_-)+ \\
&+\left(  (9 +4 c^2) \sigma_2^{4}\sigma_3-2 \sigma_2\sigma_3^{3} \right)\cdot \left(-(-b_+^2+3b_-^2)\otimes b_++3(2b_+b_-)\otimes b_- \right)+\notag\\
&+\left( c(7c^2-2) \sigma_2^{5}+ c \sigma_2^{2}\sigma_3^{2}\right)  \cdot q \otimes b_-,\\
v_-&=\left( (c^4+6c^2+3) \sigma_2^{6} +(9 +4 c^2) \sigma_2^{3}\sigma_3^{2} +3 \sigma_3^{4} \right)\cdot (b_+\otimes b_-+b_-\otimes b_+)+ \\
&+\left(  (9 +4 c^2) \sigma_2^{4}\sigma_3-2 \sigma_2\sigma_3^{3} \right)\cdot \left((-b_+^2+3b_-^2)\otimes b_-+(2b_+b_-)\otimes b_+ \right)+\notag\\
&+\frac{-1}{3}\left( c(7c^2-2) \sigma_2^{5}+ c \sigma_2^{2}\sigma_3^{2}\right)  \cdot q \otimes b_+.
\end{align*}
\end{example}

\subsection{Calculating a determinant}
The previous section shows that the space of singular vectors in $M^p_{1,c}(\Stand)$ is two dimensional, spanned by $v_+,v_-$ from Lemma \ref{p>3,t=1,generic,Stand}, and isomorphic to $\Stand$ via $b_\pm\mapsto v_\pm$.
We would now like to understand the submodule generated by $v_+,v_-$, using an argument similar to that in the proof of Lemma \ref{pne3t=0cne0char}.

Let us name their components as
$$v_+=a_{++}\otimes b_++a_{+-}\otimes b_-$$
$$v_-=a_{-+}\otimes b_++a_{--}\otimes b_-.$$
The determinant that we will have to compute to understand the submodule $\left<v_+,v_-\right>$ is 
$$Det=a_{++}a_{--}-a_{+-}a_{-+} \in S^{2p}\h^*.$$

\begin{prop}\label{p>3,Det1}
\begin{enumerate}
\item $Det$ is an invariant in degree $2p$.
\item $Det$ is in the kernel of $\partial_{y_1}$.
\item $Det$ is of the form $Det=f(c)\cdot \sigma_2^p$, where $f(c)\in \Bbbk[c]$ is a polynomial in $c$.  
\end{enumerate}
\end{prop}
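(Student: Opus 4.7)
The plan is to prove the three claims in sequence, each feeding into the next.

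\textbf{Part (1).} The map $\Phi\colon \Stand \to M^p_{1,c}(\Stand)$, $b_\pm \mapsto v_\pm$, is $S_3$-equivariant by Lemma \ref{p>3,t=1,generic,Stand}. Writing $v_i = a_{i+}\otimes b_+ + a_{i-}\otimes b_-$ exhibits $\Phi$ as a $2\times 2$ matrix $A = (a_{ij})$ with entries in $S^p\h^*$, and $S_3$-equivariance translates into $g.A = M_g A M_g^{-1}$ for every $g \in S_3$, where $M_g$ denotes the matrix of $g$ acting on $\Stand$ in the basis $b_+, b_-$. Taking determinants yields $g.Det = Det$, so $Det \in (S^{2p}\h^*)^{S_3}$.

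\textbf{Part (2).} Set $\delta_s f = (f - s.f)/\alpha_s$ for a reflection $s$ with reflection vector $\alpha_s$. The Dunkl operator decomposes as $D_{y_1} = \partial_{y_1}\otimes \id - c\delta_{12}\otimes (12) - c\delta_{13}\otimes (13)$, so $D_{y_1}(v_\pm) = 0$ produces explicit formulas expressing each $\partial_{y_1}(a_{ij})$ as a $\Bbbk$-linear combination of the eight quantities $\delta_{12}(a_{kl})$ and $\delta_{13}(a_{kl})$. Substituting into
\[
\partial_{y_1}(Det) = (\partial_{y_1}a_{++})a_{--} + a_{++}(\partial_{y_1}a_{--}) - (\partial_{y_1}a_{+-})a_{-+} - a_{+-}(\partial_{y_1}a_{-+})
\]
and collecting the $\delta_{12}$ contributions, they split as $\delta_{12}(a_{++})a_{--} - a_{++}\delta_{12}(a_{--})$ plus $\delta_{12}(a_{+-})a_{-+} - a_{+-}\delta_{12}(a_{-+})$. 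The elementary identity $\delta_s(f)g - f\delta_s(g) = \delta_s(f\cdot s.g)$, combined with the $(12)$-transformation rules extracted from $(12).A = M_{(12)}AM_{(12)}^{-1}$ (namely $a_{++}, a_{--}$ are $(12)$-invariant while $a_{+-}, a_{-+}$ are $(12)$-antiinvariant), collapses the sum to $\delta_{12}(a_{++}a_{--} - a_{+-}a_{-+}) = \delta_{12}(Det)$. An analogous grouping of the $\delta_{13}$ contributions, now using the $(13)$-transformation rules derived from $(13).A = M_{(13)}AM_{(13)}$ (with $M_{(13)}^2 = I$), collapses them to a scalar multiple of $\delta_{13}(Det)$. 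Both vanish by the $S_3$-invariance of $Det$ from part (1), so $\partial_{y_1}(Det) = 0$.

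\textbf{Part (3).} From part (2) we have $\partial_{y_1}(Det) = 0$. Since partial derivatives are $S_3$-equivariant ($g.\partial_{y_1}(f) = \partial_{g.y_1}(g.f)$) and $Det$ is $S_3$-invariant by part (1), applying $g = (12), (13)$ gives $\partial_{y_2}(Det) = \partial_{y_3}(Det) = 0$. Hence $\partial_y(Det) = 0$ for all $y \in \h$, since $y_1 - y_2$ and $y_2 - y_3$ span $\h$. The joint kernel of these derivations on $S\h^* = \Bbbk[b_+, b_-]$ is the Frobenius subalgebra $\Bbbk[b_+^p, b_-^p]$. The Frobenius map $x \mapsto x^p$ is $S_3$-equivariant in characteristic $p$, sending each $\sigma_i$ to $\sigma_i^p$ (for instance $\sigma_2^p = -\tfrac{1}{12}(b_+^{2p} + 3b_-^{2p})$ by the Freshman's-dream identity), so $\Bbbk[b_+^p, b_-^p]^{S_3} = \Bbbk[\sigma_2^p, \sigma_3^p]$. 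In degree $2p$ this space is one-dimensional, spanned by $\sigma_2^p$; hence $Det = f(c)\sigma_2^p$ for some $f(c)\in \Bbbk$. Polynomiality in $c$ follows from Lemma \ref{p>3,t=1,generic,Stand,solutions}: the coefficients $\beta_j, \gamma_j, \delta_j$ defining $v_\pm$ arise from a recursion in $c$ whose output is polynomial in $c$, so each $a_{ij}$, and therefore $Det$, is polynomial in $c$.

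The principal difficulty lies in part (2), specifically the $\delta_{13}$ case: because $b_\pm$ are not eigenvectors of $(13)$, computing the four expressions $(13).a_{ij}$ requires inverting a small linear system derived from $(13).A = M_{(13)}AM_{(13)}$, and one must then verify that the weighted sum of the four $\delta_{13}$-pair contributions actually assembles into $\delta_{13}(Det)$ rather than some unrelated $S_3$-invariant quadratic expression in the $a_{ij}$. The remaining steps are formal.
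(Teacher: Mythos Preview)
Your proof is largely sound and in Part~(1) cleaner than the paper's, but Part~(2) has the gap you yourself flag.

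For Part~(1), the paper verifies $s_1$- and $s_2$-invariance of $Det$ separately by hand, using $v_-=\tfrac{2}{3}(s_2+\tfrac{1}{2})v_+$ to express $a_{-\pm}$ in terms of $s_2.a_{+\pm}$ and then rewriting $Det$ in a visibly $s_2$-symmetric form. Your one-line argument via $\det(M_gAM_g^{-1})=\det A$ is a genuine simplification.

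For Part~(3), you and the paper argue identically.

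For Part~(2), the approaches diverge. The paper first uses the equivariance relations to write each $s.a_{ij}$ explicitly as a linear combination of the $a_{kl}$, substitutes into $D_{y_1}(v_\pm)=0$ to obtain closed formulas for $\partial_1(a_{ij})$ as rational expressions in the $a_{kl}$, and then checks $\partial_1(Det)=0$ by direct substitution. Your structural approach is more attractive, but the step you call ``the principal difficulty'' --- that the eight $\delta_{13}$-terms arising in $\partial_{y_1}(Det)$ reassemble into a scalar multiple of $\delta_{13}(Det)$ --- is not carried out, and that particular regrouping is not the natural one: the entries of $M_{(13)}$ are not $\pm 1$, so the pairwise identity $\delta_s(f)g-f\delta_s(g)=\delta_s(f\cdot s.g)$ does not apply term-by-term as it did for $(12)$.

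Your own matrix framework from Part~(1), however, closes this gap cleanly without any regrouping. With $A=(a_{ij})$, the equivariance $s.A=M_s^{T} A M_s^{-T}$ and the condition $D_{y_1}(v_\pm)=0$ combine to give
\[
\partial_{y_1}A \;=\; c\!\!\sum_{s\in\{(12),(13)\}}\!\!\delta_s(A)\,M_s^{T} \;=\; c\sum_s \frac{1}{\alpha_s}\,[A,\,M_s^{T}].
\]
Jacobi's formula $\partial_{y_1}(\det A)=\mathrm{tr}\bigl(\mathrm{adj}(A)\,\partial_{y_1}A\bigr)$ then reduces Part~(2) to the identity $\mathrm{tr}\bigl(\mathrm{adj}(A)\,[A,M]\bigr)=0$ for an arbitrary matrix $M$, which is immediate from $\mathrm{adj}(A)\,A=A\,\mathrm{adj}(A)=(\det A)I$ and cyclicity of trace. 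This completes your argument and explains conceptually why the paper's explicit computation had to come out to zero.
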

\begin{proof}
The proof does not use the explicit form of $v_+$ from the previous subsection; everything follows from the conditions
$$D_{y_1}v_+=D_{y_1}v_-=0, \quad s_1.v_+=v_+, s_1.v_-=-v_-, \quad v_-=\frac{2}{3}(s_2+\frac{1}{2}).v_+.$$
\begin{enumerate}
\item 
The facts that 
$s_1.v_+=v_+, s_1.v_-=-v_-$
imply that 
$$s_1.a_{++}=a_{++}, \quad s_1.a_{+-}=-a_{+-}, \quad s_1.a_{-+}=-a_{-+}, \quad s_1.a_{--}=a_{--}.$$
From this immediately follows that 
\begin{align*}
s_1.Det
&=a_{++}a_{--}-(-a_{+-})(-a_{-+})
=Det.
\end{align*}

To prove the $s_2$ invariance of $Det$, we expand
$v_-=\frac{2}{3}(s_2+\frac{1}{2})v_+$
to get
\begin{align}\label{a-}
a_{-+}&=\frac{1}{3}\bigg( -(s_2.a_{++})+(s_2.a_{+-})+a_{++} \bigg) \\
a_{--}&=\frac{1}{3}\bigg( 3(s_2.a_{++})+(s_2.a_{+-})+a_{+-} \bigg) \notag
\end{align}
We can now rewrite $Det$ as: 
\begin{align*}
Det
&=a_{++}(s_2.a_{++})+\frac{1}{3}a_{++}(s_2.a_{+-})+\frac{1}{3}a_{+-}(s_2.a_{++})-\frac{1}{3}a_{+-}(s_2.a_{+-}),
\end{align*}
making it clear that it is an $s_2$ invariant. 

\item 
We can use formulas \eqref{a-} to deduce: 
\begin{align}\label{s2a+}
(s_2.a_{++})&=\frac{1}{4}\bigg( a_{++}-a_{+-}-3a_{-+}+3a_{--} \bigg) \\
(s_2.a_{+-})&=\frac{1}{4}\bigg( -3a_{++}-a_{+-}+9a_{-+}+3a_{--} \bigg).  \notag
\end{align}
Similarly, the formula for the action of $s_2$ on $v_-$ gives us: 
\begin{align}\label{s2a-}
(s_2.a_{-+})&=\frac{1}{4}\bigg( -a_{++}+a_{+-}-a_{-+}+a_{--} \bigg) \\
(s_2.a_{--})&=\frac{1}{4}\bigg( 3a_{++}+a_{+-}+3a_{-+}+a_{--} \bigg).  \notag
\end{align}
From this we can also easily deduce formulas for $(13).a_{++}$ etc. 

As we know the action of the group on all $a_{\pm \pm}$, we can expand $D_{y_1}(v_{\pm})=0$ to get
\begin{align}\label{partiala}
\partial_1(a_{++})&= \frac{-c}{2(x_1-x_3)}\bigg(a_{+-}-3a_{-+} \bigg)\\
\partial_1(a_{+-})&= \frac{-2c}{x_1-x_2} a_{+-}+\frac{c}{2(x_1-x_3)}\bigg(-3a_{++}+2a_{+-}+3a_{--} \bigg) \notag \\
\partial_1(a_{-+})&= \frac{2c}{x_1-x_2} a_{-+}-\frac{c}{2(x_1-x_3)}\bigg(-a_{++}+2a_{-+}+a_{--} \bigg) \notag \\
\partial_1(a_{--})&= \frac{c}{2(x_1-x_3)}\bigg(a_{+-}-3a_{-+}\bigg).  \notag
\end{align}

Using \eqref{partiala} we calculate
\begin{align*}
\partial_1(Det)&=\partial_1(a_{++})a_{--}+\partial_1(a_{--})a_{++}-\partial_1(a_{+-})a_{-+}-\partial_1(a_{-+})a_{+-}=0.
\end{align*}

\item By (1) $Det$ is an invariant of degree $2p$, and by (2) it is in the kernel by $\partial_1$. As it is symmetric, it follows that it is also in the kernel by $\partial_2$ and $\partial_3$, so $Det$ is a $p$-th power of a polynomial. The only symmetric polynomials in degree $2p$ which are also $p$-th powers are scalar multiples of $\sigma_2^p$. This scalar depends on $c$. As the coefficients of $v_+,v_-$ depend on $c$ polynomially, so does $Det$.  
\end{enumerate}
\end{proof}

In order to analyse the module $M_{1,c}(\Stand)/\left< v_+,v_-\right>$, we will need to show that $v_+,v_-$ are (for generic $c$ and some special $c$) as independent as they can possibly be, and this will be done by showing that $Det$ is nonzero (for generic $c$ and some special $c$). This will include analysing the polynomial $f$ in $c$ and its zeroes. More on that in the next section, but for now let us prove the following Lemma.

\begin{lemma}\label{p>3,Det2}
Let $f(c)\in \Bbbk[c]$ be the polynomial from Proposition \ref{p>3,Det1}. 
\begin{enumerate}
\item It satisfies $f(c)=\frac{4}{3}\left( 3\beta_0-\delta_0\right)\left( 3\beta_0+\delta_0\right)$; 
\item All coefficients $\beta_j$ are even and all $\delta_j$ are odd polynomials in $\Bbbk[c]$.  
\item If $p=6k+1$ then $\deg \beta_j=2k-2j$, $\deg \delta_j=2k-2j-1$, and $\deg f=\frac{p-1}{3}$.
\item If $p=6k+5$ then $\deg \beta_j=2k-2j+2$, $\deg \delta_j=2k-2j+1$, and $\deg f=\frac{p+1}{3}$.
\item The polynomial $f(c)$ is not identically zero. 
\end{enumerate}
\end{lemma}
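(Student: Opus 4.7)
The plan is to exploit the explicit recursion producing $\beta_j$ and $\delta_j$ from the proof of Lemma~\ref{p>3,t=1,generic,Stand,solutions}, together with the fact established in Proposition~\ref{p>3,Det1}(3) that $Det = f(c)\sigma_2^p$. Because $Det$ is entirely determined once it is known modulo the ideal $(\sigma_3)$, part~(1) reduces to computing $a_{++}a_{--} - a_{+-}a_{-+}$ modulo $\sigma_3$.

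For part~(1), I would first reduce the expression~\eqref{eqn-v+-unknowns-2} for $v_+$ modulo $\sigma_3$. Every summand except the $j=0$ terms in the first and fourth sums carries a factor of $\sigma_3$; the summand with $\gamma_{(p-5)/6}$ vanishes modulo $\sigma_3$ since $(p-2)/3\ge 1$. Thus $a_{++}\equiv -\beta_0\sigma_2^{(p-1)/2}b_+$ and $a_{+-}\equiv 3\beta_0\sigma_2^{(p-1)/2}b_- + \delta_0\sigma_2^{(p-3)/2}q$ modulo $\sigma_3$. Applying~\eqref{a-} together with the $s_2$-action $s_2(b_+)=\tfrac{1}{2}(-b_++3b_-)$, $s_2(b_-)=\tfrac{1}{2}(b_++b_-)$, $s_2(q)=-q$ yields analogous expressions for $a_{-+}$ and $a_{--}$. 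Expanding the determinant and simplifying using the identities $b_+^2+3b_-^2=-12\sigma_2$ and $q^2\equiv -4\sigma_2^3\pmod{\sigma_3}$ from Section~\ref{sect-auxiliary} produces $Det\equiv \tfrac{4}{3}(9\beta_0^2-\delta_0^2)\sigma_2^p$, which factors as $\tfrac{4}{3}(3\beta_0-\delta_0)(3\beta_0+\delta_0)\sigma_2^p$ and so gives part~(1).

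Parts~(2), (3) and (4) are downward inductions on $j$ along the recursion. For parity, the starting value ($\beta_k$ if $p\equiv 1\pmod 3$; $\gamma_k$, and then $\delta_k=12c\gamma_k$ and $\beta_k=-\tfrac{2}{3}\gamma_k-\tfrac{c}{2}\delta_k$ if $p\equiv 2\pmod 3$) satisfies the claim directly; the inductive step uses that the recursion writes $\delta_j$ as a linear combination of $c\beta_{j+1}$ and $\delta_{j+1}$ (both odd by the hypothesis), and $\beta_j$ as a linear combination of $\beta_{j+1}$ and $c\delta_j$ (both even). For degrees the same induction applies: in each alternating step the dominant contribution comes from the $c$-multiplied term, raising the degree by one, and the asserted $\deg f$ then follows from part~(1) combined with the degrees of $\beta_0,\delta_0$.

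For part~(5), which is the main obstacle, I would track leading coefficients through the recursion. Iterating the formulas gives $L_{\beta_0}$ as the (nonzero) starting value multiplied by a product of rational scalars of the shape $\tfrac{-32(j+1)}{(6j+1)(6j+3)(6j+5)}$ for $j=0,\ldots,k-1$. Because $p>3$ and $j+1,\,6j+1,\,6j+3,\,6j+5$ all lie in $[1,p-2]$ for the permitted range of $j$, every factor is a unit in $\Bbbk$, so $L_{\beta_0}\ne 0$. By parts~(3) and~(4), $\deg\beta_0^2>\deg\delta_0^2$, so the top-degree term of $f(c)$ comes entirely from $9\beta_0^2$, with leading coefficient a nonzero scalar multiple of $L_{\beta_0}^2$, proving $f(c)\not\equiv 0$. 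The bookkeeping in part~(1), and the verification that no recursion coefficient reduces to zero modulo $p$ in part~(5), are the two steps that require the most care.
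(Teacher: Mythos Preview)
Your arguments for parts~(1)--(4) are correct and essentially the same as the paper's: reduce modulo $\sigma_3$ to extract the coefficient of $\sigma_2^p$ for~(1), and read off parity and degree from the recursion of Lemma~\ref{p>3,t=1,generic,Stand,solutions} for~(2)--(4). The only cosmetic difference is that the paper writes $a_{-+},a_{--}$ down directly from the shape of $v_-$ rather than via~\eqref{a-}, but this comes to the same thing.

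For part~(5) your argument is correct but takes a longer route than the paper. The paper simply evaluates at $c=0$: since all $\delta_j$ are odd in $c$ they vanish there, so $f(0)=12\beta_0(0)^2$, and the recursion at $c=0$ reduces to $\beta_j(0)=\frac{-8(j+1)(2j+1)}{3(6j+1)(6j+5)}\beta_{j+1}(0)$ (with the appropriate modification of the starting value when $p\equiv 2\pmod 3$), whose factors are all units in $\Bbbk$ for the relevant range of $j$. Your approach of tracking the leading coefficient $L_{\beta_0}$ instead is equally valid and in fact proves a little more (it pins down the top term of $f$ rather than just one value), but it requires parts~(3)--(4) as input, whereas the paper's evaluation trick is self-contained. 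Both arguments ultimately hinge on the same check that certain small integers are nonzero modulo $p$, so there is no real difference in difficulty at that step.
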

\begin{proof}
\begin{enumerate}
\item Proposition \ref{p>3,Det1} (3) showed that $Det=a_{++}a_{--}-a_{+-}a_{-+}$ is a scalar multiple of $\sigma_2^p$, so we now calculate the scalar $f(c)$ by considering only the parts of $a_{++}$, $a_{--}$, $a_{+-}$ and $a_{-+}$ which will contribute to a power of $\sigma_2$. More specifically, any term divisible by $\sigma_3$ will not contribute to $Det$. Disregarding all such terms, we can write: 
\begin{align*}
a_{++}&=\beta_0\sigma_2^{\frac{p-1}{2}}\cdot (-b_+)+\sigma_3 \cdot (\ldots)\\
a_{+-}&=\beta_0\sigma_2^{\frac{p-1}{2}}\cdot 3b_-+\delta_0\sigma_2^{\frac{p-3}{2}}\cdot q +\sigma_3 \cdot (\ldots)\\
a_{-+}&=\beta_0\sigma_2^{\frac{p-1}{2}}\cdot b_--\frac{1}{3}\delta_0\sigma_2^{\frac{p-3}{2}}\cdot q+\sigma_3 \cdot (\ldots)\\
a_{--}&=\beta_0\sigma_2^{\frac{p-1}{2}}\cdot b_+ +\sigma_3 \cdot (\ldots).
\end{align*} 
From here we then compute: 
\begin{align*}
Det
&=\beta_0\sigma_2^{\frac{p-1}{2}} (-b_+)  \beta_0\sigma_2^{\frac{p-1}{2}} b_+ - \left(\beta_0\sigma_2^{\frac{p-1}{2}} 3b_-+\delta_0\sigma_2^{\frac{p-3}{2}} q \right) \left( \beta_0\sigma_2^{\frac{p-1}{2}} b_--\frac{1}{3}\delta_0\sigma_2^{\frac{p-3}{2}} q\right)+\sigma_3 \cdot (\ldots)\\
&=\beta_0^2\sigma_2^{p-1}(-b_+^2-3b_-^2)+\frac{1}{3}\delta_0^2\sigma_2^{p-3}q^2+\sigma_3 \cdot (\ldots)\\
&=12 \beta_0^2 \sigma_2^p+\frac{1}{3}\delta_0^2\sigma_2^{p-3}(-27\sigma_3^2-4\sigma_2^3)+\sigma_3 \cdot (\ldots)\\
&=\left( 12 \beta_0^2 -\frac{4}{3}\delta_0^2 \right) \sigma_2^p+\sigma_3 \cdot (\ldots).
\end{align*}
Comparing this with $Det=f(c)\sigma_2^p$ we get that 
$$f(c)=12 \beta_0^2 -\frac{4}{3}\delta_0^2 =\frac{4}{3}\left( 3\beta_0-\delta_0\right)\left( 3\beta_0+\delta_0\right).$$

\item Clear from the recursion in Lemma \ref{p>3,t=1,generic,Stand,equations}.
\item Clear from the recursion in Lemma \ref{p>3,t=1,generic,Stand,equations}.
\item It is enough to see that the polynomial $f(c)$ is nonzero at a specific point, so set  $c=0$. Then $\delta_j=0$ for all $j$ and $f(0)=12\beta_0^2$, which is nonzero by the recursion in Lemma \ref{p>3,t=1,generic,Stand,equations}.
\end{enumerate}
\end{proof}

We will now use this determinant to analyse the modules $M_{1,c}(\Stand)/\left< v_\pm\right>$ and $M_{1,c}(\Stand)/\left< v_\pm, \sigma_3^p\otimes b_{\pm}\right>$.

\begin{lemma}\label{p>3,t=1,generic,Stand-character} Let $c$ be generic. 
\begin{enumerate}
\item The submodule $\left<v_+,v_-\right>$ of $M_{1,c}(\Stand)$ is isomorphic to $M_{1,c}(\Stand)[-p]$.
\item The vectors $\sigma_2^p\otimes b_+, \sigma_2^p\otimes b_-$ are in $\left<v_+,v_-\right>$. 
\item The vectors $\sigma_3^p\otimes b_+$ and $\sigma_3^p\otimes b_-$ are not in $\left<v_+,v_-\right>$. 
\item The module $M_{1,c}(\Stand)/\left<v_\pm,\sigma_3^p\otimes b_\pm \right>$ has the character 
$$\chi_{M_{1,c}(\Stand)/\left<v_\pm,\sigma_3^p\otimes b_\pm \right>}(z)=\chi_{S\h^*}(z)\cdot [\Stand]\cdot (1-z^p)(1-z^{3p})$$
and the Hilbert series 
$$h_{M_{1,c}(\Stand)/\left<v_\pm,\sigma_3^p\otimes b_\pm \right>}(z)=2\frac{(1-z^p)(1-z^{3p})}{(1-z)^2}.$$
\item The module $M_{1,c}(\Stand)/\left<v_\pm,\sigma_3^p\otimes b_\pm \right>$ is irreducible and thus equal to $L_{1,c}(\Stand)$.
\end{enumerate}
\end{lemma}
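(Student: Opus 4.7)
\textbf{Proof plan for Lemma \ref{p>3,t=1,generic,Stand-character}.}

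For parts (1) and (2), the plan is to mimic the argument of Lemma \ref{pne3t=0cne0char}, replacing the role of $-3\sigma_2$ by $\det = f(c)\sigma_2^p$ from Proposition \ref{p>3,Det1} and Lemma \ref{p>3,Det2}. Since $f(c)$ is a nonzero polynomial in $c$ (Lemma \ref{p>3,Det2}(5)), for generic $c$ we have $f(c)\in \Bbbk^\times$. The $S_3$-equivariant assignment $b_\pm\mapsto v_\pm$ extends by the universal property to a graded $H_{1,c}$-homomorphism $\varphi\colon M_{1,c}(\Stand)[-p]\to M_{1,c}(\Stand)$. A homogeneous element $A\otimes b_++B\otimes b_-$ in $\ker\varphi$ gives the system $Aa_{++}+Ba_{-+}=0$, $Aa_{+-}+Ba_{--}=0$, whose determinant is $f(c)\sigma_2^p\ne 0$; hence $A=B=0$, proving (1). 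For (2), I solve $Av_++Bv_-=\sigma_2^p\otimes b_+$ by Cramer's rule; the $\sigma_2^p$ factor in the right-hand side cancels the $\sigma_2^p$ in $\det$, producing polynomial solutions $A,B\in S\h^*$. The case of $\sigma_2^p\otimes b_-$ is symmetric.

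For (3), Cramer's rule in the fraction field gives $A=\sigma_3^p a_{--}/(f(c)\sigma_2^p)$ and $B=-\sigma_3^p a_{+-}/(f(c)\sigma_2^p)$ as the only candidates. Since $\sigma_2,\sigma_3$ are coprime irreducibles in the UFD $S\h^*$, polynomiality would force $\sigma_2^p\mid a_{--}$. But inspecting the explicit formula \eqref{eqn-v+-unknowns-2}, the term $\beta_0\sigma_2^{(p-1)/2}\cdot(-b_+\otimes b_++3b_-\otimes b_-)$ contributes $3\beta_0\sigma_2^{(p-1)/2}b_-$ to $a_{--}$, and all other summands in $a_{--}$ are divisible by $\sigma_3$; so $a_{--}$ has $\sigma_2$-adic valuation exactly $(p-1)/2<p$. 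Hence $\sigma_2^p\nmid a_{--}$ and $\sigma_3^p\otimes b_-\notin\left<v_+,v_-\right>$; the case $\sigma_3^p\otimes b_+$ is analogous.

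For (4), I will bound the Hilbert series of $Q=M_{1,c}(\Stand)/\left<v_\pm,\sigma_3^p\otimes b_\pm\right>$ from above by $\chi_{S\h^*}(z)\cdot[\Stand]\cdot(1-z^p)(1-z^{3p})$. By (1), $\left<v_+,v_-\right>\cong M_{1,c}(\Stand)[-p]$, and since $\sigma_3^p\otimes b_\pm$ are singular of degree $3p$ and by (3) have nonzero image in $M/\left<v_+,v_-\right>$, the submodule they generate in $M$ is a quotient of $M_{1,c}(\Stand)[-3p]$. The intersection of these two submodules contains $\sigma_3^p\cdot v_\pm=a_{++}(\sigma_3^p\otimes b_+)+a_{+-}(\sigma_3^p\otimes b_-)$ (and its $v_-$ analogue), and a second application of the $\det=f(c)\sigma_2^p\ne 0$ Cramer argument shows these span a submodule isomorphic to $M_{1,c}(\Stand)[-4p]$. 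Inclusion-exclusion then bounds the character of $Q$ by $(1-z^p-z^{3p}+z^{4p})\chi_{M_{1,c}(\Stand)}(z)=\chi_{S\h^*}(z)\cdot[\Stand]\cdot(1-z^p)(1-z^{3p})$.

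For (5), observe that $Q$ surjects onto $L_{1,c}(\Stand)$ (since $L_{1,c}(\Stand)$ is the unique irreducible quotient of $M_{1,c}(\Stand)$ and by construction $Q$ is a quotient). By \cite[Thm 1.7]{EtGi02} and \cite[Rmk 1.2.3]{BFG06}, as already invoked at the start of Theorem \ref{thm-p>3generic}, $\dim L_{1,c}(\Stand)=6p^2$ for generic $c$; this equals the total dimension $\chi_{S\h^*}(1)\cdot 2\cdot(1-z^p)(1-z^{3p})|_{z=1}$ computed by the upper bound. Hence $Q=L_{1,c}(\Stand)$, and both inequalities in (4) are equalities. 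The main obstacle is the cleanness of the inclusion-exclusion in (4): verifying that the intersection of the two submodules is exactly $M_{1,c}(\Stand)[-4p]$ (rather than just containing it) without circular use of the final dimension; I expect the equality-of-dimensions argument from \cite{EtGi02,BFG06} to close the gap automatically, since any strict excess in the intersection would force $\dim Q>6p^2$, contradicting the surjection onto $L_{1,c}(\Stand)$.
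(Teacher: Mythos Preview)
Your approach to parts (1)--(3) is essentially the paper's argument, and correct. One small slip in (3): the term $\beta_0\sigma_2^{(p-1)/2}(-b_+\otimes b_++3b_-\otimes b_-)$ belongs to $v_+$, so it contributes to $a_{++}$ and $a_{+-}$, not to $a_{--}$. The paper's computation (in the proof of Lemma~\ref{p>3,Det2}) records $a_{--}=\beta_0\sigma_2^{(p-1)/2}b_++\sigma_3\cdot(\ldots)$, and for generic $c$ one has $\beta_0\ne 0$ (it is a nonzero polynomial in $c$ by the recursion). With that correction your divisibility argument goes through.

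The genuine gap is in (4)--(5), and it is a sign error in the inequality. Your inclusion--exclusion gives
\[
\dim Q_k=\dim M_k-\dim\langle v_\pm\rangle_k-\dim\langle\sigma_3^p\otimes b_\pm\rangle_k+\dim\bigl(\langle v_\pm\rangle\cap\langle\sigma_3^p\otimes b_\pm\rangle\bigr)_k.
\]
You know $\langle v_\pm\rangle\cong M[-p]$ exactly, and in fact $\langle\sigma_3^p\otimes b_\pm\rangle\cong M[-3p]$ exactly (multiplication by $\sigma_3^p$ is injective on $S\h^*$), but for the intersection you only establish the \emph{containment} $\langle\sigma_3^pv_\pm\rangle\subseteq\langle v_\pm\rangle\cap\langle\sigma_3^p\otimes b_\pm\rangle$. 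A larger intersection makes $Q$ \emph{larger}, not smaller, so what your argument yields is the \emph{lower} bound $\dim Q\ge 6p^2$. The surjection $Q\twoheadrightarrow L_{1,c}(\Stand)$ gives $\dim Q\ge\dim L=6p^2$ as well---the same direction. So ``excess in the intersection would force $\dim Q>6p^2$, contradicting the surjection'' is not a contradiction at all, and the gap does not close.

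The fix is to prove the reverse inclusion directly, which is short and uses exactly the tools you already have. Take $v\in\langle v_\pm\rangle\cap\langle\sigma_3^p\otimes b_\pm\rangle$ and write $v=Av_++Bv_-=C\sigma_3^p\otimes b_++D\sigma_3^p\otimes b_-$. Cramer's rule gives
\[
A=\frac{(Ca_{--}-Da_{-+})\,\sigma_3^p}{f(c)\,\sigma_2^p},\qquad B=\frac{(-Ca_{+-}+Da_{++})\,\sigma_3^p}{f(c)\,\sigma_2^p}.
\]
Since $\sigma_2,\sigma_3$ are coprime in the UFD $S\h^*$ and $A,B\in S\h^*$, we must have $\sigma_3^p\mid A$ and $\sigma_3^p\mid B$; writing $A=A'\sigma_3^p$, $B=B'\sigma_3^p$ shows $v=A'\sigma_3^pv_++B'\sigma_3^pv_-\in\langle\sigma_3^pv_\pm\rangle$. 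This is precisely the paper's argument for (4), and once the intersection is computed exactly the character of $Q$ is exact and part (5) follows by the dimension comparison you intended.
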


\begin{proof}
\begin{enumerate}
\item The $S_3$-map $\varphi:\Stand\to M_{1,c}^p(\Stand)$ given by $\varphi(b_{\pm})=v_{\pm}$ induces a map of $H_{1,c}(S_3,\h)$ modules $\varphi:M_{1,c}(\Stand)[-p]\to M_{1,c}(\Stand)$ whose image is $\left< v_+,v_-\right>$. Let us show its kernel is zero.

Assume that $A,B\in S\h^*$ are homogeneous and that $\varphi(A\otimes b_++B\otimes b_-)=0$. This can be written as
\begin{align*}
A\cdot a_{++}+B\cdot a_{-+}&=0\\
A\cdot a_{+-}+B\cdot a_{--}&=0.
\end{align*}
Considering this as a linear system of equations, we note that its determinant is, by Theorem \ref{p>3,Det1} and Lemma \ref{p>3,Det2}, equal to $f(c)\sigma_2^p$, which is nonzero for generic $c$, and thus for generic $c$ the only solution of this system is $A=B=0$.

\item Checking if $\sigma_2^p\otimes b_+$ is in the $H_{t,c}(S_3,\h)$ submodule $\left<v_+,v_-\right>$ is equivalent to trying to find $A,B\in S\h^*$ such that
$$Av_++Bv_-=\sigma_2^p\otimes b_+.$$
This can be rewritten as the system
\begin{align*}
A\cdot a_{++}+B\cdot a_{-+}&=\sigma_2^p\\
A\cdot a_{+-}+B\cdot a_{--}&=0,
\end{align*}
which, for generic $c$ where $f(c)\ne 0$, has a unique solution
\begin{align*}
A&=\frac{\sigma_2^p\cdot a_{--}}{Det}=\frac{1}{f(c)}\cdot a_{--}\\
B&=\frac{-\sigma_2^p\cdot a_{+-}}{Det}=-\frac{1}{f(c)}\cdot a_{+-}.
\end{align*}
This shows that whenever $f(c)\ne 0$, $\sigma_2^p\otimes b_+$ is in $\left<v_+,v_-\right>$. The vector $\sigma_2^p\otimes b_-$ can be obtained from $\sigma_2^p\otimes b_+$ by the $S_3$ action, so the claim follows.

\item Similarly, checking if the vector $\sigma_3^p\otimes b_+$ is in $\left<v_+,v_-\right>$ is equivalent to checking if there arehomogeneous $A,B\in S\h^*$ such that 
$$Av_++Bv_-=\sigma_3^p\otimes b_+,$$
which is a system with a unique rational solution
\begin{align*}
A&=\frac{\sigma_3^p\cdot a_{--}}{f(c)\sigma_2^p}\\
B&=\frac{-\sigma_3^p\cdot a_{+-}}{f(c)\sigma_2^p}.
\end{align*}
These $A,B$ are rational function in $\h^*$ and not elements of $S\h^*$, and they are the only solutions when $f(c)\ne 0$. So, we conclude that $\sigma_3^p\otimes b_\pm$ do not lie in $\left<v_\pm\right>$.

\item The character $\chi_{M_{1,c}(\Stand)/\left<v_\pm,\sigma_3^p\otimes b_\pm \right>}(z)$ equals
\begin{align*}
\chi_{M_{1,c}(\Stand)}(z)-\chi_{\left<v_\pm\right>}(z)-\chi_{\left<\sigma_3^p\otimes \pm \right>}(z)+\chi_{\left<v_\pm\right>\cap \left<\sigma_3^p\otimes b_\pm \right>}(z).
\end{align*}
By (1), $\left<v_+,v_- \right>\cong M_{1,c}(\Stand)[-p]$ and $\left<\sigma_3^p\otimes b_+, \sigma_3^p\otimes b_- \right>\cong M_{1,c}(\Stand)[-3p]$, so
$$\chi_{\left<v_\pm\right>}(z)=\chi_{S\h^*}[\Stand]\cdot z^p, \,\, \chi_{\left<\sigma_3^p\otimes b_\pm \right>}(z)=\chi_{S\h^*}[\Stand]\cdot z^{3p},$$
and the task is to describe $\left<v_\pm\right>\cap \left<\sigma_3^p\otimes b_\pm \right>$.

Let $v\in \left<v_\pm\right>\cap \left<\sigma_3^p\otimes b_\pm \right>$ be arbitrary. There exist $A,B,C,D\in S\h^*$ such that
\begin{equation}\label{eq-intersection}
v=Av_++Bv_-=C\sigma_3^p\otimes b_++D\sigma_3^p\otimes b_-.
\end{equation}
As before, we get
\begin{align*}
A&=\frac{C \cdot a_{--}+D \cdot a_{-+}}{f(c)\sigma_2^p}\sigma_3^p \\
B&=\frac{-C \cdot a_{+-}+D \cdot a_{++}}{f(c)\sigma_2^p}\sigma_3^p .
\end{align*}
Given that $\sigma_2$ and $\sigma_3$ are algebraically independent so their powers are coprime, it follows that $\sigma_2^p$ needs to divide $C \cdot a_{--}+D \cdot a_{-+}$ and $-C \cdot a_{+-}+D \cdot a_{++}$, and $\sigma_3^p$ needs to divide both $A$ and $B$. In other words, there are $A',B'\in S\h^*$ such that 
$$A=A'\cdot \sigma_3^p, \quad B=B'\cdot \sigma_3^p.$$
Rewriting equation \eqref{eq-intersection} we get
\begin{equation*}
v=A' \sigma_3^p  v_++B' \sigma_3^p  v_-=C\sigma_3^p\otimes b_++D\sigma_3^p\otimes b_-.
\end{equation*}
For any choice of $A',B'\in S\h^*$ there are unique $C,D\in S\h^*$ satisfying this, given by 
$$C=A' a_{++}+B' a_{-+}, \quad 
D=A' a_{+-}+B' a_{--}.$$
This lets us conclude that 
$$\left<v_\pm\right>\cap \left<\sigma_3^p\otimes b_\pm \right>=\left<\sigma_3^p  v_\pm \right>, \quad \chi_{\left<\sigma_3^pv_\pm\right>}(z)=\chi_{M_{1,c}(\Stand)}(z)\cdot z^{4p},$$
and the character of $M_{1,c}(\Stand)/\left<v_\pm,\sigma_3^p\otimes b_\pm \right>$ follows.

\item The module $L_{1,c}(\Stand)$ is a quotient of $M_{1,c}(\Stand)/\left<v_\pm,\sigma_3^p\otimes b_\pm \right>$ of dimension $6p^2$. The above shows that 
$$\dim M_{1,c}(\Stand)/\left<v_\pm,\sigma_3^p\otimes b_\pm \right>(z)=h_{M_{1,c}(\Stand)/\left<v_\pm,\sigma_3^p\otimes b_\pm \right>}(1)=6p^2,$$ so they are equal.
\end{enumerate}
\end{proof}

\section{Irreducible representations of $H_{t,c}(S_3,\h)$ in characteristic $p>3$ for special $c$}
\label{sect-p>3-special}

We now turn our attention to special values of the parameter $c$, which are not covered by Theorem \ref{thm-p>3generic}. The value of the characteristic remains $p>3$. When $t=0$ the only special value of $c$ is $c=0$, and when $t=1$ the special values of $c$ are $
0,1,2\ldots, p-1$. We will sometimes consider these $c$ as elements of $\mathbb{Z}$ to allow us to write inequalities such as $0<c<p/6$, and at other times (when they are coefficients in the algebra), we consider them as elements of $\mathbb{F}_p\subseteq \Bbbk$. 

The aim of this section is to prove the following theorem. 
\begin{theorem}\label{thm-p>3special}
The characters of the irreducible representations $L_{t,c}(\tau)$ of the rational Cherednik algebra $H_{t,c}(S_3,\h)$ over an algebraically closed field of characteristic $p>3$, for all special $c$, $t=0,1$, and any $\tau$, are given by the following tables. 

Characters: 
\begin{center}
{\renewcommand*{\arraystretch}{1.5}\begin{tabular}{ c |c }
$p>3$ & $\Triv$   \\ \hline \hline 
 $t=0$, $c=0$ & $[\Triv]$  \\ \hline \hline
 $t=1$, $c=0$  & $\chi_{S^{(p)}\h^*}(z)$     \\ \hline
 $t=1, 0<c<p/3 $ & $\chi_{S\h^*}(z)\cdot (1-z^{3c+p}[\Stand]+z^{2(3c+p)}[\Sign])$  \\ 
  &    \scriptsize{[Lian]}     \\  \hline
 $t=1, p/3<c<p/2$ & $\chi_{S\h^*}(z)\cdot (1-z^{3c-p}[\Stand]+z^{2(3c-p)}[\Sign])$  \\
   &    \scriptsize{[Lian]}      \\ \hline 
 $t=1,p/2<c<2p/3$  & $\chi_{S\h^*}(z)(1-z^{6c-3p}[\Sign])(1-z^p)$  \\
    &          \\  \hline
 $t=1,2p/3<c<p$  & $\chi_{S\h^*}(z)\cdot (1-z^{3c-2p}[\Stand]+z^{2(3c-2p)}[\Sign])$   \\
   &    \scriptsize{[Lian]}  
 \end{tabular}}
\end{center}

\begin{center}
{\renewcommand*{\arraystretch}{1.5}\begin{tabular}{ c |c }
$p>3$ &  $\Sign$  \\ \hline \hline 
 $t=0$, $c=0$ & $[\Sign]$  \\ \hline \hline
 $t=1$, $c=0$  & $\chi_{S^{(p)}\h^*}(z)[\Sign]$    \\ \hline
 $t=1, 0<c<p/3$  & $\chi_{S\h^*}(z)\cdot ([\Sign]-z^{p-3c}[\Stand]+z^{2(p-3c)}[\Triv])$  \\\hline
 $t=1, p/3<c<p/2$  & $\chi_{S\h^*}(z)([\Sign]-z^{3p-6c}[\Triv])(1-z^p)$  \\ \hline
 $t=1, p/2<c<2p/3$  & $\chi_{S\h^*}(z)\cdot ([\Sign]-z^{2p-3c}[\Stand]+z^{2(2p-3c)}[\Triv])$ \\ \hline
 $t=1, 2p/3<c<p$   & $\chi_{S\h^*}(z)\cdot ([\Sign]-z^{4p-3c}[\Stand]+z^{2(4p-3c)}[\Triv])$    
 \end{tabular}}
\end{center}

\begin{center}

{\renewcommand*{\arraystretch}{1.5}\begin{tabular}{ c  |c }
$p>3$ &  $\Stand$ \\ \hline \hline 
 $t=0$, $c=0$ &  $[\Stand]$ \\ \hline \hline
 $t=1, c=0$   & $\chi_{S^{(p)}\h^*}(z)\cdot [\Stand]$  \\ \hline 
 $t=1,$ &  $\chi_{S\h^*}(z)\cdot \left([\Stand]-[\Triv]z^{p-3c}-\right.$ \\
  $0<c<p/3$ &  $\left. -[\Stand]z^p-[\Sign]z^{3c+p}+[\Stand]z^{2p}\right)$ \\ \hline    
 $t=1,$ & $\chi_{S\h^*}(z)\cdot \left( [\Stand]-z^{-p+3c}[\Sign]-\right.$ \\
$ p/3<c<p/2$ &  $ \left. -z^{3p-3c}[\Triv]-z^{p+3c}[\Sign]-z^{5p-3c}[\Triv]+z^{4p}[\Stand] \right)$\\ 
 & (Conjecture) \\ \hline
  $t=1,$ & $\chi_{S\h^*}(z)\left( [\Stand]-z^{2p-3c}[\Triv]-\right.$ \\
$ p/2<c<2p/3$ &   $ \left.- z^{3c}[\Sign]-z^{4p-3c}[\Triv]-z^{2p+3c}[\Sign]+z^{4p}[\Stand] \right)$\\
 & (Conjecture) \\ \hline
 $t=1,$ & $\chi_{S\h^*}(z)\cdot \left([\Stand]-[\Sign]z^{3c-2p}-\right.$ \\
 $ 2p/3<c<p$ & $ \left. -[\Stand]z^p-[\Triv]z^{4p-3c}+[\Stand]z^{2p}\right)$ 
 \end{tabular}}
\end{center}

\vspace{0.5cm}
Hilbert polynomials: 
\begin{center}
{\renewcommand*{\arraystretch}{1.5}\begin{tabular}{ c |c |c |c }
$p>3$ & Triv & Sign & Stand \\ \hline \hline 
 $t=0$, $c=0$ & 1 & 1 & 2 \\ \hline 
 $t=1$, $c\in \mathbb{F}_p$  & &  &   \\
 $c=0$ & $\left(\frac{1-z^{p}}{1-z}\right)^2$ & $\left(\frac{1-z^{p}}{1-z}\right)^2$ & $2\left(\frac{1-z^{p}}{1-z}\right)^2$ \\
 $0<c<p/3$ & $\left(\frac{1-z^{3c+p}}{1-z}\right)^2$ & $\left(\frac{1-z^{p-3c}}{1-z}\right)^2$  & $\frac{1-z^{p-3c}-2z^p -z^{p+3c}+2z^{2p}}{(1-z)^2}$ \\
& \scriptsize{\cite[Thm 3.3]{Li14}} & & \\ 
 $p/3<c<p/2$ & $\left(\frac{1-z^{3c-p}}{1-z}\right)^2$  & $\frac{(1-z^{3p-6c})(1-z^p)}{(1-z)^2}$  & $\frac{1-z^{-p+3c}-z^{3p-3c}-z^{p+3c}-z^{5p-3c}+2z^{4p}}{(1-z)^2}$\\
 & \scriptsize{\cite[Thm 3.3]{Li14}} & & \scriptsize{(Conjecture)} \\  
 $p/2<c<2p/3$ & $\frac{(1-z^{6c-3p})(1-z^p)}{(1-z)^2}$  & $\left(\frac{1-z^{2p-3c}}{1-z}\right)^2$ & $\frac{1-z^{2p-3c}-z^{3c}-z^{4p-3c}-z^{2p+3c}+2z^{4p}}{(1-z)^2}$\\
&  & & \scriptsize{(Conjecture)} \\
 $2p/3<c<p$ & $\left(\frac{1-z^{3c-2p}}{1-z}\right)^2$  & $\left(\frac{1-z^{4p-3c}}{1-z}\right)^2$  & $\frac{1-z^{3c-2p}-2z^p -z^{4p-3c}+2z^{2p}}{(1-z)^2}$ \\
& \scriptsize{\cite[Thm 3.3]{Li14}} & & \\ 
 \end{tabular}}
\end{center}

In all cases, the singular vectors are known and given in the proofs. 
\end{theorem}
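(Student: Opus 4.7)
The plan is to handle each case in the tables separately, exploiting the symmetries and lemmas developed earlier to reduce the number of genuinely new computations. First, the $c=0$ cases follow immediately: $L_{0,0}(\tau)$ is handled by Lemma \ref{t=c=0}, and $L_{1,0}(\tau)$ by Lemma \ref{t=1,c=0}, which directly give the stated characters. Second, by Lemma \ref{Sign}, the pullback under the twist $\Phi^*$ sends $L_{1,-c}(\tau)$ to $L_{1,c}(\Sign \otimes \tau)$. Since negation on $\mathbb{F}_p\setminus\{0\}$ swaps the intervals $(0,p/3) \leftrightarrow (2p/3, p)$ and $(p/3, p/2) \leftrightarrow (p/2, 2p/3)$, this lets us deduce: (a) all $L_{1,c}(\Sign)$ rows from the corresponding $L_{1,c}(\Triv)$ rows; (b) the $L_{1,c}(\Stand)$ row for $2p/3 < c < p$ from the row for $0 < c < p/3$; and (c) the conjectured $L_{1,c}(\Stand)$ row for $p/2 < c < 2p/3$ from the conjectured row for $p/3 < c < p/2$. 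This reduces the problem to three genuinely new cases.

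The first genuinely new case is $L_{1,c}(\Triv)$ for $p/2 < c < 2p/3$, where Lian \cite{Li14} has conjectured the degrees of the generators of $J_{1,c}(\Triv)$ but not proven them. My approach is to guess generators in the predicted degrees guided by the conjectured Hilbert polynomial $\frac{(1-z^{6c-3p})(1-z^p)}{(1-z)^2}$: this decomposition suggests a $\Sign$-type singular vector of degree $6c-3p$ (of the form $q$ times some $S_3$-invariant in the rescaled Young basis $b_+, b_-$) together with a degree-$p$ singular vector of $\Stand$-type. Using the basis of $S\h^*$ from Theorem \ref{decmposeShp>3} together with the auxiliary Dunkl operator computations in Section \ref{sect-auxiliary}, I would set up an undetermined linear combination in the appropriate isotypic component (cut down by Lemma \ref{OmegaAction}) and solve $D_{y_1}(v) = 0$ using Lemma \ref{KerD1} to get a recursion on coefficients analogous to Lemma \ref{p>3,t=1,generic,Stand,equations}. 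After exhibiting the singular vectors, one checks the resulting quotient is irreducible by showing no further singular vectors exist in the isotypic components permitted by $\Omega$, and verifying the Hilbert polynomial at $z=1$ gives a dimension already equal to $\dim L_{1,c}(\Triv)$ (or, failing that, a direct degree-by-degree character comparison as in Lemmas \ref{p=2,t=1,Stand,c=1,k=2}--\ref{p=2,t=1,Stand,c=1,k=7}).

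The second new case is $L_{1,c}(\Stand)$ for $0 < c < p/3$. Here the conjectured character has five ``missing'' terms compared to the Verma module, suggesting singular vectors in degrees $p-3c$ (of $\Triv$-type), $p$ (two copies of $\Stand$-type, the generic ones from Lemma \ref{p>3,t=1,generic,Stand}), and $p+3c$ (of $\Sign$-type), with an overlap/relation at degree $2p$. The strategy is: first confirm that the generic singular vectors $v_\pm$ persist at these special $c$ (checking that the recursion in Lemma \ref{p>3,t=1,generic,Stand,equations} still admits a solution when $c \in \mathbb{F}_p$); then search for the additional $\Triv$-type singular vector in $M^{p-3c}_{1,c}(\Stand)$ using Theorem \ref{decmposeVermap>3}, which yields a small explicit linear system in coefficients indexed by $(a,b)$ with $2a+3b = p-3c-1$; and finally check that $s_2$ of the obtained vector is a $\Sign$-type singular vector in the appropriate higher degree. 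Once the generators are found, the character computation proceeds as in Lemma \ref{p>3,t=1,generic,Stand-character}: describe the intersections of the submodules they generate, use that these generators recreate a shifted copy of $M_{1,c}(\Stand)$ via the universal property, and verify via the determinant of the system (à la Lemma \ref{p>3,Det2}) that the submodules are as large as possible.

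The hard part, and the main obstacle, will be this second case: the resulting determinant $\mathrm{Det}$ of the coefficient system for the degree-$(p-3c)$ singular vector and related consistency conditions need not vanish, and proving this non-vanishing for all primes $p$ is exactly where the technical Assumption \ref{assum} on $\mathbb{F}_p$ enters. I would perform the determinant computation symbolically, reduce non-vanishing to a statement about certain sums of Legendre-type symbols or binomial coefficients modulo $p$, and verify that the resulting arithmetic condition is equivalent to Assumption \ref{assum}. The remaining two entries of the table, for $L_{1,c}(\Stand)$ in the middle ranges, are stated as conjectures and will not be proven; I would simply record the predicted characters, note the Magma verification up to $p=20$ already done, and point the reader to Conjecture \ref{finalconj} for the precise form.
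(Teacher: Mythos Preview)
Your overall strategy matches the paper's, but there are two concrete errors that would derail the $\Triv$ case for $p/2<c<2p/3$. First, the degree-$p$ generator is of $\Triv$-type, not $\Stand$-type: by Lemma~\ref{OmegaAction}, $\Omega$ acts on $M_{1,c}^p(\Triv)$ by $p\equiv 0$, which forces any singular vectors there to be of type $\Triv$. The paper's vector is
\[
v_p=\sigma_2^{2p-3c}\sigma_3\sum_{j=0}^{c-\frac{p+1}{2}}\binom{c-\frac{p+1}{2}}{j}\frac{1}{2j+1}(-4\sigma_2^3)^{c-\frac{p+1}{2}-j}(-27\sigma_3^2)^j,
\]
a symmetric polynomial. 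Second, and more subtly, $v_p$ is \emph{not} singular in $M_{1,c}(\Triv)$ but only in the quotient $M_{1,c}(\Triv)/\langle v_{6c-3p}\rangle$ where $v_{6c-3p}=q^{2c-p}$. One computes $D_{y_1}(v_p)$ and finds it is a nonzero multiple of $q^{2c-p}$ rather than zero; your plan to ``solve $D_{y_1}(v)=0$'' in degree $p$ would therefore find nothing. The paper's order of operations---first quotient by $v_{6c-3p}$, then search degree $p$ in that quotient---is essential. After exhibiting both vectors, the paper shows they form a regular sequence (Lemma~\ref{complete-intersection}), which gives the character directly; irreducibility then follows because the quotient is concentrated in degrees below the next allowed singular degree $6c-2p$.

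For $L_{1,c}(\Stand)$ with $0<c<p/3$, your identification of the degrees and types of the generators is correct, but two points need adjusting. The vector $v_{p+3c}$ is not obtained from $v_{p-3c}$ by applying $s_2$: group elements preserve degree, and $p+3c\neq p-3c$. It is a genuinely separate singular vector of type $\Sign$ (Lemma~\ref{p>3-Stand-specialc-lemma-p+3c}), though its coefficients do satisfy a recursion formally related to that for $v_{p-3c}$ by $c\mapsto -c$. More importantly, Assumption~\ref{assum} does not enter where you place it. The determinant controlling $\langle v_{p-3c}\rangle\cap\langle v_{p+3c}\rangle$ (Lemma~\ref{p>3-Stand-specialc-char1}) is computed unconditionally and shown to be a nonzero multiple of $\sigma_2^p$. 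The assumption is instead needed in Lemma~\ref{p>3-Stand-specialc-lemma-3c} to rule out a $\Sign$-type singular vector in degree $3c$ of the quotient $M_{1,c}(\Stand)/\langle v_{p-3c}\rangle$ when $p/6<c<p/3$; this amounts to the nonvanishing of an explicit sum in $\mathbb{F}_p$, which is exactly the content of Assumption~\ref{assum}. The rest of the argument is a careful Hilbert-series bookkeeping (Lemmas~\ref{p>3-Stand-specialc-char2}--\ref{p>3-Stand-specialc-char4}) rather than a single determinant check.
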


To write these characters explicitly, recall that, from Corollaries \ref{characterShp>3} and \ref{characterSphp>3},
\begin{align*}\chi_{S\h^*}(z)&=\frac{1}{(1-z^2)(1-z^3)}\left([\Triv]+(z+z^2)[\Stand]+z^3[\Sign]\right),\\
\chi_{S^{(p)}\h^*}(z)&=\chi_{S\h^*}(z)\cdot (1-z^p[\Stand]+z^{2p}[\Sign]).
\end{align*}

\begin{proof} For all $\tau$ and $t$, the case $c=0$ is standard, and explained in Lemma \ref{t=c=0} for $t=0$ and Lemmas \ref{t=1,c=0} for $t=1$. 

For $\tau=\Triv$, the remaining computations for $t=1$, $c\ne 0 \in \mathbb{F}_p$ fall into several cases depending on where $c$ lies in the set $0,1,\ldots, p-1$. The paper \cite{Li14} deals with all these intervals except one, $p/2<c<2p/3$, where they give conjectured degrees of the generators. We explain the work of \cite{Li14} in Section \ref{sect-Lian} and work out the remaining case $p/2<c<2p/3$ in Theorem \ref{thm-sect-p>3,Triv,specialc} (giving the generators of the maximal proper graded submodule, the Hilbert polynomial, the character, and showing this is a complete intersection).

For $\tau=\Sign$, the character formulas follow from the character formulas for $\Triv$ by Lemma \ref{Sign}. 

For $\tau=\Stand$ $t=1$ and $c\in \mathbb{F}_p$, the case $0<c<p/3$ is done with all the proofs in Section \ref{section-p>3,Stand,specialc-1}, the case $p/3<c<p/2$ is done conjecturally and with no proofs in Section \ref{section-p>3,Stand,specialc-2}, and the cases $p/2<c<p$ follow from them using Lemma \ref{Sign} and the fact that $\Stand\otimes \Sign\cong \Stand$.

\end{proof}

\subsection{Results of Lian \cite{Li14}}\label{sect-Lian}
The paper \cite{Li14} deals with some special values of $c$ for $\tau=\Triv$. Here we restate their Theorem 3.3. in our conventions. 

\begin{prop}[\cite{Li14} Theorem 3.3.]
Let $\Bbbk$ be an algebraically closed field of characteristic $p>3$, $t=1$ and $c\in \mathbb{F}_p$, and consider the rational Cherednik algebra $H_{1,c}(S_3,\h)$. The degrees of the generators of the maximal proper graded submodule of the Verma module $M_{1,c}(\Triv)$ are as follows: 
\begin{enumerate}
\item $0<c<p/3$: two generators of degree $3c+p$, 
\item $p/3<c<p/2$: two generators of degree $3c-p$,  
\item $2p/3<c<p$: two generators of degree $3c-2p$.  
\end{enumerate}
These generators are given explicitly in the proof of \cite{Li14} Theorem 3.3. In each case the irreducible quotient $L_{1,c}(\Triv)$ is a complete intersection, so letting $a\in \mathbb{N}$ be the degree of the generators, its character and Hilbert polynomial are
\begin{align*}\chi_{L_{1,c}(\Triv)}(z)&=\chi_{S\h^*}(z)\cdot (1-z^{a}[\Stand]+z^{2a}[\Sign]),\\
h_{L_{1,c}(\Triv)}(z)&=\left(\frac{1-z^{a}}{1-z}\right)^2.
\end{align*}
\end{prop}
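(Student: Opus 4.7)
Since the statement restates \cite{Li14} Theorem 3.3, the simplest route is to cite Lian's argument; a re-derivation in the framework of the present paper would proceed as follows. First, by the Casimir calculation of Lemma \ref{OmegaAction}, any singular vector of type $\Stand$ in $M_{1,c}(\Triv)$ must lie in a degree $k$ with $k \equiv 3c \pmod{p}$, and any of type $\Sign$ in a degree $k \equiv 6c \pmod{p}$. This selects a sequence of candidate degrees $3c, 3c \pm p, 3c \pm 2p, \ldots$, and the claim is that the minimum positive degree at which $\Stand$-type singular vectors actually appear is $3c+p$ in range (1), $3c-p$ in range (2), and $3c-2p$ in range (3). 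In ranges (2) and (3) this is already the smallest positive candidate; in range (1) one must additionally show that no $\Stand$-type singular vector exists in the strictly smaller degree $3c$.

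Second, to exhibit singular vectors at the claimed degree $a$, I would parametrise a general element of the $\Stand$-isotypic component of $S^a\h^*$ using the basis from Theorem \ref{decmposeShp>3}: linear combinations of $\sigma_2^i\sigma_3^j b_\pm$ with $2i+3j = a-1$, together with $\sigma_2^i\sigma_3^j(-b_+^2+3b_-^2)$ and $\sigma_2^i\sigma_3^j(2b_+b_-)$ with $2i+3j = a-2$. By Lemma \ref{KerD1} singularity reduces to the single condition $D_{y_1}(v)=0$, which expands via the auxiliary identities of Section \ref{sect-auxiliary} into a linear recursion on the coefficients closely analogous to Lemma \ref{p>3,t=1,generic,Stand,equations}. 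Solving the recursion yields a two-dimensional solution space at degree $a$; repeating the analysis at degree $3c$ in range (1) shows that the corresponding system admits only the zero solution in that range of $c$, ruling out earlier singular vectors.

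Third, for the complete intersection claim, observe that $M_{1,c}(\Triv) \cong S\h^*$ is a polynomial ring in the two variables $b_+, b_-$, so any two nonzero elements $f_+, f_-$ form a regular sequence iff they are coprime. Since $f_+, f_-$ span a copy of $\Stand$, their greatest common divisor in $S\h^*$ is $S_3$-invariant up to scalar, hence an element of $(S\h^*)^{S_3}$; a nontrivial common invariant factor $g$ would produce singular vectors $f_\pm/g$ in strictly smaller degree, contradicting the minimality established above. Thus $(f_+, f_-)$ is a regular sequence, $M_{1,c}(\Triv)/\langle f_+, f_-\rangle$ has Hilbert polynomial $((1-z^a)/(1-z))^2$, and irreducibility follows because the support of this quotient is too narrow to admit further singular vectors of type $\Stand$ or $\Sign$ (their allowed degrees would have to exceed $2(a-1)$, outside the support).

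The principal obstacle is the simultaneous verification, at each $c$, that the linear system from paragraph two is inconsistent at the smaller candidate degree while admitting a two-dimensional solution at the claimed $a$. Closed-form analysis of the recursion is needed to track how the coefficients depend on $c \in \mathbb{F}_p$, and the boundary between ranges (1), (2) and (3) is precisely where a leading equation of the recursion degenerates, forcing the minimum generator degree to jump by $p$. Once the singular vectors have been produced explicitly, the complete intersection structure and the irreducibility check are comparatively routine.
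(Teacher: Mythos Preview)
The paper does not give its own proof of this proposition; it is stated as a citation of \cite{Li14} Theorem 3.3, and the Remark immediately following it explicitly notes that Lian's argument has gaps (the complete intersection property is assumed, and irreducibility relies on unpublished work of Bezrukavnikov--Okounkov) which the present paper does \emph{not} fill for these ranges of $c$. So there is no ``paper's own proof'' to compare against; your sketch is an attempt to supply what both Lian and this paper leave open.

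Your outline is sound through the second paragraph, but the third paragraph contains two genuine gaps. First, for the regular-sequence claim: even if $g=\gcd(f_+,f_-)$ is a semi-invariant (and it could transform as $\Sign$, not only $\Triv$), it does not follow that $f_\pm/g$ are singular. Dunkl operators are not $S\h^*$-linear; for invariant $g$ one has $D_y(gh)=(\partial_y g)h+gD_y h$, so $D_y(f_\pm)=0$ only yields $gD_y(h_\pm)=-(\partial_y g)h_\pm$, which does not force $D_y(h_\pm)=0$. The paper's own method in the comparable situations (Proposition~\ref{p>3,Det1}, Lemma~\ref{p>3,Det2} for generic $c$, and Lemma~\ref{complete-intersection} for $p/2<c<2p/3$) is instead to compute directly the determinant $a_{++}a_{--}-a_{+-}a_{-+}$ of the $2\times 2$ system and show it is a nonzero multiple of a power of $\sigma_2$; that is what you would need here.

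Second, the irreducibility claim that all further singular-vector degrees exceed $2(a-1)$ is false. For instance in range (1) with $a=3c+p$ the top degree is $6c+2p-2$, but Triv-type candidates occur at $p,2p$, Sign-type at $6c$ or $6c\pm p$, and Stand-type at $3c+2p$, all of which can lie in the support. Compare the paper's treatment of the $p/2<c<2p/3$ case, where Lemma~\ref{Triv-special-lemma3} is an entire computation devoted to ruling out a single intermediate degree $3c-p$, and only after quotienting by \emph{both} generators does the degree-bound argument close. You would need analogous intermediate checks here; the support argument alone does not suffice.
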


\begin{remark}
Note some differences between this proposition and the theorem as stated in \cite{Li14}. Firstly, \cite{Li14} works with the reflection representation $V$ and we work with $\h$; the dictionary for translating results from one setting to the other is given in Proposition \ref{prop-dictionaryVvsh} and consequently we list one generator fewer ($\sigma_1^p$, which is $0$ in our setting). Secondly, there is a typo in the statement of Case (3) of \cite{Li14}; the correct degrees are listed here and the proof in \cite{Li14} is correct. Thirdly, we calculate the characters, which is straightforward from the information \cite{Li14} provide. Fourthly, the results of \cite{Li14} are missing for $p/2<c<2p/3$, where they conjecturally give the degrees of the generators. We address this in detail in Section \ref{sect-p>3,Triv,specialc}, calculating the generators,the character of the quotient, and showing the quotient is irreducible. Finally, we want to point out that the results of \cite{Li14} in the cases $0<c<p/2$ and $2p/3<c<p$ lack three steps in the proofs: 1) a minor step showing, (in their notation) that $D_{2}(G_1)=0$, which is completely straightforward; 2) the proof that the quotient is a complete intersection and thus its Hilbert polynomial is as stated, which they explicitly assume in Theorem 3.3.; and 3) the proof that the quotient is irreducible, which relies on the yet unpublished results by Roman Bezrukavnikov and Andrei Okounkov. We do not do this here, but note that these results all appear correct and can be verified using similar methods to our work in the $p/2<c<2p/3$ case.
\end{remark}

\subsection{Irreducible representation $L_{1,c}(\Triv)$}\label{sect-p>3,Triv,specialc}

The aim of this section is to describe $L_{1,c}(\Triv)$ in the non-modular case $p>3$ for the values of $c$ that \cite{Li14} does not cover, namely $p/2<c<2p/3$. We fix those values of the parameter throughout the section. The main result is: 

\begin{theorem}\label{thm-sect-p>3,Triv,specialc}
The irreducible representation $L_{1,c}(\Triv)$ of the rational Cherednik algebra $H_{1,c}(S_3,\h)$ over an algebraically closed field of characteristic $p>3$ for $c\in \mathbb{F}_p$, $p/2<c<2p/3$, is the quotient of the Verma module $M_{1,c}(\Triv)$ by the submodule generated by the vectors
\begin{align*}
v_{6c-3p}&=q^{2c-p}=(-4\sigma_2^3-27\sigma_3^2)^{c-\frac{p+1}{2}}q\\
v_p&=\sigma_2^{2p-3c}\sigma_3 \sum_{j=0}^{c-\frac{p+1}{2}} {c-\frac{p+1}{2} \choose j}\frac{1}{2j+1}(-4\sigma_2^3)^{c-\frac{p+1}{2}-j} (-27\sigma_3^2)^j.
\end{align*}
This quotient is a complete intersection, and its character and Hilbert polynomial are
\begin{align*}
\chi_{L_{1,c}(\Triv)}(z)&=\chi_{S\h^*}(z)(1-z^{6c-3p}[\Sign]-z^p[\Triv]+z^{6c-2p}[\Sign])\\
h_{L_{1,c}(\Triv)}(z)&=\frac{(1-z^{6c-3p})(1-z^p)}{(1-z)^2}.
\end{align*}
\end{theorem}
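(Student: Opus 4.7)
The strategy has four steps: (i) check $v_{6c-3p}=q^{2c-p}$ is singular in $M_{1,c}(\Triv)$; (ii) check $v_p$ is singular in $M_{1,c}(\Triv)/\langle v_{6c-3p}\rangle$; (iii) show the quotient by both generators has the stated Hilbert polynomial and character, by verifying that the two vectors form a regular sequence in $S\h^*=\Bbbk[b_+,b_-]$ and invoking the Koszul resolution with $S_3$-types tracked; and (iv) prove this quotient is irreducible.

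Step (i) is the classical Dunkl-on-Vandermonde computation. Writing $\partial_{y_k}(q)=q\Phi_k$ with $\Phi_k=\sum_{j\ne k}(x_k-x_j)^{-1}$, and using $s.q^m=(-1)^m q^m$ for a reflection $s$, one obtains $D_{y_k}(q^m)=(m-2c)\,q^m\Phi_k$ for any odd $m\geq 1$. With $m=2c-p$ (odd because $p$ is odd, and with $0<m<p/3$ thanks to $p/2<c<2p/3$), the coefficient $m-2c=-p$ vanishes in $\Bbbk$, so $q^{2c-p}$ is a nonzero singular vector of $\Sign$-type in degree $6c-3p$. Step (ii) is the main technical computation. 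Since $v_p\in(S\h^*)^{S_3}$ is an invariant, every reflection fixes it, so all divided-difference terms of $D_{y_1}$ vanish on it and $D_{y_1}(v_p)=\partial_{y_1}(v_p)$. By Lemma \ref{KerD1} it then suffices to show that $\partial_{y_1}(v_p)$ lies in the $S\h^*$-span of $q^{2c-p}$. The plan is to recognise the coefficient sum defining $v_p$ as the polynomial identity coming from $\int_0^1(-4\sigma_2^3-27t^2\sigma_3^2)^N\,dt$ with $N=c-\tfrac{p+1}{2}$, then differentiate termwise and regroup to exhibit $\partial_{y_1}(v_p)$ as $q^{2c-p}$ times an explicit polynomial in $\sigma_2,\sigma_3$ and $b_\pm$. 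Producing this factorisation explicitly is the principal obstacle; I would first work out $p=7,11,13$ by hand or with Magma to guess the correct symmetric cofactor before attempting the general case.

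For step (iii), $v_{6c-3p}=q^{2c-p}$ factors as a product of linear forms and $v_p$ lies in the invariant subring $\Bbbk[\sigma_2,\sigma_3]$, so the two share a common factor only if $q^2=-27\sigma_3^2-4\sigma_2^3$ divides $v_p$. Substituting $\sigma_3^2=-\tfrac{4}{27}\sigma_2^3$ into the formula for $v_p$ collapses the inner sum to $\sum_{j=0}^N\binom{N}{j}\tfrac{(-1)^j}{2j+1}$, which equals $\tfrac{4^N(N!)^2}{(2N+1)!}$ in $\mathbb{Q}$ and thus in $\mathbb{F}_p$ since the constraint $2N+1<p/3$ makes every factorial involved a unit. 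The result is nonzero, so $v_{6c-3p}$ and $v_p$ form a regular sequence in $\Bbbk[b_+,b_-]$. Since both are singular, the submodule they generate is the $S\h^*$-ideal they generate, and the Koszul resolution gives the Hilbert polynomial $(1-z^{6c-3p})(1-z^p)/(1-z)^2$. Tracking the $\Sign$ twist attached to $v_{6c-3p}$ and the trivial twist attached to $v_p$ through the Koszul complex yields the stated character, with the $\Sign$-type syzygy term $z^{6c-2p}[\Sign]$ coming from $\Sign\otimes\Triv=\Sign$.

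Finally, for step (iv), I would apply Lemma \ref{OmegaAction} together with the character formula just obtained to restrict the possible degrees and $S_3$-isotypic types of any putative additional singular vector in the quotient, reducing to a finite check. These residual isotypic components can be described explicitly using the rescaled Young basis from Theorems \ref{decmposeShp>3} and \ref{decmposeVermap>3}, and singularity can be ruled out by computing $D_{y_1}$ on each candidate with the auxiliary formulas of Section \ref{sect-auxiliary}, in the same style as the treatment of $L_{1,c}(\Stand)$ in Section \ref{sect-p>3-generic}. As noted above, the hardest piece of the whole argument is step (ii): identifying $\partial_{y_1}(v_p)$ as a polynomial multiple of $q^{2c-p}$ in closed form.
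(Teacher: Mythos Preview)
Your overall strategy matches the paper's, and several pieces are correct or even cleaner than what the paper does. Your step~(i) argument via the identity $D_{y_k}(q^m)=(m-2c)\,q^m\Phi_k$ for odd $m$ is a neat shortcut compared to the paper's direct expansion in Lemma~\ref{Triv-special-lemma1}, and your step~(iii) argument (coprimality in $\Bbbk[b_+,b_-]$ via the substitution $\sigma_3^2=-\tfrac{4}{27}\sigma_2^3$, then Koszul) is more economical than the paper's isotypic case analysis in Lemma~\ref{complete-intersection}. The closed form $\sum_{j=0}^N\binom{N}{j}\tfrac{(-1)^j}{2j+1}=\tfrac{4^N(N!)^2}{(2N+1)!}$ is also quicker than the paper's recursion in Lemma~\ref{lemma-aux-1}.

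Step~(ii) is easier than you fear and should not be left as an obstacle. The point is that $\partial_{y_1}$ applied to any $S_3$-invariant automatically lands in $q\cdot(\text{symmetric})\cdot(b_--b_+)$: writing a general $\Triv$-component vector in degree $p$ as $v=\sum_j\alpha_j\sigma_2^{\frac{p-3}{2}-3j}\sigma_3^{2j+1}$ and using the auxiliary formulas of Section~\ref{sect-auxiliary}, one finds directly (this is the paper's Lemma~\ref{Triv-special-lemma4}) that
\[
D_{y_1}(v)=\tfrac{1}{12}\,\sigma_2^{2p-3c}\Bigl(\textstyle\sum_j\alpha_j(2j+1)\,\sigma_2^{3(N-j)}\sigma_3^{2j}\Bigr)\,q\,(b_--b_+),\qquad N=c-\tfrac{p+1}{2}.
\]
So the ``closed-form factorisation'' you worry about guessing is structural; singularity modulo $q^{2c-p}$ is then the single linear condition $\alpha_j(2j+1)=\binom{N}{j}(-4)^{N-j}(-27)^j$, which simultaneously determines $v_p$ uniquely and shows it is singular in the quotient. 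No experimentation with small primes is needed.

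The genuine gap is in step~(iv). The Casimir restricts singular vectors to degrees $\equiv 0,3c,6c\pmod p$, and for $p/2<c<2p/3$ these fall as $6c-3p<3c-p<p<6c-2p<\cdots$. After computing the Hilbert polynomial you can indeed discard everything from degree $6c-2p$ onward, since the quotient vanishes past degree $6c-2p-2$. But degree $3c-p$ (type $\Stand$) lies strictly below $p$, and you must separately rule out singular vectors there in $M_{1,c}(\Triv)/\langle v_{6c-3p}\rangle$ before concluding irreducibility. The paper treats this as an independent and nontrivial computation (Lemma~\ref{Triv-special-lemma3}), carried out in a basis $\{q^{2i}\sigma_3^{m-2i}b_+,\ q^{2i+1}\sigma_3^{m-2i-1}b_-\}$ adapted to reduction modulo $q^{2c-p}$; it is not the routine finite check your sketch suggests. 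You also need the uniqueness of the singular vector at degrees $6c-3p$ and $p$ (Lemmas~\ref{Triv-special-lemma2} and~\ref{Triv-special-lemma4}), though those are comparatively short.
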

\begin{proof}
Firstly, let us use the Casimir element $\Omega$. If $M$ is a $H_{t,c}(S_3,\h)$ module equal to $M_{1,c}(\Triv)$ or one of its quotients, and $\tau\subseteq M^l$ is an irreducible $S_3$-subrepresentation consisting of singular vectors, then by Lemma \ref{OmegaAction} $\Omega$ acts on $\tau$ by the following scalar: 
\begin{equation}\Omega|_\Triv=0\cdot \mathrm{id}, \quad 
\Omega|_\Sign=6c \cdot \mathrm{id}, \quad 
\Omega|_\Stand=3c \cdot \mathrm{id}.\label{Omega-repeated}\end{equation}
At the same time, because $\tau\subseteq M^l$, $\Omega$ acts on $\tau$ by $l\cdot \mathrm{id}$. 
As a consequence, singular vectors  of type $\Triv$ in $M$ can only appear in degrees of the form $kp$, singular vectors of type $\Sign$ in degrees of the form $6c+kp$, and singular vectors of type $\Stand$ in degrees of the form $3c+kp$. 

Secondly, note that for $c\in \mathbb{F}_p$ with $\frac{p}{2}<c<\frac{2p}{3}$ we have
\begin{equation}0<6c-3p<3c-p<p,\label{Triv-inequality}\end{equation} and these are the only integers of the form $kp,3c+kp$ and $6c+kp$ with $k\in \mathbb{Z}$ in the interval $[0,p]$. 

These two facts together now spell out a strategy for describing $L_{1,c}(\Triv)$: starting from the lowest degrees, we test for singular vectors. The first fact tells us in which degrees can the singular vectors appear, and in which isotypic component. Then we use the basis from Theorem \ref{decmposeShp>3} to reduce the size of the space where we are looking for the singular vectors, by only looking at specific isotypic components in specific degrees. The second fact tells us in which order to look at these graded pieces. This order is important because as soon as we take a quotient by some submodule in some degree, all calculations in higher degrees need to be done in the quotient module and no longer in the Verma module. This is because there could be singular vectors in the quotient of the Verma module which do not lift to a singular vector in the Verma module. (See Lemma \ref{Triv-special-lemma4} for an example of this.)

By equations \eqref{Omega-repeated} and \eqref{Triv-inequality}, the first space to consider is the $\Sign$ isotypic component in degree $6c-3p$. In Lemma \ref{Triv-special-lemma1} we show that 
$$v_{6c-3p}=q^{2c-p}=(-4\sigma_2^3-27\sigma_3^2)^{c-\frac{p+1}{2}}q$$
is a singular vector in degree $6c-3p$, generating a submodule of $M_{1,c}(\Triv)$ isomorphic to $M_{1,c}(\Sign)[-(6c-3p)]$. In Lemma \ref{Triv-special-lemma2} we show there are no other singular vectors in degree $6c-3p$. 

Next, we take the quotient $M_{1,c}(\Triv)/\left<v_{6c-3p}\right>$ and continue examining the degrees as dictated by equations \eqref{Omega-repeated} and \eqref{Triv-inequality}. In Lemma \ref{Triv-special-lemma3} we show that there are no singular vectors in degree $3c-p$ of the module $M_{1,c}(\Triv)/\left<v_{6c-3p}\right>$. In Lemma \ref{Triv-special-lemma4} we show that there is a one dimensional space of singular vectors in degree $p$ of the module $M_{1,c}(\Triv)/\left<v_{6c-3p}\right>$, in the $\Triv$ isotypic component and spanned by $$v_{p}=\sigma_2^{2p-3c}\sigma_3 \sum_{j=0}^{c-\frac{p+1}{2}} {c-\frac{p+1}{2} \choose j}\frac{1}{2j+1}(-4\sigma_2^3)^{c-\frac{p+1}{2}-j} (-27\sigma_3^2)^j.$$ We note that $v_p$ is only singular in the quotient $M_{1,c}(\Triv)/\left<q^{2c-p}\right>$, and is not an image of a singular vector in $M_{1,c}(\Triv)$. 

In Lemma \ref{complete-intersection} 
we show that $M_{1,c}(\Triv)/\left<v_{6c-3p},v_p\right>$ is a complete intersection and calculate its character and Hilbert series. Its Hilbert series turns out to be $$h_{L_{1,c}(\Triv)}(z)=\frac{(1-z^{6c-3p})(1-z^p)}{(1-z)^2}.$$
By equations \eqref{Omega-repeated} and \eqref{Triv-inequality}, the next degree to consider and search for singular vectors is $6c-2p$. However, the above Hilbert series shows that the maximal degree of $M_{1,c}(\Triv)/\left<v_{6c-3p},v_p \right>$ is $6c-3p-1+p-1=6c-2p-2<6c-2p$. This means the quotient $M_{1,c}(\Triv)/\left<v_{6c-3p},v_p \right>$ has no singular vectors, and is thus irreducible and equal to $L_{1,c}(\Triv)$.

\end{proof}

We now proceed with the details of the above proof, listed as a sequence of lemmas. We keep the assumptions on the parameters $t,c,p$ listed at the start of Section \ref{sect-p>3,Triv,specialc}. 

\begin{lemma}\label{Triv-special-lemma1}
The vector 
$$v_{6c-3p}=q^{2c-p}=(-4\sigma_2^3-27\sigma_3^2)^{c-\frac{p+1}{2}}q$$
is singular in $M_{1,c}^{6c-3p}(\Triv)$, and generates a subrepresentation $\left< v_{6c-3p}\right>$ isomorphic to $M_{1,c}(\Sign)[-(6c-3p)]$.
\end{lemma}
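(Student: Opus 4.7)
My plan is to verify that $v_{6c-3p}=q^{2c-p}$ is singular by a direct computation that exploits the factorisation $q^{2c-p}=g\cdot q$, in which $g$ carries all of the $S_3$-invariance and $q$ carries all of the sign-antisymmetry, and then to deduce the isomorphism $\langle v_{6c-3p}\rangle\cong M_{1,c}(\Sign)[-(6c-3p)]$ from the universal property of the Verma module together with the fact that $S\h^{\ast}$ is an integral domain.

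Since $p$ is odd, $2c-p$ is odd, so I can write $v_{6c-3p}=g\cdot q$ with $g=(q^{2})^{(2c-p-1)/2}=(-4\sigma_2^3-27\sigma_3^2)^{c-(p+1)/2}$, which is $S_3$-invariant by the identity $q^{2}=-27\sigma_3^2-4\sigma_2^3$ recorded in Section~\ref{sect-sympol}. Because $g$ is invariant, the divided-difference terms of every Dunkl operator annihilate it, yielding the product rule
\[
D_y(gq)=g\,D_y(q)+q\,\partial_y(g),
\]
which collapses the problem to a single scalar calculation. Combining the classical identity $\partial_y(q)=q\sum_{s}\langle\alpha_s,y\rangle/\alpha_s$ (sum over reflections) with $s(q)=-q$ gives $D_y(q)=(1-2c)\,q\sum_{s}\langle\alpha_s,y\rangle/\alpha_s$ in $M_{1,c}(\Triv)$, while the chain rule gives $\partial_y(g)=(2c-p-1)\,q^{2c-p-1}\sum_{s}\langle\alpha_s,y\rangle/\alpha_s$. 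Substituting, the scalar prefactor collapses to $(1-2c)+(2c-p-1)=-p=0\in\Bbbk$, so $D_y(v_{6c-3p})=0$ for every $y\in\h$.

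For the second assertion, observe that as an odd power of the antisymmetric $q$, the vector $v_{6c-3p}$ spans a one-dimensional $S_3$-subrepresentation isomorphic to $\Sign$. Being singular, it induces via the universal property of the Verma module a unique graded $H_{1,c}(S_3,\h)$-homomorphism $\varphi\colon M_{1,c}(\Sign)[-(6c-3p)]\to M_{1,c}(\Triv)$ with image $\langle v_{6c-3p}\rangle$. On underlying graded vector spaces $\varphi$ is multiplication by $q^{2c-p}$ inside the integral domain $S\h^{\ast}$ and is therefore injective, so $\varphi$ is an isomorphism onto its image.

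The only genuine step of the argument is spotting the factorisation $q^{2c-p}=g\cdot q$ with $g\in (S\h^{\ast})^{S_3}$: once this is in hand the invariant-factor product rule reduces the entire claim to the single arithmetic identity $(1-2c)+(2c-p-1)\equiv 0\pmod{p}$, and the rest is formal. No other ingredient of the plan presents a serious obstacle.
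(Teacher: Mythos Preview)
Your proof is correct. The second half (the isomorphism with the shifted Verma module) is essentially identical to the paper's argument: both observe that the induced map is multiplication by $q^{2c-p}$ in the integral domain $S\h^{\ast}$ and hence injective.

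For the singularity, however, your route is genuinely different and cleaner than the paper's. The paper works in the rescaled Young basis $b_+,b_-$, invokes Lemma~\ref{KerD1} to reduce to $D_{y_1}$, and then verifies $D_{y_1}(q^{2c-p})=0$ by substituting the explicit values of $\partial_{y_1}(q)$, $\tfrac{\id-(12)}{x_1-x_2}(q)$, $\tfrac{\id-(13)}{x_1-x_3}(q)$ tabulated in Section~\ref{sect-auxiliary} and checking that the resulting combination of $\sigma_2$, $(-b_+^2+3b_-^2)$, and $2b_+b_-$ vanishes. Your argument instead exploits two structural identities---the logarithmic derivative $\partial_y(q)=q\sum_s\langle\alpha_s,y\rangle/\alpha_s$ and the antisymmetry $\tfrac{1-s}{\alpha_s}(q)=2q/\alpha_s$---to reduce the entire computation to the single arithmetic fact $(1-2c)+(2c-p-1)=-p\equiv 0$. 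This works for all $y\in\h$ at once (no appeal to Lemma~\ref{KerD1}), is basis-free, and would generalise verbatim to show that the appropriate power of the discriminant is singular in $M_{1,c}(\Triv)$ for any real reflection group. The paper's approach, by contrast, is tied to the specific auxiliary tables it has already built for $S_3$; its advantage is only consistency with the computational framework used elsewhere in the paper.
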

\begin{proof}
The vector $v_{6c-3p}$ spans a copy of $\Sign$ inside $M_{1,c}^{6c-3p}(\Triv)$, so by Lemma \ref{KerD1} to check it is singular it is enough to show $D_{y_1}(v_{6c-3p})=0$. We check this directly, using computations from Section \ref{sect-auxiliary} and the fact that $q^{2c-p-1}$ is a symmetric polynomial.
\begin{align*}
D_{y_1}(q^{2c-p})&=(2c-p)q^{2c-p-1}\partial_{y_1}(q)-cq^{2c-p-1}\frac{id-(12)}{x_1-x_2}(q)-cq^{2c-p-1}\frac{id-(13)}{x_1-x_3}(q)\\
&=cq^{2c-p-1}\left( 2\cdot \frac{-1}{4}(-b_+^2+3b_-^2)+2\cdot \frac{1}{4}(2b_+b_-)+2 \sigma_2+\frac{1}{3} (-b_+^2+3b_-^2) - \right.\\
& \quad  \quad  \quad \left. -2 \sigma_2+\frac{1}{6} (-b_+^2+3b_-^2)-\frac{1}{2}\cdot 2b_+b_- \right) =0.\\
\end{align*}

This shows $v_{6c-3p}$ is singular, so it induces a map
$$\varphi:M_{1,c}(\Sign)[-(6c-3p)]\to M_{1,c}(\Triv).$$
Seen as a map $S\h^*[-(6c-3p)]\cong M_{1,c}(\Sign)[-(6c-3p)] \to M_{1,c}(\Triv)\cong S\h^*$ this is multiplication by $q^{2c-p}$ which is injective. So, $\left< v_{6c-3p}\right> \cong M_{1,c}(\Sign)[-(6c-3p)]$.
\end{proof}

\begin{lemma}\label{Triv-special-lemma2}
The only singular vectors in $M_{1,c}^{6c-3p}(\Triv)$ are multiples of $v_{6c-3p}$. 
\end{lemma}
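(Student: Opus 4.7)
The plan is to restrict the location and type of any singular vector via the Casimir, then reduce the singular-vector condition to a first-order PDE on symmetric polynomials, and finally analyze this PDE using characteristic-$p$ factorization. First, by Lemma \ref{OmegaAction}, singular vectors of type $\Triv,\Stand,\Sign$ in $M_{1,c}(\Triv)$ can only occur in graded degrees congruent to $0,3c,6c$ modulo $p$ respectively. Since $p/2<c<2p/3$ gives $0<6c-3p<p$, and (using $p>3$) this integer is congruent to $6c$ but not to $0$ or $3c$ modulo $p$, any singular vector in $M_{1,c}^{6c-3p}(\Triv)$ must be of type $\Sign$. By Theorem \ref{decmposeShp>3} every such vector is of the form $v=Fq$ for some symmetric $F\in(S\h^*)^{S_3}$ of degree $6c-3p-3$.

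By Lemma \ref{KerD1}, $v=Fq$ is singular if and only if $D_{y_1}(Fq)=0$. A direct computation, using that $F$ is symmetric, $q$ is antisymmetric, and $\partial_{y_1}(q)=q/(x_1-x_2)+q/(x_1-x_3)$, gives
\[
D_{y_1}(Fq) \;=\; \partial_{y_1}(F)\,q \;+\; (1-2c)\,F\,\partial_{y_1}(q),
\]
so the singular-vector condition becomes the first-order PDE $\partial_{y_1}(F)\,q=(2c-1)\,F\,\partial_{y_1}(q)$. The vector $F_0:=q^{2c-p-1}$ is a solution (since $2c-p-1\equiv 2c-1\pmod p$), giving the known singular vector $v_{6c-3p}=F_0\cdot q$.

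For uniqueness, given any other symmetric solution $F$ I set $H:=F/F_0$ in the fraction field $\Bbbk(b_+,b_-)$. The equations for $F$ and $F_0$ combine to give $\partial_{y_1}(H)=0$, and applying $(12)$ and $(23)$ (which fix $H$) yields $\partial_{y_i}(H)=0$ for every $i$; equivalently $\partial H/\partial b_+=\partial H/\partial b_-=0$. In characteristic $p$ this forces $H\in\Bbbk(b_+^p,b_-^p)$, and $S_3$-invariance further places $H$ in $\Bbbk(\sigma_2^p,\sigma_3^p)$.

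The main obstacle is the final step of concluding $H$ is a constant. Since $H$ is homogeneous of degree $0$, write $H=\tilde P/\tilde Q$ with $\tilde P,\tilde Q\in\Bbbk[\sigma_2^p,\sigma_3^p]$ coprime, giving the equality $F\tilde Q=\tilde P\cdot q^{2c-p-1}$ in $\Bbbk[b_+,b_-]$. The crux is a factorization check: $\sigma_2^p$ and $\sigma_3^p$ factor in $\Bbbk[b_+,b_-]$ as products of $p$-th powers of linear forms $b_+ +\alpha b_-$, whose scalars $\alpha$ (arising as $p$-th roots of $\pm\sqrt{-3}$ and of $\pm 3$) avoid $0,\pm 1$ precisely because $p>3$; hence $\tilde Q$ is coprime in $\Bbbk[b_+,b_-]$ to the factors $b_-$ and $b_+\pm b_-$ of $q$. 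Therefore $\tilde Q\mid\tilde P$ in $\Bbbk[b_+,b_-]$, so $H$ is a polynomial of degree $0$, i.e., a scalar, and $F=\lambda F_0$ as required.
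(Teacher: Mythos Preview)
Your approach is quite different from the paper's, which simply writes a general vector in the $\Sign$ isotypic component in the basis of Theorem~\ref{decmposeShp>3}, applies $D_{y_1}$, reads off a two--term recursion $\alpha_{i+1}=\frac{4}{27}\frac{c-\frac{p+1}{2}-i}{i+1}\alpha_i$ on the coefficients, and solves it uniquely (up to scalar) to recover $v_{6c-3p}=q^{2c-p}$. Your reduction to the first--order equation $\partial_{y_1}(F)\,q=(2c-1)F\,\partial_{y_1}(q)$ and then to $\partial_{y_i}H=0$ for $H=F/F_0$ is correct and more conceptual; it explains \emph{why} the solution is unique rather than verifying it by computation.

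There is, however, a genuine gap in your final step. The assertion that every $\tilde Q\in\Bbbk[\sigma_2^p,\sigma_3^p]$ factors in $\Bbbk[b_+,b_-]$ into the linear factors of $\sigma_2$ and $\sigma_3$ is false: a general polynomial in $\sigma_2^p,\sigma_3^p$ (for instance $\sigma_2^{3p}+\lambda\sigma_3^{2p}$) can have linear factors entirely unrelated to those of $\sigma_2,\sigma_3$. So you cannot conclude coprimality of $\tilde Q$ and $q$ from a comparison of those specific $\alpha$--values. (Two smaller slips: the relevant $\alpha$'s are $\pm\sqrt{-3}$ and $0,\pm 3$, not $p$-th roots thereof; and $\alpha=0$ does occur, for the factor $b_+$ of $\sigma_3$, though this is harmless since $b_+\nmid q$.)

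The repair is shorter than the attempted factorization check. Since $\Bbbk[b_+,b_-]$ is free of finite rank over $\Bbbk[\sigma_2^p,\sigma_3^p]$, coprimality of $\tilde P,\tilde Q$ in the smaller ring persists in the larger one; hence $F\tilde Q=\tilde P\,q^{2c-p-1}$ forces $\tilde Q\mid q^{2c-p-1}$ in $\Bbbk[b_+,b_-]$. Now compare degrees: $\deg q^{2c-p-1}=6c-3p-3<p$ because $c<2p/3$, whereas any nonconstant element of $\Bbbk[\sigma_2^p,\sigma_3^p]$ has degree at least $2p$. Therefore $\tilde Q$ is constant, $H\in\Bbbk$, and $F$ is a scalar multiple of $F_0$.
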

\begin{proof}
By Theorem \ref{decmposeShp>3} 
we are looking for all singular vectors of the form 
$$v=\sum_{i=0}^{c-\frac{p+1}{2}} \alpha_i \sigma_2^{3i} \sigma_3^{2c-p-1-2i}\cdot q.$$
The condition $D_{y_1}(v)=0$ yields, using Section \ref{sect-auxiliary}, the system
$$\alpha_{i+1}=  \frac{2^2}{3^3}\frac{(c-\frac{p+1}{2}-i)}{i+1} \alpha_i,  \quad  \quad  0\le i \le c-\frac{p+1}{2},$$
which has a unique (up to overalll scaling) solution  
$$\alpha_i=(-1)^{c-\frac{p+1}{2}} 4^i\cdot 27^{c-\frac{p+1}{2}-i}\cdot {{c-\frac{p+1}{2}}\choose i} , \quad 0\le i \le c-\frac{p+1}{2},$$
leading to 
\begin{align*}
v&=(-1)^{c-\frac{p+1}{2}} \sum_{i=0}^{c-\frac{p+1}{2}} {{c-\frac{p+1}{2}}\choose i} 4^i  \sigma_2^{3i} \cdot 27^{c-\frac{p+1}{2}-i} \sigma_3^{2(c-\frac{p+1}{2}-i)}\cdot q\\
&=(-4  \sigma_2^{3} - 27 \sigma_3^{2})^{c-\frac{p+1}{2}}\cdot q=q^{2c-p}=v_{6c-3p}.
\end{align*}
\end{proof}

\begin{lemma}\label{Triv-special-lemma3}
There are no singular vectors in degree $3c-p$ of $M_{1,c}(\Triv)/\left<v_{6c-3p}\right>$.\end{lemma}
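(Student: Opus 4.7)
The plan is to combine Lemma \ref{OmegaAction} (via the Casimir element) with Lemma \ref{KerD1} (via the reduction of singularity to the vanishing of $D_{y_1}$) to cut the search space down drastically, then solve a finite linear system. By Lemma \ref{OmegaAction}, any singular vector of type $\tau$ in a quotient of $M_{1,c}(\Triv)$ that sits in graded degree $l$ forces $l\equiv 0\pmod p$ for $\tau=\Triv$, $l\equiv 6c\pmod p$ for $\tau=\Sign$ and $l\equiv 3c\pmod p$ for $\tau=\Stand$. Since $3c-p\equiv 3c\pmod p$, the only isotypic component in degree $3c-p$ that could contain a singular vector is $\Stand$. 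By Lemma \ref{KerD1} it then suffices to look at the one-dimensional $s_1$-fixed line inside each copy of $\Stand$ and verify that $D_{y_1}$ has no further kernel modulo $\langle v_{6c-3p}\rangle$.

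Next, I would use Theorem \ref{decmposeShp>3} to write down an explicit basis of that $s_1$-fixed part of the $\Stand$ isotypic component of $M^{3c-p}_{1,c}(\Triv)\cong S^{3c-p}\h^*$: the vectors $\sigma_2^a\sigma_3^b\,b_+$ with $2a+3b=3c-p-1$ together with $\sigma_2^a\sigma_3^b(-b_+^2+3b_-^2)$ with $2a+3b=3c-p-2$. A general candidate is a linear combination with unknown coefficients $\alpha_{a,b},\beta_{a,b}$, which can be further parametrised by a single index $j$ as in the proof of Lemma \ref{numberinbasis}. Applying the tables of Section \ref{sect-auxiliary} gives $D_{y_1}$ of each basis vector as a combination of basis vectors of the $s_1$-fixed part of $M^{3c-p-1}_{1,c}(\Triv)$ (that is, of products $\sigma_2^{a'}\sigma_3^{b'}\cdot 1$, $\sigma_2^{a'}\sigma_3^{b'}\cdot q$, and the two $\Stand$ type quadratics). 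Setting the coefficients of each resulting basis vector to zero yields a block-tridiagonal linear system in $\alpha_{a,b}$ and $\beta_{a,b}$.

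The last step is to compare the solution space with the known submodule contribution. The subrepresentation $\langle v_{6c-3p}\rangle\cong M_{1,c}(\Sign)[-(6c-3p)]$ contributes in degree $3c-p$ the vectors $f\cdot q^{2c-p}$ with $f\in S^{2p-3c}\h^*$; since $\Sign\otimes\Stand\cong\Stand$, its $\Stand$ isotypic part in degree $3c-p$ corresponds bijectively (via multiplication by $q^{2c-p}$) to the $\Stand$ isotypic part of $S^{2p-3c}\h^*$, and these are all automatically killed by $D_{y_1}$. Counting using Theorem \ref{decmposeShp>3}, the dimension of the $s_1$-fixed part of the $\Stand$ isotypic component of $S^{2p-3c}\h^*$ equals
\[
\bigl|\{(a,b)\in\mathbb{N}_0^2:2a+3b=2p-3c-1\}\bigr|+\bigl|\{(a,b)\in\mathbb{N}_0^2:2a+3b=2p-3c-2\}\bigr|.
\]
My goal is to show that the rank of the linear system from the previous step is exactly equal to the total number of unknowns minus this dimension, so that every solution of $D_{y_1}(v)=0$ lies in $\langle v_{6c-3p}\rangle$.

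The main obstacle I expect is the combinatorial bookkeeping of the recursion: since we have two families of unknowns $(\alpha_{a,b},\beta_{a,b})$ indexed by two different lattice points on the lines $2a+3b=3c-p-1$ and $2a+3b=3c-p-2$, and the output of $D_{y_1}$ couples them across three adjacent lines of the lattice, the resulting system is roughly of the same shape as the one analysed in Lemmas \ref{Stand-generic-alpha,gamma}--\ref{p>3,t=1,generic,Stand,solutions}. The cleanest way to win is probably to produce the map $\Phi:S^{2p-3c}\h^*_{\Stand}\hookrightarrow M^{3c-p}_{1,c}(\Triv)_{\Stand}$ given by $f\mapsto f\cdot q^{2c-p}$, show that $\Phi$ is injective with image lying in $\ker D_{y_1}$, and then verify that the total rank of the linear system matches, so no nonzero solution survives in the quotient.
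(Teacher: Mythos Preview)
There is a genuine gap. Your argument conflates ``singular in $M_{1,c}(\Triv)$'' with ``singular in $M_{1,c}(\Triv)/\langle v_{6c-3p}\rangle$''. A vector $v$ is singular in the quotient precisely when $D_{y_1}(v)\in\langle q^{2c-p}\rangle$, not when $D_{y_1}(v)=0$ in the Verma module. Your key claim, that the vectors $f\cdot q^{2c-p}$ with $f$ in the $\Stand$ part of $S^{2p-3c}\h^*$ are ``automatically killed by $D_{y_1}$'', is false: the submodule $\langle q^{2c-p}\rangle$ is preserved by $D_{y_1}$, but not annihilated. Indeed, as the paper notes (and as a direct computation confirms), there are \emph{no} singular vectors at all in $M_{1,c}^{3c-p}(\Triv)$, so $\ker D_{y_1}$ on the $\Stand$ $s_1$-fixed part is zero, not the dimension you are trying to match. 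Your proposed rank count therefore cannot go through, and even if it did, it would only rule out singular vectors in $M_{1,c}(\Triv)$, not in the quotient.

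The paper handles this by changing basis: instead of the $\sigma_2^a\sigma_3^b b_+$ and $\sigma_2^a\sigma_3^b(-b_+^2+3b_-^2)$ basis (which is ill-suited to quotienting by a power of $q$), it uses a basis of the form $q^{2i}\sigma_3^{m}b_+$, $q^{2i+1}\sigma_3^{m'}b_-$ (or their degree-two analogues, depending on $p\bmod 3$). In this basis, passing to the quotient by $q^{2c-p}$ simply truncates the range of $i$ to $i<c-\frac{p-1}{2}$. One then writes a general $w$ in the truncated basis, computes $D_{y_1}(w)$ using the tables of Section~\ref{sect-auxiliary}, and shows that the resulting linear system in the quotient has only the trivial solution. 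The essential step you are missing is precisely this: you must work modulo $\langle q^{2c-p}\rangle$ from the start, and the standard $\sigma_2,\sigma_3$ basis makes that awkward while the $q,\sigma_3$ basis makes it transparent.
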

\begin{proof}
It is straightforward to show that there are no singular vectors in degree $3c-p$ of $M_{1,c}(\Triv)$, using a similar computation to the above and the basis 
\begin{equation}\label{basis1}\left\{\sigma_2^a\sigma_3^bb_+ \mid 2a+3b+1=3c-p \right\}\cup \left\{\sigma_2^a\sigma_3^b(-b_+^2+3b_-^2) \mid 2a+3b+2=3c-p \right\},
\end{equation}
 of the $S_2$ invariant part of the $\Stand$ isotypic component of $M_{1,c}^{3c-p}(\Triv)$. However, this lemma claims more - that there are no singular vectors modulo $v_{6c-3p}=q^{2c-p}$. The basis \eqref{basis1} is not well suited to taking this quotient and the calculations are more involved, so we first change basis to the one in which taking this quotient becomes very easy. We then prove the lemma by direct computation. We distinguish two cases, depending on the remainder of $p$ modulo $3$. 

{\bf Case 1. $p\equiv 2 \pmod 3$}
The new basis of the $S_2$ invariant part of the $\Stand$ isotypic component of $M_{1,c}^{3c-p}(\Triv)$ we propose to use in this case is 
\begin{equation}\label{basis4} \left\{q^{2i}\sigma_3^{c-\frac{p+1}{3}-2i} b_+ \mid 0\le i \le \frac{c}{2}-\frac{p+1}{6} \right\} \cup \left\{q^{2i+1}\sigma_3^{c-\frac{p+1}{3}-2i-1} b_- \mid 0\le i \le \frac{c}{2}-\frac{p+1}{6}-\frac{1}{2} \right\}.
\end{equation}
This set is indeed a basis as it lies in the $S_2$ invariant part of the $\Stand$ isotypic component of $M_{1,c}^{3c-p}(\Triv)$, is linearly independent and has the correct number of elements.

After taking the quotient by $\left<q^{2c-p}\right>$ the set \eqref{basis4} is reduced to 
\begin{equation}\label{basis5} \left\{q^{2i}\sigma_3^{c-\frac{p+1}{3}-2i} b_+ \mid 0\le i \le c-\frac{p+1}{2} \right\} \, \cup \,  \left\{q^{2i+1}\sigma_3^{c-\frac{p+1}{3}-2i-1} b_- \mid 0\le i \le c-\frac{p+1}{2}-1 \right\}.
\end{equation}
This set spans the $S_2$ invariant part of the $\Stand$ isotypic component of the module $M_{1,c}^{3c-p}(\Triv)/\left<q^{2c-p}\right>$ and contains $2c-p$ elements. On the other hand, the multiplicity of $\Stand$ in 
$M_{1,c}^{3c-p}(\Triv)$ is $\frac{3c-p+2}{3}$, the multiplicity of $\Stand$ in $M_{1,c}^{3c-p}(\Sign)[6c-3p]$ is $\frac{2p-3c+2}{3}$, so the multiplicity of $\Stand$ in $M_{1,c}^{3c-p}(\Triv)/\left<q^{2c-p} \right>$ is $$\frac{3c-p+2}{3}-\frac{2p-3c+2}{3}=2c-p.$$ This shows that the set \eqref{basis5} is a basis. We calculate the Dunkl operators in this basis: 
\begin{align}\label{dunkl-trivstand-1}
&D_{y_1}(q^{2i}\sigma_3^{c-\frac{p+1}{3}-2i}\cdot b_+)=\\
\notag &\quad=(1-\frac{3c+p+1}{2})q^{2i}\sigma_3^{c-\frac{p+1}{3}-2i}+\\
\notag &\quad\quad +\frac{1}{2}(c-\frac{p+1}{3}-2i)q^{2i+1}\sigma_3^{c-\frac{p+1}{3}-2i-1}+(-27i)q^{2i-1}\sigma_3^{c-\frac{p+1}{3}-2i+1}+\\
\notag &\quad\quad + (-3i)q^{2i-1}\sigma_3^{c-\frac{p+1}{3}-2i}\sigma_2b_+ + \frac{1}{6}(c-\frac{p+1}{3}-2i)q^{2i}\sigma_3^{c-\frac{p+1}{3}-2i-1}\sigma_2b_++\\
\notag &\quad\quad + (-3i)q^{2i-1}\sigma_3^{c-\frac{p+1}{3}-2i}\sigma_2b_- - \frac{1}{2}(c-\frac{p+1}{3}-2i)q^{2i}\sigma_3^{c-\frac{p+1}{3}-2i-1}\sigma_2b_-,
\end{align}
\begin{align}\label{dunkl-trivstand-2}
&D_{y_1}(q^{2i+1}\sigma_3^{c-\frac{p+1}{3}-2i-1}\cdot b_-)=\\
\notag &\quad=\frac{-1}{6}(c-\frac{p+1}{3}-2i-1)q^{2i+2}\sigma_3^{c-\frac{p+1}{3}-2i-2}+\frac{9}{2}(2i+1-2c)q^{2i}\sigma_3^{c-\frac{p+1}{3}-2i}+\\
\notag &\quad\quad +(1+\frac{3}{2}(c-\frac{p+1}{3}))q^{2i+1}\sigma_3^{c-\frac{p+1}{3}-2i-1}+\\
\notag &\quad\quad + (\frac{-1}{2}(2i+1)+c)q^{2i}\sigma_3^{c-\frac{p+1}{3}-2i-1}\sigma_2b_+- \frac{1}{6}(c-\frac{p+1}{3}-2i-1)q^{2i+1}\sigma_3^{c-\frac{p+1}{3}-2i-2}\sigma_2b_+ +\\
\notag &\quad\quad  + (\frac{3}{2}(2i+1)-3c)q^{2i}\sigma_3^{c-\frac{p+1}{3}-2i-1}\sigma_2b_-- \frac{1}{6}(c-\frac{p+1}{3}-2i-1)q^{2i+1}\sigma_3^{c-\frac{p+1}{3}-2i-2}\sigma_2b_-.
\end{align}

Assume that 
$$w=\sum_{i=0}^{c-\frac{p+1}{2}}\alpha_i q^{2i}\sigma_3^{c-\frac{p+1}{3}-2i}\cdot b_++\sum_{i=0}^{c-\frac{p+1}{2}-1}\beta_i q^{2i+1}\sigma_3^{c-\frac{p+1}{3}-2i-1}\cdot b_- $$ is a singular vector. 
Using \eqref{dunkl-trivstand-1} and \eqref{dunkl-trivstand-2}, we get that the $\Triv$ component of $D_{y_1}(w)$ is $0$ if and only if 
\begin{align*}
& \sum_{i=0}^{c-\frac{p+1}{2}}\alpha_i \left(1-\frac{3c+p+1}{2} \right)q^{2i}\sigma_3^{c-\frac{p+1}{3}-2i}+\\
& \quad + \sum_{i=0}^{c-\frac{p+1}{2}-1}\beta_i \frac{-1}{6}(c-\frac{p+1}{3}-2i-1)q^{2i+2}\sigma_3^{c-\frac{p+1}{3}-2i-2}+\\
& \quad \quad + \sum_{i=0}^{c-\frac{p+1}{2}-1}\beta_i \frac{9}{2}(2i+1-2c)q^{2i}\sigma_3^{c-\frac{p+1}{3}-2i} =0.
\end{align*}
This leads to 
\begin{align}
\alpha_0\cdot \left(1-\frac{3c+p+1}{2} \right) + \beta_0 \cdot \frac{9}{2}(1-2c)&= 0, \label{lemma-nosingvectTrivStand-sys1} \\
\alpha_i\cdot \left(1-\frac{3c+p+1}{2} \right) + \beta_i \cdot \frac{9}{2}(2i+1-2c)+ \beta_{i-1} \cdot \frac{1}{6}(2i-1-c+\frac{p+1}{3})&= 0, \label{lemma-nosingvectTrivStand-sys2} \\
\alpha_{c-\frac{p+1}{2}}\cdot \left(1-\frac{3c+p+1}{2} \right) +  \beta_{c-\frac{p+1}{2}-1} \cdot \frac{1}{6}(c-\frac{2(p+1)}{3}-1)&= 0.\label{lemma-nosingvectTrivStand-sys3}  
\end{align}
Similarly, \eqref{dunkl-trivstand-1} and \eqref{dunkl-trivstand-2} imply that the $\Sign$ component of $D_{y_1}(w)$ is $0$ if and only if 
\begin{align}
\alpha_0\cdot \left(c-\frac{p+1}{3} \right) + \beta_0 \cdot \left(1+\frac{3}{2}(c-\frac{p+1}{2}) \right)+\alpha_1\cdot (-27)&= 0,  \label{lemma-nosingvectTrivStand-sys4} \\ 
\alpha_i\cdot \left(c-\frac{p+1}{3} -2i\right) + \beta_i \cdot \left(1+\frac{3}{2}(c-\frac{p+1}{2}) \right)+ \alpha_{i+1} \cdot \left(-27(i+1)\right)&= 0, \label{lemma-nosingvectTrivStand-sys5} \\
\alpha_{c-\frac{p+1}{2}}\cdot \left(-c+\frac{2(p+1)}{3} \right) &= 0 \label{lemma-nosingvectTrivStand-sys6}
\end{align}

The system of equations \eqref{lemma-nosingvectTrivStand-sys1}-\eqref{lemma-nosingvectTrivStand-sys6} can now be shown to have no nonzero solutions by using \eqref{lemma-nosingvectTrivStand-sys6} to deduce $\alpha_{c-\frac{p+1}{2}}=0$, 
\eqref{lemma-nosingvectTrivStand-sys3} to deduce $\beta_{c-\frac{p+1}{2}-1}=0$, 
then alternating \eqref{lemma-nosingvectTrivStand-sys2} and \eqref{lemma-nosingvectTrivStand-sys5} to show that $\beta_{i-1}=0$ and $\alpha_{i}=0$ for all $i=1, \ldots, c-\frac{p+1}{2}-1$ and finally using either  \eqref{lemma-nosingvectTrivStand-sys1} or  \eqref{lemma-nosingvectTrivStand-sys4} to show that $\alpha_{0}=0$. The only constants we divide by in this calculation are $c-\frac{p+1}{3}-2i$ and $c-\frac{p+1}{3}-2i+1$, which are never equal to $0$ for these $i$ and $\frac{p}{2}<c<\frac{2p}{3}$. This shows that the only solution to the equation $D_{y_1}(w)=0$ in the $S_2$ invariant part of the $\Stand$ isotypic component of $M_{1,c}^{3c-p}(\Triv)/\left<q^{2c-p}\right>$ is $w=0$, proving the claim of the Lemma in Case 1.

{\bf Case 2. $p\equiv 1 \pmod 3$} Similar. The basis of the $S_2$-invariant part of the $\Stand$ isotypic component of $M_{1,c}^{3c-p}(\Triv)/\left<q^{2c-p}\right>$ is
\begin{equation*}\left\{q^{2i}\sigma_3^{c-\frac{p+2}{3}-2i}\cdot (-b_+^2+3b_-^2)\,,\, q^{2i+1}\sigma_3^{c-\frac{p+2}{3}-2i-1}\cdot 2b_+b_- \mid 0\le i < c-\frac{p+1}{2} \right\}.
\end{equation*}

We calculate the Dunkl operators in this basis to be: 
\begin{align*}
&D_{y_1}(q^{2i}\sigma_3^{c-\frac{p+1}{3}-2i}\cdot (-b_+^2+3b_-^2) )=\\
 &\quad= 2(c-\frac{p+2}{3}-2i)q^{2i}\sigma_3^{c-\frac{p+2}{3}-2i-1}\sigma_2^2+(-9i)q^{2i-1}\sigma_3^{c-\frac{p+1}{3}-2i}\sigma_2^2+\\
 &\quad\quad +\textrm{something in the Stand component of } M_{1,c}^{3c-p-1}(\Triv)
\end{align*}
\begin{align*}
&D_{y_1}(q^{2i+1}\sigma_3^{c-\frac{p+1}{3}-2i-1}\cdot 2b_+b_-) )=\\
 &\quad=6(2i+1-2c)q^{2i}\sigma_3^{c-\frac{p+2}{3}-2i-1}\sigma_2^2+2(c-\frac{p+2}{3}-2i-1)q^{2i+1}\sigma_3^{c-\frac{p+1}{3}-2i-2}\sigma_2^2+\\
 &\quad\quad +\textrm{something in the Stand component of } M_{1,c}^{3c-p-1}(\Triv).
\end{align*}

Requiring that a vector 
\begin{equation*}w=\sum_{i=0}^{c-\frac{p+1}{2}}\alpha_i q^{2i}\sigma_3^{c-\frac{p+2}{3}-2i}\cdot (-b_+^2+3b_-^2)+\sum_{i=0}^{c-\frac{p+1}{2}-1}\beta_i q^{2i+1}\sigma_3^{c-\frac{p+2}{3}-2i-1}\cdot 2b_+b_-
\end{equation*}
satisfies $D_{y_1}(w)=0$ leads to to a system of equations for $\alpha_i,\beta_i$. 
By asking that the $\Triv$ component of $D_{y_1}(w)$ is $0$ we get
\begin{align*}
\alpha_{c-\frac{p+1}{2}}&=0\\
\alpha_{i}\cdot2(c-\frac{p+2}{3}-2i)+\beta_i\cdot 6(2i+1-2c) &=0, \qquad i=0, \ldots, c-\frac{p+1}{2}-1
\end{align*}
and by asking that the $\Sign$ component of $D_{y_1}(w)$ is $0$ we get
\begin{align*}
\alpha_{i+1}\cdot (-9)(i+1) +\beta_i\cdot 2(c-\frac{p+2}{3}-2i-1) &=0, \qquad i=0, \ldots, c-\frac{p+1}{2}.
\end{align*}
This system has a unique solution $\alpha_i=0,\beta_i=0$ for all $i$, so $w=0$. This proves the Lemma in case 2. 
\end{proof}

\begin{lemma}\label{Triv-special-lemma4}
There is a one dimensional space of singular vectors in degree $p$ of the module $M_{1,c}(\Triv)/\left<q^{2c-p}\right>$, in the $\Triv$ isotypic component and spanned by $$v_p=\sigma_2^{2p-3c}\sigma_3 \sum_{j=0}^{c-\frac{p+1}{2}} {c-\frac{p+1}{2} \choose j}\frac{1}{2j+1}(-4\sigma_2^3)^{c-\frac{p+1}{2}-j} (-27\sigma_3^2)^j.$$ Note that $v_p$  does not lift to a singular vector in $M_{1,c}(\Triv)$. 
\end{lemma}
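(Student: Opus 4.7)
The strategy mirrors Lemmas~\ref{Triv-special-lemma1}--\ref{Triv-special-lemma3}: prove existence, uniqueness, and non-liftability in turn. By the Casimir computation in the proof of Theorem~\ref{thm-sect-p>3,Triv,specialc}, any singular vector in degree $p$ of $M_{1,c}(\Triv)/\langle q^{2c-p}\rangle$ must lie in the $\Triv$ isotypic component, so the search is restricted to that piece. Since $v_p$ is a polynomial in the $S_3$-invariants $\sigma_2$ and $\sigma_3$, it is itself $S_3$-invariant, so the divided-difference part of $D_{y_1}$ annihilates it and $D_{y_1}(v_p)=\partial_{y_1}(v_p)$. By Lemma~\ref{KerD1}, showing singularity thus reduces to verifying $\partial_{y_1}(v_p)\in\langle q^{2c-p}\rangle$.

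For existence I would compute $\partial_{y_1}(v_p)$ by the Leibniz rule, using the formulas for $\partial_{y_1}(\sigma_2)$ and $\partial_{y_1}(\sigma_3)$ from Section~\ref{sect-auxiliary}. The coefficients $\binom{c-(p+1)/2}{j}/(2j+1)$ are chosen precisely so that the resulting expression factors as $q^{2c-p}\cdot h$ for an explicit $h\in S^{4p-6c-1}\h^*$. The bookkeeping becomes transparent if one views $v_p$ as the formal integral
\[
v_p=\sigma_2^{2p-3c}\sigma_3\int_0^1\bigl(-4\sigma_2^3-27\sigma_3^2\,t^2\bigr)^{c-(p+1)/2}\,dt,
\]
differentiates under the integral, and applies integration by parts to $\tfrac{d}{dt}\!\left[t(-4\sigma_2^3-27\sigma_3^2 t^2)^n\right]$ together with $q^2=-4\sigma_2^3-27\sigma_3^2$: the boundary term produces exactly $q^{2c-p-1}$ and the interior furnishes the required cancellations. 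The denominators $2j+1$ with $0\le j\le c-(p+1)/2<p/6$ are all invertible in $\mathbb{F}_p$ (since $p>3$), so the identity transfers cleanly to characteristic $p$. For uniqueness I would then expand a general invariant $w=\sum_{2a+3b=p}\alpha_{a,b}\sigma_2^a\sigma_3^b\in M_{1,c}^p(\Triv)$ and impose the condition $\partial_{y_1}(w)\in\langle q^{2c-p}\rangle=q^{2c-p}\cdot S\h^*$. Expanding $\partial_{y_1}(w)$ in the basis of $M_{1,c}^{p-1}(\Triv)$ from Theorem~\ref{decmposeShp>3} and projecting onto the $S_2$-symmetric and $S_2$-antisymmetric parts produces a linear recursion on the $\alpha_{a,b}$ directly analogous to the one treated in Lemma~\ref{Triv-special-lemma3}. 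Alternating the two families of equations determines every $\alpha_{a,b}$ from a single free parameter, all intermediate denominators being nonzero in $\mathbb{F}_p$ for $p/2<c<2p/3$, so the solution space is one-dimensional and spanned by $v_p$.

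For non-liftability, the existence computation yields $\partial_{y_1}(v_p)=q^{2c-p}\cdot h$ with $h\neq0$ in $S\h^*$: reducing $h$ modulo $q^2$ leaves a nonzero scalar multiple, namely $\sum_{j=0}^{n}\binom{n}{j}\tfrac{(-1)^j}{2j+1}=\tfrac{4^n(n!)^2}{(2n+1)!}$ with $n=c-(p+1)/2$, of a nonzero monomial in $\sigma_2$ and $\sigma_3$; this scalar is nonzero in $\mathbb{F}_p$ since $0<n<p/6$. Hence $v_p$ itself is not singular in $M_{1,c}(\Triv)$. Any other lift of $v_p$ differs by a vector of the form $q^{2c-p+1}u$ with $u$ a homogeneous invariant of degree $4p-6c-3$, and the Leibniz identity $\partial_{y_1}(q^{2c-p+1}u)=q^{2c-p}\!\left[(2c-p+1)\partial_{y_1}(q)u+q\partial_{y_1}(u)\right]$ shows that the correction cannot cancel $q^{2c-p}h$, because the required equality fails after reducing modulo $q^2$ in $\Bbbk[\sigma_2,\sigma_3]$. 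The principal obstacle throughout is the existence step, where the Leibniz expansion produces many terms and the integration-by-parts identity above is the cleanest way to recognise the cancellations and package them into the promised factorisation.
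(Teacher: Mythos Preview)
Your outline separates existence, uniqueness, and non-liftability, whereas the paper handles existence and uniqueness in a single short computation. The key simplification you miss is this: the paper first passes to a basis of the \emph{quotient} $M_{1,c}^p(\Triv)/\langle q^{2c-p}\rangle$. Since $(q^2)^{c-\frac{p-1}{2}}=(-4\sigma_2^3-27\sigma_3^2)^{c-\frac{p-1}{2}}$ vanishes there, every invariant of degree $p$ is uniquely a combination $v=\sum_{j=0}^{c-\frac{p+1}{2}}\alpha_j\,\sigma_2^{\frac{p-3}{2}-3j}\sigma_3^{2j+1}$. One then computes $D_{y_1}(v)=\partial_{y_1}(v)$ directly. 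Because $2a+3b=p\equiv 0$ forces $-a/6=b/4=(2j+1)/4$ in $\mathbb F_p$, the auxiliary identities of Section~\ref{sect-auxiliary} give
\[
\partial_{y_1}\!\bigl(\sigma_2^a\sigma_3^b\bigr)=\tfrac{2j+1}{12}\,\sigma_2^{a-1}\sigma_3^{b-1}\cdot q\,(b_--b_+)
\]
for every monomial in this basis. Thus $D_{y_1}(v)$ \emph{automatically} factors as a symmetric polynomial times $q(b_--b_+)$, and singularity in the quotient reduces to the single condition that this symmetric polynomial be a scalar multiple of $(-4\sigma_2^3-27\sigma_3^2)^{c-\frac{p+1}{2}}$. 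Matching coefficients gives the $\alpha_j$ immediately; this is both existence and uniqueness at once, and the nonvanishing of $D_{y_1}(v_p)$ in $M_{1,c}(\Triv)$ is visible from the same line.

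Your uniqueness argument, as written, has a genuine gap. You work with a general $w=\sum_{2a+3b=p}\alpha_{a,b}\sigma_2^a\sigma_3^b$ in the \emph{full} Verma module and claim the condition $\partial_{y_1}(w)\in\langle q^{2c-p}\rangle$ yields a recursion with a single free parameter. But $\langle q^{2c-p}\rangle$ is an $H_{1,c}$-submodule, so every $\Triv$-type element of $\langle q^{2c-p}\rangle^p$ (namely every $q^{2c-p+1}u$ with $u\in(S\h^*)^{S_3}$ of degree $4p-6c-3$) already satisfies this condition. That space is nonzero for many $(p,c)$ in the range---for instance $p=11$, $c=6$, or $p=13$, $c=7$---so the solution space in the full Verma is then strictly larger than one-dimensional and your recursion cannot pin down all $\alpha_{a,b}$ from one parameter. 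The fix is precisely what the paper does: reduce to a basis of the quotient first, or equivalently quotient your solution space by the contribution from $\langle q^{2c-p}\rangle$; you do neither. Your integral representation of $v_p$ is a nice observation (and the paper uses the same device later, in Lemma~\ref{lemma-aux-1}), but it is not needed here: the factorisation $\partial_{y_1}(\sigma_2^a\sigma_3^b)\propto q(b_--b_+)$ in degree $p$ is the decisive shortcut. Finally, your non-liftability argument over-reaches---reducing ``modulo $q^2$ in $\Bbbk[\sigma_2,\sigma_3]$'' does not apply, since the relevant $h$ lies in the $\Stand$-component of $S\h^*$, not in $\Bbbk[\sigma_2,\sigma_3]$; the paper simply reads off $D_{y_1}(v_p)\ne 0$ from the explicit computation.
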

\begin{proof}
We will be calculating modulo $$q^{2c-p}=(-4\sigma_2^3-27\sigma_3^2)^{c-\frac{p+1}{2}}q.$$
The basis we are working with in $\Triv$ isotypic component of $M_{1,c}^p(\Triv)$ is $$\left\{\sigma_2^a\sigma_3^b\, \mid \, 2a+3b=p, 0\le a,b\right\}.$$
In $M_{1,c}(\Triv)/\left<q^{2c-p}\right>$ these elements are not linearly independent, as they satisfy
$$\sigma_2^a\sigma_3^b(-4\sigma_2^3-27\sigma_3^2)^{c-\frac{p+1}{2}}q^2=\sigma_2^a\sigma_3^b(-4\sigma_2^3-27\sigma_3^2)^{c-\frac{p+1}{2}+1}=0,$$ for all $a,b$ such that  $2a+3b+6(c-\frac{p+1}{2}+1)=p.$ In particular, in this quotient we can express any power of $\sigma_3^b$  with $b\ge 2c-p+1$ in terms of polynomials of lower degree in $\sigma_3$. The basis of the $\Triv$ isotypic component of $M_{1,c}(\Triv)^p/\left<q^{2c-p}\right>$ is thus
\begin{align*}\left\{\sigma_2^a\sigma_3^b\, \mid \, 2a+3b=p, 0\le a, 0\le b <  2c-p+1\right\}=
\left\{\sigma_2^{\frac{p-3}{2}-3j}\sigma_3^{2j+1}\, \mid \, 0\le j \le c-\frac{p+1}{2}\right\}.
\end{align*}

Write an arbitrary vector $v$ in the $\Triv$ isotypic component of $M_{1,c}^p(\Triv)/\left<q^{2c-p}\right>$ as 
$$v=\sum_{j=0}^{c-\frac{p+1}{2}} \alpha_j \sigma_2^{\frac{p-3}{2}-3j} \sigma_3^{2j+1}$$
for some $\alpha_j\in \Bbbk$ (depending on $c$), and let us check when $v$ is singular. We have 
\begin{align*}
D_{y_1}(\sigma_2^a\sigma_3^b)&=\frac{-a}{6}\sigma_2^{a-1}\sigma_3^b(b_++3b_-)+\frac{b}{36}\sigma_2^a\sigma_3^{b-1}((-b_+^2+3b_-^2)+3\cdot 2b_+b_-), \\
D_{y_1}(v)&=\sum_{j=0}^{c-\frac{p+1}{2}} \alpha_j\left( \frac{-(\frac{p-3}{2}-3j)}{6}\sigma_2^{\frac{p-3}{2}-3j-1}\sigma_3^{2j+1}(b_++3b_-)+\right.\\
& \quad \quad \left. +\frac{2j+1}{36}\sigma_2^{\frac{p-3}{2}-3j}\sigma_3^{2j}((-b_+^2+3b_-^2)+3\cdot 2b_+b_-)\right)\\
&=\sum_{j=0}^{c-\frac{p+1}{2}} \alpha_j \frac{2j+1}{12} \sigma_2^{\frac{p-3}{2}-3j-1}\sigma_3^{2j}\left(3\sigma_3b_++\frac{1}{3}\sigma_2(-b_+^2+3b_-^2)\right)\\
& \quad \quad -\alpha_j \frac{2j+1}{12} \sigma_2^{\frac{p-3}{2}-3j-1}\sigma_3^{2j}\left(  -9\sigma_3b_+   -\sigma_2\cdot 2b_+b_- \right)\\
&=\sum_{j=0}^{c-\frac{p+1}{2}} \alpha_j \frac{2j+1}{12} \sigma_2^{\frac{p-3}{2}-3j-1}\sigma_3^{2j}qb_--\alpha_j \frac{2j+1}{12} \sigma_2^{\frac{p-3}{2}-3j-1}\sigma_3^{2j}qb_+\\
&=\frac{1}{12}\sigma_2^{2p-3c}\left( \sum_{j=0}^{c-\frac{p+1}{2}} \alpha_j (2j+1) \sigma_2^{3(c-\frac{p+1}{2}-j)}\sigma_3^{2j}\right)q\cdot (b_--b_+).
\end{align*}

Recall that we are working in the quotient by $q^{2c-p}$. The above vector $D_{y_1}(v)$ is a multiple of 
$$q^{2c-p}=(-4\sigma_2^3-27\sigma_3^2)^{c-\frac{p+1}{2}}q=\sum_{j=0}^{c-\frac{p+1}{2}}{c-\frac{p+1}{2} \choose j}(-4\sigma_2^3)^{c-\frac{p+1}{2}-j}(-27\sigma_3^2)^j$$
if and only if (up to overalll scaling) we have for all $j$
$$\alpha_j  (2j+1)={c-\frac{p+1}{2} \choose j}(-4)^{c-\frac{p+1}{2}-j}(-27)^j.$$ 
This proves there is (up to scalars) exactly one singular vector in the $\Triv$ isotypic component of $M_{1,c}^p(\Triv)/\left<q^{2c-p}\right>$ equal to 
$$v_p=\sigma_2^{2p-3c}\sigma_3 \sum_{j=0}^{c-\frac{p+1}{2}} {c-\frac{p+1}{2} \choose j}\frac{1}{2j+1}(-4\sigma_2^3)^{c-\frac{p+1}{2}-j} (-27\sigma_3^2)^j.$$
\end{proof}

\begin{remark}
We can rewrite it as
$$v_p=\sigma_2^{2p-3c}\sigma_3\sum_{i=0}^{c-\frac{p+1}{2}} (-4\sigma_2^3-27\sigma_3^2)^{c-\frac{p+1}{2}-i}(-4\sigma_2^3)^i\frac{\prod_{j=0}^{i-1}(2m-2j)}{\prod_{j=0}^{i}(2m+1-2j)}$$
though it is not clear if we gain anything by this nor if there is a closed formula for $v_p$. 

\end{remark}

\begin{remark}
When $c=\frac{1}{2}=\frac{p+1}{2}$, it lies in the interval considered here as $p/2<\frac{p+1}{2}<2p/3$. For this $c$ the above formula says 
$v_p=\sigma_2^{\frac{p-3}{2}}\sigma_3.$
This special case has already been noted in \cite{Li14} Remark 3.5. 
\end{remark}

\begin{lemma}\label{complete-intersection}
The vectors $v_{6c-3p}, v_p$ form a regular sequence, and $M_{1,c}(\Triv)/\left<v_{6c-3p}, v_{p}\right>$ is a complete intersection. Its character and Hilbert polynomial are 
\begin{align*}
\chi_{M_{1,c}(\Triv)/\left<v_{6c-3p}, v_{p}\right>}(z)&=\chi_{S\h^*}(z)(1-z^{6c-3p}[\Sign]-z^p[\Triv]+z^{6c-2p}[\Sign])\\
h_{M_{1,c}(\Triv)/\left<v_{6c-3p}, v_{p}\right>}(z)&=\frac{(1-z^{6c-3p})(1-z^p)}{(1-z)^2}.
\end{align*}

\end{lemma}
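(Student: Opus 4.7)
The plan is to view $M_{1,c}(\Triv)$ as the polynomial ring $S\h^* \cong \Bbbk[b_+, b_-]$ and exploit the standard fact that any two algebraically independent homogeneous elements in a polynomial ring in two variables form a regular sequence, so the quotient is a complete intersection whose character and Hilbert series follow from the Koszul complex. The first half of the regular-sequence condition is immediate: $v_{6c-3p} = q^{2c-p}$ is a nonzero element of the integral domain $S\h^*$. The content of the lemma is that $v_p$ remains a nonzerodivisor modulo $\left<q^{2c-p}\right>$, which is the step I focus on.

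To analyse the associated primes of $\left<q^{2c-p}\right>$, I would use the factorisation $q = b_-(b_+ - b_-)(b_+ + b_-)$ into linear forms in $S\h^*$. These three linear forms are permuted transitively by $S_3$, so an $S_3$-invariant polynomial divisible by any one of them must be divisible by all three, i.e.\ by $q$. Since $v_p$ is invariant (it is a polynomial in $\sigma_2, \sigma_3$), the divisibility $q \mid v_p$ would force $v_p = q\cdot g$ with $g$ antisymmetric, hence $g = q\cdot h$ with $h$ invariant, so that $q^2 = -4\sigma_2^3 - 27\sigma_3^2$ divides $v_p$ in the invariant subring $\Bbbk[\sigma_2, \sigma_3]$. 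It therefore suffices to show $v_p \not\equiv 0 \pmod{-4\sigma_2^3 - 27\sigma_3^2}$.

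The calculation is explicit. Writing $m = c - (p+1)/2$ and substituting $-27\sigma_3^2 \equiv 4\sigma_2^3$ in the formula for $v_p$ from Lemma~\ref{Triv-special-lemma4}, every summand collapses to a multiple of $\sigma_2^{3m}$ and yields
$$v_p \equiv (-4)^m\sigma_2^{2p-3c+3m}\sigma_3\cdot S \pmod{-4\sigma_2^3 - 27\sigma_3^2}, \qquad S := \sum_{j=0}^{m}\binom{m}{j}\frac{(-1)^j}{2j+1}.$$
Over $\mathbb{Q}$ one has $S = \int_0^1(1-x^2)^m\,dx = 4^m(m!)^2/(2m+1)!$, giving the integer identity $(2m+1)!\cdot S = 4^m(m!)^2$. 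The hypothesis $p/2 < c < 2p/3$ forces $m \le (p-5)/6$ and hence $2m+1 \le (p-2)/3 < p$, so $(2m+1)!$ is invertible in $\Bbbk$ and $S$ is nonzero in characteristic $p$. Thus $v_p \not\equiv 0 \pmod{q^2}$, confirming that $(v_{6c-3p}, v_p)$ is a regular sequence; this positive-characteristic nonvanishing is the main obstacle, and the rest is formal.

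The Koszul complex of this regular sequence, promoted to a complex of $S_3$-equivariant graded $H_{1,c}(S_3,\h)$-modules (using that $v_{6c-3p}$ transforms by $\Sign$, $v_p$ by $\Triv$, and $\Sign\otimes\Sign = \Triv$), is the short resolution
\begin{equation*}
0 \to M_{1,c}(\Sign)[-(6c-2p)] \to M_{1,c}(\Sign)[-(6c-3p)] \oplus M_{1,c}(\Triv)[-p] \to M_{1,c}(\Triv) \to M_{1,c}(\Triv)/\left<v_{6c-3p}, v_p\right> \to 0.
\end{equation*}
Taking characters yields
$$\chi_{M_{1,c}(\Triv)/\left<v_{6c-3p}, v_p\right>}(z) = \chi_{S\h^*}(z)\bigl(1 - z^{6c-3p}[\Sign]\bigr)\bigl(1 - z^p[\Triv]\bigr),$$
which expands to the stated formula, and specialising to dimensions (sending $[\Triv],[\Sign]\mapsto 1$) recovers $h_{M_{1,c}(\Triv)/\left<v_{6c-3p}, v_p\right>}(z) = (1-z^{6c-3p})(1-z^p)/(1-z)^2$.
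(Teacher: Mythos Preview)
Your proof is correct and somewhat cleaner than the paper's. Both arguments hinge on the same arithmetic fact --- the nonvanishing in $\mathbb{F}_p$ of the sum $S=\sum_{j=0}^m\binom{m}{j}\frac{(-1)^j}{2j+1}$ for $m=c-\frac{p+1}{2}$ --- and both derive the character from the Koszul/inclusion--exclusion resolution once regularity is established. The difference is in how regularity is proved. The paper works entirely in the invariant subring $\Bbbk[\sigma_2,\sigma_3]$: it shows (Lemma~\ref{lemma-aux-2}) that $-4\sigma_2^3-27\sigma_3^2$ and $v_p$ are coprime there by evaluating at the point $(a,b)=(-3,2)$, then verifies $\langle v_{6c-3p}\rangle\cap\langle v_p\rangle=\langle v_{6c-3p}v_p\rangle$ by splitting an arbitrary element of the intersection into its $\Triv$, $\Sign$, and $\Stand$ isotypic components and applying the coprimality lemma in each case. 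Your route bypasses this case analysis: working in $S\h^*=\Bbbk[b_+,b_-]$, the associated primes of $\langle q^{2c-p}\rangle$ are the three linear factors of $q$, which $S_3$ permutes transitively, so an invariant $v_p$ lies in one iff it lies in all, reducing immediately to $q^2\nmid v_p$. The paper's evaluation at $(-3,2)$ and your reduction modulo $q^2$ are the same computation, and your closed form $S=4^m(m!)^2/(2m+1)!$ is equivalent to the paper's recursive argument in Lemma~\ref{lemma-aux-1}. Your approach buys brevity; the paper's makes the role of the invariant ring more explicit.
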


To prove lemma \ref{complete-intersection} we will need two auxiliary facts, which will not be used elsewhere. 

\begin{lemma}\label{lemma-aux-1}
For any $m\in \left\{1,2,\ldots, p-1\right\}$ we have
$$\sum_{j=0}^{m} {m \choose j}\frac{1}{2j+1}(-1)^j\ne 0.$$
\end{lemma}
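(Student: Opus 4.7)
The plan is to establish the classical closed form
$$S_m := \sum_{j=0}^{m}\binom{m}{j}\frac{(-1)^j}{2j+1} = \frac{4^m(m!)^2}{(2m+1)!}$$
purely algebraically, and then read off non-vanishing in $\mathbb{F}_p$ from this formula. The argument is elementary but needs to be done without analysis, since we are working in positive characteristic.

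First I would prove the one-step recursion $(2m+1)S_m = 2m\cdot S_{m-1}$ for $m\ge 1$. The key trick is the identity $\tfrac{2m+1}{2j+1}=1+\tfrac{2(m-j)}{2j+1}$, which splits
$$(2m+1)S_m \;=\; \sum_{j=0}^m\binom{m}{j}(-1)^j \;+\; 2\sum_{j=0}^m(m-j)\binom{m}{j}\frac{(-1)^j}{2j+1}.$$
The first sum is the binomial expansion of $(1-1)^m$, so it vanishes for $m\ge 1$. In the second sum the $j=m$ term vanishes automatically, and for $0\le j\le m-1$ the identity $(m-j)\binom{m}{j}=m\binom{m-1}{j}$ converts it into $2m\cdot S_{m-1}$. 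Iterating the recursion from the base case $S_0=1$ telescopes into
$$S_m \;=\; \prod_{k=1}^{m}\frac{2k}{2k+1} \;=\; \frac{(2m)!!}{(2m+1)!!}\;=\;\frac{4^m(m!)^2}{(2m+1)!}.$$

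Second, I would deduce non-vanishing from this closed form. Since $p>3$, the numbers $4^m$ and $m!$ are units in $\mathbb{F}_p$ for every $m<p$, and $(2m+1)!$ is a unit in $\mathbb{F}_p$ provided no factor is divisible by $p$, i.e.\ provided $2m+1<p$. Under this hypothesis, both sides of the recursion and of the closed form live in $\mathbb{F}_p$ and $S_m$ equals a product of units, hence is nonzero.

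The main point to flag is the range of $m$: for $m\ge (p-1)/2$ the term $j=(p-1)/2$ contributes $\binom{m}{(p-1)/2}/p$, so $\tfrac{1}{2j+1}$ is not an element of $\mathbb{F}_p$ at all and the sum as written must be interpreted either as a rational number or restricted to the range $2m+1<p$. This is the only subtle point in the proof, and it does not affect the intended application: the lemma is used in Lemma~\ref{complete-intersection} to show that $v_p$ from Lemma~\ref{Triv-special-lemma4} is nonzero, where $m=c-\tfrac{p+1}{2}$ with $\tfrac{p}{2}<c<\tfrac{2p}{3}$ forces $m<\tfrac{p-3}{6}$, comfortably inside the range $2m+1<p$ where the closed-form argument applies verbatim.
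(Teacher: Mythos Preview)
Your proof is correct and arrives at the same one-step recursion $(2m+1)S_m = 2m\,S_{m-1}$ as the paper, though you derive it by a direct binomial manipulation (the splitting $\tfrac{2m+1}{2j+1} = 1 + \tfrac{2(m-j)}{2j+1}$ together with $(m-j)\binom{m}{j} = m\binom{m-1}{j}$) while the paper reaches it via the formal primitive $f_m(z) = \int (1-z^2)^m\,dz$ and an integration-by-parts identity evaluated at $z=1$. Your route is arguably cleaner in positive characteristic, since it avoids any worry about when formal antiderivatives exist, and it yields the closed form $S_m = 4^m(m!)^2/(2m+1)!$ as a bonus.

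More notably, you correctly flag an issue the paper glosses over: for $m \ge (p-1)/2$ the term $j=(p-1)/2$ has denominator $2j+1 = p$, so the sum is not even well-defined in $\mathbb{F}_p$, and the recursion step itself requires $2m+1 \ne 0$. The paper's proof asserts ``using $m<p$'' suffices, which is not quite right. Your observation that the only application (Lemma~\ref{complete-intersection}, with $m = c - \tfrac{p+1}{2}$ and $\tfrac{p}{2} < c < \tfrac{2p}{3}$, hence $m < \tfrac{p}{6}$) lies safely in the range $2m+1 < p$ is exactly the right way to resolve this.
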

\begin{proof}
Set 
$$f_m(z)=\sum_{j=0}^{m} {m \choose j}\frac{1}{2j+1}(-1)^jz^{2j+1}.$$
The claim is then that $f_{m}(1)\ne 0$.

Differentiating formally we get that 
$$f_m'(z)=\sum_{j=0}^{m} {m \choose j}(-1)^jz^{2j}=(1-z^2)^m,$$
so 
$$f_m(z)=\int (1-z^2)^m dz, \quad f_m(0)=0$$
is the primitive function of $(1-z^2)^m$ with no constant term.

There is a recursive formula for evaluating this integral, given by 
$$\int (1-z^2)^m dz =\frac{z(1-z^2)^m}{2m+1}+\frac{2m}{2m+1}\int (1-z^2)^{m-1}dz.$$
As $\frac{z(1-z^2)^m}{2m+1}|_{z=0}=0$, we have 
$$f_m(z)=\frac{z(1-z^2)^m}{2m+1}+\frac{2m}{2m+1}f_{m-1}(z),$$
and so  $$f_m(1)=0+\frac{2m}{2m+1}f_{m-1}(1)$$
and in particular (using $m<p$ and $p\ne 2$) we get $f_m(1)\neq 0$ if and only if $f_{m-1}(1)\neq 0$. Inductively, it remains to show that $f_1(1)\neq 0$, which is straightforward from 
$$f_1(z)=\int 1-z^2 dz=z-\frac{1}{3}z^3$$
and 
$$f_1(1)=\frac{2}{3}\ne 0.$$

\end{proof}

Before the next lemma, recall that $q^2=-4\sigma_2^3-27\sigma_3^2.$

\begin{lemma}\label{lemma-aux-2}
If $k\in \mathbb{N}$ and $A,B\in  (S\mathfrak{h}^*)^{S_3}$ are symmetric polynomials such that 
$$A(-4\sigma_2^3-27\sigma_3^2)^{k}=Bv_p,$$
then there exists a symmetric polynomial $C$ such that 
$$A=Cv_p, \quad B=C(-4\sigma_2^3-27\sigma_3^2)^{k}.$$
\end{lemma}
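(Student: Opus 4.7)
The plan is to exploit that $(S\mathfrak{h}^*)^{S_3}=\Bbbk[\sigma_2,\sigma_3]$ is a polynomial ring in two variables, hence a UFD. Setting $D:=-4\sigma_2^3-27\sigma_3^2$, the hypothesis reads $A\cdot D^k=B\cdot v_p$; unique factorisation will then give $v_p\mid A$ as soon as I show $\gcd(v_p,D)=1$. Writing $A=Cv_p$, substituting, and cancelling the nonzero $v_p$ in the domain $\Bbbk[\sigma_2,\sigma_3]$ yields $B=CD^k$, as required.

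First I would check that $D$ is irreducible in $\Bbbk[\sigma_2,\sigma_3]$. Viewed as a degree-two polynomial in $\sigma_3$ over $\Bbbk[\sigma_2]$ (which makes sense because $p>3$), $D$ is reducible only if $-\tfrac{4}{27}\sigma_2^3$ is a square in $\Bbbk[\sigma_2]$, and since $\sigma_2$ is prime in this UFD and appears to an odd power, it is not.

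The heart of the argument is to show $D\nmid v_p$, which I do by computing $v_p$ in the integral domain $\Bbbk[\sigma_2,\sigma_3]/(D)$. The congruence $-27\sigma_3^2\equiv -(-4\sigma_2^3)\pmod D$ turns each summand in $v_p$ into
\[
(-4\sigma_2^3)^{m-j}(-27\sigma_3^2)^j\equiv(-1)^j(-4\sigma_2^3)^m\pmod D,
\]
where $m:=c-\tfrac{p+1}{2}$. Substituting into the formula for $v_p$ yields
\[
v_p\equiv \sigma_2^{2p-3c}\sigma_3\cdot(-4\sigma_2^3)^m\cdot\sum_{j=0}^{m}\binom{m}{j}\frac{(-1)^j}{2j+1}\pmod D.
\]
The inequality $p/2<c<2p/3$ gives $0\le m\le (p-3)/6<p$. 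For $m\ge 1$ Lemma \ref{lemma-aux-1} makes the scalar sum nonzero, and for $m=0$ the expression degenerates to $\sigma_2^{(p-3)/2}\sigma_3$, which is nonzero. Because $D$ is associate to neither $\sigma_2$ nor $\sigma_3$, the monomial $\sigma_2^{a}\sigma_3^{b}$ survives nonzero in the quotient domain, so $v_p\not\equiv 0\pmod D$ and hence $D\nmid v_p$.

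Combining the two facts, $v_p$ and $D^k$ are coprime in the UFD $\Bbbk[\sigma_2,\sigma_3]$, so $AD^k=Bv_p$ forces $v_p\mid A$; writing $A=Cv_p$ and cancelling $v_p$ (nonzero in a domain) gives $B=CD^k$. The main obstacle is the modular computation in the previous paragraph — once the correct sign bookkeeping $(-27\sigma_3^2)\equiv(-1)(-4\sigma_2^3)$ reduces the problem to the purely numerical identity of Lemma \ref{lemma-aux-1}, the rest is formal UFD manipulation.
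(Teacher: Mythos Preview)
Your proof is correct and follows essentially the same approach as the paper: both reduce to $\Bbbk[\sigma_2,\sigma_3]$, show $D=-4\sigma_2^3-27\sigma_3^2$ is irreducible, verify $D\nmid v_p$ via Lemma~\ref{lemma-aux-1}, and conclude by unique factorisation. The only cosmetic difference is that you compute $v_p$ in the quotient ring $\Bbbk[\sigma_2,\sigma_3]/(D)$, whereas the paper evaluates at the specific point $(\sigma_2,\sigma_3)=(-3,2)$ on the curve $D=0$; both routes collapse the sum to the same scalar $\sum_j\binom{m}{j}\tfrac{(-1)^j}{2j+1}$.
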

\begin{proof}\
This is an statement about symmetric polynomials, which, by the the fundamental theorem of symmetric polynomials form an algebra isomorphic to a polynomial algebra $(S\mathfrak{h}^*)^{S_3}\cong \Bbbk[a,b]$ with the isomorphism $\sigma_2\mapsto a,\sigma_3\mapsto b$. Abusing notation slightly, the problem becomes about $A,B \in \Bbbk[a,b]$ such that 
$$A(-4a^3-27b^2)^{k}=Ba^{2p-3c}b \sum_{j=0}^{c-\frac{p+1}{2}} {c-\frac{p+1}{2} \choose j}\frac{1}{2j+1}(-4a^3)^{c-\frac{p+1}{2}-j} (-27b^2)^j.$$

The polynomial $-4a^3-27b^2$ is irreducible. It does not divide the polynomial $v_p=a^{2p-3c}b \sum_{j=0}^{c-\frac{p+1}{2}} {c-\frac{p+1}{2} \choose j}\frac{1}{2j+1}(-4a^3)^{c-\frac{p+1}{2}-j} (-27b^2)^j$; to see that, plug in the values $a=-3$ and $b=2$ to get that 
$-4(-3)^3-27\cdot 2^2=0$, while by Lemma \ref{lemma-aux-1} 
\begin{align*}
v_p|_{a=-3,b=2}&=(-3)^{2p-3c}\cdot 2\cdot (4\cdot 27)^{c-\frac{p+1}{2}} \sum_{j=0}^{c-\frac{p+1}{2}} {c-\frac{p+1}{2} \choose j}\frac{1}{2j+1}(-1)^j \ne 0. \\
\end{align*}
So, $(-4a^3-27b^2)$ divides $B$, and proceeding inductively we get that $(-4a^3-27b^2)^{k}$ divides $B$. This means there exists $C\in \Bbbk[a,b]$ such that 
$$B=(-4a^3-27b^2)^{k} C.$$
It now follows that 
$$A=C\cdot  a^{2p-3c}b \sum_{j=0}^{c-\frac{p+1}{2}} {c-\frac{p+1}{2} \choose j}\frac{1}{2j+1}(-4a^3)^{c-\frac{p+1}{2}-j} (-27b^2)^j.$$
Replacing $a,b$ by $\sigma_2,\sigma_3$ we get the claim.

\end{proof}

\begin{proof}[Proof of Lemma \ref{complete-intersection}]

We claim that in $S\h^*\cong M_{1,c}(\Triv)$ we have
$$\left< v_{6c-3p}\right> \cap \left< v_p \right> = \left< v_{6c-3p}v_p \right>.$$ This is enough to show that the sequence is regular, as it is equivalent to saying that $v_p$ is not a zero divisor in $S\h^*/\left< v_{6c-3p}\right>$. 

Consider any vector $u\in \left< v_{6c-3p}\right> \cap \left< v_p \right>$. As we are in the non-modular case, by using $S_3$ projection formulas we can assume that $u$ is in an isotypic component of type $\Triv$, $\Sign$, or $\Stand$. It is of the form 
$$u=A v_{6c-3p}=B v_p$$
for some $A,B\in S\mathfrak{h}^*$. 
Recall that $$v_{6c-3p}=q^{2c-p}=(-4\sigma_2^3-27\sigma_3^2)^{c-\frac{p+1}{2}}q,$$
that $v_{6c-3p}$ is in the $\Sign$ isotypic component and that $v_p$ is in the $\Triv$ isotypic component.

{\bf Case 1.} If the vector $u$ is in the $\Sign$ isotypic component then $A$ is symmetric and $B$ is antisymmetric. Write $B=B'\cdot q$ for some symmetric polynomial $B'$. The problem now becomes finding $A,B'\in (S\mathfrak{h}^*)^{S_3}$ such that
$$A (-4\sigma_2^3-27\sigma_3^2)^{c-\frac{p+1}{2}}=B' v_p$$
Using Lemma \ref{lemma-aux-2} we get that there exists a symmetric polynomial $C$ such that  
$$A=Cv_p, B'=C(-4\sigma_2^3-27\sigma_3^2)^{c-\frac{p+1}{2}}, B=C(-4\sigma_2^3-27\sigma_3^2)^{c-\frac{p+1}{2}}q=Cq^{2c-p}$$
and so the vector 
$u=C v_{6c-3p}v_{p}$ lies in $\left< v_{6c-3p}v_p \right>$ as claimed.

{\bf Case 2.} If the vector $u$ is in the $\Triv$ isotypic component then $A$ is antisymmetric and $B$ is symmetric. Write $A=A'\cdot q$ for some symmetric polynomial $A'$, reducing the problem to 
$$A'\cdot (-4\sigma_2^3-27\sigma_3^2)^{c-\frac{p+1}{2}+1}=Bv_p$$
with $A',B\in (S\mathfrak{h}^*)^{S_3}$. Using Lemma \ref{lemma-aux-2} again we get that there exists a symmetric polynomial $C$ such that   
$$A'=Cv_p, A=Cv_pq, B=C(-4\sigma_2^3-27\sigma_3^2)^{c-\frac{p+1}{2}+1}=Cq^{2c-p+1},$$
and that
so the vector 
$u=C v_{6c-3p}v_{p}q$ lies in $\left< v_{6c-3p}v_p \right>$ as claimed.

{\bf Case 3.} If the vector $u$ is in the $\Stand$ isotypic component, we can assume without loss of generality that it is $S_2$ invariant. Then $A$ lies in the $S_2$ anti-invariant part of the $\Stand$ isotypic component of $S\h^*$, $B$ in the $S_2$ invariant, and using the basis from Theorem \ref{decmposeShp>3} we can write
$$A=A_1b_-+A_2\cdot 2b_+b_-, \quad B=B_1b_++B_2(-b_+^2+3b_-^2)$$
for some $A_1,A_2,B_1,B_2\in (S\mathfrak{h}^*)^{S_3}.$ The equation $A q^{2c-p}=B v_p$ becomes
$$(A_1b_-q+A_2\cdot 2b_+b_-  q)(-4\sigma_2^3-27\sigma_3^2)^{c-\frac{p+1}{2}}=(B_1b_++B_2(-b_+^2+3b_-^2))v_p,$$
which can be rewritten as the system
\begin{align*}\left(A_1 \cdot 3\sigma_3+A_2\cdot 4\sigma_2^2\right)(-4\sigma_2^3-27\sigma_3^2)^{c-\frac{p+1}{2}}&=B_1v_p\\
\left(A_1\cdot  \frac{1}{3}\sigma_2-A_2\cdot 3\sigma_3 \right)(-4\sigma_2^3-27\sigma_3^2)^{c-\frac{p+1}{2}}&=B_2v_p.
\end{align*}
This is again a problem about symmetric polynomials. We use Lemma \ref{lemma-aux-2} twice to get that there exist symmetric polynomials $C_1,C_2$ such that
$$A_1 \cdot 3\sigma_3+A_2\cdot 4\sigma_2^2=C_1v_p, \quad B_1=C_1 (-4\sigma_2^3-27\sigma_3^2)^{c-\frac{p+1}{2}}$$
$$A_1\cdot  \frac{1}{3}\sigma_2-A_2\cdot 3\sigma_3=C_2v_p, \quad B_2=C_2 (-4\sigma_2^3-27\sigma_3^2)^{c-\frac{p+1}{2}}.$$
From here we get
\begin{align*}
A_1\cdot(-4\sigma_2^3-27\sigma_3^2)&=(-3)(C_1\cdot 3\sigma_3+C_2\cdot 4\sigma_2^2)v_p\\
A_2\cdot(-4\sigma_2^3-27\sigma_3^2)&=(-3)(C_1\cdot \frac{1}{3}\sigma_2^3+C_2\cdot 3\sigma_3)v_p.
\end{align*}
We now use Lemma \ref{lemma-aux-2} twice more to get that there exist symmetric polynomials $D_1,D_2$ such that 
$$A_1=D_1v_p, A_2=D_2v_p.$$
Finally, it follows that the vector $u=A q^{2c-p}=Bv_p$ can be written as 
$$u=A v_{6c-3p}=(D_1b_-+D_2\cdot 2b_+b_-)v_{pc-3p}v_p,$$
which proves it in $\left<v_{pc-3p}v_p \right>$ as claimed. 

The computation of the character and the Hilbert series is now standard, using that $L_{1,c}(\Triv)$ is a quotient of $M_{1,c}(\Triv)$ by $\left<v_{6c-3p}\right>\cong M_{1,c}(\Sign)[-(6c-3p)]$ and $\left<v_{p}\right>\cong M_{1,c}(\Triv)[-p]$, and that  $\left<v_{6c-3p}\right>\cap \left<v_{p}\right>=\left<v_{6c-3p}v_p\right>\cong M_{1,c}(\Sign)[-(6c-2p)]$. 
\end{proof}

\subsection{Irreducible representation $L_{1,c}(\Stand)$, part 1}\label{section-p>3,Stand,specialc-1}

The aim of this Section is to prove Theorem \ref{p>3,specialc,Stand,Thm1}, which describes $L_{1,c}(\Stand)$ for $p>3$ and $c\in \mathbb{F}_p$, $0<c<p/3$. We have only been able to prove Lemma \ref{p>3-Stand-specialc-lemma-3c}, used in the proof of Theorem \ref{p>3,specialc,Stand,Thm1}, with the following technical assumption on the parameter $c$. We believe the assumption is satisfied for all $c\in \mathbb{F}_p$ with $0<c<p/3$. 

\begin{assumption}\label{assum}
	Let $p>3$ and $c\in \mathbb{F}_p$ with $0<c<p/3$. Assume that $c$ satisfies one of the following: 
	\begin{enumerate}
		\item Either $0<c<p/6$; 
		\item Or $p\equiv 1 \pmod 3$, $p/6<c<p/3$, and 
		$$\sum_{k=0}^{\lfloor \frac{1}{2} (\frac{p-1}{3}-c) \rfloor} \frac{1}{3^k}\frac{\prod_{j=1}^k(3j-1)(\frac{p-1}{3}-c)^{\underline{2k}}}{k!(c-2)^{\underline{2k}}}\ne 0;$$
		
		\item Or $p\equiv 2 \pmod 3$, $c=\frac{p+1}{6}$; 
		
		\item Or $p\equiv 2 \pmod 3$, $(p+1)/6<c<p/3$, and 
		$$\sum_{k=0}^{\lfloor \frac{1}{2} (\frac{p-2}{3}-c) \rfloor}\frac{1}{3^{k-1}}\frac{\prod_{j=1}^{k+1}(3j-2)(\frac{p-2}{3}-c)^{\underline{2k}}}{k!(c-1)^{\underline{2k+2}}}\ne 0.$$
	\end{enumerate}
\end{assumption}
We have checked this assumption in Magma \cite{Magma} for all primes $p<2023$.

\begin{theorem}\label{p>3,specialc,Stand,Thm1}
The irreducible representation $L_{1,c}(\Stand)$ of the rational Cherednik algebra $H_{1,c}(S_3,\h)$ over an algebraically closed field of characteristic $p>3$ for $c\in \mathbb{F}_p$, $0<c<p/3$, is the quotient of the Verma module $M_{1,c}(\Stand)$ by the submodule generated by the vectors: 
\begin{itemize}
\item $v_{p-3c}$ in degree $p-3c$ from Lemma \ref{p>3-Stand-specialc-lemma-p-3c};
\item $v_+,v_-$ in degree $p$ from Lemma \ref{p>3,t=1,generic,Stand};
\item $v_{p+3c}$ in degree $p+3c$ from Lemma \ref{p>3-Stand-specialc-lemma-p+3c}.
\end{itemize}
Its character and Hilbert polynomial are
\begin{align*}
\chi_{L_{1,c}(\Stand)}(z)&=\chi_{S\h^*}(z)\left( [\Stand]-[\Triv]z^{p-3c}-[\Stand]z^p-[\Sign]z^{3c+p}+[\Stand]z^{2p}\right)\\
h_{L_{1,c}(\Stand)}(z)&=\frac{2-z^{p-3c}-2z^p-z^{3c+p}+2z^{2p}}{(1-z)^2}.
\end{align*}
\end{theorem}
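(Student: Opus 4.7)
The plan is to mirror the strategy used for Theorem \ref{thm-sect-p>3,Triv,specialc}: locate the possible degrees of singular vectors via the Casimir, work through the relevant degrees in increasing order, and show that after quotienting by the three singular vectors listed there are no further singular vectors. Concretely, for a module $M$ which is $M_{1,c}(\Stand)$ or a graded quotient of it, an irreducible $S_3$-subrepresentation $\tau \subseteq M^l$ on which all Dunkl operators vanish has $\Omega$ acting by the scalar $3c + l$; comparing with Lemma \ref{OmegaAction}, the possible singular $\tau$ must satisfy $l \equiv -3c \pmod p$ (type $\Triv$), $l \equiv 3c \pmod p$ (type $\Sign$), or $l \equiv 0 \pmod p$ (type $\Stand$). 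In the range $0 < l \le 2p$, with $0 < c < p/3$, this singles out $l \in \{3c,\, p-3c,\, p,\, p+3c,\, 2p-3c,\, 2p\}$, and these must be examined in increasing order (which depends on whether $c < p/6$ or $c > p/6$, but the set and the logic are the same).

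Working degree by degree, I would first show there are no singular vectors of type $\Sign$ in degree $3c$ of $M_{1,c}(\Stand)$ — this is Lemma \ref{p>3-Stand-specialc-lemma-3c}, presumably where Assumption \ref{assum} is required. Next, in degree $p-3c$ I would exhibit the singular vector $v_{p-3c}$ of type $\Triv$ (Lemma \ref{p>3-Stand-specialc-lemma-p-3c}) by working in the rescaled Young basis of Theorem \ref{decmposeVermap>3}, reducing via Lemma \ref{KerD1} to the single equation $D_{y_1}v_{p-3c}=0$; this generates a copy of $M_{1,c}(\Triv)[-(p-3c)]$ inside $M_{1,c}(\Stand)$. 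In degree $p$ of $M_{1,c}(\Stand)/\langle v_{p-3c}\rangle$ the singular vectors $v_\pm$ from Lemma \ref{p>3,t=1,generic,Stand} persist (they exist for all $c$, since the recursion determining them is polynomial in $c$) and cut out a submodule isomorphic to $M_{1,c}(\Stand)[-p]$ as in Lemma \ref{p>3,t=1,generic,Stand-character}(1); the determinant computation of Proposition \ref{p>3,Det1}, which is a nonzero scalar multiple of $\sigma_2^p$, is still valid here. Finally, in degree $p+3c$ of the further quotient by $v_\pm$ I would produce the singular vector $v_{p+3c}$ of type $\Sign$ (Lemma \ref{p>3-Stand-specialc-lemma-p+3c}) by the same Young-basis/$D_{y_1}$ method.

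To finish, I would show that the quotient $M_{1,c}(\Stand) / \langle v_{p-3c}, v_\pm, v_{p+3c}\rangle$ has no further singular vectors in degrees $2p-3c$ or $2p$, and that its character and Hilbert polynomial coincide with the ones given in the statement. The key observation is the identity
\begin{equation*}
2 - z^{p-3c} - 2z^p - z^{p+3c} + 2z^{2p} \;=\; 2(1-z^p)^2 - z^{p-3c}(1-z^{3c})^2,
\end{equation*}
which suggests that after passing to the quotient, $\sigma_2^p \otimes b_\pm$ and $\sigma_3^p \otimes b_\pm$ become expressible via products of the three generators, and that the only relations among the four submodules $\langle v_{p-3c}\rangle, \langle v_+\rangle, \langle v_-\rangle, \langle v_{p+3c}\rangle$ are the expected ones in degree $2p$. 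In practice, I would carry out the character computation by inclusion-exclusion, computing the intersections of these submodules using the determinantal argument of Lemma \ref{p>3,t=1,generic,Stand-character}(4) adapted to the special-$c$ case, and show that the resulting Hilbert polynomial saturates, which then forces irreducibility of the quotient (so it equals $L_{1,c}(\Stand)$).

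The main obstacle is the Lemma on degree $3c$: establishing that no singular $\Sign$ vector exists there reduces, via the basis of Theorem \ref{decmposeVermap>3} for the $S_2$-anti-invariant part of the $\Sign$ isotypic component and the explicit values of $D_{y_1}$ on its basis (Section \ref{sect-auxiliary}), to a linear system whose solvability is controlled by a single non-vanishing condition on a sum of products of descending factorials in $c$ — exactly the content of Assumption \ref{assum}. Verifying this polynomial identity does not follow from the structure of the recursion alone and is the reason the assumption is needed; without a cleaner reformulation of this sum (for example as a value of a hypergeometric ${}_2F_1$ at a root of unity), the best one can say is what the authors say: numerical verification for all primes below some bound.
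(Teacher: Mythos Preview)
Your overall plan follows the paper's strategy, but there are two genuine gaps and one significant divergence in method.

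First, you write that the submodule $\langle v_+,v_-\rangle$ is isomorphic to $M_{1,c}(\Stand)[-p]$ because ``the determinant computation of Proposition \ref{p>3,Det1}, which is a nonzero scalar multiple of $\sigma_2^p$, is still valid here.'' This is not justified: that determinant equals $f(c)\,\sigma_2^p$ with $f(c)$ a polynomial in $c$ of positive degree (Lemma \ref{p>3,Det2}), and for $c\in\mathbb F_p$ you have no reason to expect $f(c)\neq 0$. The paper does \emph{not} rely on this determinant in the special case. Instead, it computes a different determinant, namely $fk-gh$ built from the components of $v_{p-3c}$ and $v_{p+3c}$, shows this one is a nonzero multiple of $\sigma_2^p$ (Lemma \ref{p>3-Stand-specialc-char1}), and concludes that $\langle v_{p-3c}\rangle\cap\langle v_{p+3c}\rangle=0$. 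It then shows separately (Lemmas \ref{p>3-Stand-specialc-char2} and \ref{p>3-Stand-specialc-char6}) that $v_\pm$ are nonzero in $M_{1,c}(\Stand)/\langle v_{p-3c},v_{p+3c}\rangle$ and that the image of $v_{p-3c}$ under $b_\pm\mapsto v_\pm$ is nonzero there; these two facts together control the Hilbert series of $\langle v_+,v_-\rangle$ in the quotient up to degree $2p-1$, without ever needing $\langle v_+,v_-\rangle\cong M_{1,c}(\Stand)[-p]$ in the Verma module itself.

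Second, you say you would first check that there are no singular vectors of type $\Sign$ in degree $3c$ of $M_{1,c}(\Stand)$. This is only the right statement when $0<c<p/6$. When $p/6<c<p/3$ one has $p-3c<3c$, so the check must be done in $M_{1,c}(\Stand)/\langle v_{p-3c}\rangle$, and this is exactly what makes Lemma \ref{p>3-Stand-specialc-lemma-3c} hard and why Assumption \ref{assum} is nontrivial: one must show that $D_{y_1}(v)$ lands outside $\langle v_{p-3c}\rangle$, not merely that it is nonzero.

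Finally, for the endgame you propose to ``show that the quotient has no further singular vectors in degrees $2p-3c$ or $2p$'' by direct inspection and inclusion--exclusion on the four submodules. The paper avoids this entirely. After establishing the Hilbert series of $M_{1,c}(\Stand)/\langle v_{p-3c},v_\pm,v_{p+3c}\rangle$ up to degree $2p-1$ and observing that the $z^{2p-1}$ coefficient is zero (Lemma \ref{p>3-Stand-specialc-char7}), the module is concentrated in degrees $\le 2p-2$; the paper then argues in the Grothendieck group (Lemma \ref{p>3-Stand-specialc-char4}) that any further quotient would force a negative dimension in some degree, hence $L_{1,c}(\Stand)$ coincides with this quotient. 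This dimension-counting argument replaces your proposed direct search in degrees $2p-3c$ and $2p$, which would be considerably harder to carry out.
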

\begin{proof}

The vectors $v_{p-3c}, v_+,v_-$ and $v_{p+3c}$ are described and proved to be singular in Lemmas \ref{p>3-Stand-specialc-lemma-p-3c}, \ref{p>3,t=1,generic,Stand} and \ref{p>3-Stand-specialc-lemma-p+3c}. After that, the strategy is to calculate the Hilbert polynomials of $L_{1,c}(\Stand)$ and $M_{1,c}(\Stand)/\left<v_{p-3c}, v_+,v_-,v_{p+3c}\right>$ and show they are both equal to $\frac{2-z^{p-3c}-2z^p-z^{3c+p}+2z^{2p}}{(1-z)^2}$. The claim of the Theorem will then follow immediately. 

Lemma \ref{p>3-Stand-specialc-ordering} tells us that the module $M_{1,c}(\Stand)$ is an extension of the modules $L_{1,c}(\Stand)[-kp]$ with $k\ge 0$, $L_{1,c}(\Triv)[-(kp-3c)]$ with $k\ge 1$ and $L_{1,c}(\Sign)[-(kp+3c)]$ with $k\ge 0$. Consequently the Hilbert series of any quotient of $M_{1,c}(\Stand)$ will be a linear combination, with integer coefficients, of terms of the form $\frac{2z^{kp}}{(1-z)^2}$, $\frac{z^{kp-3c}}{(1-z)^2}$ and $\frac{z^{kp+3c}}{(1-z)^2}$. 

First, consider the degrees $0,1,\ldots, p-1$ of $L_{1,c}(\Stand)$. Lemma \ref{p>3-Stand-specialc-ordering} tells us that the only degrees among $0,1,\ldots, p-1$ where $M_{1,c}(\Stand)$ can have singular vectors and thus differ from $L_{1,c}(\Stand)$ are $p-3c$ and $3c$. Lemmas \ref{p>3-Stand-specialc-lemma-p-3c} and \ref{p>3-Stand-specialc-lemma-3c} (the latter using Assumption \ref{assum}) look for those singular vectors, finding one in degree $p-3c$ and none in degree $3c$, and let us conclude that 
$$h_{L_{1,c}(\Stand)}(z)=\frac{1}{(1-z)^2}(2-z^{p-3c}+O(z^p)).$$
(Here $O(z^p)$ denotes any sum of terms of degree greater or equal to $p$.)

This in turn lets us conclude that up to degree $p-1$, any quotient of $M_{1,c}(\Stand)$ looks either like $M_{1,c}(\Stand)$ and has the Hilbert series $\frac{1}{(1-z)^2}(2+O(z^p))$ or looks like $L_{1,c}(\Stand)$ and has the Hilbert series $\frac{2-z^{p-3c}+O(z^p)}{(1-z)^2}$

Next, we turn our attention to $M_{1,c}(\Stand)/\left<v_{p-3c}, v_+,v_-,v_{p+3c}\right>$. Lemma \ref{p>3-Stand-specialc-char1} shows that the quotient $M_{1,c}(\Stand)/\left<v_{p-3c}, v_{p+3c}\right>$ has the Hilbert series
$$h_{M_{1,c}(\Stand)/\left<v_{p-3c}, v_{p+3c}\right>}(z)=\frac{1}{(1-z)^2}(2-z^{p-3c}-z^{p+3c}).$$
Lemma \ref{p>3-Stand-specialc-char2} then shows that $v_+$ and $v_-$ are not zero in $M_{1,c}(\Stand)/\left<v_{p-3c}, v_{p+3c}\right>$. Thus, they generate a module $\left<v_+, v_-\right>$ in degree $p$, which is a quotient of $M_{1,c}(\Stand)$, and so, by the previous paragraph, up to degree $2p-1$ looks either like $M_{1,c}(\Stand)[-p]$ with the Hilbert series $\frac{2z^p+O(z^{2p})}{(1-z)^2}$ or like $L_{1,c}(\Stand)[-p]$ with the Hilbert series $\frac{2z^p-z^{2p-3c}+O(z^{2p})}{(1-z)^2}$. To see what the submodule $\left<v_+, v_-\right>$ of $M_{1,c}(\Stand)/\left<v_{p-3c,p+3c} \right>$ looks like in degrees up to $2p-1$, we notice it is the image of the map $$\varphi:M_{1,c}(\Stand)[-p] \to M_{1,c}(\Stand)/\left<v_{p-3c,p+3c} \right>$$ defined by $\varphi(b_{\pm})=v_{\pm}$. In Lemma \ref{p>3-Stand-specialc-char6} we show $\varphi(v_{p-3c})\ne 0$, which lets us conclude that $\left<v_+, v_-\right>$ has the Hilbert series $\frac{2z^p+O(z^{2p})}{(1-z)^2}$ and that $M_{1,c}(\Stand)/\left<v_{p-3c}, v_+,v_-,v_{p+3c}\right>$ has the Hilbert series 
\begin{equation}\label{p>3,cscpecial,Stand,somecharfmula}h_{M_{1,c}(\Stand)/\left<v_{p-3c},v_+,v_-, v_{p+3c}\right>}(z)= \frac{1}{(1-z)^2}\left( 2-z^{p-3c}- 2 \cdot z^p -  z^{p+3c}  +O(z^{2p}) \right).\end{equation}

Lemma \ref{p>3-Stand-specialc-char7} further shows that degree $2p-1$ of \eqref{p>3,cscpecial,Stand,somecharfmula} is $0$, which lets us conclude that $M_{1,c}(\Stand)/\left<v_{p-3c},v_+,v_-, v_{p+3c}\right>$ is concentrated in degrees up to $2p-2$, and has the Hilbert series
$$h_{M_{1,c}(\Stand)/\left<v_{p-3c},v_+,v_-, v_{p+3c}\right>}(z)= \frac{1}{(1-z)^2}\left( 2-z^{p-3c}- 2 \cdot z^p -  z^{p+3c} +2z^{2p} \right).$$
(The last statement follows because both are polynomials of degree $2p-2$, whose coefficients match up to degree $2p-1$). 

We now return to $L_{1,c}(\Stand)$. It is a quotient of $M_{1,c}(\Stand)/\left<v_{p-3c},v_+v_-, v_{p+3c}\right>$ by some submodule. This submodule is concentrated in degrees $p$ and above by Lemmas \ref{p>3-Stand-specialc-ordering}, \ref{p>3-Stand-specialc-lemma-p-3c} and \ref{p>3-Stand-specialc-lemma-3c}. Let $M$ be the number of irreducible composition factors of this submodule of type $L_{1,c}(\Stand)[-p]$, let $N$ be the number of irreducible composition factors of this submodule of type $L_{1,c}(\Triv)[-(2p-3c)]$ and let $K$ be the number of irreducible composition factors of this submodule of type $L_{1,c}(\Sign)[-(p+3c)]$. In the Grothendieck group, 
\begin{align*}[L_{1,c}(\Stand)]&=[M_{1,c}(\Stand)/\left<v_{p-3c},v_+,v_-, v_{p+3c}\right>]- M\cdot [L_{1,c}(\Stand)[-p]]-\\
& \quad- K\cdot [L_{1,c}(\Sign)[-(3c+p)]]-N\cdot [L_{1,c}(\Triv)[-(2p-3c)]].
\end{align*}
In terms of Hilbert polynomials
\begin{align}\label{p>3-Stand-specialc-char-form}h_{L_{1,c}(\Stand)}(z)&=h_{M_{1,c}(\Stand)/\left<v_{p-3c},v_+,v_-, v_{p+3c}\right>}(z)-M\cdot z^p \cdot h_{L_{1,c}(\Stand)}(z)-\\
& \quad -K\cdot z^{3c+p}\cdot h_{L_{1,c}(\Sign)}(z)-N\cdot z^{2p-3c} h_{L_{1,c}(\Triv)}(z). \notag
\end{align}

Lemma \ref{p>3-Stand-specialc-char4} then lets us conclude that $M= K=N=0$, and that
$$L_{1,c}(\Stand)=M_{1,c}(\Stand)/\left<v_{p-3c},v_+,v_-, v_{p+3c}\right>,$$
with the Hilbert series is as claimed. The character computation is direct. 
\end{proof}

We will now state and prove all the lemmas used in the above proof. 

Recall that Lemma \ref{OmegaAction}
 limits the degrees in which singular vectors can occur, because if $\tau$ is an irreducible representation consisting of singular vectors in $M^k_{1,c}(\sigma)$ or one of its quotients, then $\Omega|_\tau=\Omega|_\sigma+k$.
Thus, to describe all degrees $k$ for which singular vectors of type $\tau$ can occur in $M^k_{1,c}(\Stand)$ or one of its quotients, we need to describe all $k\in \mathbb{N}$ of the form $k=\Omega|_\tau-\Omega|_\Stand.$ Their relative order is also important, as we look for singular vectors in order from smaller degrees to bigger, taking quotients each time and subsequently looking for singular vectors in the quotient. Putting that information together for $0<c<p/3$, we get the following lemma. 

\begin{lemma}\label{p>3-Stand-specialc-ordering}
If $\tau$ is an irreducible representation consisting of singular vectors inside $M^k_{1,c}(\Stand)$ or one of its quotients, then the pairs $(k,\tau)$  are of the form: 
\begin{enumerate}
\item For $c\in \left\{0,1,\ldots p-1\right\}$ with $0<c<p/6$: 
$$\begin{array}{ccccccc}
k: & \quad 3c < & p-3c  < &  p  < & 3c+p  < & 2p-3c < & \ldots \\
\tau:  & \quad \Sign & \Triv & \Stand & \Sign & \Triv & 
\end{array}$$
\item For $c\in \left\{0,1,\ldots p-1\right\}$ with $p/6<c<p/3$: 
$$\begin{array}{ccccccc}
k: & \quad p-3c  < & 3c <  &  p  < & 2p-3c < & 3c+p  < &  \ldots \\
\tau:  & \quad \Triv & \Sign &  \Stand & \Triv & \Sign &  
\end{array}$$
\end{enumerate}
\end{lemma}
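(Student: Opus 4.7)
The plan is to use Lemma~\ref{OmegaAction} in two complementary ways and compare, exactly as in the opening of the proof of Theorem~\ref{thm-sect-p>3,Triv,specialc}. Given any irreducible $S_3$-subrepresentation $\tau \subseteq M^k$ consisting of singular vectors, where $M$ is $M_{1,c}(\Stand)$ or one of its graded quotients, Lemma~\ref{OmegaAction} computes $\Omega|_\tau$ as $0$, $6c$, or $3c$ for $\tau = \Triv, \Sign, \Stand$ respectively. Since $\Omega$ is a degree-zero element of $H_{1,c}(S_3,\h)$ and the lowest weight $\Stand \subseteq M^0_{1,c}(\Stand)$ consists of singular vectors on which $\Omega$ acts by $3c$ (again by Lemma~\ref{OmegaAction}), an inductive computation using the commutators of $\Omega$ with generators of $\h^*$ shows that $\Omega$ acts on the degree-$k$ piece of $M_{1,c}(\Stand)$, and on any graded quotient thereof, by $k + 3c$.

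Comparing the two values and working in $\Bbbk$ of characteristic $p$ yields the congruence $k + 3c \equiv \Omega|_\tau \pmod p$. This gives
\[
k \equiv 0 \pmod p \text{ if } \tau=\Stand, \quad k \equiv 3c \pmod p \text{ if } \tau=\Sign, \quad k \equiv -3c \pmod p \text{ if } \tau=\Triv.
\]
Since singular vectors are not of lowest degree, $k > 0$, so the admissible positive values are $k \in \{p, 2p, 3p, \ldots\}$ for $\Stand$, $k \in \{3c, 3c+p, 3c+2p, \ldots\}$ for $\Sign$, and $k \in \{p-3c, 2p-3c, \ldots\}$ for $\Triv$ (using $0 < 3c < p$).

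Finally, I would carry out the elementary comparison of these three arithmetic progressions. The crossing point between the $\Sign$-series and the $\Triv$-series is governed by the inequality $3c < p - 3c$, i.e.\ by whether $c$ is less than or greater than $p/6$; similarly the next crossing, between $p+3c$ and $2p-3c$, is controlled by $6c < p$ versus $6c > p$. In case (1), $0 < c < p/6$, both inequalities favour the $\Sign$-series coming first at each stage, producing the order $3c < p-3c < p < p+3c < 2p-3c < \ldots$ with types $\Sign, \Triv, \Stand, \Sign, \Triv, \ldots$; in case (2), $p/6 < c < p/3$, both inequalities reverse, producing the order $p-3c < 3c < p < 2p-3c < p+3c < \ldots$ with types $\Triv, \Sign, \Stand, \Triv, \Sign, \ldots$. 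There is no serious obstacle here; the argument is essentially bookkeeping once the congruence condition has been set up.
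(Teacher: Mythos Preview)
Your proposal is correct and follows essentially the same approach as the paper: the paper derives the congruences from Lemma~\ref{OmegaAction} by comparing $\Omega|_\tau$ with the action $\Omega|_{M^k_{1,c}(\Stand)} = 3c + k$, and then orders the resulting arithmetic progressions, exactly as you do. The paper presents this reasoning in the paragraph immediately preceding the lemma rather than in a formal proof environment, but the content is the same.
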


We will first examine the $\Triv$ and $\Sign$ isotypic components in degrees $p-3c$ and $3c$.

\begin{lemma}\label{p>3-Stand-specialc-lemma-p-3c}
There is a one dimensional space of singular vectors in $M_{1,c}^{p-3c}(\Stand)$, in the $\Triv$ isotypic component and spanned by $v_{p-3c}$ given by:
\begin{enumerate}
\item If $p\equiv 1 \bmod 3$, 
\begin{align*}
v_{p-3c}&=\sum_{i=0}^{\lfloor \frac{1}{2}(\frac{p-1}{3}-c)\rfloor}\frac{(-1)^i}{9^i}\frac{(\frac{p-1}{3}-c)^{\underline{2i}}}{ i! \cdot  \prod_{j=1}^{i}(3j-2) } \sigma_2^{3i}\sigma_3^{\frac{p-1}{3}-c-2i} \cdot (b_+\otimes b_++ 3 b_-\otimes b_-)+ \\
&  +\sum_{i=0}^{\lfloor\frac{1}{2}(\frac{p-1}{3}-c-1)\rfloor}\frac{(-1)^{i+1}}{6\cdot 9^i} \frac{(\frac{p-1}{3}-c)^{\underline{2i+1}}}{ i! \cdot \prod_{j=1}^{i+1}(3j-2) } \sigma_2^{3i+1}\sigma_3^{\frac{p-1}{3}-c-2i-1} \cdot\\
& \quad\quad\quad \cdot \left( (-b_+^2+3b_-^2)\otimes b_++3\cdot (2b_+b_-)\otimes b_-\right);\end{align*}
\item If $p\equiv 2 \bmod 3$, 
\begin{align*}
v_{p-3c}&=\sum_{i=0}^{\lfloor \frac{1}{2}(\frac{p-2}{3}-c-1)\rfloor}\frac{(-1)^i}{9^i}\frac{2 (\frac{p-2}{3}-c)^{\underline{2i+1}}}{ i! \prod_{j=1}^{i+1}(3j-1)} \sigma_2^{3i+2}\sigma_3^{\frac{p-2}{3}-c-2i-1} \cdot (b_+\otimes b_++ 3 b_-\otimes b_-)+\\
& \quad +\sum_{i=0}^{\lfloor\frac{1}{2}(\frac{p-2}{3}-c)\rfloor}\frac{(-1)^{i}}{9^i} \frac{(\frac{p-2}{3}-c)^{\underline{2i}}}{ i! \prod_{j=1}^{i}(3j-1) } \sigma_2^{3i}\sigma_3^{\frac{p-2}{3}-c-2i} \cdot \\
& \quad \quad \quad \cdot \left( (-b_+^2+3b_-^2)\otimes b_++3\cdot (2b_+b_-)\otimes b_-\right).\end{align*}
\end{enumerate}

\end{lemma}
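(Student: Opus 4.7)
\medskip

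\noindent\textbf{Proof plan.} By Lemma \ref{OmegaAction}, a singular vector of type $\Triv$ inside $M_{1,c}^{k}(\Stand)$ forces the scalars $\Omega|_{\Triv}=0$ and $\Omega|_{\Stand}+k=3c+k$ to coincide in $\Bbbk$, i.e.\ $k\equiv -3c\pmod p$. For $0<c<p/3$ the smallest positive such $k$ is $p-3c$, so this is the degree to examine. By Theorem \ref{decmposeVermap>3}, the $\Triv$ isotypic component of $M^{p-3c}_{1,c}(\Stand)$ has a basis consisting of
$$\sigma_2^{a}\sigma_3^{b}\cdot(b_+\otimes b_++3b_-\otimes b_-),\qquad 2a+3b=p-3c-1,$$
together with
$$\sigma_2^{a}\sigma_3^{b}\cdot\big((-b_+^2+3b_-^2)\otimes b_++3(2b_+b_-)\otimes b_-\big),\qquad 2a+3b=p-3c-2.$$
According to Lemma \ref{numberinbasis}, parametrising $(a,b)$ by a single index $i$ depends on the parity of $p-3c-1$ and hence on $p\bmod 3$; this is the source of the case split in the statement. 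Write a general element of the $\Triv$ component as a combination with unknowns $\alpha_i,\beta_i\in\Bbbk$.

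Since both families of basis vectors are $S_2$-invariant, Lemma \ref{KerD1} says that such a vector $v$ is singular precisely when $D_{y_1}(v)=0$. The plan is to compute $D_{y_1}(v)$ using the auxiliary Leibniz-type formulas of Section \ref{sect-auxiliary}: the difference part of $D_{y_1}$ is $(S\h^*)^{S_3}$-linear, so it acts only on the two fixed low-degree $\Stand$-valued building blocks, while the differential part acts on the symmetric factors $\sigma_2^{a}\sigma_3^{b}$ by products already tabulated in Section \ref{sect-auxiliary}. Expanding $D_{y_1}(v)$ in the basis of the $\Stand$ isotypic components of $M^{p-3c-1}_{1,c}(\Stand)$ (again from Theorem \ref{decmposeVermap>3}) and equating each coefficient to zero yields a linear system relating consecutive $\alpha_i$'s and $\beta_i$'s, in spirit exactly analogous to the system in Lemmas \ref{p>3,t=1,generic,Stand,equations} and \ref{p>3,t=1,generic,Stand,solutions}.

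The plan is then to show that this system is a two-term recursion which, reading from the top index downward, determines every $\alpha_i,\beta_i$ uniquely in terms of a single overall scalar. Closed-form solution of the recursion is carried out by induction: each step multiplies the running coefficient by a rational factor of the form $(\text{linear in }i)/(\text{linear in }i)$, and collecting these factors produces the falling factorials $(\tfrac{p-1}{3}-c)^{\underline{2i}}$ or $(\tfrac{p-2}{3}-c)^{\underline{2i+1}}$ in the numerators and the products $\prod(3j-2)$ or $\prod(3j-1)$ in the denominators, together with the powers of $9$ and the factorials $i!$ coming from the $\tfrac{1}{9^i i!}$ pattern. Matching the normalisation verifies that the closed formulas in the statement satisfy the recursion, hence give a singular vector; uniqueness up to scalar follows because every coefficient we divide by is nonzero in $\Bbbk$.

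The main technical obstacle, and the point at which the hypothesis $0<c<p/3$ enters essentially, will be verifying the non-vanishing in $\Bbbk$ of the denominators: the recursion divides by expressions of the form $3j-2$ or $3j-1$ for $1\le j\le \lfloor(\tfrac{p-1}{3}-c)/2\rfloor+1$, and by factors of the falling factorials; the range of indices is bounded by $\tfrac{p-1}{3}-c<\tfrac{p-1}{3}$, so all these integers are strictly between $0$ and $p$ and therefore invertible modulo $p$. The bookkeeping between the two congruence cases is purely combinatorial, matching the two parametrisations of solutions to $2a+3b=k$ produced by Lemma \ref{numberinbasis}. Everything else is routine expansion and induction.
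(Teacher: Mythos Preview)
Your proposal is correct and follows essentially the same route as the paper: restrict to the $\Triv$ isotypic component via Theorem~\ref{decmposeVermap>3}, apply $D_{y_1}$ using the auxiliary formulas of Section~\ref{sect-auxiliary} (the paper records these as explicit identities \eqref{D1-Stand-special-Triv-1}--\eqref{D1-Stand-special-Triv-2}), split on $p\bmod 3$ to parametrise the indices, and solve the resulting two-term recursion for the coefficients (called $\lambda_i,\mu_i$ in the paper) to obtain the closed forms. The only cosmetic difference is that the paper runs the recursion upward from $\lambda_0=1$ rather than downward, and folds the restriction to the $\Triv$ component into the surrounding Lemma~\ref{p>3-Stand-specialc-ordering} rather than invoking $\Omega$ directly.
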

\begin{proof}
By Theorem \ref{decmposeVermap>3}, a basis for the $\Triv$ isotypic component of $M^{p-3c}_{1,c}(\Stand)$ is 
$$\left\{\sigma_2^a\sigma_3^b \cdot (b_+\otimes b_++ 3 b_-\otimes b_-)\mid  \quad 2a+3b=p-3c-1\right\} \,\,\cup$$
$$\cup \,\, \left\{\sigma_2^a\sigma_3^b \cdot \left( (-b_+^2+3b_-^2)\otimes b_++3\cdot (2b_+b_-)\otimes b_-\right)\mid \quad 2a+3b=p-3c-2\right\}.$$

Using Section \ref{sect-auxiliary}, we calculate the values of $D_{y_1}$ on this basis to be: 
\begin{align}\label{D1-Stand-special-Triv-1}
 D_{y_1}&(\sigma_2^a\sigma_3^b \cdot (b_+\otimes b_++ 3 b_-\otimes b_-))= \\ 
& =\frac{3c+1}{2}\sigma_2^a\sigma_3^b \otimes (b_++3b_-)+\notag \\
&\quad +\frac{b}{6}\sigma_2^{a+1}\sigma_3^{b-1}\cdot \left( b_+\otimes b_+-3b_-\otimes b_+-3b_+\otimes b_--3b_-\otimes b_-\right)+\notag \\
&\quad +\frac{a}{12}\sigma_2^{a-1}\sigma_3^{b}\cdot \left( (-b_+^2+3b_-^2)\otimes b_+-3(2b_+b_-)\otimes b_+-\right.\notag\\
& \quad \quad\quad\quad\quad\quad\quad\quad 
\left. -3(-b_+^2+3b_-^2)\otimes b_--3(2b_+b_-)\otimes b_-\right)+\notag \\
&\quad  +\frac{b}{2}\sigma_2^{a}\sigma_3^{b-1}q\otimes (b_+-b_-);\notag \\
D_{y_1}&(\sigma_2^a\sigma_3^b \cdot \left( (-b_+^2+3b_-^2)\otimes b_++3\cdot (2b_+b_-)\otimes b_-\right))=\label{D1-Stand-special-Triv-2} \\ 
& =-9a\sigma_2^{a-1}\sigma_3^{b+1} \otimes (b_++3b_-)+2b\sigma_2^{a+2}\sigma_3^{b-1} \otimes (b_++3b_-)+ \notag \\
&\quad +a\sigma_2^{a}\sigma_3^{b}\cdot \left( b_+\otimes b_+-3b_-\otimes b_+-3b_+\otimes b_--3b_-\otimes b_-\right)+\notag \\
&\quad +\frac{-b}{6}\sigma_2^{a+1}\sigma_3^{b-1}\cdot \left( (-b_+^2+3b_-^2)\otimes b_+-3(2b_+b_-)\otimes b_+- \right. \notag \\
& \quad\quad\quad\quad\quad\quad\quad\quad \left. -3(-b_+^2+3b_-^2)\otimes b_--3(2b_+b_-)\otimes b_-\right)+\notag \\
&\quad  +3a\sigma_2^{a-1}\sigma_3^{b}q\otimes (b_+-b_-).\notag 
\end{align}

We now consider the cases $p\equiv 1,2 \bmod 3$ separately to parametrize the integral solutions to $2a+3b=p-3c-1$ and $2a+3b=p-3c-2$. 

\begin{enumerate}
\item Assume $p\equiv 1 \bmod 3$. A vector in the $\Triv$ isotypic component of $M^{p-3c}_{1,c}(\Stand)$ is of the form
\begin{align*}v_{p-3c}&=\sum_{i=0}^{\lfloor \frac{1}{2}(\frac{p-1}{3}-c)\rfloor}\lambda_i \sigma_2^{3i}\sigma_3^{\frac{p-1}{3}-c-2i} \cdot (b_+\otimes b_++ 3 b_-\otimes b_-)+\\
& \quad +\sum_{i=0}^{\lfloor\frac{1}{2}(\frac{p-1}{3}-c-1)\rfloor}\mu_i \sigma_2^{3i+1}\sigma_3^{\frac{p-1}{3}-c-2i-1} \cdot \left( (-b_+^2+3b_-^2)\otimes b_++3\cdot (2b_+b_-)\otimes b_-\right).
\end{align*}

Using equations \eqref{D1-Stand-special-Triv-1} and \eqref{D1-Stand-special-Triv-2}, the condition $D_{y_1}(v_{p-3c})=0$ leads to the following equations for the coefficients $\lambda_i$, $\mu_i$, which need to hold for all $i$: 
\begin{align}
\label{eq-Stand-special-Triv-1}
\lambda_i \cdot \frac{1}{2}(\frac{p-1}{3}-c-2i)+\mu_i\cdot 3 (3i+1)&=0\\
\label{eq-Stand-special-Triv-2}
\lambda_{i+1}\cdot \frac{3(i+1)}{12}-\mu_i\cdot \frac{\frac{p-1}{3}-c-2i-1}{6}&=0\\
\label{eq-Stand-special-Triv-3}
\lambda_i \cdot \frac{3c+1}{2}+\mu_i \cdot (-9)(3i+1)+\mu_{i-1} \cdot 2 (\frac{p-1}{3}-c-2i+1)&=0.
\end{align}

The system has a unique (up to overall scaling) solution
\begin{align}\label{eq-Stand-special-p-3c-coeffs-case1}
\lambda_i&=
\frac{(-1)^i}{9^i}\frac{(\frac{p-1}{3}-c)^{\underline{2i}}}{ i! \cdot  \prod_{j=1}^{i}(3j-2) }\\
\mu_i&=\frac{(-1)^{i+1}}{6\cdot 9^i} \frac{(\frac{p-1}{3}-c)^{\underline{2i+1}}}{ i! \cdot \prod_{j=1}^{i+1}(3j-2) }
.\notag
\end{align}

\item Assume $p\equiv 2 \bmod 3$. A vector in the $\Triv$ isotypic component of $M^{p-3c}_{1,c}(\Stand)$ is of the form
\begin{align*}v_{p-3c}&=\sum_{i=0}^{\lfloor \frac{1}{2}(\frac{p-2}{3}-c-1)\rfloor}\lambda_i \sigma_2^{3i+2}\sigma_3^{\frac{p-2}{3}-c-2i-1} \cdot (b_+\otimes b_++ 3 b_-\otimes b_-)+\\
& \quad +\sum_{i=0}^{\lfloor\frac{1}{2}(\frac{p-2}{3}-c)\rfloor}\mu_i \sigma_2^{3i}\sigma_3^{\frac{p-2}{3}-c-2i} \cdot \left( (-b_+^2+3b_-^2)\otimes b_++3\cdot (2b_+b_-)\otimes b_-\right).
\end{align*}

Using equations \eqref{D1-Stand-special-Triv-1} and \eqref{D1-Stand-special-Triv-2}, the condition $D_{y_1}(v_{p-3c})=0$ leads to linear equations 
with a unique (up to overall scaling) solution
\begin{align}\label{eq-Stand-special-p-3c-coeffs-case2}
\lambda_i&=\frac{(-1)^i}{9^i}\frac{2 (\frac{p-1}{3}-c)^{\underline{2i+1}}}{ i! \prod_{j=1}^{i+1}(3j-1)}\\
\mu_i&=\frac{(-1)^{i}}{9^i} \frac{(\frac{p-1}{3}-c)^{\underline{2i}}}{ i! \prod_{j=1}^{i}(3j-1) }.\notag
\end{align}
\end{enumerate}
\end{proof}

\begin{lemma}\label{p>3-Stand-specialc-lemma-3c} Assume $c$ satisfies the conditions in Assumption \ref{assum}. Then
there are no singular vectors in $M_{1,c}(\Stand)^{3c}/\left<v_{p-3c}\right>$. 
\end{lemma}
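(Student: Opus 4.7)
The plan is as follows. By Lemma \ref{OmegaAction} together with the ordering in Lemma \ref{p>3-Stand-specialc-ordering}, any singular vector in degree $3c$ of $M_{1,c}(\Stand)/\langle v_{p-3c}\rangle$ must lie in the $\Sign$ isotypic component, and by Lemma \ref{KerD1} it suffices to produce a vanishing condition under $D_{y_1}$ alone. Using Theorem \ref{decmposeVermap>3}, a basis of this $\Sign$ isotypic component of $M^{3c}_{1,c}(\Stand)$ is given by
$\sigma_2^a\sigma_3^b\cdot (b_+\otimes b_- - b_-\otimes b_+)$ for $2a+3b=3c-1$
together with $\sigma_2^a\sigma_3^b\cdot((-b_+^2+3b_-^2)\otimes b_- - (2b_+b_-)\otimes b_+)$ for $2a+3b=3c-2$. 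Write a general candidate $w$ as a linear combination with unknown coefficients $\lambda_i,\mu_i$, parametrising $a,b$ in the two sub-bases according to the residue of $p\bmod 3$ exactly as in the proof of Lemma \ref{p>3-Stand-specialc-lemma-p-3c}.

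The next step is a direct Dunkl-operator computation. Using the auxiliary formulas in Section \ref{sect-auxiliary}, expand $D_{y_1}$ on each basis vector; the result splits into linearly independent $S_3$-isotypic pieces (the $\otimes(b_+-3b_-)$ piece, the $q\otimes(b_++b_-)$ piece, the long bracket involving $b_+\otimes b_\pm$ and $b_-\otimes b_\pm$, and the long bracket involving $(-b_+^2+3b_-^2)\otimes b_\pm$ and $(2b_+b_-)\otimes b_\pm$). Reading off the coefficient of each produces a system of linear relations among the $\lambda_i,\mu_i$, entirely analogous to equations \eqref{eq-Stand-special-Triv-1}--\eqref{eq-Stand-special-Triv-3}. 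Two of these relations give a two-term recursion: one expresses $\mu_i$ in terms of $\lambda_i$, and the other expresses $\lambda_{i+1}$ in terms of $\lambda_i$. The remaining relations fix the initial value and impose a consistency condition at the top of the range.

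Splitting by the cases of Assumption \ref{assum} and by the residue of $p$ modulo $3$ yields four subcases. In the range $0<c<p/6$, the submodule $\langle v_{p-3c}\rangle$ vanishes in degree $3c$, and the consistency condition at the top of the recursion is a factor whose factors lie outside the range in which they can vanish modulo $p$; this forces $\lambda_0=0$ and hence $w=0$. The exceptional value $c=(p+1)/6$ with $p\equiv 2\pmod 3$ is handled by a direct short computation. In the two remaining ranges $p/6<c<p/3$, the submodule $\langle v_{p-3c}\rangle$ does contribute non-trivial $\Sign$-isotypic elements in degree $3c$ of the form $f\cdot v_{p-3c}$ with $f\in S^{6c-p}\h^*$; one must therefore subtract an explicit multiple of $v_{p-3c}$ and re-examine the residual recursion. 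Iterating the recursion down to $\lambda_0$, the top-degree consistency condition telescopes to exactly the hypergeometric-type sum appearing in part (2) or part (4) of Assumption \ref{assum}. Non-vanishing of that sum forces the candidate $w$ to coincide with an element of $\langle v_{p-3c}\rangle$, so $w=0$ in the quotient.

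The main obstacle is precisely the last point: in characteristic $0$ (or for $c$ over $\mathbb{Z}$) the relevant sums have a clean closed form and are manifestly non-zero, but in characteristic $p$ they do not seem to simplify, which is why we cannot remove Assumption \ref{assum} from the statement. All remaining steps of the proof are linear algebra and direct calculation using the formulas of Section \ref{sect-auxiliary}, exactly in the style of Lemma \ref{p>3-Stand-specialc-lemma-p-3c}.
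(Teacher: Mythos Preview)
Your outline is correct and follows the same route as the paper: restrict to the $\Sign$ isotypic component via Lemma \ref{OmegaAction}, reduce to $\ker D_{y_1}$ via Lemma \ref{KerD1}, expand in the basis of Theorem \ref{decmposeVermap>3}, split by whether $c<p/6$ or $p/6<c<p/3$ and by $p\bmod 3$, and in the harder range show that the obstruction telescopes to exactly the sum in Assumption \ref{assum}.

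The one place your sketch understates the work is the phrase ``subtract an explicit multiple of $v_{p-3c}$''. In the range $p/6<c<p/3$ the condition for $w$ to be singular in the quotient is $D_{y_1}(w)\in\langle v_{p-3c}\rangle^{3c-1}$, and this graded piece is not one-dimensional: the paper parametrises it by two families of unknowns $\pi_j,\rho_j$ (coefficients of $\sigma_2^{\bullet}\sigma_3^{\bullet}\,v_{p-3c}\cdot(-b_++b_-)$ and of $\sigma_2^{\bullet}\sigma_3^{\bullet}\,v_{p-3c}\cdot(-(-b_+^2+3b_-^2)+2b_+b_-)$ respectively). The resulting system first yields $\rho_j=0$ for all $j$ by comparing two of the four equation-families, and then a downward induction shows $\pi_m=0$ for all $m>0$. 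Only after this reduction to the single unknown $\pi_0$ does the telescoping work, and it requires specific multipliers $a_k=\tfrac{(-1)^{k+1}}{2}\tfrac{(3k+1)!}{k!\,(c-1)^{\underline{2k+1}}}$ and $b_k=(-1)^k\tfrac{(3k)!}{k!\,(c-1)^{\underline{2k}}}$ to collapse to the sum in Assumption \ref{assum}. Your ``iterating the recursion down to $\lambda_0$'' captures the spirit, but the elimination of the higher $\pi_j$ and the choice of telescoping weights are where the real content lies.
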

\begin{proof} We distingush the cases $0<c<p/6$ and $p/6<c<p/3$. 

{\bf Case 1: $0<c<p/6$.} By Lemma \ref{p>3-Stand-specialc-ordering} $3c<p-3c$, so $M_{1,c}(\Stand)^{3c}/\left<v_{p-3c}\right>=M_{1,c}(\Stand)^{3c}$, and it is enough to show there are no singular vectors in the $\Sign$ component of $M_{1,c}^{3c}(\Stand)$. This is the easier case as we are not working modulo $v_{p-3c}$. By Theorem \ref{decmposeVermap>3}, any vector in the $\Sign$ component of $M_{1,c}^{3c}(\Stand)$ can be uniquely expressed as
\begin{align*}v&=\sum_{i=0}^{\lfloor \frac{1}{2}(c-1)\rfloor}\nu_i \sigma_2^{3i+1}\sigma_3^{c-1-2i} \cdot (b_+\otimes b_--b_-\otimes b_+)+\\
& \quad +\sum_{i=0}^{\lfloor\frac{1}{2}(c-2)\rfloor}\xi_i \sigma_2^{3i+2}\sigma_3^{c-2-2i} \cdot \left( (-b_+^2+3b_-^2)\otimes b_-- (2b_+b_+)\otimes b_+\right).
\end{align*}

We calculate that 
\begin{align}
D_{y_1}&(\sigma_2^{a}\sigma_3^{b} \cdot (b_+\otimes b_--b_-\otimes b_+))=\label{p>3,genc,Stand,D1(Sign)-p1}\\
&=\frac{1-3c}{2}\cdot \sigma_2^{a}\sigma_3^{b} \otimes \left(b_--b_+\right)
\notag\\
& \quad + \frac{b}{6} \cdot \sigma_2^{a+1}\sigma_3^{b-1} \cdot \left(b_+\otimes b_++b_-\otimes b_++b_+\otimes b_--3b_-\otimes b_-)\right) \notag \\
& \quad + \frac{a}{12} \cdot \sigma_2^{a-1}\sigma_3^{b} \cdot \left((-b_+^2+3b_-^2)\otimes b_++(2b_+b_-)\otimes b_++\right.\notag \\
& \quad \quad \quad \quad \quad \quad \quad \quad \left.+(-b_+^2+3b_-^2)\otimes b_--3(2b_+b_-)\otimes b_-)\right) \notag \\
& \quad + \frac{b}{6}\cdot \sigma_2^{a}\sigma_3^{b-1} q \otimes \left(3b_--b_+\right),\notag
\end{align}
\begin{align}
D_{y_1}&(\sigma_2^{a}\sigma_3^{b} \cdot \left( (-b_+^2+3b_-^2)\otimes b_-- (2b_+b_+)\otimes b_-\right))=\label{p>3,genc,Stand,D1(Sign)-p2}\\
& =-9a \cdot \sigma_2^{a-1}\sigma_3^{b+1} \otimes \left(b_--b_+\right)+2b \sigma_2^{a+2}\sigma_3^{b-1} \otimes \left(b_--b_+\right) \notag\\
& \quad + a \cdot \sigma_2^{a}\sigma_3^{b} \cdot \left(b_+\otimes b_++b_-\otimes b_++b_+\otimes b_--3b_-\otimes b_-)\right) \notag \\
& \quad - \frac{b}{6} \cdot \sigma_2^{a+1}\sigma_3^{b-1} \cdot \left((-b_+^2+3b_-^2)\otimes b_++(2b_+b_-)\otimes b_++\right.\notag \\
& \quad \quad \quad \quad \quad \quad \quad \quad \left. +(-b_+^2+3b_-^2)\otimes b_--3(2b_+b_-)\otimes b_-)\right) \notag \\
& \quad + a\cdot \sigma_2^{a-1}\sigma_3^{b} q \otimes \left(3b_--b_+\right).\notag
\end{align}

From here, $D_{y_1}(v)=0$ gives the system 
\begin{align*}
\frac{1-3c}{2}\cdot \nu_i -9(3i+2) \cdot \xi_i+2(c-2i)\cdot \xi_{i-1}&=0\\
\frac{c-1-2i}{6} \cdot \nu_i+(3i+2) \cdot \xi_i&=0\\
\frac{3i+1}{12} \cdot \nu_i - \frac{c-2i}{6} \cdot \xi_{i-1}&=0,
\end{align*}
the only solution to which is $\nu_i=0,\xi_i=0$ for all $i$, so $v=0$.

{\bf Case 2: $p/6<c<p/3$.} This task is harder because $3c>p-3c$ and we have to work modulo $v_{p-3c}$. We consider further two cases. 

{\bf Case 2.1. $p\equiv 1 \bmod 3$.} A vector in the $\Sign$ component of $M^{3c}(\Stand)/\left<v_{p-3c} \right>$ can be uniquely written as 
\begin{align*}
v&=\sum_{i=0}^{\lfloor \frac{c-1}{2}\rfloor} \nu_i \sigma_2^{3i+1}\sigma_3^{c-1-2i}\cdot (b_+\otimes b_--b_-\otimes b_+)+\\
&\quad +\sum_{i=0}^{\frac{p-1}{6}-\lfloor \frac{c-1}{2} \rfloor-2}\xi_i \sigma_2^{3i+2}\sigma_3^{c-2-2i}\cdot \left( (-b_+^2+3b_-^2)\otimes b_-- (2b_+b_+)\otimes b_-\right)).
\end{align*}
(Note the boundaries of summation, due to the fact that in $M(\Stand)^{3c}/\left<v_{p-3c} \right>$ the multiplicity of $\Sign$ is $\frac{p-1}{6}$, and the above terms form a basis.)

Using Lemma \ref{KerD1}, $v$ is singular in $M^{3c}(\Stand)/\left<v_{p-3c} \right>$ if and only if $D_{y_1}(v)\in \left<v_{p-3c} \right>,$ meaning that there exist  $\pi_j,\rho_j\in \Bbbk$  such that 
\begin{align}
D_{y_1}(v)&=\sum_{j=0}^{c-\frac{p-1}{6}-1}\pi_j\sigma_2^{3j}\sigma_3^{2c-\frac{p-1}{3}-1-2j}v_{p-3c}(-b_++b_-)\label{v-sing-case2.1} \\
&\quad + \sum_{j=0}^{c-\frac{p-1}{6}-1}\rho_j \sigma_2^{3j+1}\sigma_3^{2c-\frac{p-1}{3}-2-2j}v_{p-3c}(-(-b_+^2+3b_-^2)+(2b_+b_-)).\notag
\end{align}

Expanding this out using the explicit form for $v_{p-3c}$ from Lemma \ref{p>3-Stand-specialc-lemma-p-3c} and equations \eqref{p>3,genc,Stand,D1(Sign)-p1} and \eqref{p>3,genc,Stand,D1(Sign)-p2} we get that \eqref{v-sing-case2.1} is equivalent to the following being satisfied for all $k$: 
\begin{align}\label{3c-technical1}
\nu_k \cdot \frac{1-3c}{2}+\xi_k\cdot (-9)(3k+2)+&\xi_{k-1}\cdot 2(c-2k)=\notag \\
\quad \quad \quad  \quad =\sum_{j=0}^k -6\pi_j\lambda_{k-j}+&54 \pi_j\mu_{k-j}+54\rho_j\lambda_{k-j}+72\rho_j\mu_{k-j}\\
\label{3c-technical2}
\nu_k \cdot \frac{c-1-2k}{6}+\xi_k\cdot (3k+2)&=\sum_{j=0}^k -6\pi_j\mu_{k-j}-6\rho_j\lambda_{k-j}+108\rho_j\mu_{k-j}\\
\label{3c-technical3}
\nu_k \cdot \frac{3k+1}{12}+\xi_{k-1}\cdot \frac{-c+2k}{6}&=\sum_{j=0}^k \frac{1}{2}\pi_j\lambda_{k-j}+6\rho_j\mu_{k-j-1}\\
\label{3c-technical4}
\nu_k \cdot \frac{c-1-2k}{6}+\xi_k\cdot (3k+2)&=\sum_{j=0}^k -6\pi_j\mu_{k-j}+6\rho_j\lambda_{k-j},
\end{align}
where
$\lambda_i$ and $\mu_i$ are as in Lemma \ref{p>3-Stand-specialc-lemma-p-3c}. 

Subtracting equations \eqref{3c-technical2} and \eqref{3c-technical4} we get that for all $k$
$$\sum_{j=0}^k\rho_j(-12\lambda_{k-j}+108\mu_{k-j})=0,$$
which (using the explicit form of $\lambda_i$ and $\mu_i$ from Lemma \ref{p>3-Stand-specialc-lemma-p-3c} to check that $-12\lambda_{k-j}+108\mu_{k-j}\ne 0$) implies that $\rho_j=0$ for all $j$. 

The system \eqref{3c-technical1}-\eqref{3c-technical4} is now equivalent to requiring that for all $k$
\begin{align}
\nu_k \cdot \frac{c-1-2k}{6}+\xi_k\cdot (3k+2)&=-6\sum_{j=0}^k \pi_j\mu_{k-j} \label{3c-technical5}\\
\nu_k \cdot \frac{3k+1}{12}+\xi_{k-1}\cdot \frac{-c+2k}{6}&=\frac{1}{2} \sum_{j=0}^k \pi_j\lambda_{k-j}.\label{3c-technical6}
\end{align}

We split into two further cases. 

{\bf Case 2.1.1. Assume $c$ is even.} The relevance of this assumption is in the range of $k$ for which the unknowns $\nu_k,\xi_k,\pi_k$ and the parameters $\lambda_k,\mu_k$ are potentially nonzero. Namely, we are looking for all solutions to \eqref{3c-technical5}-\eqref{3c-technical6} where: 
\begin{align*}
\nu_k\ldots 0\le k\le \frac{c}{2}-1\\
\xi_k\ldots 0\le k\le \frac{p-1}{6}-\frac{c}{2}-1\\
\pi_k\ldots 0\le k\le c-\frac{p-1}{6}-1\\
\lambda_k\ldots 0\le k\le \frac{p-1}{6}-\frac{c}{2}-1\\
\mu_k\ldots 0\le k\le \frac{p-1}{6}-\frac{c}{2}-1.
\end{align*}

Let us prove by downwards induction on $m$ that $$\nu_{\frac{p-1}{6}-\frac{c}{2}+m}=0, \quad \pi_m=0 \quad \textrm{ for all } m>0.$$
It is certainly true for large $m$ (by convention). Assume it is true for $m+1,m+2, \ldots$, and let us prove it for $m$. Equation \eqref{3c-technical5} for $k=\frac{p-1}{6}-\frac{c}{2}+m$ reads $\nu_{\frac{p-1}{6}-\frac{c}{2}+m}=0$, and equation \eqref{3c-technical6} for $k=\frac{p-1}{6}-\frac{c}{2}+m$ implies $\pi_{m}=0$. 

The system \eqref{3c-technical5}-\eqref{3c-technical6} is thus reduced to 
\begin{align}
\nu_k \cdot \frac{c-1-2k}{6}+\xi_k\cdot (3k+2)&=-6 \pi_0 \mu_{k} \label{3c-technical7}\\
\nu_k \cdot \frac{3k+1}{12}+\xi_{k-1}\cdot \frac{-c+2k}{6}&=\frac{1}{2} \pi_0\lambda_{k}.\label{3c-technical8}
\end{align}
We multiply \eqref{3c-technical7} by $a_k$ and \eqref{3c-technical8} by $b_k$ for 
$$a_k=\frac{(-1)^{k+1}}{2}\frac{(3k+1)!}{k!(c-1)^{\underline{2k+1}}}, \quad,  b_k=(-1)^{k}\frac{(3k)!}{k!(c-1)^{\underline{2k}}},$$ and sum over all $k$ to get, for $n=\frac{p-1}{6}-\frac{c}{2}$,
\begin{align*}
0&=\pi_0\cdot \left( \sum_{k=0}^{n-1} (-6)\mu_ka_k+\sum_{k=0}^{n} \frac{1}{2}\lambda_k b_k\right)\\
&=\pi_0\cdot \left( \sum_{k=0}^{n-1} \frac{-1}{2\cdot 3^k}\frac{\prod_{j=1}^k(3j-1)(2n)^{\underline{2k+1}}}{k!(c-1)^{\underline{2k+1}}}+\sum_{k=0}^{n} \frac{1}{2\cdot 3^k}\frac{\prod_{j=1}^k(3j-1)(2n)^{\underline{2k}}}{k!(c-1)^{\underline{2k}}} \right)\\
&=\pi_0\cdot \frac{c-\frac{p-1}{6}-\frac{1}{2}}{c-1}\sum_{k=0}^{n} \frac{1}{3^k}\frac{\prod_{j=1}^k(3j-1)(2n)^{\underline{2k}}}{k!(c-2)^{\underline{2k}}}.
\end{align*}
As $\frac{c-\frac{p-1}{6}-\frac{1}{2}}{c-1}\ne 0$ for $c\in \mathbb{F}_p$, $p/6<c<p/3$, and as $\sum_{k=0}^{n} \frac{1}{3^k}\frac{\prod_{j=1}^k(3j-1)(2n)^{\underline{2k}}}{k!(c-2)^{\underline{2k}}}\ne 0$ by Assumption \ref{assum}, we conclude that $\pi_0=0$. From here we proceed like in part (1) to deduce that $\nu_k=0$, $\xi_k=0$ for all $k$, and $v=0$.

{\bf Case 2.1.2. Assume $c$ is odd.} In this case we are looking for all solutions to \eqref{3c-technical5}-\eqref{3c-technical6} with the boundaries: 
\begin{align*}
\nu_k\ldots 0\le k\le \frac{c-1}{2}\\
\xi_k\ldots 0\le k\le \frac{p-1}{6}-\frac{c-1}{2}-2\\
\pi_k\ldots 0\le k\le c-\frac{p-1}{6}-1\\
\lambda_k\ldots 0\le k\le \frac{p-1}{6}-\frac{c-1}{2}-1\\
\mu_k\ldots 0\le k\le \frac{p-1}{6}-\frac{c-1}{2}-1.
\end{align*}

As before, we prove by downwards induction on $m$ that $$\nu_{\frac{p-1}{6}-\frac{c-1}{2}-1+m}=0 \quad \pi_m=0 \quad \textrm{ for all } m>0.$$ Assuming this holds for $m+1,m+2,\ldots,$ we write equations \eqref{3c-technical5}-\eqref{3c-technical6} for $k=\frac{p-1}{6}-\frac{c-1}{2}-1+m$ to get a system of two equations with two unknowns $\nu_{\frac{p-1}{6}-\frac{c-1}{2}-1+m}$ and $\pi_m=0$, the only solution of which is $\nu_{\frac{p-1}{6}-\frac{c-1}{2}-1+m}=0$, $\pi_m=0$. 

The remaining system is identical to the system \eqref{3c-technical7}-\eqref{3c-technical8}, and we proceed as in case (i).

{\bf Case 2.2. $p\equiv 2 \bmod 3$.} Similar. A vector in the $\Sign$ component of $M_{1,c}^{3c}(\Stand)/\left<v_{p-3c} \right>$ can be uniquely written as 
\begin{align*}
v&=\sum_{i=0}^{\lfloor \frac{c-1}{2}\rfloor} \nu_i \sigma_2^{3i+1}\sigma_3^{c-1-2i}\cdot (b_+\otimes b_--b_-\otimes b_+)+\\
&\quad +\sum_{i=0}^{\frac{p+1}{6}-\lfloor \frac{c-1}{2} \rfloor-2}\xi_i \sigma_2^{3i+2}\sigma_3^{c-2-2i}\cdot \left( (-b_+^2+3b_-^2)\otimes b_-- (2b_+b_+)\otimes b_-\right)).
\end{align*}
Such a vector is singular in $M(\Stand)/\left<v_{p-3c} \right>$ if and only if there exist $\pi_j,\rho_j\in \Bbbk$ such that 
\begin{align*}
D_{y_1}(v)&=\sum_{j=0}^{c-\frac{p+1}{6}-1}\pi_j\sigma_2^{3j+1}\sigma_3^{2c-\frac{p+1}{3}-2j-1}v_{p-3c}(-b_++b_-)\\
&\quad + \sum_{j=0}^{c-\frac{p+1}{6}-1}\rho_j \sigma_2^{3j+2}\sigma_3^{2c-\frac{p+1}{3}-2-2j}v_{p-3c}(-(-b_+^2+3b_-^2)+(2b_+b_-)).
\end{align*}


Reading off the coefficients we get a system of equations which lets us immediately conclude that $\rho_j=0$ for all $j$. After this, the system is:
\begin{align}
\nu_k \cdot \frac{c-1-2k}{6}+\xi_k\cdot (3k+2)&=-6\sum_{j=0}^k \pi_j\mu_{k-j} \label{3c-technical13}\\
\nu_k \cdot \frac{3k+1}{12}+\xi_{k-1}\cdot \frac{-c+2k}{6}&=\frac{1}{2} \sum_{j=0}^{k-1} \pi_j\lambda_{k-j-1}.\label{3c-technical14}
\end{align}

As before, distinguishing the cases of even and odd $c$, we show by downwards induction on $m$ that $\pi_m=0$ and $\nu_{\frac{p+1}{6}-\frac{c}{2}-1+m}=0$ for all $m>0$.

This leaves us with the system
\begin{align}
\nu_k \cdot \frac{c-1-2k}{6}+\xi_k\cdot (3k+2)&=-6 \pi_0\mu_{k} \label{3c-technical15}\\
\nu_k \cdot \frac{3k+1}{12}+\xi_{k-1}\cdot \frac{-c+2k}{6}&=\frac{1}{2} \pi_0\lambda_{k-1},\label{3c-technical14}
\end{align}
which can be shown by telescoping to have no nonzero solutions precisely when Assumption \ref{assum} is satisfied. We thus conclude that $v=0$.  
\end{proof}

Next, as dictated by the proof of Theorem \ref{p>3,specialc,Stand,Thm1}, we search $M^{p+3c}_{1,c}(\Stand)$ for singular vectors of type $\Sign$. 

\begin{lemma}\label{p>3-Stand-specialc-lemma-p+3c}
	The vector $v_{p+3c}\in M^{p+3c}_{1,c}(\Stand)$ given below is singular.
	\begin{enumerate}
		\item If ${{p}\,\equiv\,{1}\:(\mathrm{mod}\:3)}$, 
		\begin{align*}
			&v_{p+3c}=\sum_{i=0}^{\lfloor \frac{1}{2}(\frac{p-1}{3}+c)\rfloor}\frac{(-1)^i}{9^i}\frac{(\frac{p-1}{3}+c)^{\underline{2i}}}{ i! \cdot  \prod_{j=1}^{i}(3j-2) } \sigma_2^{3i}\sigma_3^{\frac{p-1}{3}+c-2i} \cdot (b_+\otimes b_-- b_-\otimes b_+)+ \\
			& \quad\quad  +\sum_{i=0}^{\lfloor\frac{1}{2}(\frac{p-1}{3}+c-1)\rfloor}\frac{(-1)^{i+1}}{6\cdot 9^i} \frac{(\frac{p-1}{3}+c)^{\underline{2i+1}}}{ i! \cdot \prod_{j=1}^{i+1}(3j-2) } \sigma_2^{3i+1}\sigma_3^{\frac{p-1}{3}+c-2i-1} \cdot &\\
			&\quad\quad\quad\quad\quad\quad\quad\quad\cdot \bigg( (-b_+^2+3b_-^2)\otimes b_++3\cdot (2b_+b_-)\otimes b_-\bigg);\end{align*}
		\item If ${{p}\,\equiv\,{2}\:(\mathrm{mod}\:3)}$, 
		\begin{align*}
			&v_{p+3c}=\sum_{i=0}^{\lfloor \frac{1}{2}(\frac{p-2}{3}+c-1)\rfloor}\frac{(-1)^i}{9^i}\frac{2 (\frac{p-2}{3}+c)^{\underline{2i+1}}}{ i! \prod_{j=1}^{i+1}(3j-1)} \sigma_2^{3i+2}\sigma_3^{\frac{p-2}{3}+c-2i-1} \cdot (b_+\otimes b_-- b_-\otimes b_+)+\\
			&\quad\quad  +\sum_{i=0}^{\lfloor\frac{1}{2}(\frac{p-2}{3}+c)\rfloor}\frac{(-1)^{i}}{9^i} \frac{(\frac{p-2}{3}+c)^{\underline{2i}}}{ i! \prod_{j=1}^{i}(3j-1) } \sigma_2^{3i}\sigma_3^{\frac{p-2}{3}+c-2i} \cdot \\
		&\quad\quad\quad\quad\quad\quad\quad\quad\cdot \bigg( (-b_+^2+3b_-^2)\otimes b_- -  (2b_+b_-)\otimes b_+\bigg).\end{align*}
			
\end{enumerate}

\end{lemma}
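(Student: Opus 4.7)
The strategy parallels the proof of Lemma \ref{p>3-Stand-specialc-lemma-p-3c} almost verbatim, with the simple substitution $c \mapsto -c$ in all relevant indices and coefficients. First, I would verify that $v_{p+3c}$ lies in the $\Sign$ isotypic component of $M^{p+3c}_{1,c}(\Stand)$. By Theorem \ref{decmposeVermap>3}, the $S_2$-skew-invariant part of that $\Sign$ isotypic component has basis
$$\{\sigma_2^a\sigma_3^b(b_+\otimes b_--b_-\otimes b_+)\mid 2a+3b=p+3c-1\}$$
together with
$$\{\sigma_2^a\sigma_3^b((-b_+^2+3b_-^2)\otimes b_--(2b_+b_-)\otimes b_+)\mid 2a+3b=p+3c-2\},$$
and each summand in the proposed $v_{p+3c}$ falls in this set (modulo a harmless rewriting of the second tensor factor).

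Next, I would use Lemma \ref{KerD1} to reduce the problem of singularity to verifying $D_{y_1}(v_{p+3c})=0$. The action of $D_{y_1}$ on a general basis vector $\sigma_2^a\sigma_3^b(b_+\otimes b_--b_-\otimes b_+)$ and on $\sigma_2^a\sigma_3^b((-b_+^2+3b_-^2)\otimes b_--(2b_+b_-)\otimes b_+)$ has already been computed in equations \eqref{p>3,genc,Stand,D1(Sign)-p1} and \eqref{p>3,genc,Stand,D1(Sign)-p2} during the proof of Lemma \ref{p>3-Stand-specialc-lemma-3c}. Substituting those formulas and writing
$$v_{p+3c}=\sum_i\lambda_i(\cdots)+\sum_i\mu_i(\cdots),$$
the condition $D_{y_1}(v_{p+3c})=0$ unpacks, by reading off the coefficients of the four linearly independent tensor factors $\otimes(b_--b_+)$, $(b_+\otimes b_++b_-\otimes b_++b_+\otimes b_--3b_-\otimes b_-)$, $((-b_+^2+3b_-^2)\otimes b_++\dots)$, and $q\otimes(3b_--b_+)$, into a system of linear recurrences in $\lambda_i,\mu_i$. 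These recurrences are obtained from \eqref{eq-Stand-special-Triv-1}--\eqref{eq-Stand-special-Triv-3} by the single replacement $c\mapsto -c$ (the extra $(1-3c)/2$ type coefficients change sign appropriately, and the falling factorial indices shift from $(\tfrac{p-1}{3}-c)^{\underline{2i}}$ to $(\tfrac{p-1}{3}+c)^{\underline{2i}}$).

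Finally, I would verify by direct substitution that the explicit coefficients given in the statement
$$\lambda_i=\frac{(-1)^i}{9^i}\frac{(\tfrac{p-1}{3}+c)^{\underline{2i}}}{i!\prod_{j=1}^{i}(3j-2)},\qquad \mu_i=\frac{(-1)^{i+1}}{6\cdot 9^i}\frac{(\tfrac{p-1}{3}+c)^{\underline{2i+1}}}{i!\prod_{j=1}^{i+1}(3j-2)}$$
(and the analogous expressions for $p\equiv 2\pmod 3$) solve the resulting system; this is the same telescoping verification as in \eqref{eq-Stand-special-p-3c-coeffs-case1} and \eqref{eq-Stand-special-p-3c-coeffs-case2}, after the substitution $c\mapsto -c$.

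The main obstacle is bookkeeping rather than conceptual: one must check that the denominators $i!$, $\prod(3j-2)$, $\prod(3j-1)$, and the relevant factorial factors never vanish in $\mathbb{F}_p$ across the summation range $0\le i\le\lfloor(\tfrac{p-1}{3}+c)/2\rfloor$. Since $\tfrac{p-1}{3}+c<\tfrac{p-1}{3}+\tfrac{p}{3}<p$ in the range $0<c<p/3$ under consideration, all these factors are nonzero modulo $p$, so no divisibility issues arise and the recursion determines the coefficients uniquely up to overall scaling, matching the formulas in the statement.
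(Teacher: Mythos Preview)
Your proposal is correct and follows the same approach as the paper: a direct computation of $D_{y_1}(v_{p+3c})$ using the formulas \eqref{p>3,genc,Stand,D1(Sign)-p1} and \eqref{p>3,genc,Stand,D1(Sign)-p2}, yielding a linear recursion for the coefficients which the stated $\lambda_i,\mu_i$ satisfy. The paper's proof is extremely terse (``direct computation, similar to before''), and your writeup supplies the details it omits.

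One remark on the $c\mapsto -c$ heuristic: it is a good guide for why the answer looks like Lemma~\ref{p>3-Stand-specialc-lemma-p-3c} with $c$ replaced by $-c$, and it can be made rigorous via Lemma~\ref{Sign} (the isomorphism $H_{1,c}\cong H_{1,-c}$ twisting by $\Sign$). However, as a literal statement about the recurrence equations it is not quite right: the $D_{y_1}$-formulas \eqref{D1-Stand-special-Triv-1}--\eqref{D1-Stand-special-Triv-2} for the $\Triv$ basis and \eqref{p>3,genc,Stand,D1(Sign)-p1}--\eqref{p>3,genc,Stand,D1(Sign)-p2} for the $\Sign$ basis differ in more than just the sign of $c$ (different tensor factors, and e.g.\ the $q$-coefficient is $b/2$ versus $b/6$). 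So if you are writing this out in full, you should derive the recursion directly from \eqref{p>3,genc,Stand,D1(Sign)-p1}--\eqref{p>3,genc,Stand,D1(Sign)-p2} rather than by formal substitution into \eqref{eq-Stand-special-Triv-1}--\eqref{eq-Stand-special-Triv-3}; the resulting system has the same shape and the same solution structure, which is the point.
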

\begin{proof}
The proof, very similar to the proofs of Lemmas \ref{p>3-Stand-specialc-lemma-3c} and \ref{p>3-Stand-specialc-lemma-p+3c}, is a direct computation using formulas \eqref{p>3,genc,Stand,D1(Sign)-p1} and \eqref{p>3,genc,Stand,D1(Sign)-p2}, showing that $D_{y_1}(v_{p+3c})=0.$
\end{proof}

After this, as outlined in the proof of Theorem \ref{p>3,specialc,Stand,Thm1}, in the next lemmas we calculate the Hilbert polynomial of the quotient of $M_{1,c}(\Stand)$ by $\left<v_{p-3c},v_+,v_-,v_{p+3c}\right>$. 

\begin{lemma}\label{p>3-Stand-specialc-char1}
The Hilbert polynomial of $M_{1,c}(\Stand)/\left<v_{p-3c,v_{p+3c}}\right>$ equals
$$h_{M_{1,c}(\Stand)/\left<v_{p-3c}, v_{p+3c}\right>}(z)=\frac{1}{(1-z)^2}(2-z^{p-3c}-z^{p+3c}).$$
\end{lemma}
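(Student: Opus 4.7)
The plan is to compute the Hilbert polynomial via inclusion--exclusion. Combining the short exact sequence
$$0 \to \langle v_{p-3c}\rangle + \langle v_{p+3c}\rangle \to M_{1,c}(\Stand) \to M_{1,c}(\Stand)/\langle v_{p-3c}, v_{p+3c}\rangle \to 0$$
with the identity
$$h_{\langle v_{p-3c}\rangle + \langle v_{p+3c}\rangle}(z) = h_{\langle v_{p-3c}\rangle}(z) + h_{\langle v_{p+3c}\rangle}(z) - h_{\langle v_{p-3c}\rangle \cap \langle v_{p+3c}\rangle}(z)$$
reduces the problem to computing the three Hilbert polynomials on the right.

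First I would show $\langle v_{p-3c}\rangle \cong M_{1,c}(\Triv)[-(p-3c)]$. The universal property of Verma modules gives a homomorphism $\varphi_1 \colon M_{1,c}(\Triv)[-(p-3c)] \to M_{1,c}(\Stand)$ with image $\langle v_{p-3c}\rangle$; identifying $M_{1,c}(\Triv) \cong S\h^*$, this is simply multiplication by $v_{p-3c}$ inside the free rank-$2$ $S\h^*$-module $M_{1,c}(\Stand) = S\h^* \otimes \Stand$. Since $S\h^*$ is an integral domain and $v_{p-3c} \neq 0$, the map is injective. The analogous argument yields $\langle v_{p+3c}\rangle \cong M_{1,c}(\Sign)[-(p+3c)]$, so $h_{\langle v_{p-3c}\rangle}(z) = z^{p-3c}/(1-z)^2$ and $h_{\langle v_{p+3c}\rangle}(z) = z^{p+3c}/(1-z)^2$.

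The core step is to prove $\langle v_{p-3c}\rangle \cap \langle v_{p+3c}\rangle = 0$. Writing $v_{p-3c} = \alpha_+\otimes b_+ + \alpha_-\otimes b_-$ and $v_{p+3c} = \gamma_+\otimes b_+ + \gamma_-\otimes b_-$ in $S\h^* \otimes \Stand$, any element of the intersection satisfies $f v_{p-3c} = g v_{p+3c}$ for some $f,g \in S\h^*$, forcing $f\alpha_\pm = g\gamma_\pm$. Cross-multiplying gives $f \cdot D = 0$, where $D := \alpha_+\gamma_- - \alpha_-\gamma_+ \in S^{2p}\h^*$, so since $S\h^*$ is a domain it suffices to show $D \neq 0$. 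A short $S_3$-analysis, using that $v_{p-3c}$ is $S_3$-invariant while $v_{p+3c}$ is $S_3$-antiinvariant and that $b_+\otimes b_- - b_-\otimes b_+$ spans the $\Sign$-component of $\Stand\otimes\Stand$ by Lemma \ref{tensorproductsp>3}, shows that $D$ is $S_3$-invariant, hence a $\Bbbk$-linear combination of the monomials $\sigma_2^a \sigma_3^b$ with $2a+3b=2p$.

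The main obstacle is to verify $D \neq 0$. My plan is to extract a single explicit nonzero coefficient of $D$ using the closed formulas for $v_{p-3c}$ and $v_{p+3c}$ given in Lemmas \ref{p>3-Stand-specialc-lemma-p-3c} and \ref{p>3-Stand-specialc-lemma-p+3c}, treating the cases $p \equiv 1$ and $p \equiv 2 \pmod 3$ separately. The coefficients appearing in those formulas are built out of falling factorials such as $(\tfrac{p-1}{3}\pm c)^{\underline{k}}$, which do not vanish in $\mathbb{F}_p$ precisely because $0 < c < p/3$; tracking the contribution to $D$ of the lowest-degree-in-$\sigma_2$ invariant monomial of degree $2p$ (i.e.\ $\sigma_2\sigma_3^{(2p-2)/3}$ if $p\equiv 1\pmod 3$, or $\sigma_2^2\sigma_3^{(2p-4)/3}$ if $p\equiv 2\pmod 3$) should express it as a nonzero product of such factorials. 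Once $D \neq 0$, the intersection vanishes and inclusion--exclusion yields the claimed Hilbert polynomial $\frac{2 - z^{p-3c} - z^{p+3c}}{(1-z)^2}$.
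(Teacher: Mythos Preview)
Your overall strategy matches the paper's: inclusion--exclusion, the identifications $\langle v_{p-3c}\rangle\cong M_{1,c}(\Triv)[-(p-3c)]$ and $\langle v_{p+3c}\rangle\cong M_{1,c}(\Sign)[-(p+3c)]$, and the reduction of $\langle v_{p-3c}\rangle\cap\langle v_{p+3c}\rangle=0$ to the nonvanishing of the determinant $D=\alpha_+\gamma_--\alpha_-\gamma_+\in (S^{2p}\h^*)^{S_3}$.

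The gap is in your plan for showing $D\neq 0$. You propose to extract the coefficient of the lowest-$\sigma_2$-degree invariant monomial, i.e.\ of $\sigma_2\sigma_3^{(2p-2)/3}$ or $\sigma_2^2\sigma_3^{(2p-4)/3}$. That coefficient is in fact zero. The paper observes an additional structural fact you are missing: because $v_{p-3c}$ and $v_{p+3c}$ are singular, one can write down $\partial_{y_1}(\alpha_\pm)$ and $\partial_{y_1}(\gamma_\pm)$ explicitly from $D_{y_1}(v_{p\mp 3c})=0$, and a short computation then gives $\partial_{y_1}(D)=0$. Since $D$ is $S_3$-invariant this forces $\partial_y(D)=0$ for all $y\in\h$, so $D$ is a $p$-th power; being an invariant of degree $2p$, it must lie in $\Bbbk\cdot\sigma_2^p$. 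Hence every coefficient of a monomial involving $\sigma_3$ vanishes, and your proposed computation would return zero and tell you nothing.

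The fix is to look at the $\sigma_2^p$-coefficient instead, which amounts to computing $D$ modulo $\sigma_3$. In the explicit formulas of Lemmas~\ref{p>3-Stand-specialc-lemma-p-3c} and~\ref{p>3-Stand-specialc-lemma-p+3c} only the top-$i$ term survives modulo $\sigma_3$, and the resulting scalar is a product of falling factorials which is visibly nonzero for $0<c<p/3$ (the paper splits into four cases according to $p\bmod 3$ and the parity of $\tfrac{p-1}{3}-c$ or $\tfrac{p-2}{3}-c$).
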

\begin{proof}
It is straightforward to see that $\left<v_{p-3c}\right>\cong M_{1,c}(\Triv)[-(p-3c)]$ and $\left<v_{p+3c}\right> \cong M_{1,c}(\Sign)[-(p+3c)]$, so we only need to calculate Hilbert series of $\left<v_{p-3c}\right>\cap \left<v_{p+3c}\right>$. This is done similar to the proof of Lemma \ref{complete-intersection}, or Lemma \ref{p>3,Det1} and \ref{p>3,t=1,generic,Stand-character}. 

Write 
\begin{align*}
v_{p-3c}&=f\otimes b_++g\otimes b_-\\
v_{p+3c}&=h\otimes b_++k\otimes b_-
\end{align*}
with $f,g,h,k\in S\h^*$.
A vector is in $\left<v_{p-3c}\right>\cap \left<v_{p-3c}\right>$ if and only if it can be written as 
$Av_{p-3c}=Bv_{p+3c}$
for some $A,B\in S\h^*$. This leads to the system 
\begin{align}
Af-Bh&=0 \label{somesystem}\\
Ag-Bk&=0,\notag
\end{align}
and to see if this has any nontrivial solutions, we will calculate its determinant $fk-gh.$

First, the facts that $v_{p-3c}$ is $S_3$ invariant and that $v_{p+3c}$ is $S_3$ anti-invariant lead to
\begin{align*}
(12).f&=f, & (12).g&=-g, & (12).h&=-h, & (12).k&=k \\
(23).f&=\frac{-f+g}{2}, & (23).g&=\frac{3f+g}{2}, & (23).h&=\frac{h-k}{2}, & (23).k&=\frac{-3h-k}{2} \\
(13).f&=\frac{-f-g}{2}, & (23).g&=\frac{-3f+g}{2}, & (23).h&=\frac{h+k}{2}, & (23).k&=\frac{3h-k}{2}.\end{align*}

Next, the fact that $v_{p-3c}$ and $v_{p+3c}$ are both singular leads to 
\begin{align*}
\partial_{y_1}(f)&=-c\frac{1}{x_1-x_3}\frac{3f+g}{2}\\
\partial_{y_1}(g)&=-2c\frac{g}{x_1-x_2}-c\frac{1}{x_1-x_3}\frac{3f+g}{2}\\
\partial_{y_1}(h)&=c\frac{2h}{x_1-x_2}+c\frac{1}{x_1-x_3}\frac{h-k}{2}\\
\partial_{y_1}(k)&=-3c\frac{1}{x_1-x_3}\frac{h-k}{2}.
\end{align*}

From here it is straightforward to show that $fk-gh$ is an $S_3$ invariant with the property $D_{y_1}(fk-gh)=0$, so $\partial_{y}(fk-gh)=0$ for all $y$ and thus $fk-gh$ is a $p$-th power and an invariant. Given that its degree is $2p$, we conclude that $fk-gh$ is a scalar multiple of $\sigma_2^p$.

To calculate this scalar, we calculate $fk-gh$ explicitly, using formulas in Lemmas \ref{p>3-Stand-specialc-lemma-p-3c} and \ref{p>3-Stand-specialc-lemma-p+3c}, while disregarding all terms with a nonzero power of $\sigma_3$ (as we know from the above that they don't contribute to the final expression as $fk-gh$ is a scalar multiple of $\sigma_2^p$). We distinguish four cases, and claim that in all of them $fk-gh\ne 0$. 
\begin{enumerate}
\item If $p\equiv 1 \bmod 3$ and $m=\frac{p-1}{3}-c$ is even, then $\frac{p-1}{3}+c=m+2c$ is even and we get
\begin{align*}
v_{p-3c}&=\frac{(-1)^{m/2}}{9^{m/2}}\frac{m!}{ (m/2)! \cdot  \prod_{j=1}^{m/2}(3j-2) } \sigma_2^{3m/2}\cdot (b_+\otimes b_++ 3 b_-\otimes b_-)+ \sigma_3\ldots \\
v_{p+3c}&=\frac{(-1)^{m/2+c}}{9^{m/2+c}}\frac{(m+2c)!}{ (m/2+c)! \cdot  \prod_{j=1}^{m/2+c}(3j-2) } \sigma_2^{3(m/2+c)}\cdot (b_+\otimes b_-- b_-\otimes b_+)+ \sigma_3\ldots \\
 fk-gh&= \frac{(-1)^{m/2}}{9^{m/2}}\frac{m!}{ (m/2)!   \prod_{j=1}^{m/2}(3j-2) } \frac{(-1)^{m/2+c}}{9^{m/2+c}}\frac{(m+2c)!}{ (m/2+c)!   \prod_{j=1}^{m/2+c}(3j-2) }\cdot \\
&   \quad \cdot \sigma_2^{3m/2}\sigma_2^{3(m/2+c)} (b_+^2+3b_-^2)\\
 &= \frac{(-1)^{\frac{p-1}{3}}(-12)({\frac{p-1}{3}}-c)! ({\frac{p-1}{3}}+c)! }{ 9^{\frac{p-1}{3}}(\frac{1}{2}({\frac{p-1}{3}}-c))!  (\frac{1}{2}({\frac{p-1}{3}}+c))!   \prod_{j=1}^{\frac{1}{2}({\frac{p-1}{3}}-c)}(3j-2)  \prod_{j=1}^{\frac{1}{2}({\frac{p-1}{3}}+c)}(3j-2)} \sigma_2^{p} \ne 0.
\end{align*}
\item If $p\equiv 1 \bmod 3$ and $m=\frac{p-1}{3}-c$ is odd - similar. 
\item If $p\equiv 2 \bmod 3$ and $m=\frac{p-2}{3}-c$ is even - similar. 
\item If $p\equiv 2 \bmod 3$ and $m=\frac{p-2}{3}-c$ is odd - similar. 
\end{enumerate}

This proves that $fk-gh\ne 0$, which implies that the only solution to the system \eqref{somesystem} is $A=B=0$, so $\left<v_{p-3c}\right>\cap \left<v_{p-3c}\right>=0$. Consequently, 
\begin{align*}
h_{M_{1,c}(\Stand)/\left<v_{p-3c}, v_{p+3c}\right>}(z)&=h_{M_{1,c}(\Stand)}(z)-h_{\left<v_{p-3c}\right>}(z)-h_{\left<v_{p+3c}\right>}(z)+h_{\left<v_{p-3c}\right>\cap \left<v_{p-3c}\right>}(z)\\
& =\frac{2}{(1-z)^2}-z^{p-3c}\cdot \frac{1}{(1-z)^2}-z^{p+3c}\cdot \frac{1}{(1-z)^2}+0.
\end{align*}
\end{proof}

\begin{lemma}\label{p>3-Stand-specialc-char2}
The images of the vectors $v_+,v_-$ from Lemma \ref{p>3,t=1,generic,Stand} in the quotient module $M_{1,c}(\Stand)/\left<v_{p-3c}, v_{p+3c}\right>$ are nonzero.
\end{lemma}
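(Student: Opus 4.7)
The plan is to reduce to the known structure of $L_{1,c}(\Triv)$ from \cite{Li14}. First, observe that $v_{p+3c}$ lies in degree $p+3c>p$, so in degree $p$ the submodule $\left<v_{p-3c},v_{p+3c}\right>$ coincides with $\left<v_{p-3c}\right>$. It therefore suffices to show $v_+\notin \left<v_{p-3c}\right>$; the analogous statement for $v_-$ then follows from the $S_3$-invariance of the submodule together with $v_-=\frac{2}{3}(s_2+\frac{1}{2})v_+$.

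Next I would verify that $\left<v_{p-3c}\right>\cong M_{1,c}(\Triv)[-(p-3c)]$ as graded $H_{1,c}(S_3,\h)$-modules. Since $v_{p-3c}$ is singular of $S_3$-type $\Triv$, the universal property of the Verma module gives a graded homomorphism $\varphi:M_{1,c}(\Triv)[-(p-3c)]\to M_{1,c}(\Stand)$ with $\varphi(1)=v_{p-3c}$, whose image is precisely $\left<v_{p-3c}\right>$. Writing $v_{p-3c}=f\otimes b_++g\otimes b_-$ with $(f,g)\ne (0,0)$, the map acts as $h\mapsto hf\otimes b_++hg\otimes b_-$, and injectivity follows from $S\h^*$ being an integral domain.

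Finally, suppose for contradiction that $v_+=\varphi(h)$ for some $h\in S^{3c}\h^*$. Because $\varphi$ is an isomorphism of $H_{1,c}(S_3,\h)$-modules onto its image and $v_+$ is singular, $h$ must itself be a singular vector in $M_{1,c}^{3c}(\Triv)$; together with the preimage of $v_-$ it spans an $S_3$-subrepresentation isomorphic to $\Stand$. However, \cite[Thm 3.3]{Li14} (recalled in Section \ref{sect-Lian}) asserts that for $0<c<p/3$ the generators of the maximal proper graded submodule of $M_{1,c}(\Triv)$ occur in degree $3c+p$, so $M_{1,c}(\Triv)$ has no singular vectors of any positive degree less than $3c+p$. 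Since $3c<3c+p$, this contradicts the existence of $h$, and the lemma follows. The only substantive input is this appeal to Lian's theorem; everything else is a formal shift-and-degree computation.
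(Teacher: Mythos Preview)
Your proof is correct and takes a genuinely different route from the paper's.

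The paper argues by direct computation: it writes an arbitrary $A\in S^{3c}\h^*$ with the correct $S_3$-transformation type as $A=gb_++h(-b_+^2+3b_-^2)$ for symmetric $g,h$, then expands the equation $Av_{p-3c}=v_+$ modulo $\sigma_2$ using the explicit formulas for $v_+$ and $v_{p-3c}$ from Lemmas~\ref{p>3,t=1,generic,Stand} and~\ref{p>3-Stand-specialc-lemma-p-3c}. In both congruence classes of $p\bmod 3$ this reduces to the impossible relation $\sigma_3^c=0$.

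Your argument instead identifies $\langle v_{p-3c}\rangle$ with $M_{1,c}(\Triv)[-(p-3c)]$ and transfers the question to the existence of a singular vector in $M_{1,c}^{3c}(\Triv)$, which you then rule out by appealing to \cite{Li14}. This is cleaner and avoids any explicit manipulation of the basis vectors. The trade-off is that it imports Lian's description of $J_{1,c}(\Triv)$ for $0<c<p/3$, a result the paper flags (see the remark after the proposition in Section~\ref{sect-Lian}) as having incomplete proofs in the literature, in particular the irreducibility of the quotient. The paper nonetheless accepts and cites this result throughout Theorem~\ref{thm-p>3special}, so your appeal is consistent with the paper's framework; but the paper's own proof of this lemma is self-contained and does not depend on \cite{Li14} at this step. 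Both approaches use the same initial reduction to $v_+\notin\langle v_{p-3c}\rangle$.
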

\begin{proof}
Let us show that $v_+$ is not contained in $\left<v_{p-3c}, v_{p+3c}\right>$.
As $v_-$ can be calculated from $v_+$ by the action of $\Bbbk(S_3)\subseteq H_{1,c}(S_3,\h)$ and $\left<v_{p-3c}, v_{p+3c}\right>$ is an $H_{1,c}(S_3,\h)$ submodule, so the claim for $v_-$ will follow. Further, noting that $\deg v_+=p>p-3c=\deg v_{p-3c}$ and $\deg v_+=p<p+3c=\deg v_{p+3c}$, the claim is equivalent to showing that $v_+\notin \left< v_{p-3c} \right>$. 

Assume the contrary, that $v_+\in \left< v_{p-3c} \right>$. In that case there exists $A\in S^{3c}\h^*$ such that $v_+=Av_{p-3c}$. Considering how these vectors transform under the action of $S_3$ ($v_{p-3c}$ spans $\Triv$ and $v_+$ spans the $(12)$-invariant part of $\Stand$), we can see that $A$ needs to transform like the $S_2$-invariant part of $\Stand$ and thus needs to be of the form $$A=gb_++h(-b_{+}^2+3b_-^2)$$ for some $g\in (S^{3c-1}\h^*)^{S_3}$, $h\in (S^{3c-2}\h^*)^{S_3}$. We will consider the equation $v_+=Av_{p-3c}$ modulo $\sigma_2$ to show it has no solutions. For that purpose, we distinguish two cases. 

\begin{enumerate}
\item If $p\equiv 1 \bmod 3$, then 
\begin{align*}
v_+&=\sigma_3^{\frac{p-1}{3}}(-b_+\otimes b_++3b_-\otimes b_-)  + \sigma_2\ldots\\
v_{p-3c}&=\sigma_3^{\frac{p-1}{3}-c}(b_+\otimes b_++3b_-\otimes b_-)  + \sigma_2\ldots.
\end{align*}
Modulo $\sigma_2$, the equation $Av_{p-3c}=v_+$ becomes
\begin{align*}
\sigma_3^{\frac{p-1}{3}}(-b_+\otimes b_++3b_-\otimes b_-)&=(gb_++h(-b_{+}^2+3b_-^2))\cdot \\
& \quad \cdot \sigma_3^{\frac{p-1}{3}-c}(b_+\otimes b_++3b_-\otimes b_-) + \sigma_2\ldots \\
\sigma_3^{c}(-b_+\otimes b_++3b_-\otimes b_-)&=(gb_+^2+h(-b_{+}^2+3b_-^2)\cdot b_+)\otimes b_++\\
&+(3gb_+b_-+ 3h(-b_{+}^2+3b_-^2)\cdot b_-)\otimes b_- + \sigma_2\ldots \\
&=(\frac{-1}{2}g(-b_{+}^2+3b_-^2)+h\cdot 54\sigma_3)\otimes b_++\\
&+(\frac{1}{2}g(2b_+b_-)+ 3h\cdot (-6q))\otimes b_- + \sigma_2\ldots .
\end{align*}
Reading off the coefficient of $(-b_+\otimes b_++3b_-\otimes b_-)$ we get
\begin{align*}
\sigma_3^{c}&=0 
\end{align*}
which is a contradiction.

\item If $p\equiv 2 \bmod 3$, then 
\begin{align*}
v_+&=\sigma_3^{\frac{p-2}{3}}(-(-b_+^2+3b_-^2)\otimes b_++3(2b_+b_-)\otimes b_-)  + \sigma_2\ldots\\
v_{p-3c}&=\sigma_3^{\frac{p-2}{3}-c}((-b_+^2+3b_-^2)\otimes b_++3(2b_+b_-)\otimes b_-)  + \sigma_2\ldots
\end{align*}

The equation $Av_{p-3c}=v_+$, similar to above, becomes 
\begin{align*}
\sigma_3^{c}(-(-b_+^2+3b_-^2)\otimes b_++3(2b_+b_-)\otimes b_-)&=(g\cdot 54\sigma_3+h\cdot (-108)\sigma_3b_+ )\otimes b_+ +\\
&+3(g\cdot 6q+h\cdot 108\sigma_3b_-)\otimes b_-  + \sigma_2\ldots
\end{align*}
\end{enumerate}
leading to
$\sigma_3^{c}=0$ which is a contradiction. 
\end{proof}

\begin{lemma}\label{p>3-Stand-specialc-char6}
Let $$\varphi:M_{1,c}(\Stand)[-p] \to M_{1,c}(\Stand)/\left<v_{p-3c},v_{p+3c} \right>$$ be the map of $H_{1,c}(S_3,\h)$-modules given by $\varphi(f\otimes b_{\pm})=f\cdot v_{\pm}$. Then $\varphi(v_{p-3c})\ne 0$.
\end{lemma}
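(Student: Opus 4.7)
The plan is to argue by contradiction and detect the non-vanishing of $\varphi(v_{p-3c})$ by reducing modulo the ideal $(\sigma_2) \subset S\h^*$. First I would write $v_{p-3c} = u_+ \otimes b_+ + u_- \otimes b_-$ with $u_\pm \in S^{p-3c}\h^*$ given explicitly by Lemma \ref{p>3-Stand-specialc-lemma-p-3c}; by definition of $\varphi$,
$$\varphi(v_{p-3c}) = u_+ v_+ + u_- v_-,$$
a singular vector of $S_3$-type $\Triv$ and degree $2p-3c$ in $M_{1,c}(\Stand)$.

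Assuming for contradiction that $\varphi(v_{p-3c}) \in \langle v_{p-3c}, v_{p+3c}\rangle$, matching $S_3$-types and degrees gives symmetric $A \in (S^p\h^*)^{S_3}$ and $B' \in (S^{p-6c-3}\h^*)^{S_3}$ (with $B' = 0$ when $p-6c-3 < 0$) such that
$$u_+ v_+ + u_- v_- = A\, v_{p-3c} + B' q\, v_{p+3c}.$$
Since $p>3$ is prime, $p \not\equiv 0 \pmod 3$ and $p-6c-3 \equiv p \not\equiv 0 \pmod 3$, so by Theorem \ref{decmposeShp>3} each monomial basis vector $\sigma_2^a\sigma_3^b$ of $(S^p\h^*)^{S_3}$ and of $(S^{p-6c-3}\h^*)^{S_3}$ satisfies $a \geq 1$. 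Hence both $A$ and $B'$ lie in the ideal $(\sigma_2)$, and the entire right-hand side is divisible by $\sigma_2$ in $M_{1,c}(\Stand) \cong S\h^* \otimes \Stand$.

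The remaining step, which I expect to be the main technical task, is to verify that $\varphi(v_{p-3c})$ is \emph{not} divisible by $\sigma_2$. For this I would use the mod-$\sigma_2$ congruences
$$b_+^2 \equiv -3 b_-^2, \qquad \sigma_3 \equiv \tfrac{1}{9} b_+ b_-^2, \qquad q \equiv -4 b_-^3 \pmod{\sigma_2},$$
which collapse the explicit sums for $v_{p-3c}$ from Lemma \ref{p>3-Stand-specialc-lemma-p-3c} and for $v_+, v_-$ from Lemma \ref{p>3,t=1,generic,Stand} to their initial ($i=0$ or $j=\lfloor\ldots\rfloor$) terms only, since every other term carries a positive power of $\sigma_2$. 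The surviving leading coefficients are the scalars $\lambda_0$ or $\mu_0$ of Lemma \ref{p>3-Stand-specialc-lemma-p-3c} and $\beta_{(p-1)/6}$ or $\gamma_{(p-5)/6}$ of Lemma \ref{p>3,t=1,generic,Stand}, all of which are explicitly nonzero in $\Bbbk$. Multiplying $u_\pm v_\pm$ mod $\sigma_2$ reduces to a single computation in $\Bbbk[b_+, b_-]\otimes \Stand$, whose outcome is a nonzero scalar multiple of a specific monomial tensored with a fixed basis vector of $\Stand$. Comparing this with the basis of the $\Triv$-isotypic component in $(S^{2p-3c}\h^*/\sigma_2) \otimes \Stand$ (read off from Theorem \ref{decmposeVermap>3}) yields the required contradiction.

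The main obstacle is the bookkeeping for the product $u_+v_+ + u_-v_-$ modulo $\sigma_2$: the two cases $p \equiv 1, 2 \pmod 3$ produce slightly different leading monomials (due to the parity of the exponents of $\sigma_3$), and the combinatorial coefficients must be combined correctly. This is handled by restricting once more to the $S_2$-invariant half of $\Stand$ (Lemma \ref{KerD1}), which halves the number of coefficients to compare, and by applying only a small number of identities from Section \ref{sect-auxiliary}, most of which acquire a much simpler form modulo $\sigma_2$.
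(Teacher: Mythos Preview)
Your proposal is correct and follows the same overall strategy as the paper: lift to $\psi:M_{1,c}(\Stand)[-p]\to M_{1,c}(\Stand)$, write a hypothetical expression $\psi(v_{p-3c})=A\,v_{p-3c}+B'q\,v_{p+3c}$ with $A,B'$ symmetric of the forced degrees, and derive a contradiction by reducing modulo $\sigma_2$ (splitting into $p\equiv 1,2 \pmod 3$).

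The one genuine difference is your observation that $A\in(S^{p}\h^*)^{S_3}$ and $B'\in(S^{p-6c-3}\h^*)^{S_3}$ are automatically divisible by $\sigma_2$, since $2a+3b=p$ (or $p-6c-3$) with $a=0$ would force $3\mid p$. This is a clean shortcut the paper does not make: the paper instead computes $A\,v_{p-3c}+B'q\,v_{p+3c}\bmod\sigma_2$ explicitly and finds it lands in the span of $(b_+\otimes b_++3b_-\otimes b_-)$, whereas $\psi(v_{p-3c})\bmod\sigma_2$ is a nonzero multiple of $((-b_+^2+3b_-^2)\otimes b_++3(2b_+b_-)\otimes b_-)$, and compares types. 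Your degree argument collapses the right-hand side to zero immediately, so you only need to verify $\psi(v_{p-3c})\not\equiv 0\bmod\sigma_2$; this buys you a slightly shorter computation. The remaining mod-$\sigma_2$ calculation of the left-hand side is exactly what the paper carries out, and your listed congruences $b_+^2\equiv -3b_-^2$, $\sigma_3\equiv\tfrac19 b_+b_-^2$, $q\equiv -4b_-^3$ are correct and adequate for it.
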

\begin{proof}
Let $\psi:M_{1,c}(\Stand)[-p] \to M_{1,c}(\Stand)$ be the map of $H_{1,c}(S_3,\h)$ modules given by the same formula. Then $\varphi(v_{p-3c})= 0$ if and only if there exist $A,B\in S \h^*$ such that $$\psi(v_{p-3c})=A\cdot v_{p-3c}+B\cdot v_{p+3c}.$$ Considering how they transform under $S_3$, we get that $A\in (S\h^*)^{S_3}$ and $B=B'q$ for $B'\in (S\h^*)^{S_3}$. We then calculate modulo $\sigma_2$, using the explicit expressions for $v_\pm,v_{p-3c},v_{p+3c}$ from Lemmas \ref{p>3-Stand-specialc-lemma-p-3c}, \ref{p>3,t=1,generic,Stand} and \ref{p>3-Stand-specialc-lemma-p+3c}, and distinguishing two cases. 

\begin{enumerate}
\item If $p\equiv 1 \bmod 3$, then 
\begin{align*}
v_+&=\sigma_3^{\frac{p-1}{3}}(-b_+\otimes b_++3b_-\otimes b_-) \bmod \sigma_2\\
v_-&=\sigma_3^{\frac{p-1}{3}}(b_+\otimes b_-+b_-\otimes b_+) \bmod \sigma_2\\
v_{p-3c}&=\sigma_3^{\frac{p-1}{3}-c}(b_+\otimes b_++3b_-\otimes b_-) \bmod \sigma_2\\
v_{p+3c}&=\sigma_3^{\frac{p-1}{3}+c}(b_+\otimes b_--3b_-\otimes b_+) \bmod \sigma_2.
\end{align*} 

We calculate modulo $\sigma_2$:
\begin{align*}
\psi(v_{p-3c})&=\sigma_3^{\frac{p-1}{3}-c}(b_+\cdot v_++3b_- \cdot v_-) \bmod \sigma_2\\
&=\sigma_3^{2\frac{p-1}{3}-c}((-b_+^2+3b_-^2)\otimes b_++3(2b_+b_-)\otimes b_- ) \bmod \sigma_2.
\end{align*}
At the same time, for any $A,B'\in (S\h^*)^{S_3}$ we have 
\begin{align*}
A\cdot v_{p-3c}+B'\cdot qv_{p+3c}&=A\sigma_3^{\frac{p-1}{3}-c}(b_+\otimes b_++3b_- \otimes b_-)+\\
& \quad +B' \sigma_3^{\frac{p-1}{3}+c}q(b_+\otimes b_--b_- \otimes b_+) \bmod \sigma_2\\
&=\left( A\sigma_3^{\frac{p-1}{3}-c}-3B' \sigma_3^{\frac{p-1}{3}+c+1}\right)
(b_+\otimes b_++3b_- \otimes b_-) \bmod \sigma_2.
\end{align*}
Comparing these expressions we see that there for all $A,B$ 
$$\psi(v_{p-3c})\ne A\cdot v_{p-3c}+B\cdot v_{p+3c},$$
so $\varphi(v_{p-3c})\ne 0$ as claimed.

\item If $p\equiv 2 \bmod 3$ - similar.

\end{enumerate}
\end{proof}

Finally, the end of the proof of Theorem \ref{p>3,specialc,Stand,Thm1} requires some facts about Hilbert polynomials, which we prove in the last two Lemmas of this subsection. 

\begin{lemma}\label{p>3-Stand-specialc-char7}
The coefficient of $z^{2p-1}$ in the expansion of
$$\frac{1}{(1-z)^2}\left( 2-z^{p-3c}- 2 \cdot z^p -  z^{p+3c}\right)$$
around $z=0$ is $0$.
\end{lemma}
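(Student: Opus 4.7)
The plan is to compute the coefficient directly using the expansion
\[
\frac{1}{(1-z)^2}=\sum_{n\geq 0}(n+1)z^n,
\]
so that for any nonnegative integer $d\leq 2p-1$ the coefficient of $z^{2p-1}$ in $\frac{z^d}{(1-z)^2}$ equals $2p-d$.

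Applying this to each of the four terms $2$, $-z^{p-3c}$, $-2z^p$, $-z^{p+3c}$ (all of which have exponent at most $2p-1$ since $0<3c<p$), I would compute the coefficient of $z^{2p-1}$ to be
\[
2\cdot 2p - \bigl(2p-(p-3c)\bigr) - 2\bigl(2p-p\bigr) - \bigl(2p-(p+3c)\bigr)
= 4p-(p+3c)-2p-(p-3c)=0.
\]
Thus the coefficient vanishes. The only thing to check is that each exponent $d\in\{0,\,p-3c,\,p,\,p+3c\}$ is indeed at most $2p-1$, which follows from $0<3c<p$. This is a routine computation, not a genuine obstacle.

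\begin{lemma}\label{p>3-Stand-specialc-char4}
In equation \eqref{p>3-Stand-specialc-char-form}, $M=K=N=0$.
\end{lemma}
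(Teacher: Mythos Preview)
Your proof is correct and takes essentially the same approach as the paper: both use the expansion $\frac{1}{(1-z)^2}=\sum_{n\ge 0}(n+1)z^n$ and compute the coefficient of $z^{2p-1}$ term by term, obtaining $4p-(p+3c)-2p-(p-3c)=0$. The only extraneous element is the stray \texttt{lemma} environment at the end of your proposal, which is not part of the proof and should be removed.
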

\begin{proof}
The coefficient of $z^k$ in $\frac{1}{(1-z)^2}$ is $k+1$, so the coefficient of $z^{2p-1}$ in the above expression is
\begin{align*}
2(2p-1+1)-(2p-1-(p-3c)+1)-2(2p-1-p+1)-(2p-1-(p+3c)+1)=0.
\end{align*}
\end{proof}

At this point in the proof of Theorem \ref{p>3,specialc,Stand,Thm1} we conclude that the Hilbert polynomial of the irreducible module $L_{1,c}(\Stand)$ has the form \eqref{p>3-Stand-specialc-char-form}. The following Lemma shows that the only module with such  Hilbert polynomial is $M_{1,c}(\Stand)/\left<v_{p-3c},v_+,v_-, v_{p+3c}\right>$ itself.

\begin{lemma}\label{p>3-Stand-specialc-char4}
Let $Q$ be a quotient of $M_{1,c}(\Stand)/\left<v_{p-3c},v_+,v_-, v_{p+3c}\right>$ with a Hilbert series of the form
\begin{align*}h_{Q}(z)&=h_{M_{1,c}(\Stand)/\left<v_{p-3c},v_+,v_-, v_{p+3c}\right>}(z)-M\cdot z^p \cdot h_{L_{1,c}(\Stand)}(z)-\\
& \quad -K\cdot z^{3c+p}\cdot h_{L_{1,c}(\Sign)}(z)-N\cdot z^{2p-3c} h_{L_{1,c}(\Triv)}(z)+O(z^{2p})
\end{align*}
for some $M,N,K \in \mathbb{N}_0$. Then: 
\begin{enumerate}
\item $M=0$;
\item $K=N=0$;
\item $Q=M_{1,c}(\Stand)/\left<v_{p-3c},v_+,v_-, v_{p+3c}\right>$.  
\end{enumerate}
\end{lemma}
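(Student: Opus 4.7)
The plan is to force $M=K=N=0$ by examining a single coefficient of $h_Q(z)$, namely the coefficient of $z^{2p-1}$, and then to upgrade this to the identification $Q=M_{1,c}(\Stand)/\left<v_{p-3c},v_+,v_-,v_{p+3c}\right>$. The key idea is that this coefficient is forced to be simultaneously non-negative (it is the Hilbert coefficient of a graded module) and non-positive (by the formula), while the non-positive expression has strictly positive integer weights attached to $M$, $K$, and $N$.

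I would first compute $[z^{2p-1}]h_Q(z)$ using the given formula. By Lemma \ref{p>3-Stand-specialc-char7}, $[z^{2p-1}]h_{M_{1,c}(\Stand)/\left<v_{p-3c},v_+,v_-,v_{p+3c}\right>}(z)=0$, since the $+2z^{2p}$ correction in the full formula does not contribute at degree $2p-1$. The Hilbert polynomials of $L_{1,c}(\Triv)$ and $L_{1,c}(\Sign)$ are known from \cite{Li14} and Lemma \ref{Sign}: $h_{L_{1,c}(\Triv)}(z)=\left(\frac{1-z^{3c+p}}{1-z}\right)^{2}$ and $h_{L_{1,c}(\Sign)}(z)=\left(\frac{1-z^{p-3c}}{1-z}\right)^{2}$, which immediately give $[z^{3c-1}]h_{L_{1,c}(\Triv)}=3c$ and $[z^{p-3c-1}]h_{L_{1,c}(\Sign)}=p-3c$. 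For $[z^{p-1}]h_{L_{1,c}(\Stand)}$, I would argue that $L_{1,c}(\Stand)$ is a quotient of $M_{1,c}(\Stand)/\left<v_{p-3c},v_+,v_-,v_{p+3c}\right>$ and that, by Lemmas \ref{p>3-Stand-specialc-ordering}, \ref{p>3-Stand-specialc-lemma-p-3c} and \ref{p>3-Stand-specialc-lemma-3c}, the only singular vectors in degrees $0\le k\le p-1$ are multiples of $v_{p-3c}$; hence the two modules agree in those degrees, and expansion of the known rational function yields $[z^{p-1}]h_{L_{1,c}(\Stand)}=2p-3c$.

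Collecting the contributions yields
$$[z^{2p-1}]h_Q(z)=-M(2p-3c)-K(p-3c)-3cN.$$
This integer must be non-negative. Since $0<c<p/3$ forces the three weights $2p-3c$, $p-3c$, and $3c$ to be positive integers in $\mathbb{Z}$, and $M,K,N\in\mathbb{N}_0$, we conclude $M=K=N=0$, which proves (1) and (2). For (3), substituting these vanishings into the hypothesised formula leaves $h_Q(z)=h_{M_{1,c}(\Stand)/\left<v_{p-3c},v_+,v_-,v_{p+3c}\right>}(z)+O(z^{2p})$; since the right-hand polynomial has degree $2p-2$ and $Q$ is a quotient of $M_{1,c}(\Stand)/\left<v_{p-3c},v_+,v_-,v_{p+3c}\right>$ (so its graded dimensions are bounded above by those of the latter), agreement below degree $2p$ forces equality of Hilbert polynomials in all degrees, and a surjection between graded modules with matching dimensions in each degree is an isomorphism.

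The main obstacle is the bookkeeping for $[z^{p-1}]h_{L_{1,c}(\Stand)}$, since an a priori description of $h_{L_{1,c}(\Stand)}$ is not yet available and one has to replace the module by $M_{1,c}(\Stand)/\left<v_{p-3c},v_+,v_-,v_{p+3c}\right>$ in low degrees; this silently invokes Lemma \ref{p>3-Stand-specialc-lemma-3c}, which itself rests on Assumption \ref{assum}. The remainder of the argument is a direct comparison of coefficients of Hilbert polynomials.
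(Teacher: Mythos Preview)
Your proof is correct and in fact slightly cleaner than the paper's. The paper establishes (1) first by a case split: for $0<c<p/6$ it examines $\dim Q^{2p-3c}$, and for $p/6<c<p/3$ it examines $\dim Q^{p+3c}$, in each case bounding $M$ by an inequality that forces $M=0$; only then does it compute $\dim Q^{2p-1}$ to obtain $-K(p-3c)-3cN\ge 0$ and conclude $K=N=0$. You bypass the case distinction entirely by working directly at degree $2p-1$ and including the $M$-term from the start, which requires knowing $[z^{p-1}]h_{L_{1,c}(\Stand)}=2p-3c$. That value is indeed available from the low-degree description $h_{L_{1,c}(\Stand)}(z)=\frac{2-z^{p-3c}+O(z^p)}{(1-z)^2}$ established earlier in the proof of Theorem~\ref{p>3,specialc,Stand,Thm1} via Lemmas~\ref{p>3-Stand-specialc-ordering}, \ref{p>3-Stand-specialc-lemma-p-3c} and \ref{p>3-Stand-specialc-lemma-3c}; the paper uses the same ingredient (it computes $\dim L_{1,c}^{p-3c}(\Stand)$ the same way) but does not exploit it at degree $p-1$ to collapse parts (1) and (2) into a single step. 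Your treatment of (3) coincides with the paper's, and your flagging of the dependence on Assumption~\ref{assum} through Lemma~\ref{p>3-Stand-specialc-lemma-3c} is accurate.
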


\begin{proof}
\begin{enumerate}
\item We distinguish two cases. 

{\bf Case 1. If $0<c<p/6$,} calculate the dimension of $Q^{2p-3c}$. We have $p+3c<2p-3c$, so 
\begin{align*}
\dim{Q^{2p-3c}}&=\dim{M_{1,c}^{2p-3c}(\Stand)/\left<v_{p-3c},v_+,v_-, v_{p+3c}\right>}-\\
& -M \dim{L_{1,c}^{p-3c}(\Stand)}-K \dim{L_{1,c}^{p-6c}(\Sign)}-N \dim{L_{1,c}^{0}(\Triv)}.
\end{align*}
By equation \eqref{p>3,cscpecial,Stand,somecharfmula} we know 
\begin{align*}
\dim&(M_{1,c}^{2p-3c}(\Stand)/\left<v_{p-3c},v_+,v_-, v_{p+3c}\right>)=\\
& \quad =2(2p-3c+1)-(2p-3c-(p-3c)+1)-\\
& \quad \quad \quad-2 (2p-3c-p+1)-(2p-3c-(p-3c)+1)\\
&\quad =6c-2.
\end{align*}
Equation \eqref{p>3-Stand-specialc-char-form} lets us conclude that 
\begin{align*}\dim{L_{1,c}^{p-3c}(\Stand)}&=\dim M_{1,c}^{p-3c}(\Stand)/\left<v_{p-3c},v_+,v_-, v_{p+3c}\right>\\
&=2(p-3c+1)-1=2p-6c+1.
\end{align*}
By Theorem \ref{thm-p>3special} $h_{L_{1,c}(\Sign)}(z)=\left( \frac{1-z^{p-3c}}{1-z}\right)^2$ so
$$\dim{L_{1,c}^{p-6c}(\Sign)}=p-6c+1.$$
We also know that $\dim{L_{1,c}^{0}(\Triv)}=1$. 
Putting it all together we get: 
\begin{align*}
0\le \dim{Q^{2p-3c}}&=(6c-2)-M\cdot (2p-6c+1)-K\cdot (p-6c+1)-N\cdot 1.
\end{align*}
Rearranging and using $c<p/6$ we get
\begin{align*}
p-2 & > 6c-2  \\
&\ge M\cdot (2p-6c+1)+K\cdot (p-6c+1)+N\\
& \ge M\cdot (2p-6c+1)\\
&> M\cdot (p+1).
\end{align*}
The only nonnegative integer $M$ satisfying this is $M=0$. 

{\bf Case 2. $p/6<c<p/3$.} We have $2p-3c<p+3c$ and similarly calculate the dimension of $Q^{p+3c}$, getting: 
\begin{align*}
0&\le \dim{Q^{p+3c}}= \dim {M_{1,c}^{p+3c}(\Stand)/\left<v_{p-3c},v_+,v_-, v_{p+3c}\right>}-\\
&\quad -M\cdot \dim{L_{1,c}^{3c}(\Stand)} -K\cdot \dim{L_{1,c}^0(\Sign)}-N\cdot \dim{L_{1,c}^{6c-p}(\Triv)}\\
&=\left(2\cdot (p+3c+1)-(p+3c-p+3c+1)-2(p+3c-p+1)-1\right)-\\
&\quad -M\cdot \left(2\cdot(3c+1)-(3c-p+3c+1)\right) -K\cdot 1 -N\cdot (6c-p+1) \\
&=\left(2p-6c-2\right)- M\cdot \left(p+1)\right) -K -N\cdot (6c-p+1) \\
& \le \left(2p-6c-2\right)- M\cdot \left(p+1\right).
\end{align*}
We rearrange this as 
$$M\cdot \left(p+1\right)\le 2p-6c-2< p+6c-6c-2=p-2,$$
and notice that the only nonnegative integer $M$ satisfying this is $M=0$.

\item Using part (1), we calculate the dimension of $Q^{2p-1}$ as
\begin{align*}
\dim{Q}^{2p-1}&=\dim{M_{1,c}^{2p-1}(\Stand)/\left<v_{p-3c},v_+,v_-, v_{p+3c}\right>}-\\
& \quad -K \cdot \dim{L_{1,c}^{p-3c-1}(\Sign)}-N\cdot  \dim{L_{1,c}^{3c-1}(\Triv)}\\
&=\left(2\cdot 2p - (2p-1-p+3c+1)-2 (2p-1-p+1)-(2p-1-p-3c+1) \right)-\\
& \quad -K \cdot (p-3c-1+1)-N\cdot (3c-1+1)\\
&=-K \cdot (p-3c)-N\cdot 3c.
\end{align*}
Using that $\dim{Q}^{2p-1}\ge 0$, $p-3c>0$ and $3c>0$, the only nonnegative $K,N$ satisfying this are $K=N=0.$
\item This follows immediately from (1) and (2). 
\end{enumerate}
\end{proof}

\subsection{Irreducible representation $L_{1,c}(\Stand)$, part 2}\label{section-p>3,Stand,specialc-2}

\begin{conjecture}\label{finalconj}
The irreducible representation $L_{1,c}(\Stand)$ of the rational Cherednik algebra $H_{1,c}(S_3,\h)$ over an algebraically closed field of characteristic $p>3$ for $c\in \mathbb{F}_p$, $p/3<c<p/2$, is the quotient of the Verma module $M_{1,c}(\Stand)$ by the submodule generated by vectors in the following degrees: 
\begin{itemize}
\item $-p+3c$ - one dimensional space, of type $\Sign$
\item $3p-3c$ - one dimensional space, of type $\Triv$
\item $p+3c$  - one dimensional space, of type $\Sign$
\item $5p-3c$ - one dimensional space, of type $\Triv$
\end{itemize}

Its character and Hilbert polynomial are
\begin{align*}
\chi_{L_{1,c}(\Stand)}(z)&=\chi_{S\h^*}(z)\left( [\Stand]-z^{-p+3c}[\Sign]-z^{3p-3c}[\Triv]-\right. \\
& \quad  \quad \left. -z^{p+3c}[\Sign]-z^{5p-3c}[\Triv]+z^{4p}[\Stand] \right)\\
h_{L_{1,c}(\Stand)}(z)&=\frac{2-z^{-p+3c}-z^{3p-3c}-z^{p+3c}-z^{5p-3c}+2z^{4p}}{(1-z)^2}.
\end{align*}

\end{conjecture}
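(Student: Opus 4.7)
The proof would follow the template of Theorem~\ref{p>3,specialc,Stand,Thm1}. First, I would apply Lemma~\ref{OmegaAction} to list all degrees where singular vectors can appear in $M_{1,c}(\Stand)$ and its quotients: for $p/3<c<p/2$ the degrees (with their isotypic components) are, in increasing order,
\[
3c-p\,(\Sign),\; 2p-3c\,(\Triv),\; p\,(\Stand),\; 3c\,(\Sign),\; 3p-3c\,(\Triv),\; 2p\,(\Stand),\; p+3c\,(\Sign),\;\ldots,\; 5p-3c\,(\Triv),\;\ldots
\]
The four conjectured generators sit at the first, fifth, seventh, and tenth positions of this list.

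The first step is to exhibit explicit singular vectors $v_{3c-p}$, $v_{3p-3c}$, $v_{p+3c}$, $v_{5p-3c}$, in the respective quotients, by writing a general element of the relevant isotypic component in the rescaled Young basis of Theorem~\ref{decmposeVermap>3}, applying $D_{y_1}$ using the auxiliary formulas of Section~\ref{sect-auxiliary}, and solving the resulting linear recursion for its coefficients --- exactly as was done in Lemmas~\ref{p>3-Stand-specialc-lemma-p-3c} and~\ref{p>3-Stand-specialc-lemma-p+3c}. By Lemma~\ref{KerD1} this suffices because each candidate is $S_2$-invariant (or anti-invariant) after fixing a line inside $\Stand$. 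The explicit recursions will likely have unique solutions up to scaling, with coefficients given by telescoping ratios involving $3c-p$, $3p-3c$, etc.

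The second step is to show that no additional singular vectors appear in the intervening degrees $2p-3c$, $p$, $3c$, $2p$, $4p-3c$, $2p+3c$ once one has quotiented by the previously constructed vectors. The most delicate of these is degree $p$: generically the vectors $v_\pm$ of Lemma~\ref{p>3,t=1,generic,Stand} are singular, but I expect them to become zero in $M_{1,c}(\Stand)/\langle v_{3c-p}\rangle$ (equivalently, to lie in $\langle v_{3c-p}\rangle$), which is possible because $\Sign\otimes\Stand\cong\Stand$ together with a degree-$(2p-3c)$ element of the $\Stand$ component of $S\h^*$ can produce an element of $\langle v_{3c-p}\rangle$ of type $\Stand$ in degree $p$. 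One would verify this by matching $v_\pm$, written in the basis of Theorem~\ref{decmposeVermap>3}, with $A\cdot v_{3c-p}$ for an explicit $A\in S\h^*$. The remaining degrees are handled by a change-of-basis argument like that of Lemma~\ref{p>3-Stand-specialc-lemma-3c}, writing the quotient module in a basis where the submodule relations are transparent, then solving a linear system of the form $D_{y_1}(w)\in\langle\cdot\rangle$; this will almost certainly require a technical arithmetic condition on $c\in\mathbb{F}_p$ analogous to Assumption~\ref{assum}.

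The third step is to compute the Hilbert polynomial of $Q:=M_{1,c}(\Stand)/\langle v_{3c-p},v_{3p-3c},v_{p+3c},v_{5p-3c}\rangle$. Each $\langle v_d\rangle$ is a shifted Verma module (after a determinantal argument like Proposition~\ref{p>3,Det1} and Lemma~\ref{complete-intersection}, proving the inducing map is injective). The four-term character formula of the conjecture suggests that the four submodules form a complete-intersection-like pattern, with pairwise intersections trivial up to degree $4p-1$ and a single copy of $M_{1,c}(\Stand)[-4p]$ appearing as the intersection $\langle v_{3c-p}\rangle\cap\langle v_{3p-3c}\rangle\cap\langle v_{p+3c}\rangle\cap\langle v_{5p-3c}\rangle$ --- this is the only consistent way to recover the $+2z^{4p}$ in the numerator. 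Finally, a dimension estimate on $Q$ at the next potential singular degree after $5p-3c$, analogous to Lemma~\ref{p>3-Stand-specialc-char4}, should force any further composition multiplicities to vanish, yielding $Q=L_{1,c}(\Stand)$. The hardest step is the combinatorial-algebraic bookkeeping in the third step: extending the two-term determinant computation of Lemma~\ref{p>3-Stand-specialc-char1} to a four-term version requires controlling all pairwise, triple, and quadruple intersections and, in doing so, likely introduces further arithmetic non-vanishing hypotheses on $c$ that one would then conjecture (and verify computationally) to hold for all primes.
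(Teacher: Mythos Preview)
The paper does not prove this statement: it is explicitly stated as Conjecture~\ref{finalconj}, with the only supporting evidence being the analogy with Lemma~\ref{p=2,t=1,Stand,c=1} (the $p=2$ case) and a Magma verification for all $p\le 13$. So there is no proof in the paper for you to compare against.

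Your outline is the natural extension of the proof of Theorem~\ref{p>3,specialc,Stand,Thm1}, and the paper implicitly endorses this template. One of your expectations is confirmed in the paper's commentary after the conjecture: the generic singular vectors $v_+,v_-$ of Lemma~\ref{p>3,t=1,generic,Stand} do lie in the submodule generated by the lower-degree vectors (the paper says they are contained in the submodule generated in degrees $3c-p$ and $3p-3c$; since $3p-3c>p$, this amounts to $v_\pm\in\langle v_{3c-p}\rangle$, as you surmised). The paper also points to Lemma~\ref{p=2,t=1,Stand,c=1} as the model, where exactly this pattern --- four one-dimensional singular spaces alternating $\Triv$/$\Sign$, no $\Stand$-type singular vector surviving --- is worked out by hand for $p=2$, $c=1$.

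Your proposal is therefore not so much a proof as a plausible research plan, and you are candid about the obstacles: the analogue of Lemma~\ref{p>3-Stand-specialc-lemma-3c} (ruling out singular vectors in the intermediate degrees $2p-3c$, $3c$, $2p$, $4p-3c$, $2p+3c$, \ldots) would require arithmetic non-vanishing conditions like Assumption~\ref{assum}, and the four-generator intersection analysis replacing Lemma~\ref{p>3-Stand-specialc-char1} is genuinely more intricate than the two-generator case. These are exactly the reasons the authors left this as a conjecture rather than a theorem.
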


Compare this with Lemma \ref{p=2,t=1,Stand,c=1}, where an analogous result is proved for $p=2$. Notice also that the vectors $v_+,v_-$ from Lemma \ref{p>3,t=1,generic,Stand}, which are singular in the generic case, are in this case contained in the submodule generated in degrees $-p+3c$ and $3p-3c$ so do not have to be included in the list of generators of the maximal proper submodule here. 

We have checked this conjecture using Magma \cite{Magma} for all $p\le 13$ and all $c$, and a small number of larger $p$ and specific $c$. For example, when $p=13$ and $c=6$, $L_{1,c}(\Stand)$ has  $50$ nonzero graded pieces and dimension $886$.

\bibliographystyle{alpha}
\bibliography{references}

\end{document}